\newcommand\codeg{\operatorname{codeg}}
\newcommand\Bin{\operatorname{Bin}}
\renewcommand{\dfrac}[2]{\lower0.15ex\hbox{\large$\textstyle\frac{#1}{#2}$}}
\newtheorem{theorem}{Theorem}[section]
\newtheorem{lemma}[theorem]{Lemma}
\newtheorem{corollary}[theorem]{Corollary}
\newtheorem{remark}[theorem]{Remark}
\numberwithin{equation}{section}
\begin{document}
\title
{Asymptotic enumeration of
linear hypergraphs with given number of vertices and edges}
\author
{{Brendan D. McKay$^1$\qquad Fang Tian$^2$}\\
{\small $^1$Research School of Computer Science}\\%
{\small Australian National University, Canberra ACT 2601, Australia} \\
{\small\tt brendan.mckay@anu.edu.au}\\[1ex]
{\small $^2$Department of Applied Mathematics}\\
{\small Shanghai University of Finance and Economics, Shanghai, 200433, China}\\
{\small\tt tianf@mail.shufe.edu.cn}
}

\date{}
\maketitle

 \begin{abstract}For $n\geq 3$, let $r=r(n)\geq 3$ be an
integer. A hypergraph is $r$-uniform
 if each edge is a set of $r$ vertices,
and is said to be linear if two edges intersect in at most
 one vertex. In this paper, the number of linear
 $r$-uniform hypergraphs on $n\to\infty$ vertices is determined asymptotically
 when the number of edges is $m(n)=o(r^{-3}n^{ \frac32})$.
 As one application, we find the probability of linearity for the
 independent-edge model of random $r$-uniform hypergraph when the
 expected number of edges is $o(r^{-3}n^{ \frac32})$.
 We also find the probability that a random $r$-uniform linear
 hypergraph with a given number of edges contains a given subhypergraph.
 \end{abstract}


\section{Introduction}\label{s:1}

 For $n\geq 3$, let $r=r(n)$ and $\ell$ be integers such that $r=r(n)\geq 3$ and $2\leq\ell\leq r-1$.
A hypergraph ${H}$ on vertex set $[n]$ is an \textit{$r$-uniform hypergraph}
(\textit{$r$-graph} for short) if each edge is a set of  $r$ vertices.
An $r$-graph is  called a  partial Steiner $(n,r,\ell)$-system
if every subset of size $\ell$ is contained in at most one edge of $H$.
In particular, $(n,r,2)$-systems are also called  \textit{linear hypergraphs}, which implies that
 any two edges intersect in at most one vertex.
 Partial Steiner $(n,r,\ell)$-systems and the stronger version, Steiner $(n,r,\ell)$-systems,
where every $\ell$-set is contained in  precisely one edge of ${H}$,
are widely studied combinatorial designs. Little is known about the number of %
 distinct partial Steiner $(n,r,\ell)$-systems, denoted by $s(n,r,\ell)$.
 Grable and Phelps~\cite{grable96}
 used the R\"{o}dl nibble algorithm~\cite{rodl85} to obtain
 an asymptotic formula for $\log s(n,r,\ell)$ as $\ell\leq r-1$
 and $n\to\infty$.
 Asratian and Kuzjurin gave another proof~\cite{asas00}.

An interesting problem is the enumeration of hypergraphs with
given number of edges. Let $\mathcal{H}_r(n,m)$ denote the set of $r$-graphs on
 the vertex set $[n]$ with $m$  edges, and let  $\mathcal{L}_r(n,m)$ denote
 the set of all  linear hypergraphs in $\mathcal{H}_r(n,m)$.

Dudek et al.~\cite{dudek13} used the switching method
to obtain the asymptotic number of $k$-regular $r$-graphs for fixed $r$
and $n\to\infty$ with $k=o(n^{1/2})$.
For $r=r(n)\geq 3$ an integer and a sequence of positive integers
$\boldsymbol{k}=\boldsymbol{k}(n)=(k_1,\ldots,k_n)$, define $M=M(n)=\sum_{i=1}^nk_i$.
Let $\mathcal{H}_r(\boldsymbol{k})$ denote the set of $r$-graphs on the
 vertex set $[n]$ with  degree sequence $\boldsymbol{k}$, and
 $\mathcal{L}_r(\boldsymbol{k})$ denote the set of all  linear hypergraphs
 in $\mathcal{H}_r(\boldsymbol{k})$.
Blinovsky and Greenhill~\cite{vlaejoc,valelec}  extended  the asymptotic
enumeration result on the number of $k$-regular $r$-graphs
 to the general $\mathcal{H}_r(\boldsymbol{k})$
when $r^4k_{\rm max}^3=o(M)$ and $n\to\infty$.
By relating the incidence matrix of a hypergraph to the biadjacency matrix of a
bipartite graph, they used switching arguments  together with previous
enumeration results for bipartite graphs to obtain the asymptotic
enumeration formula for $\mathcal{L}_r(\boldsymbol{k})$
provided $r^4k_{\rm max}^4(k_{\rm max}+r)=o(M)$ and $n\to\infty$~\cite{valelec}.
Recently, Balogh and Li~\cite{balgoh17} obtained an upper bound on the
total number of linear $r$-graphs with given girth for fixed $r\geq 3$.

Apart from these few results, the literature on the enumeration of linear
hypergraphs is very sparse.
In particular, there seems to be no asymptotic enumeration of linear
hypergraphs by the number of edges, which is the subject of this paper.
The result of Blinovsky and Greenhill~\cite{valelec} could in principle
be summed over degree sequences to obtain $\mathcal{L}_r(n,m)$
for $m=o\bigl( \min\{r^{-2}n^{\frac54},r^{-\frac83}n^{\frac43}\}\bigr)$,
but we prefer a direct switching approach.
Note that $m=O(r^{-2}n^2)$ for all linear hypergraphs; we get as far
as $m=o(r^{-3}n^{\frac32})$.

Our
application of the switching method combines several different switching
operations into a single computation, which was previously used in~\cite{green06} to
 count sparse $0$-$1$ matrices with irregular row and column sums, in~\cite{green08} to
 count sparse nonnegative integer matrices with specified row and column sums, and
 in~\cite{green13} to count sparse multigraphs with given degrees.

We will use the falling factorial $[x]_t=x(x-1)\cdots(x-t+1)$
and adopt $N$ as an abbreviation for $\binom nr$.
All asymptotics are with respect to $n\to\infty$.
Our main theorem is the following.

\begin{theorem}\label{t1.1}
Let $r=r(n)\geq 3$
and $m=m(n)$ be integers with $m=o(r^{-3}n^{ \frac32})$.
Then, as $n\to \infty$,
\begin{align}
|\mathcal{L}_r(n,m)|&=
 \binom{N}{m}
\exp\biggl[- \frac{[r]_2^2[m]_2}{4n^2}- \frac{[r]_2^3(3r^2-15r+20)m^3}{24n^4}+O\Bigl( \frac{r^6m^2}{n^3}\Bigr)\biggr] \label{maineqn}\\
&={ \frac{N^m}{m!}}
\exp\biggl[- \frac{[r]_2^2[m]_2}{4n^2}- \frac{[r]_2^3(3r^2-15r+20)m^3}{24n^4}+O\Bigl( \frac{r^6m^2}{n^3}\Bigr)\biggr]. \notag
\end{align}
\end{theorem}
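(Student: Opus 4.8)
The plan is to estimate, by a switching argument, what proportion of the $\binom Nm$ $r$-graphs with $m$ edges on $[n]$ is linear. For $j\ge0$ I would let $\mathcal S_j$ be the set of $r$-graphs in $\mathcal H_r(n,m)$ in which no two edges meet in three or more vertices and exactly $j$ unordered pairs of edges meet in exactly two vertices, so that $\mathcal L_r(n,m)=\mathcal S_0$. A crude first-moment bound shows that the number of $r$-graphs with $m$ edges having some pair of edges meeting in at least three vertices is $O(r^6m^2/n^3)\binom Nm$; hence $\sum_{j\ge0}|\mathcal S_j|=\bigl(1+O(r^6m^2/n^3)\bigr)\binom Nm$ and such graphs may be consigned to the error term. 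It then remains to estimate the ratios $|\mathcal S_j|/|\mathcal S_{j-1}|$, to solve the resulting recurrence for $|\mathcal S_0|$, and to sum over $j$.

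The basic switching takes $G\in\mathcal S_j$, a pair of edges $\{e,f\}$ with $|e\cap f|=2$, a vertex $v\in e\cap f$ and a vertex $w$ of $[n]$ outside $e\cup f$, and replaces $e$ by $e'=(e\setminus\{v\})\cup\{w\}$; for all but a few such $w$ this produces a graph in $\mathcal S_{j-1}$. Counting shows that the number of forward switchings from a fixed $G\in\mathcal S_j$ is $4jn$ up to local corrections, while the number of backward switchings into a fixed $G'\in\mathcal S_{j-1}$ is $(r-1)^2$ times the number of ordered pairs of edges of $G'$ meeting in exactly one vertex, again up to local corrections. Since the latter count concentrates near $r^2[m]_2/n$, this yields $|\mathcal S_j|/|\mathcal S_{j-1}|\sim[r]_2^2[m]_2/(4jn^2)$ and hence the leading term $-[r]_2^2[m]_2/(4n^2)$ of the exponent. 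To make this rigorous already at first order I would also need crude bounds showing that $|\mathcal S_j|$ decays once $j$ exceeds its typical value by more than its fluctuation scale, so that the sum over strata is dominated by a controlled range, and that the ``higher'' coincidences excluded above — a switched edge equal to an existing edge, or meeting two edges, or a pair newly meeting in three or more vertices — are individually rare.

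To reach the $m^3/n^4$ term the forward and backward counts must be carried one order further, and this is where the combined-switching bookkeeping of \cite{green06,green08,green13} enters: instead of iterating switchings one writes each count exactly in the form $A(1+\varepsilon)$, where $A$ is the main term and $\varepsilon$ is a sum of explicit constants times the numbers of the members of a short list of subconfigurations of $G$ (respectively $G'$) — essentially the ways in which a third edge can meet the triple $e,e',f$, together with the ways in which two edge-pairs, each meeting in two vertices, can share an edge. Each such subconfiguration count is then shown, by a further switching or second-moment estimate requiring only first-order accuracy, to concentrate about an explicit value equal to a polynomial in $r$ times a power of $m/n$. Substituting these turns the recurrence into $|\mathcal S_j|/|\mathcal S_{j-1}|=j^{-1}(\alpha+\beta j+\cdots)$ with explicit $\alpha,\beta$; solving it gives $|\mathcal S_j|=|\mathcal S_0|\,\alpha^je^{\beta j^2/(2\alpha)}/j!\,(1+\cdots)$, and evaluating $\sum_{j\ge0}|\mathcal S_j|=(1+O(r^6m^2/n^3))\binom Nm$ and solving for $|\mathcal S_0|$ produces the stated exponent. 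The second displayed form then follows from $\binom Nm=(N^m/m!)\exp\bigl(O(m^2/N)\bigr)$ because $m^2/N=O(r^6m^2/n^3)$.

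The hard part will be the second-order computation: one must locate \emph{every} configuration of two or three edges contributing at order $m^3/n^4$, determine its exact multiplicity in both the forward and the backward count, and check that nothing is double-counted or overlooked — the precise polynomial $3r^2-15r+20$ is exactly the sum of these contributions, so any slip would show up there. The subsidiary difficulties are quantitative: proving the subconfiguration counts concentrate tightly enough to be replaced by their means inside a sum over $\Theta(\mu)$ strata, with $\mu=[r]_2^2[m]_2/(4n^2)$ possibly large, and keeping every error estimate uniformly $O(r^6m^2/n^3)$ so that it matches the error term in \eqref{maineqn}.
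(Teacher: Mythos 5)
Your overall architecture (discard graphs containing a $\ge 3$-vertex overlap by a first-moment bound, stratify the rest by a ``defect'' count, estimate consecutive ratios by switchings, and sum the resulting near-Poisson series) is the same as the paper's, and your final reduction $\binom Nm=(N^m/m!)\exp(O(m^2/N))$ with $m^2/N=O(r^6m^2/n^3)$ is correct. The genuine gap lies in your choice of stratification and switching. You stratify only by $j$, the number of linked pairs, and use a local switching that moves one vertex of one edge; the reverse count into a graph $G'\in\mathcal S_{j-1}$ is then $(r-1)^2$ times $P_1(G')$, the number of ordered edge-pairs of $G'$ meeting in exactly one vertex --- a statistic \emph{not} determined by the stratum. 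Because the sum runs over $\Theta(\mu)$ strata with $\mu=[r]_2^2[m]_2/(4n^2)$, each ratio must be known to relative precision $O(r^2/n)$ for the accumulated exponent error to stay within $O(r^6m^2/n^3)$; that forces $P_1$ (of typical size $r^2m^2/n$) to be pinned down to absolute precision $O(r^4m^2/n^2)$. Its within-stratum fluctuation is at least of order $\sqrt{P_1}\asymp rm/\sqrt n$ (and in fact of order $m^{3/2}r^2/n$ from covariances of pairs sharing an edge), and the ratio of either quantity to $r^4m^2/n^2$ tends to infinity throughout the range $m=o(r^{-3}n^{\frac32})$. So the concentration you invoke (``subconfiguration counts concentrate tightly enough to be replaced by their means'') fails at the required scale, not only for the $m^3/n^4$ term but already for the stated error bound on the leading term; the same objection applies to the corrections to your forward count.

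The averaged form of the ratio identity would rescue you only if you could compute $\mathbb{E}[P_1\mid\mathcal S_{j-1}]$ to relative precision $O(r^2/n)$, and the ``further switching of first-order accuracy'' you propose for that leads to a regress, since its own reverse count depends on yet other non-stratum statistics. The paper avoids this by two design choices worth noting: its reverse switchings delete a whole cluster and re-insert its edges into a $t$-set in general position, so the reverse count is a stratum-determined factor times $|N_t|$, and $|N_t|$ is evaluated by inclusion--exclusion with an \emph{exact} first-order term $\binom r2 m\binom{n-2}{t-2}$ and only \emph{bounded} (never computed) higher terms; and those bounds require a priori control of $\max_v\deg(v)$ and of the cluster counts, which is why the paper restricts to $\mathcal H_r^{+}(n,m)$ and $\mathcal H_r^{++}(n,m)$ and stratifies by the four cluster types $(h_1,h_2,h_3,h_4)$ rather than by $j$ alone --- the three-edge clusters you relegate to ``subconfigurations'' genuinely contribute to the $(3r^2-15r+20)$ coefficient and must be carried in the stratification. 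You also omit any maximum-degree truncation and any case split for small $m$ (the paper needs three regimes because $\log(r^{-2}n)$ and $r^2m/n$ exchange roles); these are secondary but still necessary.
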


\begin{proof}[Proof of Theorem~\ref{t1.1}]
Note that the condition $m=o(r^{-3}n^{ \frac32})$ implies that
either $m=0$ or $r=o(n^{\frac12})$. In the former case the theorem
is trivially true, while in the latter we can apply
Remark~\ref{r4.5} and Lemma~\ref{l6.6} to obtain~\eqref{maineqn}
for $r^{-2}n\leq m=o(r^{-3}n^{ \frac32})$.
Equivalent expressions follow from Remark~\ref{r7.3} and
Lemma~\ref{l7.7} when $\log(r^{-2}n)\leq m=O(r^{-2}m)$
and from Remark~\ref{r8.2} and Lemma~\ref{l8.6}
when $1\leq m=O(\log(r^{-2}n))$.
%
\end{proof}

Let $\mathbb{P}_r(n,m)$ denote the probability that an
$r$-graph $H\in \mathcal{H}_r(n,m)$ chosen uniformly at random is linear. Then
 \[
 |\mathcal{L}_r(n,m)|= \binom{N}{m} \mathbb{P}_r(n,m).
 \]
Hence, our task is reduced to computing $\mathbb{P}_r(n,m)$ and
it suffices to show that $\mathbb{P}_r(n,m)$ equals the exponential
 factor in Theorem~\ref{t1.1}.

Recall that a random $r$-graph ${H}_r(n,p)=([n],E_{n,p})$ refers to an $r$-graph
on the vertex set $[n]$, where each $r$-set is an edge randomly and
independently with probability~$p$.  Also it might be surmised that random
hypergraphs with edge probability~$p$ have about the same probability of
being linear as a random hypergraph with $Np$ edges, that is not the case
when $Np$ is moderately large.
Let $\mathcal{L}_r(n)$ be the set of all
linear $r$-graphs with $n$ vertices.


\begin{theorem}\label{t1.3}
Let $r=r(n)\geq 3$ and let
$pN=m_0$ with $m_0=o(r^{-3}n^{ \frac32})$.
Then, as $n\to\infty$,
\begin{align*}
\mathbb{P}&[H_r(n,p)\in \mathcal{L}_r(n)]\\
&=\begin{cases}
\exp\Bigl[- \frac{[r]_2^2m_0^2}{4n^2}+O\bigl( \frac{r^6m_0^2}{n^3}\bigr)\Bigr],
 &\text{ if }m_0=O(r^{-2}n);\\[2ex]
\exp\Bigl[- \frac{[r]_2^2m_0^2}{4n^2}+ \frac{[r]_2^3(3r-5)m_0^3}{6n^4}+
O\bigl(\frac{\log^3(r^{-2}n)}{\sqrt{m_0}} + \frac{r^6m_0^2}{n^3}\bigr)\Bigr],
&\text{ if }r^{-2}n\leq m_0=o(r^{-3}n^{ \frac32}).
\end{cases}
\end{align*}
\end{theorem}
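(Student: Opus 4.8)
The plan is to deduce Theorem~\ref{t1.3} from Theorem~\ref{t1.1} by summing over the number of edges. Since $H_r(n,p)$ has a $\Bin(N,p)$-distributed number $M$ of edges and, conditioned on $M=m$, is uniformly distributed on $\mathcal{H}_r(n,m)$, we have
\[
\mathbb{P}[H_r(n,p)\in\mathcal{L}_r(n)]=\sum_{m=0}^{N}\binom Nm p^m(1-p)^{N-m}\,\mathbb{P}_r(n,m)=\mathbb{E}\bigl[\mathbb{P}_r(n,M)\bigr].
\]
For every $m$ whose $\Bin(N,p)$-weight is not super-polynomially small we have $m=o(r^{-3}n^{\frac32})$, so Theorem~\ref{t1.1} applies and gives $\mathbb{P}_r(n,m)=\exp\bigl[-a[m]_2-bm^3+O(r^6m^2/n^3)\bigr]$ with $a=[r]_2^2/(4n^2)$ and $b=[r]_2^3(3r^2-15r+20)/(24n^4)$; the contribution of the remaining $m$ is negligible because $\mathbb{P}_r(n,m)\le1$ and $M$ is concentrated. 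Moreover, since $p=m_0/N$ and $N=\binom nr=\Omega(n^3)$ is enormous, $\binom Nm p^m(1-p)^{N-m}=\frac{m_0^m e^{-m_0}}{m!}\,(1+o(1))$ for the relevant $m$, so it suffices to evaluate $\mathbb{E}_{M\sim\mathrm{Po}(m_0)}\bigl[\exp(-a[M]_2-bM^3+O(r^6M^2/n^3))\bigr]$.

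I would evaluate this expectation by a Laplace/saddle-point argument. Writing the exponent as $g(m)+O(r^6m^2/n^3)$ with $g(m)=-am^2+am-bm^3$, the summand is sharply peaked, and $M$ stays within an effective window of half-width $\Theta(\sqrt{m_0}\,\log(r^{-2}n))$ about $m_0$; over this window I would expand $g$ around $m_0$ and use the Poisson cumulant generating function $\log\mathbb{E}\bigl[e^{s(M-m_0)}\bigr]=m_0(e^{s}-1-s)$ at $s=g'(m_0)=-2am_0+O(a)$. The leading term $e^{-am_0^2}$ yields the main factor $\exp[-[r]_2^2m_0^2/(4n^2)]$; the second-derivative correction $\tfrac12 g''(m_0)\operatorname{Var}(M)$ contributes $-am_0+O(r^6m_0^2/n^3)$, which cancels the $+am_0$ arising from $[m]_2=m^2-m$; and the fluctuation term $\tfrac12 g'(m_0)^2\operatorname{Var}(M)=2a^2m_0^3+O(r^6m_0^2/n^3)$ combines with $-bm_0^3$ to give, after a short computation,
\[
2a^2m_0^3-bm_0^3=\frac{[r]_2^3m_0^3}{24n^4}\bigl(3[r]_2-(3r^2-15r+20)\bigr)=\frac{[r]_2^3(3r-5)m_0^3}{6n^4}.
\]

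It then remains to show that every other contribution is absorbed into the stated error. When $m_0=O(r^{-2}n)$ the terms $am_0$, $bm_0^3$, $2a^2m_0^3$, the residue left by the $\pm am_0$ cancellation, the higher Poisson cumulants, and the variation of the $O(r^6m^2/n^3)$ error across the window are all $O(r^6m_0^2/n^3)$ --- indeed $m_0=O(r^{-2}n)$ is exactly the threshold making $bm_0^3$ and $2a^2m_0^3$ of this size --- so only $-[r]_2^2m_0^2/(4n^2)$ survives. When $r^{-2}n\le m_0=o(r^{-3}n^{\frac32})$ the same pieces are still $O(r^6m_0^2/n^3)$ apart from the cubic term above, and the accuracy of the local normal approximation over the window of summation accounts for the extra $O(\log^3(r^{-2}n)/\sqrt{m_0})$. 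The main obstacle is precisely this last step: one must track, through the Taylor expansion of a \emph{cubic} exponent tested against the Poisson weights, exactly which combinations reach the order of the surviving cubic (in particular the exact cancellation $+am_0-am_0=0$ and the exact coefficient $2a^2m_0^3$ of the fluctuation term), while checking uniformly over the whole range --- most delicately near the endpoint $m_0\asymp r^{-2}n$, where the permitted error is smallest relative to $m_0$ --- that nothing else escapes the bound.
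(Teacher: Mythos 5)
Your overall route for the main regime $r^{-2}n\le m_0=o(r^{-3}n^{\frac32})$ is essentially the paper's: write $\mathbb{P}[H_r(n,p)\in\mathcal{L}_r(n)]=\sum_m\binom Nm p^mq^{N-m}\mathbb{P}_r(n,m)$, insert Theorem~\ref{t1.1}, and evaluate the sum by a local-limit/Laplace argument near the (shifted) peak $m_0^*=m_0-\frac{[r]_2^2m_0^2}{2n^2}$. The paper (Theorem~\ref{t9.6}) uses a normal local limit theorem for the binomial over explicit intervals $I_0,\dots,I_3$, while you tilt a Poisson approximation via its cumulant generating function; these are equivalent, and your bookkeeping of the main terms is correct --- the exact cancellation of the $+am_0$ from $[m]_2$, and $2a^2m_0^3-bm_0^3=\frac{[r]_2^3(3r-5)m_0^3}{6n^4}$, match the paper's Claims 1--4.

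There are, however, two concrete gaps. First, your tail control ``$\mathbb{P}_r(n,m)\le 1$ and $M$ is concentrated'' is insufficient for the upper tail: outside the window the binomial tail is only $\exp[-\Theta(\log^2(r^{-2}n))]$, whereas the main term is $\exp[-\frac{[r]_2^2m_0^2}{4n^2}+\cdots]$, and $\frac{[r]_2^2m_0^2}{4n^2}$ can be as large as $o(r^{-2}n)\gg\log^2(r^{-2}n)$. So the trivial bound does not show the tail is a relative $O(r^6m_0^2/n^3)$ of the answer. The paper needs and proves that $\mathbb{P}_r(n,m)$ is non-increasing in $m$ (Lemma~\ref{l9.1}) precisely to bound $\mathbb{P}_r(n,m)\le\mathbb{P}_r(n,m_0)$ on $I_3$; you would need the same (or an equivalent) device. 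Second, your plan to push the same window/local-CLT machinery through the case $m_0=O(r^{-2}n)$ breaks down when $m_0$ is bounded or polylogarithmic: the local approximation carries a relative error of order $1/\sqrt{m_0}$ (and your $am_0$ term is $\Theta(r^4m_0/n^2)$), neither of which is $O(r^6m_0^2/n^3)$ for small $m_0$. In that regime one should instead Taylor-expand $\exp[-a[M]_2-bM^3]$ directly (the exponent is $o(1)$ there, and $\mathbb{E}[[M]_2]=[N]_2p^2=m_0^2(1+O(1/N))$), or do what the paper does in Theorem~\ref{t9.8}: bypass Theorem~\ref{t1.1} entirely and compute $\mathbb{P}[X_{\mathrm{link}}=0]$ for $H_r(n,p)$ by a first- and second-moment inclusion-exclusion on pairs of overlapping edges. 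Both gaps are repairable, but as written the proposal does not close them.
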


From the calculations in the proof of Theorem~\ref{t1.3}, we have a corollary about the distribution on the
number of edges of $H_r(n,p)$ conditioned on it being linear.

\begin{corollary}\label{c1.4}
Let $r=r(n)\geq 3$ and let $Np=m_0$
with $r^{-2}n\leq m_0=o(r^{-3}n^{ \frac32})$.
Suppose that $n\to \infty$.
Then the number of edges of $H_r(n,p)$ conditioned on being
linear converges in distribution to the normal distribution with mean $m_0- \frac{[r]_2^2m_0^2}{2n^2}$
and variance $m_0$.
\end{corollary}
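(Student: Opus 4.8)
The plan is to read the statement off directly from the Laplace-type evaluation of $\mathbb{P}[H_r(n,p)\in\mathcal{L}_r(n)]$ carried out in the proof of Theorem~\ref{t1.3}. Let $Y$ denote the number of edges of $H_r(n,p)$ and let $b(m)=\binom{N}{m}p^m(1-p)^{N-m}$ be the unconditional $\operatorname{Bin}(N,p)$ mass function. Since a fixed $m$-edge $r$-graph is linear with probability $\mathbb{P}_r(n,m)=|\mathcal{L}_r(n,m)|/\binom{N}{m}$,
\[
\mathbb{P}\bigl[Y=m\mid H_r(n,p)\in\mathcal{L}_r(n)\bigr]=\frac{b(m)\,\mathbb{P}_r(n,m)}{\mathbb{P}[H_r(n,p)\in\mathcal{L}_r(n)]},
\]
and by Theorem~\ref{t1.1} the numerator equals $b(m)\exp[L(m)]$ with
\[
L(m)=-\frac{[r]_2^2[m]_2}{4n^2}-\frac{[r]_2^3(3r^2-15r+20)m^3}{24n^4}+O\Bigl(\frac{r^6m^2}{n^3}\Bigr)
\]
throughout the relevant range of $m$, while the denominator is exactly the sum $\sum_m b(m)\exp[L(m)]$ that is evaluated in the proof of Theorem~\ref{t1.3}. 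So the conditional law of $Y$ is the binomial law reweighted by $e^{L(m)}$ and renormalised, and the whole question is the shape of $b(m)e^{L(m)}$ near its maximum.

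Because $pN=m_0\to\infty$ and $p=m_0/N\to0$, the binomial factor obeys the standard local expansion $\log b(m)=c_n-(m-m_0)^2/(2m_0)+(\text{lower order})$ for $m$ near $m_0$. Adding $L(m)$ and retaining the dominant terms gives $\log[b(m)e^{L(m)}]=c_n'-(m-m_0)^2/(2m_0)-[r]_2^2m^2/(4n^2)+(\text{lower order})$; completing the square identifies the maximiser as $m^{\ast}=m_0-[r]_2^2m_0^2/(2n^2)+o(\sqrt{m_0})=\mu+o(\sqrt{m_0})$ and the second derivative there as $-(1/m_0)\bigl(1+O([r]_2^2m_0/n^2)\bigr)=-(1+o(1))/m_0$. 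Here the hypothesis $m_0=o(r^{-3}n^{3/2})$ — equivalently $r=o(n^{1/2})$ — is used to see that $[r]_2^2m_0/n^2=o(1)$ and, more sharply, that the remaining contributions (the difference between $[m]_2$ and $m^2$, the cubic term of $L$, the skewness of $b$, and the $O(r^6m^2/n^3)$ error) move the maximiser by only $o(\sqrt{m_0})$ and change the curvature by a factor $1+o(1)$. These are exactly the estimates already made in the proof of Theorem~\ref{t1.3}, where it is shown that $b(m)e^{L(m)}=(1+o(1))\,b(\mu)e^{L(\mu)}\exp[-(m-\mu)^2/(2m_0)]$ uniformly over a window $|m-\mu|\le w$ with $\sqrt{m_0}\log m_0\ll w\ll m_0$, and that the tail $\sum_{|m-\mu|>w}b(m)\mathbb{P}_r(n,m)$ — bounded crudely via $\mathbb{P}_r(n,m)\le1$ together with the log-concavity of the leading part of $b(m)e^{L(m)}$ — is negligible beside $b(\mu)e^{L(\mu)}\sqrt{m_0}$.

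Dividing by $\mathbb{P}[H_r(n,p)\in\mathcal{L}_r(n)]=(1+o(1))\,b(\mu)e^{L(\mu)}\sqrt{2\pi m_0}$ now yields, for every fixed real $x$,
\[
\mathbb{P}\bigl[Y=\mu+\lfloor x\sqrt{m_0}\rfloor\mid H_r(n,p)\in\mathcal{L}_r(n)\bigr]=\frac{1+o(1)}{\sqrt{2\pi m_0}}\,e^{-x^2/2},
\]
a local limit theorem for the integer-valued variable $Y$; summing this over the window $|m-\mu|\le w$ and discarding the complement with the tail bound gives $\mathbb{P}[Y\le\mu+x\sqrt{m_0}\mid H_r(n,p)\in\mathcal{L}_r(n)]\to\Phi(x)$ for all $x$, which is the asserted convergence to the normal law with mean $\mu=m_0-[r]_2^2m_0^2/(2n^2)$ and variance $m_0$. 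The main obstacle is purely one of bookkeeping: one has to confirm that the estimate of Theorem~\ref{t1.1} and its error term stay valid and usable throughout a window centred not at the binomial's mode $m_0$ but at $\mu$, a distance $[r]_2^2m_0^2/(2n^2)$ away that can itself greatly exceed $\sqrt{m_0}$, and that after Taylor-expanding $\log b(m)+L(m)$ about $\mu$ every term beyond the Gaussian $-(m-\mu)^2/(2m_0)$ is $o(1)$ once $m-\mu$ is rescaled by $\sqrt{m_0}$. Verifying these smallness conditions is precisely where $m_0=o(r^{-3}n^{3/2})$ enters, and they have in effect already been checked in the course of proving Theorem~\ref{t1.3}.
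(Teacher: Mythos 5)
Your proposal is correct and follows essentially the same route as the paper: the paper derives Corollary~\ref{c1.4} as a byproduct of the proof of Theorem~\ref{t9.6}, where Claims 2--8 establish exactly the reweighted-binomial local limit you describe --- the mass concentrates on the window $I_2$ around $m_0^*=m_0-\frac{[r]_2^2m_0^2}{2n^2}$, the summand there is $(1+o(1))\frac{1}{\sqrt{2\pi m_0}}e^{-s^2/(2m_0)}$ with $s=m-m_0^*$, and the intervals $I_0,I_1,I_3$ contribute negligibly. Your bookkeeping of where $m_0=o(r^{-3}n^{3/2})$ enters (shift of the maximiser by $o(\sqrt{m_0})$, curvature $-(1+o(1))/m_0$, uniform $o(1)$ error) matches the estimates already carried out in Claims 1--4 of that proof.
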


Consider  $H\in\mathcal{L}_r(n,m)$ chosen uniformly at random.
Using a similar switching method, we also obtain the
probability that $H$ contains a given hypergraph as a subhypergraph.

\begin{theorem}\label{t1.5}
Let $r=r(n)\geq 3$, $m=m(n)$ and $k=k(n)$ be  integers with
 $m=o(r^{-3}n^{ \frac32})$ and
  $k=o\bigl(\frac{n^3}{r^6m^2}\bigr)$.
 Let $K=K(n)$ be a linear $r$-graph on $n$ vertices with $k$ edges.
Let $H\in\mathcal{L}_r(n,m)$ be chosen uniformly at random. Then, as $n\to\infty$,
\[
\mathbb{P}[K\subseteq H]= \frac{[m]_k}{N^k}\exp\biggl[ \frac{[r]_2^2k^2}{4n^2}+
 O\Bigl( \frac{r^4k}{n^2}+ \min\Bigl\{ \frac{r^6m^2k}{n^3}, \frac{r^5mk}{n^2}\Bigr\}\Bigr)\biggr].
\]
\end{theorem}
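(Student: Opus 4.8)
The plan is to express $\mathbb{P}[K\subseteq H]$ as a telescoping product and estimate each factor with a single switching. Fix any ordering $e_1,\dots,e_k$ of the edges of $K$ and put $K_i=\{e_1,\dots,e_i\}$. Let $A_i$ be the number of linear $r$-graphs on $[n]$ with $m$ edges that contain $K_i$, so that $A_0=|\mathcal{L}_r(n,m)|$, $A_k=|\{H\in\mathcal{L}_r(n,m):K\subseteq H\}|$, and therefore
\[
 \mathbb{P}[K\subseteq H]=\prod_{i=1}^{k}\frac{A_i}{A_{i-1}}.
\]
(One could instead observe that $A_k$ counts the linear $(m-k)$-edge $r$-graphs every edge of which meets every edge of $K$ in at most one vertex and try to deduce the theorem from Theorem~\ref{t1.1}; but the forbidden edge-set then depends on $K$ in a way that perturbs the lower-order terms, so the incremental route is cleaner.)

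To estimate $A_i/A_{i-1}$ we work inside $\mathcal{L}_r(n,m;K_{i-1})$, the linear $m$-edge $r$-graphs containing $K_{i-1}$, split according to whether $e_i$ is present, and switch between the two parts in the obvious way: from $H$ with $e_i\notin H$ delete an edge $f\notin E(K_{i-1})$ and insert $e_i$, retaining only those $f$ for which $H-f+e_i$ is again linear; the reverse deletes $e_i$ and reinserts such an $f$. Since $K$ is linear, $e_i$ clashes with none of $e_1,\dots,e_{i-1}$, so from a typical source the number of forward switchings equals $m-i+1$, the number of its non-$K_{i-1}$ edges, with a shortfall only when $e_i$ clashes with one or more of the other $\le m-i$ edges --- an event of probability $O(mr^4/n^2)=o(1)$. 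From a target $H'$ the number of reverse switchings is $N-m$ minus the number of $r$-sets clashing with one of the $m-1$ edges of $H'-e_i$, i.e.\ $N\bigl(1-\tfrac{[r]_2^2(m-1)}{2n^2}+\cdots\bigr)$. Feeding these counts into the switching lemma yields
\[
 \frac{A_i}{A_{i-1}}=\frac{m-i+1}{N}\Bigl(1+(i-1)\,\frac{[r]_2^2}{2n^2}+O(\varepsilon)\Bigr),
\]
the factor $1+(i-1)\tfrac{[r]_2^2}{2n^2}$ arising because a reinserted $r$-set must remain compatible with all $m-1$ other edges of $H'$, whereas $e_i$ is free of the $i-1$ edges of $K_{i-1}$ and so is constrained by only $m-i$ edges. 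Multiplying over $i=1,\dots,k$ telescopes to $\dfrac{[m]_k}{[N]_k}\exp\bigl[\tfrac{[r]_2^2[k]_2}{4n^2}+O(k\varepsilon)\bigr]$; since $[k]_2=k^2+O(k)$ and $[N]_k=N^k\bigl(1+O(k^2/N)\bigr)$, the claimed formula follows once $\varepsilon$ (and $k^2/N$) are absorbed into the error.

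The technical heart, and the main obstacle, is to show that $\varepsilon$ can be taken as $O\bigl(\tfrac{r^4}{n^2}+\min\{\tfrac{r^6m^2}{n^3},\tfrac{r^5m}{n^2}\}\bigr)$ \emph{uniformly over every linear $K$ and every $H,H'$ that occur}. This requires (i) a bound on the number of $H\in\mathcal{L}_r(n,m;K_{i-1})$ with $e_i\notin H$ having two or more edges clashing with $e_i$, a second-order event controlled exactly as in the proof of Theorem~\ref{t1.1}, and (ii) control of how the reverse-switching count varies with $H'$, since the inclusion--exclusion correction to the number of $r$-sets clashing with $H'-e_i$ involves the codegrees and vertex-overlaps of the edges of $H'$. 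For this one carries along, for the $H$ that dominate $\mathcal{L}_r(n,m;K_{i-1})$, the same degree and codegree bounds used in the main argument; these are unaffected by $K_{i-1}$ because $K_{i-1}\subseteq H$ only prescribes $i-1\le k$ edges and $k$ is small. The two terms inside the $\min$ reflect two ways of bounding this second-order contribution --- one using that a random linear $m$-edge $r$-graph has few $r$-sets clashing with it, the other using the coarser estimate on how densely its edges can accumulate on a vertex --- with neither dominating throughout the admissible range of $(r,m)$. Given these estimates the telescoping product and the final substitutions are routine, and the hypothesis $k=o\bigl(n^3/(r^6m^2)\bigr)$ is exactly what forces $k\varepsilon=o(1)$, making the exponential factor meaningful.
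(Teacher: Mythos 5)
Your proposal follows essentially the same route as the paper: the same telescoping product over the edges of $K$, the same switching between the classes with and without $e_i$ inside the class containing $e_1,\dots,e_{i-1}$, and the same identification of the per-factor correction $1+(i-1)\frac{[r]_2^2}{2n^2}$ as the residue of the near-cancelling clash probabilities on the two sides (deletion side constrained by all $m$ edges, insertion of $e_i$ constrained only by the $m-i+1$ edges outside $K_{i-1}$). The ``technical heart'' you defer --- computing, to second-order precision and uniformly, the probability that a uniform member of the $e_i$-free class contains an edge meeting $e_i$ in two or more vertices --- is exactly the paper's Lemma~\ref{l10.2}, which it handles by a further nested switching plus inclusion--exclusion, in line with what you sketch.
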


The remainder of the paper is structured as follows. Notation and  auxiliary results
are presented in Section~\ref{s:2}. From Section~\ref{s:3} to Section~\ref{s:6},
we mainly consider the case $r^{-2}n\leq m=o(r^{-3}n^{ \frac32})$.
In Section~\ref{s:3}, we define subsets $\mathcal{H}^+_r(n,m)$ and
$\mathcal{H}^{++}_r(n,m)$ of $\mathcal{H}_r(n,m)$ and show that they
are almost all of $\mathcal{H}_r(n,m)$.  In Section~\ref{s:4}, we
show that the same is true when $\mathcal{H}^+_r(n,m)$ and
$\mathcal{H}^{++}_r(n,m)$ are restricted by certain counts of clusters of
edges that overlap in more than one vertex.
We define four other kinds of
switchings on $r$-graphs in $\mathcal{H}_r^{+}(n,m)$
which are used to remove some hyperedges with two or more common vertices, and
analyze these switchings  in Section~\ref{s:5}.  In Section~\ref{s:6}, we complete
the enumeration for the case $r^{-2}n\leq m=o(r^{-3}n^{ \frac32})$
with the help of some calculations performed in~\cite{green06,green08,green13}.
In Sections~\ref{s:7}--\ref{s:8}, we consider the cases $\log (r^{-2}n)\leq m= O(r^{-2}n)$
and $1\le m=O(\log (r^{-2}n))$, respectively. In Section~\ref{s:9}, we prove Theorem~\ref{t1.3},
while in Section~\ref{s:10}, we prove Theorem~\ref{t1.5}.

\section{Notation and  auxiliary results}\label{s:2}

To state our results precisely, we need some definitions.
Let $H$ be an $r$-graph in $\mathcal{H}_r(n,m)$.
For $U\subseteq [n]$, the \textit{codegree} of $U$ in $H$, denoted by $\codeg({U})$,
is the number of edges of $H$ containing~$U$. In particular, if $U=\{v\}$ for
 $v\in [n]$ then $\codeg({U})$ is the degree of $v$ in $H$, denoted by $\deg (v)$.
Any $2$-set $\{x,y\}\subseteq [n]$ in an
edge $e$ of $H$ is called a \textit{link} of $e$ if $\codeg ({x,y})\geq 2$.
Two edges $e_i$ and $e_j$ in $H$ are called \textit{linked edges}
if $|e_i\cap e_j|=2$.

Let $G_H$ be a simple graph whose vertices are the edges of~$H$,
with two vertices of $G$ adjacent iff the corresponding edges of~$H$ are linked.
An edge induced subgraph of $H$ corresponding to a non-trivial
 component of $G_H$ is called a \textit{cluster} of $H$.
The standard asymptotic notations $o$ and $O$ refer to $n\to\infty$.
The floor and ceiling signs are omitted whenever they are not crucial.

In order to identify several events which have low probabilities
in the uniform probability space $\mathcal{H}_r(n,m)$ as
$m=o(r^{-3}n^{ \frac32})$, the following lemmas will be useful.

\begin{lemma}\label{l2.1}
Let $r=r(n)\geq 3$, $t=t(n)\geq 1$ be integers.
Let $e_1,\ldots,e_{t}$ be distinct $r$-sets of $[n]$ and $H$ be an
$r$-graph that is chosen uniformly at random from $\mathcal{H}_r(n,m)$.
Then the probability that $e_1,\ldots,e_t$ are  edges of $H$ is at most
$\bigl( \frac{m}{N}\bigr)^t$.
\end{lemma}

\begin{proof} Since $H$ is an $r$-graph
 that is chosen uniformly at random from $\mathcal{H}_r(n,m)$, the probability
 that $e_1,\ldots,e_t$ are edges of $H$ is
\begin{align*}
 \frac{ \binom{N-t}{m-t}}{ \binom{N}{m}}&= \frac{[m]_t}{[N]_t}=
\prod_{i=0}^{t-1} \frac{m-i}{N-i}\leq\Bigl( \frac{m}{N}\Bigr)^t. \qedhere
\end{align*}
\end{proof}

\begin{lemma}\label{l2.2}
Let $r=r(n)\geq 3$ be an integer with $r=o(n^{ \frac12})$.
Let $t$ and $\alpha$ be integers such that $t=O(1)$ and $0\leq\alpha\leq rt$.
If a hypergraph $H$ is chosen uniformly at random from $\mathcal{H}_r(n,m)$,
then the expected number of sets of $t$ edges of $H$ whose union has
$rt-\alpha$ or fewer vertices is $O\bigl( t^\alpha r^{2\alpha}m^t n^{-\alpha})$.
\end{lemma}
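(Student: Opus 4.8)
The plan is to bound the expectation by a union bound over all candidate sets of $t$ edges whose union is small, then count such sets and multiply by the probability (supplied by Lemma~\ref{l2.1}) that they all appear in~$H$. Fix $\alpha$ with $0\le\alpha\le rt$, and let $S=\{e_1,\dots,e_t\}$ be a set of $t$ distinct $r$-sets with $\bigl|\bigcup_{i=1}^t e_i\bigr| = rt - \beta$ for some $\beta\ge\alpha$; since $\beta$ only ranges up to $rt$ and $t=O(1)$, it suffices to handle a single value of $\beta\ge\alpha$ and then sum the resulting $O(\cdot)$ bounds, noting the dominant term is $\beta=\alpha$. For a fixed value of $\beta$, I would count the number of such sets~$S$ as follows: first choose the vertex set $V=\bigcup e_i$, which has size $rt-\beta\le rt=O(r)$, in at most $\binom{n}{rt-\beta}\le n^{rt-\beta}/(rt-\beta)!$ ways (and for $\beta$ close to $rt$ the factor is smaller, but $n^{rt-\beta}$ is the quantity that matters); then choose the $t$ edges as $r$-subsets of~$V$, in at most $\binom{|V|}{r}^t \le (rt)^{rt}$ ways, which is $O(1)$ since $t=O(1)$ and $r=o(n^{1/2})$ — wait, $(rt)^{rt}$ is not $O(1)$ when $r\to\infty$, so this crude bound must be sharpened.

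The fix is to not fix $V$ first but instead build the edges one at a time and track the overlaps. Order the edges $e_1,\dots,e_t$; let $a_j = |e_j \setminus (e_1\cup\cdots\cup e_{j-1})|$ be the number of new vertices contributed by~$e_j$, so $a_1=r$ and $\sum_{j=1}^t a_j = rt-\beta$, hence $\sum_{j=2}^t (r-a_j) = \beta$. The number of ways to choose the ordered tuple $(e_1,\dots,e_t)$ with these parameters is at most
\[
\binom{n}{r}\prod_{j=2}^{t}\binom{\,r(j-1)\,}{\,r-a_j\,}\binom{\,n\,}{\,a_j\,}
\;\le\; N\prod_{j=2}^{t} \bigl(r(j-1)\bigr)^{r-a_j}\, n^{a_j}
\;\le\; N\,(rt)^{\beta}\,n^{\,rt-\beta-r},
\]
since $\binom{r(j-1)}{r-a_j}\le (rt)^{r-a_j}$, $\binom{n}{a_j}\le n^{a_j}$, $\sum_{j\ge2}(r-a_j)=\beta$, and $\sum_{j\ge2}a_j = rt-\beta-r$. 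Multiplying by the probability bound $(m/N)^t \le m^t/N^t$ from Lemma~\ref{l2.1}, dividing by $t!$ to pass from ordered to unordered, and using $N\ge (n/r)^r$ so that $N^{t-1}\ge (n/r)^{r(t-1)} = n^{r(t-1)}r^{-r(t-1)}$, the contribution of a fixed $\beta$ is
\[
O\!\left(\frac{m^t\,(rt)^{\beta}\,n^{rt-\beta-r}}{N^{t-1}}\right)
= O\!\left(m^t\,(rt)^{\beta}\,n^{-\beta}\cdot \frac{n^{r(t-1)}}{N^{t-1}}\right)
= O\!\left(m^t\,(rt)^{\beta}\,n^{-\beta}\, r^{r(t-1)}\right),
\]
and this last $r^{r(t-1)}$ factor is the sticking point. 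I expect the resolution is that the factor $(rt)^\beta n^{-\beta} r^{r(t-1)}$ is not actually what comes out once one is careful: the binomial $\binom{r(j-1)}{r-a_j}$ should be bounded more honestly, and with $r-a_j$ small on average (their sum is only $\beta$) one gets $\prod_j \binom{r(j-1)}{r-a_j}\le \binom{rt}{\,\beta\,}\le (rt)^\beta/\beta!$ in aggregate rather than per-edge, killing the spurious exponential-in-$r$ growth; combined with $N\ge \binom{n}{r}$ giving $n^{r(t-1)}/N^{t-1}=O(1)$ up to lower-order corrections when $r=o(n^{1/2})$, one is left with $O(t^\beta r^{2\beta} m^t n^{-\beta})$ summed over $\beta\ge\alpha$, dominated by $\beta=\alpha$.

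The main obstacle, then, is purely bookkeeping: getting the counting of $t$-edge configurations with a prescribed number $\beta$ of "coincidences" sharp enough that the bound is $O(t^\beta r^{2\beta} m^t n^{-\beta})$ — in particular tracking the $r^{2\alpha}$ (two powers of $r$ per coincidence: one from choosing which old vertex each repeated slot hits, one from the change in the $n^{(\cdot)}$ exponent relative to the generic count) rather than anything worse, and checking that the error from $r=o(n^{1/2})$ in the estimate $N = \binom nr = \frac{n^r}{r!}\bigl(1+O(r^2/n)\bigr)$ is absorbed. I would organize this as: (i) fix $\beta\ge\alpha$; (ii) count ordered $t$-tuples of $r$-sets with $|\bigcup e_i| = rt-\beta$, obtaining $O(t^\beta r^{2\beta}N^t n^{-\beta})$; (iii) apply Lemma~\ref{l2.1} and divide by $t!$; (iv) sum over $\beta$ from $\alpha$ to $rt$, with the geometric-type decay in $\beta$ (each extra coincidence costs a factor $O(t r^2/n) = o(1)$) making the sum $O(t^\alpha r^{2\alpha} m^t n^{-\alpha})$.
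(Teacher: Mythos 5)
Your overall architecture is the same as the paper's: order the edges, record for each $e_j$ how many of its vertices land in $e_1\cup\cdots\cup e_{j-1}$, bound the probability by $(m/N)^t$ via Lemma~\ref{l2.1}, and sum over $\beta\ge\alpha$ with the $\beta=\alpha$ term dominating because each extra coincidence costs a factor $O(tr^2/n)=o(1)$. However, the step where you resolve your own ``sticking point'' contains a genuine error. You claim that $n^{r(t-1)}/N^{t-1}=O(1)$ ``up to lower-order corrections.'' This is false: since $N=\binom nr=\frac{n^r}{r!}\bigl(1+O(r^2/n)\bigr)$ (as you yourself note later), one has $n^{r(t-1)}/N^{t-1}=(r!)^{t-1}\bigl(1+o(1)\bigr)$, which is unbounded when $r\to\infty$. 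Your other proposed repair --- replacing $\prod_j\binom{r(j-1)}{r-a_j}\le(rt)^\beta$ by the aggregate bound $\binom{rt}{\beta}$ --- attacks a non-problem: the factor $(rt)^\beta$ was never the source of the spurious growth (it survives into the final answer as $t^\beta r^\beta$), and sharpening it does nothing to cancel $(r!)^{t-1}$. So as written, the proposal does not close the gap it identifies.

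The correct move, and the one the paper makes, is never to pass through the crude bound $\binom{n}{a_j}\le n^{a_j}$ followed by division by $N^{t-1}$. Instead, pair each factor $\binom{n}{a_j}$ (the choice of the new vertices of $e_j$) with one of the $t-1$ remaining factors of $N^{-1}$ coming from $(m/N)^t$, and compute the ratio exactly:
\[
\frac{\binom{n}{a_j}}{\binom{n}{r}}=\frac{[r]_{r-a_j}}{[n-a_j]_{r-a_j}}
\le\Bigl(\frac{r}{n-r}\Bigr)^{r-a_j}=O\Bigl(\Bigl(\frac rn\Bigr)^{r-a_j}\Bigr),
\]
the last estimate using $r=o(n^{1/2})$ so that $(r-a_j)r=o(n)$. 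Each coincidence (each unit of $r-a_j$) then contributes $rt$ from the choice of the old vertex times $O(r/n)$ from this ratio, i.e.\ $O(tr^2/n)$ per coincidence, giving $O(m^t t^\beta r^{2\beta}n^{-\beta})$ for fixed $\beta$ and hence $O(t^\alpha r^{2\alpha}m^t n^{-\alpha})$ after summing the geometric-type series over $\beta\ge\alpha$. Your closing parenthetical (``one power of $r$ from choosing the old vertex, one from the change in the $n^{(\cdot)}$ exponent'') shows you have the right intuition; the proof just needs this ratio computation in place of the false $n^{r(t-1)}/N^{t-1}=O(1)$ claim.
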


\begin{proof}
Let $e_1,\ldots,e_t$ be distinct $r$-sets of  $[n]$.
 According to Lemma~\ref{l2.1}, the probability that $e_1,\ldots,e_t$
 are edges of $H$ is at most $(m/N)^t$.
 Here, we firstly count
 how many $e_1,\ldots,e_t$ such that $|e_1\cup\cdots\cup e_t|=rt-\beta$
 for some $\beta\geq \alpha$.

 Suppose that there is a sequence among the edges $\{e_1,\ldots,e_t\}$ and
 we have chosen the edges $\{e_1,\ldots,e_{i-1}\}$, where $2\leq i\leq t$.
 Let $a_i=|(e_1\cup\cdots\cup e_{i-1})\cap e_i|$, thus we have
 $\sum_{i=2}^ta_i=\beta$ and
 \[
O\biggl(N\prod_{i=2}^{t}(tr)^{a_i} \binom{n}{r-a_i}\biggr)
 \]
 ways to choose $\{e_1,\ldots,e_t\}$. The expected number of these
$t$ edges is
\begin{align*}
O\biggl(\Bigl( \frac{m}{N}\Bigr)^tN\prod_{i=2}^{t}(tr)^{a_i} \binom{n}{r-a_i}\biggr)
&=O\biggl(m^tt^\beta r^\beta\prod_{i=2}^{t} \frac{[r]_{a_i}}{[n-r]_{a_i}}\biggr)\\
&=O\biggl(m^tt^\beta r^\beta\prod_{i=2}^{t}\Bigl( \frac{r}{n-r}\Bigr)^{a_i}\biggr)
=O\biggl( \frac{t^\beta m^t r^{2\beta}}{n^\beta}\biggr),
\end{align*}
where we use the fact that $\prod_{j=0}^{a_i-1} \frac{r-j}{n-r-j}\leq\bigl( \frac{r}{n-r}\bigr)^{a_i}$
is true as $r\leq  \frac{n}{2}$ and $\bigl( \frac{r}{n-r}\bigr)^{a_i}=O(( \frac{r}{n})^{a_i})$ because $a_i< r$,
$r=o(n^{ \frac12})$ and $a_i r=o(n)$.

The expected number of sets of $t$ edges whose union has at most
$rt-\alpha$ vertices is
\[
O\biggl(\sum_{\beta\geq\alpha} \frac{t^\beta m^t r^{2\beta}}{n^\beta}\biggr)
=O\biggl( \frac{t^\alpha m^t r^{2\alpha}}{n^\alpha}\biggr),
\]
because $\beta=\alpha$ corresponds to the
largest term as $t=O(1)$ and $r=o(n^{ \frac12})$.\end{proof}

\begin{remark}\label{r2.3}
Throughout the following sections we assume that $n\to\infty$.
From Sections~\ref{s:3}--\ref{s:6},
we assume that $r^{-2}n \leq m=o(r^{-3}n^{ \frac32})$.
 In Sections~\ref{s:7}--\ref{s:8} , we assume
that $\log (r^{-2}n)\leq m=O(r^{-2}n)$
and $1\le m=O(\log (r^{-2}n))$, respectively.
Recall that these conditions all imply that $r=o(n^{ \frac12})$.
\end{remark}

\section{Two important subsets of $\mathcal{H}_r(n,m)$}\label{s:3}

Define
\begin{equation}\label{e3.1}
\begin{split}
M_0^*&=\biggl\lceil\log (r^{-2}n)+ \frac{3^4r^2m}{n}\biggr\rceil, \\
M_0&=\biggl\lceil\log (r^{-2}n)+ \frac{3^4r^2m}{n}\biggr\rceil+3, \\
M_1&=\biggl\lceil\log(r^{-2}n)+ \frac{3^4r^8 m^3}{2n^4}\biggr\rceil, \\
M_2&=\biggl\lceil\log(r^{-2}n)+ \frac{3^4r^7m^3}{2n^4}\biggr\rceil, \\
M_3&=\biggl\lceil\log(r^{-2}n)+ \frac{3^4r^6m^3}{2n^4}\biggr\rceil, \\
M_4&=\biggl\lceil\log(r^{-2}n)+ \frac{3^4r^4m^2}{2n^2}\biggr\rceil.
\end{split}
\end{equation}

Now define $\mathcal{H}_r^+(n,m)\subseteq\mathcal{H}_r(n,m)$
to be the set of $r$-graphs $H$ which satisfy the following
properties $\bf(a)$ to $\bf(g)$.

$\bf(a)$\  The intersection of any two edges contains at most two vertices.

$\bf(b)$\  $H$ only contains the four types of clusters that are shown in Figure~\ref{fig:1}.
(This implies that any three edges of $H$ involve at least $3r-4$ vertices and
any four edges involve at least $4r-5$ vertices. Thus, if there are
three edges of $H$, for example $\{e_1,e_2,e_3\}$, such that $|e_1\cup e_2\cup e_3|=3r-4$,
then $|e\cap (e_1\cup e_2\cup e_3)|\leq 1$  for any
edge $e$ other than $\{e_1,e_2,e_3\}$ of $H$.)
\begin{figure}[!htb]
\centering
\includegraphics[width=1.0\textwidth]{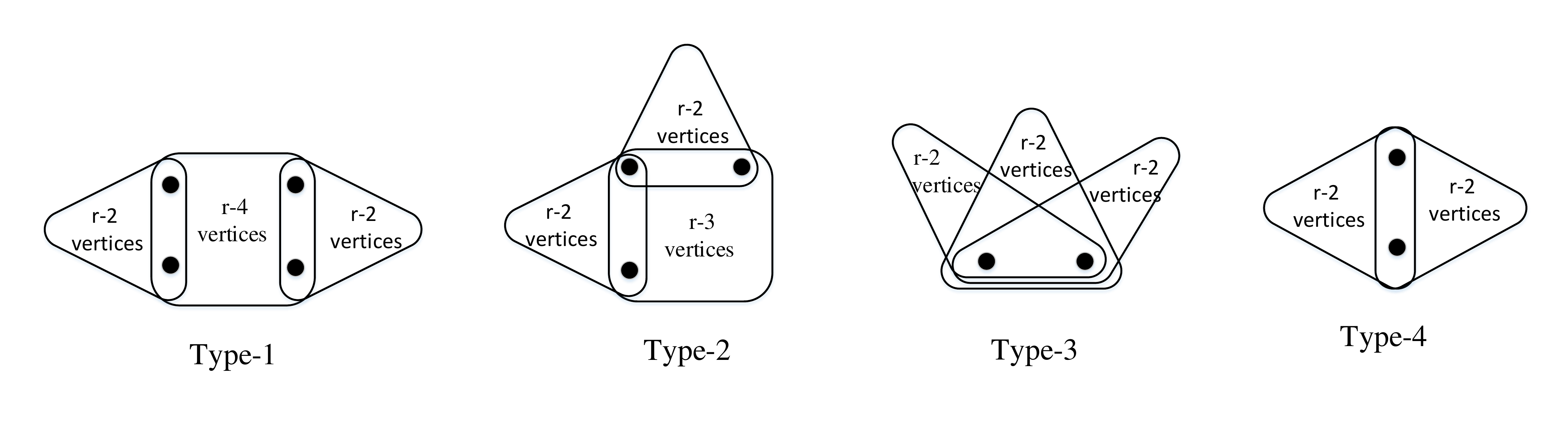}
\caption{The four types of clusters allowed in $H\in\mathcal{H}_r^+(n,m)$.\label{fig:1}}
\end{figure}

$\bf(c)$\ The intersection of any two clusters contains at most one vertex.

$\bf(d)$\ Any three distinct Type-$1$, Type-$2$ or Type-$3$ clusters
 involve at least $9r-13$ vertices. (Together with~$\bf(c)$,
this implies that if a pair of Type-$1$, Type-$2$ or Type-$3$ clusters
have exactly one common vertex, then any other
Type-$1$, Type-$2$ or Type-$3$ clusters
of $H$  must be vertex-disjoint from them.)

$\bf(e)$\ Any three distinct Type-$4$ clusters involve at least $6r-8$ vertices. 
(Together with~$\bf(c)$,
this implies that if a pair of Type-$4$ clusters of $H$ have exactly one common vertex, then any other Type-$4$
cluster of $H$ shares at most one vertex with them.)

$\bf(f)$\ There are at most $M_i$  Type-$i$ clusters, for $1\leq i\leq 4$.

$\bf(g)$\ $\deg (v)\leq M_0$ for every vertex $v\in [n]$.

We further define  $\mathcal{H}_r^{++}(n,m)\subseteq\mathcal{H}_r^+(n,m)$
to be the set of $r$-graphs $H$
by replacing the property $\bf(g)$ with a stronger constraint $\bf(g^*)$.

$\bf(g^*)$\ $\deg (v)\leq M_0^*$ for every vertex $v\in [n]$.

\begin{remark}\label{r3.1}
From property $\bf(g)$, it is natural to obtain
\[
\sum_{v\in [n]} \binom{\deg (v)}{2}=O\Bigl(M_0\sum_{v\in [n]}\deg (v)\Bigr)=O\bigl(rmM_0\bigr)
=O\Bigl(rm\log (r^{-2}n)+ \frac{r^3m^2}{n}\Bigr)
\]
for $H\in\mathcal{H}_r^+(n,m)$.
\end{remark}

We now  show that the  expected number of $r$-graphs 
 in $\mathcal{H}_r(n,m)$
not satisfying the properties of $\mathcal{H}^+_r(n,m)$ and  $\mathcal{H}^{++}_r(n,m)$ is quite small,
which implies that these $r$-graphs make asymptotically insignificant contributions.
The removal of these $r$-graphs from our main proof will lead
to some welcome simplifications.

\begin{theorem}\label{t3.2}
Suppose that $r^{-2}n\leq m=o(r^{-3}n^{ \frac32})$ and $n\to \infty$. Then
\[
 \frac{|\mathcal{H}^+_r(n,m)|}{|\mathcal{H}_r(n,m)|}=1-O\Bigl( \frac{r^6m^2}{n^3}\Bigr),\quad  \frac{|\mathcal{H}^{++}_r(n,m)|}{|\mathcal{H}_r(n,m)|}=1-O\Bigl( \frac{r^6m^2}{n^3}\Bigr).
\]
\end{theorem}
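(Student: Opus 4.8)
The plan is a first-moment (union bound) argument. Let $\mathcal B\subseteq\mathcal H_r(n,m)$ be the set of $r$-graphs failing at least one of the properties (a)--(g). Then $|\mathcal B|/|\mathcal H_r(n,m)|$ is at most the sum, over the seven properties, of the probability that a uniformly random $H\in\mathcal H_r(n,m)$ fails that property, and the goal is to show each of these seven probabilities is $O(r^6m^2/n^3)$, with the bound attained only by the term coming from~(a). The estimate for $\mathcal H^{++}_r(n,m)$ is obtained identically, with (g) replaced by (g$^*$).

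For the ``local'' properties (a)--(e), a failure forces a set of $t=O(1)$ edges whose union spans at most $rt-\alpha$ vertices for an explicit $\alpha$, after which Lemma~\ref{l2.2} bounds the expected number of such sets by $O(t^\alpha r^{2\alpha}m^t/n^\alpha)$. A failure of (a) means two edges meeting in $\ge3$ vertices, so $(t,\alpha)=(2,3)$ and the contribution is $O(r^6m^2/n^3)$; this is the dominant term. For (b), assume (a), so that all pairwise intersections have size $\le2$; a cluster that is not one of the four admissible types either has $\ge4$ edges, in which case repeatedly deleting a non-cut vertex of the corresponding component of $G_H$ (possible since every connected graph on $\ge2$ vertices has a non-cut vertex) leaves a connected-in-$G_H$ sub-cluster on exactly $4$ edges, which then spans at most $4r-6$ vertices, or is a $3$-edge cluster spanning at most $3r-5$ vertices; so (b) gives $(t,\alpha)=(4,6)$ or $(3,5)$, both $o(r^6m^2/n^3)$ because $r^6m^2/n^3=o(1)$ and $r^4m/n^2=o(1)$ follow from $m=o(r^{-3}n^{\frac32})$ and $r=o(n^{\frac12})$. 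Properties (c)--(e) forbid certain small families of admissible clusters with too few vertices; since an admissible cluster has $2$ or $3$ edges and a connected-in-$G_H$ cluster on $c$ edges spans at most $cr-2(c-1)$ vertices, a failure yields $(t,\alpha)$ with $\alpha=2t-2$ for (c), and, in the extreme cases, pairs such as $(9,14)$ for (d) and $(6,9)$ for (e); in every case Lemma~\ref{l2.2} gives $o(r^6m^2/n^3)$.

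The ``global'' properties (f), (g), (g$^*$) need a genuine tail bound, since Lemma~\ref{l2.2} requires $t=O(1)$; here I would use Markov's inequality in the form $\mathbb P[X\ge M]\le\mathbb E\binom XM$ together with Lemma~\ref{l2.1}. For (g): there are $\binom{n-1}{r-1}$ edges through a fixed vertex $v$, so by Lemma~\ref{l2.1}, $\mathbb P[\deg(v)\ge M_0]\le\binom{\binom{n-1}{r-1}}{M_0}\frac{[m]_{M_0}}{[N]_{M_0}}\le\bigl(\frac{erm}{nM_0}\bigr)^{M_0}\le\bigl(\frac{e}{3^4r}\bigr)^{M_0}$, using $M_0\ge 3^4r^2m/n$; since also $M_0\ge\log(r^{-2}n)$ and $e/(3^4r)<1$, this is at most $(r^{-2}n)^{-(3+\log r)}$, and summing over the $n$ vertices leaves $n\,(r^{-2}n)^{-(3+\log r)}=o(r^6m^2/n^3)$, where the decay uses $r^{-2}n\to\infty$ and the fact that the exponent $3+\log r$ grows with $r$, so that the estimate holds uniformly over $r^{-2}n\le m=o(r^{-3}n^{\frac32})$. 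The same computation with $M_0^*$ handles (g$^*$). For (f): by (c), distinct admissible clusters are edge-disjoint, so $\binom{X_i}{M_i}$, the number of $M_i$-families of Type-$i$ clusters, is at most $(M_i!)^{-1}$ times the number of ways of choosing $M_i$ edge-disjoint $c_i$-edge clusters ($c_i\in\{2,3\}$); counting each cluster as in the proof of Lemma~\ref{l2.2} and applying Lemma~\ref{l2.1} to the $c_iM_i$ specified edges gives $\mathbb E\binom{X_i}{M_i}\le(e\mu_i/M_i)^{M_i}$, where $\mu_i$ equals $O(r^4m^2/n^2)$ for $i=4$ and $O(r^{a_i}m^3/n^4)$ for $i=1,2,3$, with $r^{a_i}$ the power of $r$ occurring in the definition of $M_i$. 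Because $3^4$ is large enough that the lower bound on $M_i$ built into its definition makes $e\mu_i/M_i$ at most some constant $<1$, and $M_i\ge\log(r^{-2}n)$, the resulting bound is $o(r^6m^2/n^3)$.

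Summing the seven contributions gives $|\mathcal B|/|\mathcal H_r(n,m)|=O(r^6m^2/n^3)$, dominated by~(a), and likewise for $\mathcal H^{++}_r(n,m)$. I expect the main obstacle to be the two ``global'' bounds (f) and (g), together with the combinatorial reductions in (b)--(e): Lemma~\ref{l2.2} cannot be quoted directly for (f) and (g), so the counting must be redone to bring Lemma~\ref{l2.1} to bear on $c_iM_i$ edges at once, and one must verify that the resulting tails beat $r^6m^2/n^3$ uniformly at both ends of the admissible range of $m$ -- which is exactly what dictates the two-summand form of the thresholds $M_0,\dots,M_4$ and the appearance of the large constant $3^4$.
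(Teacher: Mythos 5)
Your proposal is correct and follows essentially the same route as the paper: a union bound over properties (a)--(g), with Lemma~\ref{l2.2} applied with the same parameters $(t,\alpha)$ for (a)--(e), and a first-moment tail bound of the form $\mathbb{E}\binom{X}{M}\le(e\mu/M)^M$ driven by the two-term structure of $M_0,\dots,M_4$ for (f) and (g$^*$). The only (harmless) deviation is in (f), where you count families of pairwise edge-disjoint cluster shapes directly, whereas the paper first conditions on property (d) (resp.\ (b)) so as to count vertex-disjoint clusters (resp.\ paired-distinct links).
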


\begin{proof} Consider $H\in \mathcal{H}_r(n, m)$ chosen uniformly at random. It's
enough for us to prove the second equation. We apply Lemma~\ref{l2.2} several times
to show that  $H$ satisfies the properties $\bf(a)$-$\bf(g^*)$ with probability $1-O\bigl( \frac{r^6m^2}{n^3}\bigr)$.

$\bf(a)$\  Applying Lemma~\ref{l2.2} with $t=2$ and $\alpha=3$, the expected number of
two edges involving at most $2r-3$ vertices is $O\bigl( \frac{r^6m^2}{n^3}\bigr)$.
 Hence,  the property $\bf(a)$ holds with probability $1-O\bigl( \frac{r^6m^2}{n^3}\bigr)$.

$\bf(b)$\ Applying Lemma~\ref{l2.2} with $t=3$ and $\alpha=5$, the expected number of
three edges involving at most $3r-5$ vertices is
\[
O\Bigl( \frac{r^{10}m^3}{n^{5}}\Bigr)
=O\Bigl( \frac{r^6m^2}{n^3}\Bigr),
\]
where the last equality is true because $m=o(r^{-3}n^{ \frac32})$.

Similarly, applying Lemma~\ref{l2.2} with $t=4$ and $\alpha=6$,  the expected
number of four edges involving at most $4r-6$ vertices is
\[
O\Bigl( \frac{r^{12}m^4}{n^{6}}\Bigr)
=O\Bigl( \frac{r^6m^2}{n^3}\Bigr).
\]

If there is a cluster with four or more edges, then there must be
four edges involving at most $4r-6$ vertices.
Hence, every cluster contains at most three edges and the property
$\bf(b)$ holds with probability $1-O\bigl( \frac{r^6m^2}{n^3}\bigr)$.

$\bf(c)$\ Applying Lemma~\ref{l2.2}, the expected number of two clusters such
that their intersection contains
two or more vertices is
\[
O\Bigl( \frac{r^{20}m^6}{n^{10}}+ \frac{r^{16}m^5}{n^8}+ \frac{r^{12}m^4}{n^6}
\Bigr)=O\Bigl( \frac{r^6m^2}{n^3}\Bigr),
\]
where the first term  arises from the case between Type-$1$ (or Type-$2$
or Type-$3$) cluster and Type-$1$ (or Type-$2$ or Type-$3$)
cluster and there are at most $6r-10$ vertices if their intersection
contains at least two vertices, the second term arises from the case
between Type-$1$ (or Type-$2$ or Type-$3$) cluster and Type-$4$ cluster
and there are at most $5r-8$ vertices if their intersection contains
at least two vertices, while the last term arises from the case between
Type-$4$ cluster and Type-$4$ cluster and there are at most $4r-6$ vertices
if their intersection contains at least two vertices.
Hence, the property $\bf(c)$ holds with probability $1-O\bigl( \frac{r^6m^2}{n^3}\bigr)$.

$\bf(d)$\ The expected number of three Type-$1$, Type-$2$ or Type-$3$ clusters
involving at most $9r-14$ vertices is
\[
O\Bigl( \frac{r^{28}m^9}{n^{14}}\Bigr)
=O\Bigl( \frac{r^6m^2}{n^3}\Bigr).
\]
Hence, the property $\bf(d)$ holds with probability $1-O\bigl( \frac{r^6m^2}{n^3}\bigr)$.

$\bf(e)$\  The expected number of  three Type-$4$
clusters involving at most $6r-9$ vertices is
\[
O\Bigl( \frac{r^{18}m^6}{n^{9}}\Bigr)
=O\Bigl( \frac{r^6m^2}{n^3}\Bigr).
\]
Hence,   the property $\bf(e)$ holds with probability $1-O\bigl( \frac{r^6m^2}{n^3}\bigr)$.

$\bf(f)$\ Define four events $\mathcal{E}_i$ as
\[
 \mathcal{E}_i=\bigl\{\text{There are at most $M_i$ Type-$i$ clusters in $H$}\bigr\},
\]
and $\mathcal{\overline{E}}_i$ as the complement of
the event $\mathcal{E}_i$, where $1\leq i\leq 4$.
We show that $\mathbb{P}[\mathcal{E}_i]=1-O\bigl( \frac{r^6m^2}{n^3}\bigr)$
for $1\leq i\leq 4$.

Let
\[
Q_1=\biggl\lceil \log (r^{-2}n)+\frac{3^{4}r^8 m^3}{4n^4}\biggr\rceil
\]
and define $\ell_1=Q_1+1$. We first show that the expected number of
sets of $\ell_1$ vertex-disjoint Type-$1$ clusters in $H$ is $O\bigl( \frac{r^6}{n^3}\bigr)$.
These $\ell_1$ vertex-disjoint Type-$1$ clusters  contain  $3\ell_1$ edges and $(3r-4)\ell_1$
vertices. Note that Lemma~\ref{l2.2} is not appropriate here because $3\ell_1$ is not a constant.

By Lemma~\ref{l2.1}, it follows that the expected number of sets of  $\ell_1$
vertex-disjoint Type-$1$ clusters in $H$ is at most
\begin{align*}
 \binom{n}{(3r-4)\ell_1} &\binom{(3r-4)\ell_1}{3r-4,\ldots,3r-4} \frac{1}{\ell_1!}
\biggl[ \binom{3r-4}{r} \binom{r}{4} \binom{4}{2} \binom{2r-4}{r-2}\biggr]^{\ell_1}\biggl( \frac{m}{N}\biggr)^{3\ell_1}\\
&=O\biggl(\biggl( \frac{[r]_4[r]_2^2em^3}{4\ell_1n^4}\biggr)^{\ell_1}\biggr)
=O\Bigl(\Bigl( \frac{e}{3^4}\Bigr)^{\ell_1}\Bigr)
=O\Bigl( \frac{r^6}{n^3}\Bigr),
\end{align*}
where the first two equalities are true because $\ell_1!\geq \bigl( \frac{\ell_1}{e}\bigr)^{\ell_1}$
and $\ell_1> \frac{3^4r^8 m^3}{4n^4}$, and the
last equality is true because of the assumption $\ell_1>\log (r^{-2}n)$.

Assuming that property $\bf(d)$ holds, any Type-$1$ cluster is
 either vertex-disjoint from all other Type-$1$ clusters of $H$, or
 shares one vertex with precisely one
 other Type-$1$ cluster of~$H$, then it follows that
 $\mathbb{P}[\mathcal{\overline{E}}_1\mid\text{Property {\bf(d)} holds}]
 =O\bigl( \frac{r^6}{n^3}\bigr)$.
By the total probability formula, we have
\[
\mathbb{P}[\mathcal{\overline{E}}_1]
\leq\mathbb{P}[\text{Property {\bf(d)} does not hold}]+
\mathbb{P}[\mathcal{\overline{E}}_1\mid\text{Property {\bf(d)} holds}]
=O\Bigl( \frac{r^6m^2}{n^3}\Bigr)
\]
and $\mathbb{P}\left[\mathcal{E}_1\right]=1-O\bigl( \frac{r^6m^2}{n^3}\bigr)$.
Similarly, we  have $\mathbb{P}\left[\mathcal{E}_2\right]=1-O\bigl( \frac{r^6m^2}{n^3}\bigr)$, $\mathbb{P}\left[\mathcal{E}_3\right]=1-O\bigl( \frac{r^6m^2}{n^3}\bigr)$.

At last, we show that
$\mathbb{P}[\mathcal{E}_4]=1-O\bigl( \frac{r^6m^2}{n^3}\bigr)$ is also true.
Let $\ell_4=M_4+1$. Let $\{x_i,y_i\}\in  \binom{[n]}{2}$ be a set of $\ell_4$ links
with edges $e_i$ and $e_i'$, here $i\in\{1,\ldots,\ell_4\}$.
These $\ell_4$ links  are called \textit{paired-distinct} if these $2\ell_4$ edges
are all distinct. Assuming the property $\bf(b)$ holds, note that the number of
Type-$4$ clusters is no greater than the number of paired-distinct links.
Define a event $\mathcal{E}_4'$ as
\[
\mathcal{E}_4'=\bigl\{\text{There are at most $M_4$ paired-distinct links in $H$}\bigr\},
\]
and $\mathcal{\overline{E}}_4'$ as the complement of the event $\mathcal{E}_4'$.
We firstly show that $\mathbb{P}[\mathcal{\overline{E}}_4']=O\bigl( \frac{r^6}{n^3}\bigr)$ by
\[
\mathbb{P}\bigl[\mathcal{\overline{E}}_4'\bigr]
=O\biggl( \binom{n}{r-2}^{2\ell_4} \binom{ \binom{n}{2}}{\ell_4}\biggl( \frac{m}{N}\biggr)^{2\ell_4}\biggr)
=O\biggl(\Bigl( \frac{r^4e m^2}{2\ell_4n^2}\Bigr)^{\ell_4}\biggr)
=O\Bigl( \frac{r^6}{n^3}\Bigr),
\]
where the last two equalities are true because $\ell_4> \frac{3^4r^4m^2}{2n^2}$
and $\ell_4>\log(r^{-2}n)$.
Then it follows that $\mathbb{P}[\mathcal{\overline{E}}_4\mid\text{Property {\bf(b)}
 holds}]\leq\mathbb{P}[\mathcal{\overline{E}}_4']=O\bigl( \frac{r^6}{n^3}\bigr)$.
By the law of total probability,
\[
\mathbb{P}[\mathcal{\overline{E}}_4]
\leq\mathbb{P}[\text{Property {\bf(b)} does not hold}]+
\mathbb{P}[\mathcal{\overline{E}}_4\mid\text{Property {\bf(b)} holds}]
=O\Bigl( \frac{r^6m^2}{n^3}\Bigr).
\]

$\bf(g^*)$\  Define $d=M_0^*+1$.
The expected number of vertices $v$ such that $\deg (v)\geq d$ is
\[
n \binom{n-1 }{r-1}^d \frac{1}{d!}\Bigl( \frac{m}{N}\Bigr)^{d}
=O\Bigl(n\Bigl( \frac{emr}{dn}\Bigr)^{d}\Bigr)
=O\Bigl(n\Bigl( \frac{e}{3^4r}\Bigr)^{d}\Bigr)
=O\Bigl( \frac{r^6}{n^3}\Bigr),
\]
where the second equality is true because $d!\geq \bigl( \frac{d}{e}\bigr)^{d}$,
and the last equality is true because of the assumption $d>M_0^*$ made in~\eqref{e3.1}.
Thus, there are no  vertices with degree at least $d$
 in $H$ holds with  probability $1-O\bigl( \frac{r^6}{n^3}\bigr)$.

This completes the proof of Theorem~\ref{t3.2}.
\end{proof}

\begin{remark}\label{r3.3}
For nonnegative integers $h_1$,
$h_2$, $h_3$ and $h_4$, define $\mathcal{C}_{h_1,h_2,h_3,h_4}^{+}$
to be the set of $r$-graphs in $H\in \mathcal{H}_r^+(n,m)$ with
exactly $h_i$ clusters of Type~$i$, for $1\leq i\leq 4$.
Define $\mathcal{C}_{h_1,h_2,h_3,h_4}^{++}$ similarly.
By the definitions of $H\in \mathcal{H}_r^+(n,m)$ and
$H\in \mathcal{H}_r^{++}(n,m)$ we have
\begin{align*}
|\mathcal{H}_r^+(n,m)|&=\sum_{h_4=0}^{M_4}\sum_{h_3=0}^{M_3}
\sum_{h_2=0}^{M_2}\sum_{h_1=0}^{M_1}|\mathcal{C}_{h_1,h_2,h_3,h_4}^{+}|,\\
|\mathcal{H}_r^{++}(n,m)|&=\sum_{h_4=0}^{M_4}\sum_{h_3=0}^{M_3}
\sum_{h_2=0}^{M_2}\sum_{h_1=0}^{M_1}|\mathcal{C}_{h_1,h_2,h_3,h_4}^{++}|.
\end{align*}
\end{remark}

We will estimate the relative sizes of these subsets by means of
switching operations.  The following is an essential tool that we
will use repeatedly.

\begin{lemma}\label{l3.5}
Assume that $r^{-2}n\leq m=o(r^{-3}n^{ \frac32})$.
Let $H\in \mathcal{H}_r^{+}(n,m-\xi)$
and $N_t$ be the set of $t$-sets of $[n]$ of which no two vertices belong to
the same edge of $H$, where $r\leq t\leq 3r-4$ and $\xi=O(1)$.
Suppose that $n\to\infty$. Then
\[
|N_t|=\biggl[ \binom{n}{t}- \binom{r}{2}m \binom{n-2}{t-2}\biggr]
  \biggl(1+O\Bigl( \frac{r^6mn\log (r^{-2}n)+r^8m^2}{n^{4}}\Bigr)\biggr).
\]
\end{lemma}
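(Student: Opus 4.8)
The plan is to count the $t$-sets in $N_t$ by inclusion–exclusion on the "bad" pairs, where a bad pair is a $2$-set contained in some edge of $H$. Let $P$ be the set of all pairs $\{x,y\}$ with $x,y$ both in a common edge of $H$; since $H\in\mathcal{H}_r^{+}(n,m-\xi)$ and distinct edges meet in at most two vertices, we have $|P|=\binom r2(m-\xi)-\sum_{\{x,y\}:\,\codeg(x,y)\ge2}\bigl(\codeg(x,y)-1\bigr)$, and the correction term is $O\bigl(\sum_v\binom{\deg v}{2}\bigr)=O\bigl(rm\log(r^{-2}n)+r^3m^2/n\bigr)$ by Remark~\ref{r3.1} (using $\xi=O(1)$). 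The first term of the claimed formula, $\binom nt-\binom r2 m\binom{n-2}{t-2}$, is exactly $\binom nt$ minus (to leading order) the number of $t$-sets that contain at least one bad pair, counted once per bad pair; so the job is to show that the double-counting and higher-order corrections are absorbed in the stated relative error $O\bigl((r^6mn\log(r^{-2}n)+r^8m^2)/n^4\bigr)$.

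First I would write $|N_t| = \sum_{S\subseteq P}(-1)^{|S|}\,T(S)$, where $T(S)$ is the number of $t$-sets containing every pair in $S$; here only collections $S$ whose union has size $\le t$ contribute, and $T(S)=\binom{n-|\bigcup S|}{t-|\bigcup S|}$. The $|S|=0$ term is $\binom nt$ and the $|S|=1$ terms sum to $-|P|\binom{n-2}{t-2}$. Combining these two with the estimate for $|P|$ above gives $\binom nt-\binom r2 m\binom{n-2}{t-2}+O\bigl((rm\log(r^{-2}n)+r^3m^2/n)\binom{n-2}{t-2}\bigr)$; since $\binom{n-2}{t-2}\big/\binom nt=O(t^2/n^2)=O(r^2/n^2)$ and $\binom r2 m\binom{n-2}{t-2}=O(r^4mn^{-2})\binom nt=o\bigl(\binom nt\bigr)$, this error is already of the right relative order. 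It then remains to bound the contribution of all $S$ with $|S|\ge2$.

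For $|S|=2$ the two pairs either share a vertex (union of size $3$) or are disjoint (union of size $4$); there are $O(|P|^2)=O(r^4m^2)$ of the latter, contributing $O\bigl(r^4m^2\binom{n-4}{t-4}\bigr)=O\bigl(r^4m^2\cdot t^4n^{-4}\bigr)\binom nt=O(r^8m^2n^{-4})\binom nt$, which matches the second summand of the error. For two pairs sharing a vertex, the number of such $S$ is $O\bigl(\sum_v\binom{\deg v}{2}\cdot r\bigr)$ after accounting for how a vertex of degree $d$ lies in $O(dr)$ bad pairs, hence $O\bigl(r(rm\log(r^{-2}n)+r^3m^2/n)\bigr)$ of them, each contributing $\binom{n-3}{t-3}=O(t^3n^{-3})\binom nt=O(r^3n^{-3})\binom nt$; this gives $O\bigl((r^6mn\log(r^{-2}n)+r^8m^2)/n^4\bigr)\binom nt$, exactly the first summand of the error. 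The hard part — really the only place one must be careful — is the uniform bound on the tail $\sum_{|S|\ge3}$: here one uses that every pair lies in at most $r$ edges via property $\bf(g)$ is not even needed crudely, since a union of $j$ pairs spans at least $\lceil(1+\sqrt{1+8j})/2\rceil$ vertices, so $\sum_{|S|=j}T(S)\le |P|^j\binom{n}{t}\cdot O\bigl((t/n)^{\Omega(\sqrt j)}\bigr)$, and with $|P|=O(r^2m)$, $t=O(r)$, and $r^2m/n=o(n^{1/2}r/n)=o(1)$ the series is dominated by its $j=2$ term; summing the geometric tail shows it is $O\bigl(r^8m^2/n^4\bigr)\binom nt$ as well. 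Collecting the $|S|=0,1$ main terms and the $|S|\ge2$ error completes the proof.
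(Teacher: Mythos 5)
Your main-term computation and your treatment of the $|S|\le 2$ terms track the paper's argument closely (the paper indexes the bad events by pairs $(e,\{i,j\})$ with $\{i,j\}\subset e$ rather than by distinct bad $2$-sets, so the codegree correction that you apply to $|P|$ up front appears there as the case $e\neq e'$, $\{i,j\}=\{i',j'\}$ of the second-order sum, bounded via the cluster counts $M_s$; and your count of two bad pairs sharing a vertex is short a factor of $r$ relative to the paper's $(r-1)^2\sum_v\binom{\deg v}{2}$, though the corrected count still fits the stated error). The essential divergence is structural: the paper never expands the full inclusion--exclusion series. It uses the two-sided Bonferroni truncation
\[
\textstyle\binom{n}{t}-\sum|A_{e(i,j)}|\;\le\;|N_t|\;\le\;\binom{n}{t}-\sum|A_{e(i,j)}|+\sum|A_{e(i,j)}\cap A_{e'(i',j')}|,
\]
so only single events and pairs of events ever need to be estimated, and no tail has to be controlled.

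Because you commit to the exact identity $|N_t|=\sum_{S\subseteq P}(-1)^{|S|}T(S)$, you must bound $\sum_{|S|\ge 3}T(S)$, and the bound you offer does not work. First, the assertion $r^2m/n=o(1)$ is false in the regime of the lemma: the hypothesis $m\ge r^{-2}n$ forces $r^2m/n\ge 1$, and $r^2m/n$ can be as large as any $o(r^{-1}n^{1/2})$, which tends to infinity. Second, and more fundamentally, the estimate $\sum_{|S|=j}T(S)\le\binom{|P|}{j}\max_S T(S)$ pairs the total number of $j$-subsets of $P$ (which is dominated by configurations whose union has size close to $2j$) with the largest possible value of $T(S)$ (attained only by configurations whose union has the minimal size $\approx\sqrt{2j}$). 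The resulting bound $\frac{|P|^j}{j!}(t/n)^{O(\sqrt j)}\binom nt$ is not summable: taking $j\approx|P|=\Theta(r^2m)$ gives roughly $e^{|P|-c\sqrt{|P|}\log(n/t)}\to\infty$ since $|P|\gg\log^2 n$. So the series you are summing term-by-term in absolute value diverges, and the claim that it is ``dominated by its $j=2$ term'' fails. To repair this you would need to stratify $\sum_{|S|=j}T(S)$ by the size of $\bigcup S$ and count, for each $u$, the number of $j$-subsets of $P$ with union of size exactly $u$ --- which requires structural information about $P$ of exactly the kind the properties $\bf(a)$--$\bf(g)$ encode --- or, far more simply, abandon the full expansion and use the Bonferroni sandwich as the paper does.
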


\begin{proof} We will use inclusion-exclusion.  Let $A_{e(i,j)}$ be the
event that a $t$-set of $[n]$ contains two vertices $i$ and $j$ of the edge $e$.
Thus, we have
\[
 \binom{n}{t}-\sum_{\{e,\{i,j\}\}}|A_{e(i,j)}|\leq|N_t|\leq \binom{n}{t}-\sum_{\{e,\{i,j\}\}}|A_{e(i,j)}|+\sum_{\{e,\{i,j\}\}\neq \{e',\{i',j'\}\}}|A_{e(i,j)}\cap A_{e'(i',j')}|.
\]
Clearly, $|A_{e(i,j)}|= \binom{n-2}{t-2}$ for each edge $e$
and $\{i,j\}\subset e$.   We have $|N_t|\geq \binom{n}{t}- \binom{r}{2}(m-\xi) \binom{n-2}{t-2}$.

Now we consider the upper bound. For the case $e=e'$, we have
\begin{align}\label{e3.2}
\sum_{\{e,\{i,j\}\}\neq \{e',\{i',j'\}\}} & |A_{e(i,j)}\cap A_{e'(i',j')}| \notag \\
&=(m-\xi) \binom{r}{3} \binom{3}{2} \binom{n-3}{t-3}+ \frac12(m-\xi) \binom{r}{4} \binom{4}{2} \binom{n-4}{t-4} \notag \\
&=O\biggl((m-\xi)r^3 \binom{n-3}{t-3}\biggr).
\end{align}
For the case $e\neq e'$ and $\{i,j\}\cap \{i',j'\}=\emptyset$, we have
\begin{align}\label{e3.3}
\sum_{\{e,\{i,j\}\}\neq \{e',\{i',j'\}\}} & |A_{e(i,j)}\cap A_{e'(i',j')}| \notag\\
&=O\biggl( \binom{m-\xi}{2} \binom{r}{2}^2 \binom{n-4}{t-4}\biggr) \notag\\
&=O\biggl((m-\xi)^2r^4 \binom{n-4}{t-4}\biggr).
\end{align}
For the case $e\neq e'$ and $|\{i,j\}\cap \{i',j'\}|=1$,  by Remark~\ref{r3.1}, we have
\begin{align}\label{e3.4}
\sum_{\{e,\{i,j\}\}\neq \{e',\{i',j'\}\}} & |A_{e(i,j)}\cap A_{e'(i',j')}| \notag\\
&=\sum_{v\in [n]} \binom{\deg(v)}{2}(r-1)^2 \binom{n-3}{t-3} \notag\\
&=O\biggl(\Bigl( \frac{r^3(m-\xi)n\log (r^{-2}n) +r^5(m-\xi)^2}{n}\Bigr) \binom{n-3}{t-3}\biggr).
\end{align}
For the case $e\neq e'$ and $\{i,j\}=\{i',j'\}$, since there are at most $M_s$ Type-$s$
clusters with $1\leq s\leq 4$ in $H$, as the equation shown in~\eqref{e3.1}, we have
\begin{align}\label{e3.5}
\sum_{\{e,\{i,j\}\}\neq \{e',\{i',j'\}\}} & |A_{e(i,j)}\cap A_{e'(i',j')}| \notag\\
&=O\biggl(\sum_{s=1}^4M_s \binom{n-2}{t-2}\biggr) \notag\\
&=O\biggl(\Bigl(\log (r^{-2}n)+ \frac{r^4(m-\xi)^2}{n^2}\Bigr) \binom{n-2}{t-2}\biggr).
\end{align}

By equations~\eqref{e3.2}--\eqref{e3.5} and the assumption $m\geq r^{-2}n$, we have
\[
\sum_{\{e,\{i,j\}\}\neq \{e',\{i',j'\}\}}|A_{e(i,j)}\cap A_{e'(i',j')}|
=O\biggl(\Bigl( \frac{r^3mn\log (r^{-2}n) +r^5m^2}{n}\Bigr) \binom{n-3}{t-3}\biggr).
\]
Thus, we complete the proof of Lemma~\ref{l3.5} by
\[
 \frac{ \binom{n-3}{t-3}}{ \binom{n}{t}}= \frac{[t]_3}{[n]_3}=O\Bigl( \frac{r^3}{n^3}\Bigr),
 \frac{ \binom{r}{2}\xi \binom{n-2}{t-2}}{ \binom{n}{t}}=O\Bigl( \frac{r^4}{n^2}\Bigr)
=O\Bigl( \frac{r^8m^2}{n^4}\Bigr),
\]
because $r\leq t\leq 3r-4$, $m\geq r^{-2}n$ and $n\to\infty$.
\end{proof}

\begin{lemma}\label{l3.6}
Assume $r^{-2}n\leq m=o(r^{-3}n^{ \frac32})$.
Let $H\in\mathcal{H}_r^{+}(n,m-\xi)$ and $N_{2r-2}'$ be  the set of $(2r-2)$-sets of $[n]$
of which exactly two vertices belong to the same edge of $H$, where $\xi=O(1)$.
Suppose that $n\to\infty$. Then
\[
|N_{2r-2}'|= \binom{r}{2}m \binom{n-2}{2r-4}
\biggl(1+O\Bigl( \frac{r^2n\log(r^{-2}n)+r^4m}{n^2}\Bigr)\biggr).
\]
\end{lemma}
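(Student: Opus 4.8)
The plan is to mimic the inclusion–exclusion argument of Lemma~\ref{l3.5}, but now counting $(2r-2)$-sets whose restriction to the edges of $H$ has exactly one block of size $2$ (and all other vertices outside the union of the edges, or spread so that no second pair occurs). First I would fix an edge $e$ and a pair $\{i,j\}\subseteq e$ and count the $(2r-2)$-sets $S$ with $\{i,j\}\subseteq S$; there are $\binom{n-2}{2r-4}$ of these, giving the main term $\binom r2 m\binom{n-2}{2r-4}$ after summing over the $\binom r2 (m-\xi)$ choices of $(e,\{i,j\})$. The point is that this main term over-counts: an $S$ chosen this way might contain a \emph{second} pair inside some edge (possibly the same edge $e$, possibly a different one), and such $S$ must be subtracted. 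So the lower bound on $|N_{2r-2}'|$ is $\binom r2 m\binom{n-2}{2r-4}$ minus the count of $(2r-2)$-sets carrying two or more pairs, and the upper bound is the same main term (no subtraction needed for the upper bound since every $S$ counted in $N_{2r-2}'$ arises at least once). Thus I only need to show the ``two-or-more-pair'' correction is of relative order $O\bigl((r^2 n\log(r^{-2}n)+r^4 m)/n^2\bigr)$, plus absorb the $\xi$-discrepancy $\binom r2\xi\binom{n-2}{2r-4}$, which is $O(r^2 n^{-2})\cdot$(main term) since $\xi=O(1)$ and $m\geq r^{-2}n$, hence dominated.

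The correction term splits into exactly the same cases as in Lemma~\ref{l3.5}, but evaluated at $t=2r-2$: (i) both pairs in the same edge $e$, contributing $O\bigl(m r^3\binom{n-3}{2r-5}\bigr)$ (third and fourth vertices within one $r$-set); (ii) two pairs in distinct edges $e\neq e'$ sharing a common vertex, controlled via $\sum_v\binom{\deg v}{2}$ as in \eqref{e3.4}, contributing $O\bigl((r^3 m n\log(r^{-2}n)+r^5 m^2)n^{-1}\binom{n-3}{2r-5}\bigr)$; (iii) two pairs in distinct edges with $\{i,j\}=\{i',j'\}$, i.e.\ a link shared by two edges, bounded by $\sum_s M_s\binom{n-2}{2r-4}$ as in \eqref{e3.5}, contributing $O\bigl((\log(r^{-2}n)+r^4 m^2 n^{-2})\binom{n-2}{2r-4}\bigr)$; and (iv) two vertex-disjoint pairs in two distinct edges, $O\bigl(m^2 r^4\binom{n-4}{2r-6}\bigr)$. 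Here I must be slightly careful that case (iv) with two \emph{distinct} edges each contributing one pair is the generic over-count, whereas in the definition of $N'_{2r-2}$ such an $S$ genuinely has \emph{two} pairs and is correctly excluded — so this really is a subtraction, not a double-count artifact.

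Dividing through by the main term $\binom r2 m\binom{n-2}{2r-4}$ and using $\binom{n-3}{2r-5}/\binom{n-2}{2r-4}=O(r/n)$, $\binom{n-4}{2r-6}/\binom{n-2}{2r-4}=O(r^2/n^2)$, together with $m\leq r^{-3}n^{3/2}$ and $m\geq r^{-2}n$, each case reduces to a term of size $O\bigl((r^2 n\log(r^{-2}n)+r^4 m)/n^2\bigr)$ or smaller; in particular case~(iii) gives $O\bigl(\log(r^{-2}n)/m + r^4 m/n^2\bigr)=O\bigl(r^2\log(r^{-2}n)/n + r^4 m/n^2\bigr)$ after using $m\ge r^{-2}n$, matching the claimed error. \textbf{The main obstacle} I anticipate is the bookkeeping in case~(ii): one has to invoke Remark~\ref{r3.1} to bound $\sum_v\binom{\deg v}{2}=O\bigl(rm\log(r^{-2}n)+r^3 m^2/n\bigr)$ and then check that this, after the $\binom{n-3}{2r-5}/\binom{n-2}{2r-4}=O(r/n)$ reduction and division by $m$, lands inside the advertised error bound — it does, but only because of the standing hypothesis $m\geq r^{-2}n$, so that the $\log(r^{-2}n)$ contribution is genuinely $O\bigl(r^2 n\log(r^{-2}n)/n^2\bigr)$ rather than something larger. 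Everything else is routine binomial-ratio estimation parallel to Lemma~\ref{l3.5}.
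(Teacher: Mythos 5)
Your proposal is correct and follows essentially the same route as the paper: a Bonferroni/inclusion--exclusion bound with main term $\binom r2(m-\xi)\binom{n-2}{2r-4}$ and a correction controlled by the same four intersection cases already computed in the proof of Lemma~\ref{l3.5}, evaluated at $t=2r-2$; the paper simply quotes the combined bound $O\bigl((r^3mn\log(r^{-2}n)+r^5m^2)n^{-1}\binom{n-3}{2r-5}\bigr)$ from that proof instead of re-deriving the cases. One cosmetic point: the valid lower bound subtracts (a constant multiple of) the pair sum $\sum|A_{e(i,j)}\cap A_{e'(i',j')}|$ rather than literally the number of $(2r-2)$-sets carrying two or more pairs, but that pair sum is exactly what your cases (i)--(iv) estimate, so the argument goes through unchanged.
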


\begin{proof} It is clear that
\[
\sum_{\{e,\{i,j\}\}}|A_{e(i,j)}|-\sum_{\{e,\{i,j\}\}
\neq \{e',\{i',j'\}\}}|A_{e(i,j)}\cap A_{e'(i',j')}|\leq |N_{2r-2}'|\leq \sum_{\{e,\{i,j\}\}}|A_{e(i,j)}|.
\]
Therefore, as shown in the proof of Lemma~\ref{l3.5}, we have
\begin{align*}
|N_{2r-2}'|&\leq \binom{r}{2}(m-\xi) \binom{n-2}{2r-4},\\
|N_{2r-2}'|&\geq \binom{r}{2}(m-\xi) \binom{n-2}{2r-4}-
O\biggl(\Bigl( \frac{r^3mn\log (r^{-2}n) +r^5m^2}{n}\Bigr) \binom{n-3}{2r-5}\biggr).
\end{align*}
We complete the proof by noting that
$\frac{ \binom{n-3}{2r-5}}{ \binom{r}{2}m \binom{n-2}{2r-4}}
=O\bigl( \frac{1}{rmn}\bigr)$ and $ \frac{\xi}{m}=O\bigl( \frac{r^4m}{n^2}\bigr)$.
\end{proof}

The switching method relies on the fact that the ratio of the sizes of the two
parts of a bipartite graph is reciprocal to the ratio of their average degrees.
For our purposes we need a generalization as given in the following lemma.

\begin{lemma}\label{l3.7}
Let $G$ be a bipartite graph with vertex sets $A$ and $B$, where $A=A_1\cup A_2$ with $A_1\cap A_2=\emptyset$ and
 $B=B_1\cup B_2$ with $B_1\cap B_2=\emptyset$. Let
$d_{min}^{A_1}$ and $d_{min}^{B_1}$ be the minimum degrees of vertices in $A_1$ and $B_1$, respectively.
Let $d_{max}^A$ and $d_{max}^B$ be the maximum degrees of vertices in $A$ and $B$, respectively. Then
\[
 \frac{d_{min}^{B_1}}{d_{max}^A}\biggl(1+ \frac{|A_2|}{|A_1|}\biggr)^{\!\!-1}
 \leq  \frac{|A_1|}{|B_1|}
 \leq  \frac{d_{max}^B}{d_{min}^{A_1}}\biggl(1+ \frac{|B_2|}{|B_1|}\biggr).
\]
\end{lemma}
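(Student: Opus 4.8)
\textbf{Proof proposal for Lemma~\ref{l3.7}.}

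The plan is to estimate the number $e(G)$ of edges of $G$ from two sides by counting over $A$-vertices and over $B$-vertices, but with a twist: we only keep the ``good'' halves $A_1$ and $B_1$ when we want a lower bound on an edge count, and we use all of $A$ or all of $B$ when we want an upper bound. First I would observe the trivial but crucial two-sided bounds
\[
d_{min}^{A_1}\,|A_1| \;\le\; e(G) \;\le\; d_{max}^B\,|B|,
\qquad
d_{min}^{B_1}\,|B_1| \;\le\; e(G) \;\le\; d_{max}^A\,|A|.
\]
Each of these is immediate: summing degrees over $A_1$ (resp.\ $B_1$) counts a subset of the edges, so it is at most $e(G)$; summing over all of $B$ (resp.\ all of $A$) counts every edge, so it is at least $e(G)$.

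Next I would simply chain the inequalities. From $d_{min}^{B_1}|B_1|\le e(G)\le d_{max}^A|A|$ and $|A|=|A_1|+|A_2|=|A_1|\bigl(1+|A_2|/|A_1|\bigr)$ we get
\[
d_{min}^{B_1}\,|B_1| \;\le\; d_{max}^A\,|A_1|\Bigl(1+\tfrac{|A_2|}{|A_1|}\Bigr),
\]
which rearranges to the claimed lower bound on $|A_1|/|B_1|$. Symmetrically, from $d_{min}^{A_1}|A_1|\le e(G)\le d_{max}^B|B|$ and $|B|=|B_1|\bigl(1+|B_2|/|B_1|\bigr)$ we get
\[
d_{min}^{A_1}\,|A_1| \;\le\; d_{max}^B\,|B_1|\Bigl(1+\tfrac{|B_2|}{|B_1|}\Bigr),
\]
which rearranges to the claimed upper bound on $|A_1|/|B_1|$. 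Dividing through requires $d_{max}^A>0$ and $|B_1|,d_{min}^{A_1}>0$; in the application these denominators are positive (and if any of $A_1$, $B_1$ is empty the relevant inequality is vacuous or read as $0\le\cdots$), so no separate degenerate case needs attention.

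There is essentially no obstacle here: the lemma is a packaging of double counting, and the only ``content'' is the asymmetric choice of which vertex class to restrict on each side. The one thing I would be careful to state is the convention for empty pieces or zero minimum degrees so that the displayed fractions make sense, but that is bookkeeping rather than a genuine difficulty. The real work, of course, lies in the later sections where this lemma is fed edge-counts coming from the switching operations together with the estimates of Lemmas~\ref{l3.5} and~\ref{l3.6}; the lemma itself is just the lever.
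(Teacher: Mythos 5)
Your proof is correct and is essentially the paper's argument: both are a double count with the asymmetric restriction to $A_1$ (resp.\ $B_1$) on the lower side and all of $B$ (resp.\ $A$) on the upper side, yielding the identical inequality $d_{min}^{A_1}|A_1|\le d_{max}^B(|B_1|+|B_2|)$ and its mirror. The only cosmetic difference is that the paper counts the edges between $A_1$ and $B_1$ and subtracts the at most $|B_2|d_{max}^B$ edges leaving $A_1$ for $B_2$, whereas you count all of $e(G)$; the resulting bounds coincide.
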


\begin{proof} Let $E$ be the set of  edges between $A_1$ and $B_1$ in $G$. We have
\begin{align*}
 |A_1|d_{min}^{A_1}-|B_2|d_{max}^B \leq |E| &\leq |A_1|d_{max}^A,~~\text{and}\\
|B_1|d_{min}^{B_1}-|A_2|d_{max}^A \leq |E| &\leq |B_1|d_{max}^B.
\end{align*}
Combining these inequalities, we have
\[
 \frac{|A_1|d_{min}^{A_1}-|B_2|d_{max}^B}{|B_1|d_{max}^B}\leq1
\]
which gives the upper bound on $ \frac{|A_1|}{|B_1|}$, and
\[
 \frac{|B_1|d_{min}^{B_1}-|A_2|d_{max}^A}{|A_1|d_{max}^A}\leq1
\]
which gives the lower bound.
\end{proof}

\section{Partitions in $\mathcal{H}_r^{+}(n,m)$ and $\mathcal{H}_r^{++}(n,m)$}\label{s:4}

We firstly show that $|\mathcal{C}_{0,0,0,0}^{++}|$ and
 $|\mathcal{L}_r(n,m)|$ are almost equal.

\begin{theorem}\label{t4.1}
Assume that $r^{-2}n\leq m=o(r^{-3}n^{ \frac32})$
and $n\to\infty$. Then
\[
|\mathcal{C}_{0,0,0,0}^{++}|=\Bigl(1-O\Bigl( \frac{r^6}{n^3}\Bigr)\Bigr)|\mathcal{L}_r(n,m)|.
\]
\end{theorem}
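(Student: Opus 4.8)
The plan is to compare $\mathcal{C}_{0,0,0,0}^{++}$ with $\mathcal{L}_r(n,m)$ directly, observing that a linear hypergraph is precisely an $r$-graph with no cluster at all, i.e.\ no two edges sharing two or more vertices. So $\mathcal{L}_r(n,m)\supseteq\mathcal{C}_{0,0,0,0}^{++}$ automatically once we know $\mathcal{C}_{0,0,0,0}^{++}\subseteq\mathcal{H}_r^{++}(n,m)$: every linear $H$ vacuously satisfies properties $\bf(a)$--$\bf(f)$ (it has no intersecting pair of edges in two vertices, hence no cluster, hence $h_1=h_2=h_3=h_4=0$), and the only genuine constraint is the degree bound $\bf(g^*)$, namely $\deg(v)\le M_0^*$ for all $v$. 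Thus $\mathcal{C}_{0,0,0,0}^{++}$ is exactly the set of linear $H\in\mathcal{H}_r(n,m)$ additionally satisfying $\bf(g^*)$, and the task reduces to showing that almost every linear hypergraph with $m$ edges has maximum degree at most $M_0^*$.

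\medskip
The second step is the probability estimate: I would show that a uniformly random $H\in\mathcal{L}_r(n,m)$ violates $\bf(g^*)$ with probability $O(r^6/n^3)$. For this, first note $|\mathcal{L}_r(n,m)|\ge\tfrac12\binom{N}{m}$ for $n$ large, since by Lemma~\ref{l2.2} (with $t=2$, $\alpha=1$) the expected number of pairs of edges meeting in two or more vertices in a random $H\in\mathcal{H}_r(n,m)$ is $O(r^2m^2/n)=o(1)$ under $m=o(r^{-3}n^{3/2})$. Hence for any event $\mathcal{B}$,
\[
\mathbb{P}[H\in\mathcal{B}\mid H\text{ linear}]\le\frac{\mathbb{P}[H\in\mathcal{B}]}{\mathbb{P}[H\text{ linear}]}\le 2\,\mathbb{P}[H\in\mathcal{B}],
\]
where the probabilities on the right are in the uniform space $\mathcal{H}_r(n,m)$. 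Now take $\mathcal{B}=\{\exists v:\deg(v)>M_0^*\}$. Exactly as in the $\bf(g^*)$ paragraph of the proof of Theorem~\ref{t3.2}, with $d=M_0^*+1$, the expected number of vertices of degree at least $d$ in a random $H\in\mathcal{H}_r(n,m)$ is at most
\[
n\binom{n-1}{r-1}^{d}\frac{1}{d!}\Bigl(\frac{m}{N}\Bigr)^{d}
=O\Bigl(n\Bigl(\frac{emr}{dn}\Bigr)^{d}\Bigr)
=O\Bigl(n\Bigl(\frac{e}{3^4r}\Bigr)^{d}\Bigr)
=O\Bigl(\frac{r^6}{n^3}\Bigr),
\]
using $d>M_0^*\ge\log(r^{-2}n)+3^4r^2m/n$ from~\eqref{e3.1}. (Note this step only needs the uniform model, not linearity, so it is unaffected by the linearity conditioning except through the factor $2$ above.) Combining, $\mathbb{P}[\mathcal{B}\mid H\text{ linear}]=O(r^6/n^3)$, i.e.\ $|\mathcal{L}_r(n,m)\setminus\mathcal{C}_{0,0,0,0}^{++}|=O(r^6/n^3)\,|\mathcal{L}_r(n,m)|$, which is the claim.

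\medskip
The only subtlety worth flagging — and the place where I would be most careful — is the lower bound $|\mathcal{L}_r(n,m)|=\Omega\bigl(\binom{N}{m}\bigr)$, since everything hinges on being allowed to transfer an expectation bound from the uniform model to the linearity-conditioned model. That bound itself follows cleanly from Lemma~\ref{l2.2} once one checks that $m=o(r^{-3}n^{3/2})$ forces $r^2m^2/n=o(n^{2}\cdot r^{-4}\cdot r^{2}/n)=o(r^{-2}n)$... more simply, $r^2m^2/n = o(r^{2}\cdot r^{-6}n^{3}/n)=o(r^{-4}n^{2})$, which tends to $\infty$ — so one must instead argue directly: $r^2m^2/n\to 0$ iff $m=o(r^{-1}n^{1/2})$, and indeed $m=o(r^{-3}n^{3/2})$ with $r=o(n^{1/2})$ gives $m=o(r^{-3}n^{3/2})=o(r^{-1}n^{1/2}\cdot r^{-2}n)$; since $r^{-2}n\ge 1$ this is not immediately $o(r^{-1}n^{1/2})$. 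The honest route is: by Markov, the number of $H\in\mathcal{H}_r(n,m)$ with a repeated-pair is at most $O(r^2m^2/n)\binom{N}{m}$, and when $m\le r^{-2}n^{}$... In fact the cleanest fix is to invoke Theorem~\ref{t3.2}, which already gives $|\mathcal{H}_r^{++}(n,m)|=(1-o(1))|\mathcal{H}_r(n,m)|$, and to note $\mathcal{C}_{0,0,0,0}^{++}\subseteq\mathcal{L}_r(n,m)\subseteq\mathcal{H}_r(n,m)$ together with $|\mathcal{H}_r^{++}(n,m)\setminus\mathcal{C}_{0,0,0,0}^{++}|$ being controlled by the cluster-count and degree conditions; comparing $\mathcal{C}_{0,0,0,0}^{++}$ to $\mathcal{H}_r^{++}(n,m)$ and using that the non-linear part of $\mathcal{H}_r^{++}$ is $O(r^6m^2/n^3)\binom{N}{m}$ pins down $|\mathcal{L}_r(n,m)|$ between $\binom{N}{m}(1-o(1))$ and $\binom{N}{m}$, after which the degree-violation estimate above closes the argument. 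I would structure the final writeup around this last route to avoid the delicate size estimates.
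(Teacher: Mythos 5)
Your reduction of the theorem to the statement ``almost every $H\in\mathcal{L}_r(n,m)$ has maximum degree at most $M_0^*$'' is correct: properties $\bf(a)$--$\bf(f)$ are vacuous for linear hypergraphs, so $\mathcal{C}_{0,0,0,0}^{++}$ is exactly the set of linear $H$ satisfying $\bf(g^*)$. But the way you then estimate the degree-violation probability has a fatal gap. Your plan is to transfer a first-moment bound from the uniform space $\mathcal{H}_r(n,m)$ to the conditioned space $\mathcal{L}_r(n,m)$ via $\mathbb{P}[\mathcal{B}\mid\text{linear}]\le\mathbb{P}[\mathcal{B}]/\mathbb{P}[\text{linear}]$, which requires $\mathbb{P}[\text{linear}]=\Omega(1)$, i.e.\ $|\mathcal{L}_r(n,m)|=\Omega\bigl(\binom{N}{m}\bigr)$. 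That is false throughout the regime $r^{-2}n\le m=o(r^{-3}n^{\frac32})$ in which Theorem~\ref{t4.1} is used: the expected number of linked pairs is $\Theta\bigl(\frac{r^4m^2}{n^2}\bigr)$, which is at least of constant order when $m\ge r^{-2}n$ and tends to infinity for larger $m$, and indeed Theorem~\ref{t1.1} itself gives $\mathbb{P}_r(n,m)=\exp\bigl[-\Theta\bigl(\frac{r^4m^2}{n^2}\bigr)\bigr]$, which can be exponentially small (e.g.\ $\exp[-r^{-2}n]$ near the upper end of the range). The correction factor $1/\mathbb{P}[\text{linear}]=\exp\bigl[\Theta\bigl(\frac{r^4m^2}{n^2}\bigr)\bigr]$ then swamps the $O(r^6/n^3)$ you get from the unconditional expectation: with $d>M_0^*$ the unconditional bound is roughly $n(e/(3^4r))^d\le n\exp[-3^4(r^2m/n)\log(3^4r/e)]$, and $\frac{r^4m^2}{n^2}\gg \frac{r^2m}{n}\log r$ when $m$ is near $r^{-3}n^{3/2}$, so the transferred bound is useless. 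Your proposed fallback in the last paragraph rests on the same false premise, namely that the non-linear part of $\mathcal{H}_r^{++}(n,m)$ has measure $O(r^6m^2/n^3)$; in fact the non-linear part (graphs with at least one cluster) constitutes almost all of $\mathcal{H}_r^{++}(n,m)$ once $r^4m^2/n^2\to\infty$, which is precisely why the paper needs Sections~\ref{s:5}--\ref{s:6} to evaluate $\sum_h|\mathcal{C}^{++}_{h_1,h_2,h_3,h_4}|/|\mathcal{C}^{++}_{0,0,0,0}|\approx\exp[r^4m^2/(4n^2)]$.

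The paper avoids this trap by working entirely inside $\mathcal{L}_r(n,m)$ with a switching argument: for a fixed vertex $v$, the switching $\mathcal{S}_v(d\to d-1)$ removes an edge through $v$ and reinserts it elsewhere so that linearity is preserved, and counting these switchings (via Lemma~\ref{l3.5} applied to $H-\{v\}$) yields $\mathbb{P}[\deg(v)=d]/\mathbb{P}[\deg(v)=d-1]<2rm/(dn)$ directly in the conditional space. Iterating this ratio down from $d>M_0^*$ gives $\mathbb{P}[\deg(v)>M_0^*]=O(r^6/n^4)$ without ever comparing $|\mathcal{L}_r(n,m)|$ to $\binom{N}{m}$. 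To repair your argument you would need to replace the transfer step with such an internal switching (or some other conditional estimate); as written, the proof does not establish the theorem in the stated range of $m$.
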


\begin{proof}
Consider $H\in \mathcal{L}_r(n,m)$ chosen uniformly at random.
We show that  there are no vertices with degree greater
than $M_0^*$ with probability $1-O\bigl( \frac{r^6}{n^3}\bigr)$.
Fix $v\in[n]$.

Assume $\deg (v)=d$ for some integer $d\geq 1$. Define
the set $\mathcal{S}_v(d\to d-1)$ to be the set of switching operations
that consist of removing one edge containing $v$ and placing it somewhere
else such that it doesn't contain $v$ but the linearity property is preserved.

Applying Lemma~\ref{l3.5} to $H-\{v\}$, we have
$|\mathcal{S}_v(d\to d-1)|\geq d\bigl[ \binom{n-1}{r}- \binom{r}{2}m \binom{n-3}{r-2}\bigr]$.
In the other direction, assume $\deg (v)=d-1$  for some integer
$d\geq 1$. Define the set $\mathcal{S}_v(d-1\to d)$ as the set of
operations inverse to $\mathcal{S}_v(d\to d-1)$.
Clearly, $|\mathcal{S}_v(d-1\to d)|\leq m \binom{n-1}{r-1}$.

Thus, we have
\begin{align*}
 \frac{\mathbb{P}[\deg (v)=d]}{\mathbb{P}[\deg (v)=d-1]}
&= \frac{|\mathcal{S}_v(d-1\to d)|}{|\mathcal{S}_v(d\to d-1)|}\\
&\leq  \frac{m \binom{n-1}{r-1}}{d\bigl[ \binom{n-1}{r}- \binom{r}{2}m \binom{n-3}{r-2}\bigr]}\\
&= \frac{rm}{dn}\Bigl(1+O\Bigl( \frac{r^4m}{n^2}\Bigr)\Bigr)
< \frac{2rm}{dn},
\end{align*}
where the last equality is true because $r^{-2}n\leq m=o(r^{-3}n^{ \frac32})$.
If $d>M_0^*$, following the recursive relation as above, then we have
\[
\mathbb{P}[\deg (v)=d]\leq \prod_{i=4\lceil \frac{r^2m}{n}\rceil+1}^d\Bigl( \frac{2rm}{in}\Bigr)
\mathbb{P}\Bigl[\deg (v)=4\Bigl\lceil \frac{r^2m}{n}\Bigr\rceil\Bigr]
\leq\Bigl( \frac{1}{2r}\Bigr)^{d-4\bigl\lceil \frac{r^2m}{n}\bigr\rceil}.
\]
The probability that $\deg (v)>M_0^*$ is
\[
\mathbb{P}[\deg (v)>M_0^*]\leq \sum_{d>M_0^*}\Bigl( \frac{1}{2r}\Bigr)^{d-4
\bigl\lceil \frac{r^2m}{n}\bigr\rceil}
<2\Bigl( \frac{1}{2r}\Bigr)^{M_0^*-4\bigl\lceil \frac{r^2m}{n}\bigr\rceil}
=O\Bigl( \frac{r^6}{n^4}\Bigr).
\]
The probability that  there is  a vertex with degree greater than $M_0^*$ is
\[
\mathbb{P}\bigl[\exists_{v\in[n]}:\deg (v)>M_0^*\bigr]
 \leq n\mathbb{P}\bigl[\deg (v)>M_0^*\bigr]
=O\Bigl(\frac{r^6}{n^3}\Bigr),
\]
to complete the proof.
\end{proof}

Next we show that
$|\mathcal{C}_{h_1,h_2,h_3,h_4}^{+}|$
 and $|\mathcal{C}_{h_1,h_2,h_3,h_4}^{++}|$
are almost equal if $h_i\leq M_i$ for $1\leq i\leq 4$.
Though the proof  is similar to that of Theorem~\ref{t4.1}, its switching
operations are different in order to keep the number of Type-$i$ clusters
intact for $1\leq i\leq 4$.

\begin{theorem}\label{t4.2}
Assume $r^{-2}n\leq m=o(r^{-3}n^{ \frac32})$ and that
$0\leq h_i\leq M_i$ for $1\leq i\leq 4$. Suppose that $n\to\infty$. Then
\[
|\mathcal{C}_{h_1,h_2,h_3,h_4}^{++}|=
\Bigl(1-O\Bigl( \frac{r^6}{n^3}\Bigr)\Bigr)|\mathcal{C}_{h_1,h_2,h_3,h_4}^{+}|.
\]
\end{theorem}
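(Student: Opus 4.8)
The plan is to mimic the proof of Theorem~\ref{t4.1} but with a switching operation that relocates an edge incident to a high-degree vertex \emph{without disturbing the cluster structure}, so that the number of Type-$i$ clusters is preserved for all $1\le i\le 4$. Concretely, suppose $H\in\mathcal{C}_{h_1,h_2,h_3,h_4}^{+}$ has a vertex $v$ with $\deg(v)=d$. Among the (at most four) clusters, at most one can contain $v$ in more than one edge, so at least $d-O(1)$ of the edges through $v$ are ``free'' in the sense of not lying in any cluster (equivalently, they meet every other edge of $H$ in at most one vertex). The switching $\mathcal{S}_v(d\to d-1)$ will take one such free edge $e\ni v$ and replace it by an $r$-set $e'$ with $v\notin e'$, chosen so that (i) linearity-type overlaps are respected, i.e.\ no two vertices of $e'$ lie in a common existing edge, and (ii) $e'$ is vertex-disjoint from all existing clusters, so that no new cluster is created and none is destroyed. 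Requirement (ii) costs only $O(rmM)$-type corrections, since the total number of cluster vertices is $O(r\sum_i M_i)$, which is negligible against $\binom{n-1}{r}$.

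The key steps, in order, are: first, fix $v\in[n]$ and assume $\deg(v)=d$; count $|\mathcal{S}_v(d\to d-1)|$ from below. The number of free edges through $v$ is $\ge d-O(1)$, and for each, the number of valid targets $e'$ is, by Lemma~\ref{l3.5} applied to an appropriate subhypergraph (with $t=r$ and $\xi=O(1)$) and after subtracting the $O(rmM_0)=O\bigl(rm\log(r^{-2}n)+r^3m^2/n\bigr)$ choices that would meet a cluster, at least $\bigl[\binom{n-1}{r}-\binom{r}{2}m\binom{n-3}{r-2}\bigr]\bigl(1-O(r^6/n^3)\bigr)$; hence $|\mathcal{S}_v(d\to d-1)|\ge (d-O(1))\bigl[\binom{n-1}{r}-\binom{r}{2}m\binom{n-3}{r-2}\bigr]\bigl(1-o(1)\bigr)$. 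Second, bound $|\mathcal{S}_v(d-1\to d)|\le m\binom{n-1}{r-1}$ trivially (pick the edge to move and a position for $v$). Third, form the ratio to get, exactly as in Theorem~\ref{t4.1},
\[
\frac{\mathbb{P}[\deg(v)=d]}{\mathbb{P}[\deg(v)=d-1]}=\frac{|\mathcal{S}_v(d-1\to d)|}{|\mathcal{S}_v(d\to d-1)|}<\frac{2rm}{dn}
\]
for $d$ larger than some $4\lceil r^2m/n\rceil$, where the probabilities are now conditional on lying in $\mathcal{C}_{h_1,h_2,h_3,h_4}^{+}$. Fourth, telescope this recursion from $d=M_0^*$ downward to obtain $\mathbb{P}[\deg(v)>M_0^*]=O(r^6/n^4)$, then union-bound over $v\in[n]$ to get $\mathbb{P}[\exists v:\deg(v)>M_0^*]=O(r^6/n^3)$; since $\mathcal{C}_{h_1,h_2,h_3,h_4}^{++}$ is exactly the subset of $\mathcal{C}_{h_1,h_2,h_3,h_4}^{+}$ with all degrees $\le M_0^*$, this is the claim.

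The main obstacle is verifying that the switching genuinely preserves the cluster type counts in both directions. One must check carefully that moving a free edge $e$ off $v$ to a cluster-disjoint, overlap-respecting location $e'$ neither merges nor splits any cluster (this is where property $\bf(c)$ and the ``at least $3r-4$ vertices in three edges'' consequence of $\bf(b)$ are used: $e'$ sharing at most one vertex with any existing edge and being disjoint from all clusters means $G_H$ is unchanged), and that the inverse operation — restoring a free edge through $v$ — likewise does not alter cluster counts. A secondary point is bookkeeping the various $O(r^6/n^3)$ error terms: the corrections from Lemma~\ref{l3.5}, from excluding cluster vertices, and from the $O(1)$ non-free edges through $v$ must all be shown to be $o(1)$ or absorbed, which follows from $r^{-2}n\le m=o(r^{-3}n^{3/2})$ exactly as in the earlier lemmas. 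No genuinely new estimate is needed; the work is in the combinatorial verification that the counts of clusters of each type are invariant.
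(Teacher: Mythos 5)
There is a genuine gap at the very first step of your count. You claim that, among the edges through a high-degree vertex $v$, ``at least $d-O(1)$'' are free (not lying in any cluster), on the grounds that only $O(1)$ clusters can contain $v$. That is not what properties $\bf(c)$--$\bf(e)$ give you. Property $\bf(d)$ does force at most two Type-$1$/$2$/$3$ clusters through a common vertex, but property $\bf(e)$ only requires three Type-$4$ clusters to span at least $6r-8$ vertices, and three (or more) Type-$4$ clusters pairwise meeting exactly in $\{v\}$ span exactly $3(2r-2)-2=6r-8$ vertices, so they are permitted. Since the number of Type-$4$ clusters is only bounded by $M_4=\lceil\log(r^{-2}n)+3^4r^4m^2/(2n^2)\rceil$, which is not $O(1)$, a vertex $v$ of degree $d$ may have \emph{every} one of its $d$ edges sitting in a distinct Type-$4$ cluster. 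In that case your switching $\mathcal{S}_v(d\to d-1)$ has no admissible moves at all, the lower bound $|\mathcal{S}_v(d\to d-1)|\ge (d-O(1))\bigl[\binom{n-1}{r}-\binom{r}{2}m\binom{n-3}{r-2}\bigr]$ fails, and the ratio $\mathbb{P}[\deg(v)=d]/\mathbb{P}[\deg(v)=d-1]<2rm/(dn)$ cannot be derived. (Your parenthetical ``at most four clusters'' appears to conflate the four cluster \emph{types} with the number of cluster \emph{instances} meeting $v$.)

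The paper closes exactly this gap by enlarging the set of switchings: when the chosen edge through $v$ lies in a Type-$1$, $2$ or $3$ cluster the \emph{entire cluster} is relocated to a fresh $(3r-4)$-set of $[n]\setminus\{v\}$, and when it lies in a Type-$4$ cluster the whole cluster is moved to a fresh $(2r-2)$-set; only genuinely free edges are moved as single $r$-sets. Writing $d=d_0+d_1+d_2+d_3+d_4$ according to which cluster type (if any) each edge through $v$ belongs to, each of the three counts is bounded below via Lemma~\ref{l3.5} (which is stated for all $r\le t\le 3r-4$ precisely for this purpose), and since $\binom{n-1}{3r-4}-\binom{r}{2}m\binom{n-3}{3r-6}$ and $\binom{n-1}{2r-2}-\binom{r}{2}m\binom{n-3}{2r-4}$ each exceed $\binom{n-1}{r}-\binom{r}{2}m\binom{n-3}{r-2}$, one recovers the full factor of $d$ in the lower bound. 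The rest of your outline (the trivial upper bound $m\binom{n-1}{r-1}$ on the reverse switching, the telescoped ratio, and the union bound over $v$) matches the paper and is fine; also, your extra requirement that the new edge be vertex-disjoint from all clusters is unnecessary, since an $r$-set meeting every existing edge in at most one vertex already cannot create, merge or destroy a cluster. But as written, the proposal does not prove the theorem for vertices whose incident edges are mostly absorbed into Type-$4$ clusters.
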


\begin{proof}
Consider $H\in \mathcal{C}_{h_1,h_2,h_3,h_4}^{+}$ chosen uniformly at random.
We will show that  there are no vertices with degree
greater than $M_0^*$ with probability $1-O\bigl( \frac{r^6}{n^3}\bigr)$.

Fix $v\in[n]$. Assume that $\deg (v)=d$ for some integer
$d\geq 1$. Let $d_i$ be the number of Type-$i$ clusters containing $v$.
By the property $\bf(c)$ for $\mathcal{H}_{r}^{+}(n,m)$, the
intersection of every two clusters here is only the vertex $v$. Let $d_0=d-d_1-d_2-d_3-d_4$.
Define the set $\mathcal{S}_v(d\to d-1)$
as a set of switching strategies to decrease the degree of $v$ from $d$
to $d-1$ while keeping the number of Type-$i$ clusters unchanged for
$1\leq i\leq 4$ in $H$.  Each strategy involves moving one edge or one cluster.
If we choose an edge containing $v$ in a Type-$1$, Type-$2$ or Type-$3$
cluster, then we switch this cluster to a $(3r-4)$-set of $[n]-\{v\}$ with
no two vertices in the same edge of $H-\{v\}$; if we choose an
edge containing $v$ in a Type-$4$ cluster, then we switch this cluster to
 a $(2r-2)$-set of $[n]-\{v\}$ with no two vertices in the same
edge of $H-\{v\}$; otherwise we switch the edge  containing $v$ to
an $r$-set of $[n]-\{v\}$ with no two vertices in the same edge
of $H-\{v\}$. Applying Lemma~\ref{l3.5} to $H-\{v\}$, we have
\begin{align*}
|\mathcal{S}_v(d\to d-1)|&\geq d_0\biggl[ \binom{n-1}{r}- \binom{r}{2}m \binom{n-3}{r-2}\biggr]\\
&{\qquad}+(d_1+d_2+d_3)\biggl[ \binom{n-1}{3r-4}- \binom{r}{2}m \binom{n-3}{3r-6}\biggr]\\
&{\qquad}+d_4\biggl[ \binom{n-1}{2r-2}- \binom{r}{2}m \binom{n-3}{2r-4}\biggr]\\
&>d\biggl[ \binom{n-1}{r}- \binom{r}{2}m \binom{n-3}{r-2}\biggr].
\end{align*}
Assume $\deg (v)=d-1$  for some integer $d\geq 1$. Define the
set $\mathcal{S}_v(d-1\to d)$ as the switching
 strategies inverse to the above to increase the degree of $v$ from $d-1$ to $d$.
 Clearly, $|\mathcal{S}_v(d-1\to d)|\leq m \binom{n-1}{r-1}$.

Thus, we have
\[
 \frac{\mathbb{P}[\deg (v)=d]}{\mathbb{P}[\deg (v)=d-1]}
= \frac{|\mathcal{S}_v(d-1\to d)|}{|\mathcal{S}_v(d\to d-1)|}
<  \frac{m \binom{n-1}{r-1}}{d\bigl[ \binom{n-1}{r}- \binom{r}{2}m \binom{n-2}{r-2}\bigr]}
= \frac{rm}{dn}\Bigl(1+O\Bigl( \frac{r^4m}{n^2}\Bigr)\Bigr),
\]
after which we complete the proof as in Theorem~\ref{t4.1}.
\end{proof}

\begin{remark}\label{r4.3}
From the proof of Theorem~\ref{t4.2}, for any given $h_1,\ldots,h_4$,
we have shown that $|\mathcal{C}_{h_1,h_2,h_3,h_4}^{+}|=0$ iff
$|\mathcal{C}_{h_1,h_2,h_3,h_4}^{++}|=0$.
Similarly, from the proof of Theorem~\ref{t4.1}, we have shown if $|\mathcal{L}_r(n,m)|\neq0$,
then $|\mathcal{C}_{0,0,0,0}^{++}|\neq0$.
\end{remark}

\begin{remark}\label{r4.4}
In fact, by Theorem~\ref{t3.2}, we have $|\mathcal{H}^+_r(n,m)|\neq0$.
There exist $h_i$ with  $1\leq i\leq 4$ such that $|\mathcal{C}_{h_1,h_2,h_3,h_4}^{+}|\neq0$.
By the switching operations in Section~\ref{s:5} below, we obtain $|\mathcal{L}_r(n,m)|\neq0$.
\end{remark}

\begin{remark}\label{r4.5}
By Theorem~\ref{t3.2}, Remark~\ref{r3.3}, Theorem~\ref{t4.1}, Remark~\ref{r4.3} and Remark~\ref{r4.4}, it follows that
\begin{align*}
 \frac{1}{\mathbb{P}_r(n,m)}&= \biggl(1-O\Bigl( \frac{r^6m^2}{n^3}\Bigr)\biggr)
 \sum_{h_4=0}^{M_4}\sum_{h_3=0}^{M_3}\sum_{h_2=0}^{M_2}\sum_{h_1=0}^{M_1}
 \frac{|\mathcal{C}_{h_1,h_2,h_3,h_4}^{++}|}
{|\mathcal{L}_r(n,m)|}\\
&= \biggl(1-O\Bigl( \frac{r^6m^2}{n^3}\Bigr)\biggr)
 \sum_{h_4=0}^{M_4}\sum_{h_3=0}^{M_3}\sum_{h_2=0}^{M_2}\sum_{h_1=0}^{M_1} \frac{
|\mathcal{C}_{h_1,h_2,h_3,h_4}^{++}|}{|\mathcal{C}_{0,0,0,0}^{++}|}.
\end{align*}
We estimate the above sum using switching operations designed
to remove these Type-$i$ clusters for $1\leq i\leq 4$ in the next two sections.
\end{remark}

\section{Switchings on $r$-graphs in $\mathcal{H}_r^{+}(n,m)$}\label{s:5}

Now our task is reduced to calculating the ratio
$|\mathcal{C}_{h_1,h_2,h_3,h_4}^{++}|/|\mathcal{C}_{0,0,0,0}^{++}|$
when $0\leq h_i\leq M_i$ for $1\leq i\leq 4$.

\subsection{Switchings of Type-$1$ clusters}\label{s:5.1}

Let $H\in \mathcal{C}_{h_1,h_2,h_3,h_4}^{+}$. A \textit{Type-$1$ switching}
from $H$ is used to reduce the number of Type-$1$ clusters in $H$,
which is defined in the following four steps.

\noindent{\bf Step 0.}\  Take a Type-$1$ cluster
$\{e,f,g\}$ and remove it from~$H$.
Define $H_0$ with the same vertex set $[n]$ and the edge
set $E(H_0)=E(H)\backslash \{e,f,g\}$.

\noindent{\bf Step 1.}\ Take any $r$-set from $[n]$ of which
no two vertices belong to the same edge of $H_0$ and %
add it as a new edge.  The new graph is denoted by $H'$.

\noindent{\bf Step 2.}\ Insert another new edge at an $r$-set
of $[n]$ of which no two vertices belong to the same edge of $H'$.
The resulting graph is denoted by~$H''$.

\noindent{\bf Step 3.}\ Insert an edge at an $r$-set of which no
two vertices belong to
the same edge of $H''$. The resulting graph is denoted by $H'''$.

A Type-$1$ switching operation is illustrated in Figure~\ref{fig:2} below.
\begin{figure}[!htb]
\centering
\includegraphics[width=0.9\textwidth]{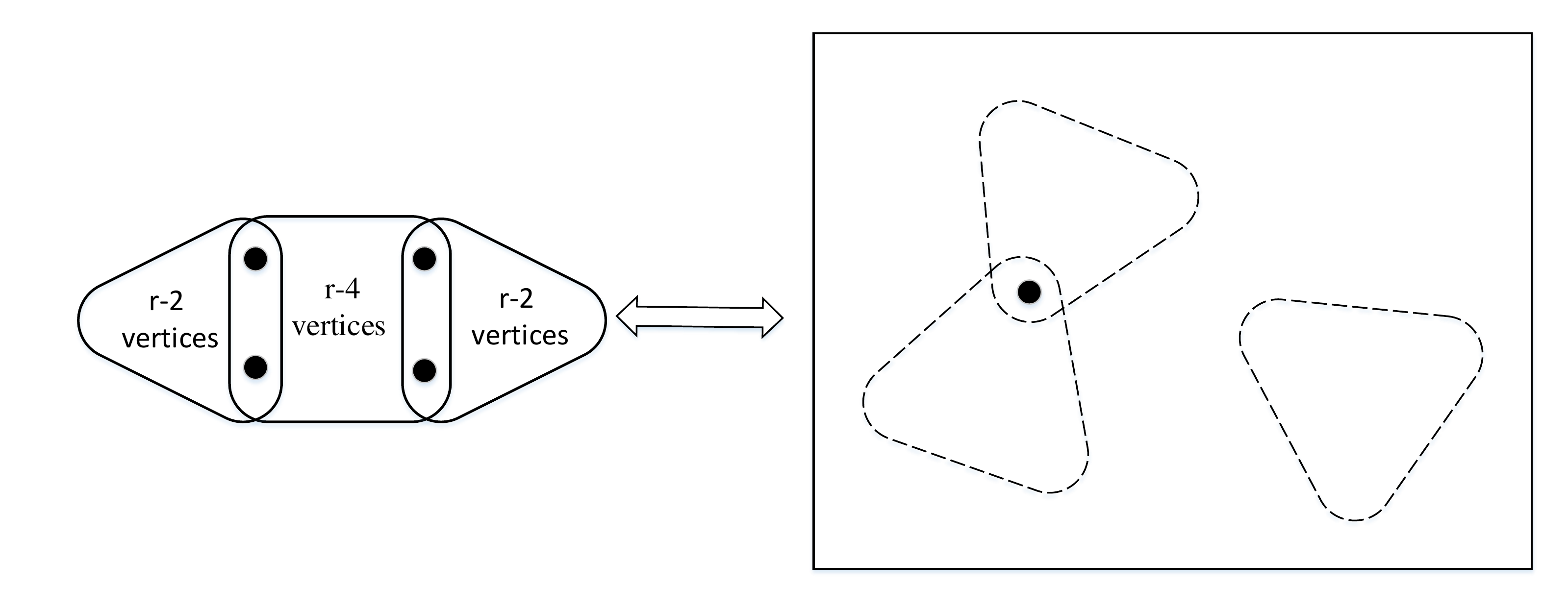}
\caption{An example of a Type-$1$ switching between $H$ and $H'''$\label{fig:2}}
\end{figure}
Note that any two new edges may or may not have a vertex in common.

\begin{remark}\label{r5.1}
A Type-$1$ switching reduces the number of Type-$1$ clusters in $H$ by
one without changing the other types of clusters.
Moreover, conditions $\bf(a)$--$\bf(f)$ remain true.
Since a vertex might gain degree during Steps 1--3, a Type-1 switching
does not necessarily map $\mathcal{C}_{h_1,h_2,h_3,h_4}^{+}$ into
$\mathcal{C}_{h_1-1,h_2,h_3,h_4}^{+}$ or map $\mathcal{C}_{h_1,h_2,h_3,h_4}^{++}$ into
$\mathcal{C}_{h_1-1,h_2,h_3,h_4}^{++}$.
However, it always maps $\mathcal{C}_{h_1,h_2,h_3,h_4}^{++}$ into
$\mathcal{C}_{h_1-1,h_2,h_3,h_4}^{+}$.
\end{remark}

A \textit{reverse Type-$1$ switching} is the reverse of a  Type-$1$ switching.
A reverse Type-$1$ switching from
$H'''\in \mathcal{C}_{h_1-1,h_2,h_3,h_4}^{+}$
is defined by sequentially removing  three edges of $H'''$ not containing a link, then
choosing a $(3r-4)$-set $T$ from $[n]$
of which no two vertices belong to any remaining  edges of $H'''$,
then inserting three edges into $T$ such that they
create a Type-$1$ cluster.
This operation is depicted in Figure~\ref{fig:2} by following the arrow in reverse.

\begin{remark}\label{r5.2}
If $H'''\in \mathcal{C}_{h_1-1,h_2,h_3,h_4}^{++}$, then a reverse
Type-$1$ switching from $H'''$ may also violate the condition $\bf(g^*)$
for $\mathcal{H}_r^{++}(n,m)$, but the resulting graph~$H$ is in
$\mathcal{C}_{h_1,h_2,h_3,h_4}^{+}$
because $\deg (v)\leq M_0^*+2<M_0$ for every vertex $v\in[n]$ of $H$.
\end{remark}

 Next we analyze Type-$1$ switchings to find a  relationship between the sizes of
 $\mathcal{C}_{h_1,h_2,h_3,h_4}^{++}$ and
 $\mathcal{C}_{h_1-1,h_2,h_3,h_4}^{++}$.

\begin{lemma}\label{l5.3}
Assume $r^{-2}n\leq m=o(r^{-3}n^{ \frac32})$ and
that $0\leq h_i\leq M_i$ for $1\leq i\leq 4$ with $h_1\geq 1$.\\
$(a)$\ Let $H\in \mathcal{C}_{h_1,h_2,h_3,h_4}^{+}$.
Then the number of Type-$1$ switchings for $H$ is
\[
h_1\biggl[N- \binom{r}{2}m \binom{n-2}{r-2}\biggr]^3
\biggl(1+O\Bigl( \frac{r^6mn\log (r^{-2}n)+r^8m^2}{n^{4}}\Bigr)\biggr).
\]
$(b)$\ Let $H'''\in \mathcal{C}_{h_1-1,h_2,h_3,h_4}^{+}$. The number of
reverse Type-$1$ switchings for $H'''$ is
\begin{align*}
&36 \binom{3r-4}{r} \binom{r}{4} \binom{2r-4}{r-2} \binom{m-3(h_1-1)-3h_2-3h_3-2h_4}{3}\\
&{\qquad}\times\biggl[ \binom{n}{3r-4}- \binom{r}{2}m \binom{n-2}{3r-6}\biggr]
\biggl(1+O\Bigl( \frac{r^6mn\log (r^{-2}n)+r^8m^2}{n^{4}}\Bigr)\biggr).
\end{align*}
\end{lemma}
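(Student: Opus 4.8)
\textbf{Proof proposal for Lemma~\ref{l5.3}.}

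The plan is to count both sets of switchings directly by following their defining steps and multiplying the number of choices at each step, keeping careful track of the error incurred at each stage from the linearity (and property $\bf(a)$--$\bf(g)$) constraints.

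For part $(a)$, I would proceed as follows. First, I choose one of the $h_1$ Type-$1$ clusters $\{e,f,g\}$ to remove in Step 0; this gives the leading factor $h_1$. For Step~1, I must count the $r$-sets of $[n]$ no two of whose vertices lie in a common edge of $H_0$; by Lemma~\ref{l3.5} applied with $t=r$ and $\xi$ absorbed into the constant (here $H_0\in\mathcal{H}_r^+(n,m-3)$, so $\xi=3=O(1)$), this is $\bigl[N-\binom r2 m\binom{n-2}{r-2}\bigr]\bigl(1+O(\frac{r^6mn\log(r^{-2}n)+r^8m^2}{n^4})\bigr)$. For Step~2 and Step~3 I again apply Lemma~\ref{l3.5}, now to $H'$ and $H''$ respectively, which have $m-2$ and $m-1$ edges; the key observation is that adding one or two extra edges changes the count only within the stated relative error, since the correction terms in Lemma~\ref{l3.5} already dominate the $O(r^2\binom{n-2}{r-2}/N)$ contribution of one more forbidden edge. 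Multiplying the three factors gives $\bigl[N-\binom r2 m\binom{n-2}{r-2}\bigr]^3$ times the cube of $(1+O(\cdots))$, which is still $1+O(\cdots)$, yielding the claimed formula. I would also remark (citing Remark~\ref{r5.1}) that every such choice genuinely produces a valid element of $\mathcal{C}_{h_1-1,h_2,h_3,h_4}^+$, so no over- or under-counting occurs, and that the three inserted edges are unordered in the output but the construction produces them in order, which is why no symmetry factor appears (each resulting configuration of three ``loose'' edges is reached exactly once because Step~0 fixes which cluster was destroyed).

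For part $(b)$, the reverse switching from $H'''\in\mathcal{C}_{h_1-1,h_2,h_3,h_4}^+$ first picks three edges not containing a link, i.e.\ three ``loose'' edges, of which there are $m-3(h_1-1)-3h_2-3h_3-2h_4$ (each Type-$1$, $2$, $3$ cluster uses $3$ edges and each Type-$4$ cluster uses $2$ edges, all of which contain links); the number of ordered triples is $[\,m-3(h_1-1)-3h_2-3h_3-2h_4\,]_3$, but since the three removed edges play symmetric roles in the construction we get $\binom{m-3(h_1-1)-3h_2-3h_3-2h_4}{3}$ unordered choices, and the factor $36$ together with $\binom{3r-4}r\binom r4\binom{2r-4}{r-2}$ will record the number of ways to then place a Type-$1$ cluster on a given $(3r-4)$-set $T$. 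Next I count the $(3r-4)$-sets $T$ none of whose vertex pairs lies in a remaining edge of $H'''$: this is exactly $|N_{3r-4}|$ for $H'''$ (which lies in $\mathcal{H}_r^+(n,m-3)$), so by Lemma~\ref{l3.5} with $t=3r-4$ it equals $\bigl[\binom n{3r-4}-\binom r2 m\binom{n-2}{3r-6}\bigr]\bigl(1+O(\frac{r^6mn\log(r^{-2}n)+r^8m^2}{n^4})\bigr)$. Finally I must verify the combinatorial constant: a Type-$1$ cluster is (per Figure~\ref{fig:1}) three edges on a vertex set of size $3r-4$ sharing structure as in the Type-$1$ picture; choosing the first edge inside $T$ in $\binom{3r-4}r$ ways, selecting within it the $4$-subset that will carry the two links in $\binom r4$ ways, splitting those $4$ vertices into the two pairs shared with the other two edges, and completing each of the other two edges with $r-2$ new vertices from $T$ in $\binom{2r-4}{r-2}$ ways each, with the appropriate multiplicity, produces the product $36\binom{3r-4}r\binom r4\binom{2r-4}{r-2}$ — I would spell out exactly how the $36$ arises (it is the number of ordered ways to assign the two auxiliary edges to the two link-pairs and to match pairs, accounting for the automorphisms of the Type-$1$ shape). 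The hard part here is getting this constant right: it must be consistent with the denominator ``$4$'' appearing in the proof of Theorem~\ref{t3.2} (where the same Type-$1$ count $\binom{3r-4}r\binom r4\binom42\binom{2r-4}{r-2}$ was used), so I would cross-check against that computation, noting $\binom42=6$ and that the passage from the ``expected number'' bound there to the exact switching count here replaces the symmetric $\binom42$ by the ordered structure that yields $36$.

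\textbf{Main obstacle.} The genuinely delicate point is not the application of Lemma~\ref{l3.5} — that is mechanical once one checks that each intermediate graph $H_0,H',H'',H'''$ lies in some $\mathcal{H}_r^+(n,m-\xi)$ with $\xi=O(1)$ — but rather pinning down the exact combinatorial constant for placing a Type-$1$ cluster on a $(3r-4)$-set and confirming that the counting is bijective (each reverse switching output is an $H$ counted the right number of times in part $(a)$, and vice versa). I expect to spend the bulk of the argument justifying the constant $36\binom{3r-4}r\binom r4\binom{2r-4}{r-2}$ and verifying that the ``loose edge'' count $m-3(h_1-1)-3h_2-3h_3-2h_4$ is exact, using properties $\bf(b)$ and $\bf(c)$ to guarantee that no edge is loose-and-in-a-cluster and that distinct clusters do not share edges.
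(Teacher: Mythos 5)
Your proposal follows the paper's proof exactly: part (a) is $h_1$ choices of cluster followed by three applications of Lemma~\ref{l3.5} (with $\xi=O(1)$ absorbed into the error term), and part (b) is (ways to delete three loose edges) $\times$ (Lemma~\ref{l3.5} count of admissible $(3r-4)$-sets) $\times$ (ways to build a Type-$1$ cluster on $T$), with the loose-edge count $m-3(h_1-1)-3h_2-3h_3-2h_4$ justified by properties $\bf(b)$ and $\bf(c)$. The formulas and error terms all match.

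The one point you should fix is the bookkeeping of the factor $36$, where your account is internally inconsistent. In the paper, the reverse switching deletes the three loose edges \emph{sequentially}, giving $6\binom{m-3(h_1-1)-3h_2-3h_3-2h_4}{3}$ ordered choices, and the number of Type-$1$ clusters on a fixed $(3r-4)$-set $T$ is $\binom{3r-4}{r}\binom{r}{4}\binom{4}{2}\binom{2r-4}{r-2}$, so $36=3!\cdot\binom{4}{2}$: one factor of $6$ is the deletion ordering and the other is $\binom42$ inside the cluster count. This ordered convention on the reverse side is forced by the forward side: Steps 1--3 insert the three new edges in order, so each unordered outcome is reached $3!$ times in part (a) (your parenthetical claim that it is reached exactly once is wrong), and Lemma~\ref{l3.7} requires both degree counts to use the same convention. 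Your version instead takes \emph{unordered} deletions and attributes all of $36\binom{3r-4}{r}\binom r4\binom{2r-4}{r-2}$ to the cluster-placement count on $T$; that count is actually only $6\binom{3r-4}{r}\binom r4\binom{2r-4}{r-2}$, so if you carried your stated decomposition through literally you would lose a factor of $6$ in part (b) (or, equivalently, fail to match the ordered counting in part (a)). The final formula you wrote down is the correct one, but the justification of the constant needs to be rebuilt along the lines above.
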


\begin{proof} $(a)$\ Let $H\in \mathcal{C}_{h_1,h_2,h_3,h_4}^{+}$.
Let $\mathcal{S}(H)$ be set the of all Type-$1$ switchings which can
be applied to $H$. There are exactly $h_1$ ways to choose a Type-$1$ cluster.
In each of the steps 2--4 of the switching, by Lemma~\ref{l3.5}, there are
\[
\biggl[N- \binom{r}{2}m \binom{n-2}{r-2}\biggr]
\biggl(1+O\Bigl( \frac{r^6mn\log (r^{-2}n)+r^8m^2}{n^{4}}\Bigr)\biggr)
\]
ways to choose the new edge.
Thus, we have
\[
\bigl|\mathcal{S}(H)\bigl|=h_1\biggl[N- \binom{r}{2}m \binom{n-2}{r-2}\biggr]^3
\biggl(1+O\Bigl( \frac{r^6mn\log (r^{-2}n)+r^8m^2}{n^{4}}\Bigr)\biggr).
\]

$(b)$\ Conversely, suppose that $H'''\in \mathcal{C}_{h_1-1,h_2,h_3,h_4}^{+}$.
Similarly, let $\mathcal{S}'(H''')$ be the set of all reverse Type-$1$ switchings for $H'''$.
There are exactly $6 \binom{m-3(h_1-1)-3h_2-3h_3-2h_4}{3}$ ways to
delete three edges in sequence such that
none of them contain a link. By Lemma~\ref{l3.5}, there are
\[
\biggl[ \binom{n}{3r-4}- \binom{r}{2}m \binom{n-2}{3r-6}\biggr]
  \biggl(1+O\Bigl( \frac{r^6mn\log (r^{-2}n)+r^8m^2}{n^{4}}\Bigr)\biggr)
\]
ways to choose a $(3r-4)$-set $T$ from $[n]$ of which
no two vertices belong to any remaining edges of $H'''$. For every $T$,
there are $ \binom{3r-4}{r} \binom{r}{4} \binom{4}{2} \binom{2r-4}{r-2}$
ways to create a Type-$1$ cluster in $T$. Thus, we have
\begin{align*}
|\mathcal{S}'(H''')|&=36 \binom{3r-4}{r} \binom{r}{4} \binom{2r-4}{r-2} \binom{m-3(h_1-1)-3h_2-3h_3-2h_4}{3}\\
&{\qquad}\times\biggl[ \binom{n}{3r-4}- \binom{r}{2}m \binom{n-2}{3r-6}\biggr]\biggl(1+O\Bigl( \frac{r^6mn\log (r^{-2}n)+r^8m^2}{n^{4}}\Bigr)\biggr).\qedhere
\end{align*}
\end{proof}

\begin{corollary}\label{c5.4}
With notation as above,
for $0\leq h_i\leq M_i$ when $1\leq i\leq 4$, the following hold:\\
$(a)$\ If $\mathcal{C}_{h_1,h_2,h_3,h_4}^{++}=\emptyset$, then $\mathcal{C}_{h_1+1,h_2,h_3,h_4}^{++}=\emptyset$.
\\
$(b)$\
Let $h_1'=h_1'(h_2,h_3,h_4)$ be the first value of $h_1\leq M_1$
such that $\mathcal{C}_{h_1,h_2,h_3,h_4}^{++}=\emptyset$,
or $h_1'=M_1+1$ if no such value exists. Suppose that $n\to\infty$.
Then uniformly for $1\leq h_1< h_1'$,
\begin{align*}
 \frac{|\mathcal{C}_{h_1,h_2,h_3,h_4}^{++}|}{|\mathcal{C}_{h_1-1,h_2,h_3,h_4}^{++}|}&=
 \frac{36 \binom{3r-4}{r} \binom{r}{4} \binom{2r-4}{r-2} \binom{m-3(h_1-1)-3h_2-3h_3-2h_4}{3}
\bigl[ \binom{n}{3r-4}- \binom{r}{2}m \binom{n-2}{3r-6}\bigr]}{h_1\bigl[N- \binom{r}{2}m \binom{n-2}{r-2}\bigr]^3}\\
&{\qquad}\times\biggl(1+O\Bigl( \frac{r^6mn\log (r^{-2}n)+r^8m^2}{n^{4}}\Bigr)\biggr).
\end{align*}
\end{corollary}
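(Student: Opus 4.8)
The plan is to derive Corollary~\ref{c5.4} directly from Lemma~\ref{l5.3} by the standard switching-ratio argument, with Lemma~\ref{l3.7} (the generalized double-counting lemma) supplying the error control. First I would dispose of part~$(a)$: if $\mathcal{C}_{h_1,h_2,h_3,h_4}^{++}=\emptyset$ but $\mathcal{C}_{h_1+1,h_2,h_3,h_4}^{++}\neq\emptyset$, pick any $H$ in the latter set; by Remark~\ref{r5.1} a Type-$1$ switching applied to $H$ lands in $\mathcal{C}_{h_1,h_2,h_3,h_4}^{+}$, and the switching count in Lemma~\ref{l5.3}$(a)$ is strictly positive once $h_1+1\geq 1$, so such a switching exists; but then reversing it (Lemma~\ref{l5.3}$(b)$, whose count is also positive) produces an element of $\mathcal{C}_{h_1+1,h_2,h_3,h_4}^{+}$ from something in $\mathcal{C}_{h_1,h_2,h_3,h_4}^{+}$ — I would instead phrase this more carefully using Remark~\ref{r4.3}-style bookkeeping, noting that the switching/reverse-switching counts being positive forces the two $^{++}$ classes to be simultaneously empty or nonempty once we pass through the $^+$ classes, giving the contrapositive of~$(a)$.

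For part~$(b)$ I would set up the bipartite graph $G$ of Lemma~\ref{l3.7} with $A=\mathcal{C}_{h_1,h_2,h_3,h_4}^{+}$ split as $A_1=\mathcal{C}_{h_1,h_2,h_3,h_4}^{++}$ and $A_2=A\setminus A_1$, and $B=\mathcal{C}_{h_1-1,h_2,h_3,h_4}^{+}$ split as $B_1=\mathcal{C}_{h_1-1,h_2,h_3,h_4}^{++}$ and $B_2=B\setminus B_1$, with an edge joining $H\in A$ to $H'''\in B$ whenever some Type-$1$ switching carries $H$ to $H'''$. By Remark~\ref{r5.1} every Type-$1$ switching from an element of $A_1$ lands in $B=\mathcal{C}_{h_1-1,h_2,h_3,h_4}^{+}$ (not necessarily in $B_1$), and by Remark~\ref{r5.2} every reverse Type-$1$ switching from an element of $B_1$ lands in $A=\mathcal{C}_{h_1,h_2,h_3,h_4}^{+}$; this is exactly what makes the $A_1$/$B_1$ splitting legitimate. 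Lemma~\ref{l5.3}$(a)$ gives that every vertex of $A$ (in particular of $A_1$) has degree equal to the displayed quantity, so $d_{min}^{A_1}$ and $d_{max}^{A}$ agree up to the stated relative error; Lemma~\ref{l5.3}$(b)$ does the same for $d_{min}^{B_1}$ and $d_{max}^{B}$ (here using that the formula depends only on $h_1,\dots,h_4$, not on the particular graph). Then Lemma~\ref{l3.7} gives
\[
 \frac{d_{min}^{B_1}}{d_{max}^{A}}\Bigl(1+\tfrac{|A_2|}{|A_1|}\Bigr)^{-1}
 \leq \frac{|A_1|}{|B_1|}
 \leq \frac{d_{max}^{B}}{d_{min}^{A_1}}\Bigl(1+\tfrac{|B_2|}{|B_1|}\Bigr),
\]
and I would squeeze the two sides together.

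The ratio of degrees $d^{B_1}_{\min}/d^{A}_{\max}$ (respectively $d^{B}_{\max}/d^{A_1}_{\min}$) is, by Lemma~\ref{l5.3}, precisely the main fraction in the statement times $\bigl(1+O(\frac{r^6mn\log(r^{-2}n)+r^8m^2}{n^4})\bigr)$, and one checks this $O$-term is absorbed into the claimed $O(\frac{r^6mn\log(r^{-2}n)+r^8m^2}{n^4})$; so the only remaining issue is that the correction factors $1+|A_2|/|A_1|$ and $1+|B_2|/|B_1|$ are $1+O(\frac{r^6}{n^3})$. This is where I would invoke Theorem~\ref{t4.2}: it gives $|\mathcal{C}_{h_1,h_2,h_3,h_4}^{++}|=(1-O(r^6/n^3))|\mathcal{C}_{h_1,h_2,h_3,h_4}^{+}|$ uniformly for $0\leq h_i\leq M_i$, hence $|A_2|/|A_1|=O(r^6/n^3)$ and likewise $|B_2|/|B_1|=O(r^6/n^3)$, and $r^6/n^3 = O\bigl(\frac{r^6mn\log(r^{-2}n)+r^8m^2}{n^4}\bigr)$ since $m\geq r^{-2}n$. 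Combining, both bounds on $|A_1|/|B_1|$ equal the main fraction times $1+O(\frac{r^6mn\log(r^{-2}n)+r^8m^2}{n^4})$, which is the assertion. The uniformity in $h_1$ follows because all the $O$-estimates in Lemmas~\ref{l3.5}, \ref{l5.3} and Theorem~\ref{t4.2} are uniform over the relevant ranges. The main obstacle is not any single estimate but the careful verification that the four classes $A_1,A_2,B_1,B_2$ interact with the switching operation exactly as Lemma~\ref{l3.7} requires — i.e. that Remarks~\ref{r5.1} and~\ref{r5.2} really do guarantee edges of $G$ stay inside $A$ and $B$ — together with confirming that $h_1'$ is defined so that $|B_1|\neq 0$ throughout the range $1\leq h_1<h_1'$, which is needed for the ratio to make sense and is exactly the role of part~$(a)$.
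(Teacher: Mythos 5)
Your proposal is correct and follows essentially the same route as the paper: part $(a)$ via applying a Type-$1$ switching to an element of $\mathcal{C}_{h_1+1,h_2,h_3,h_4}^{++}$ to land in $\mathcal{C}_{h_1,h_2,h_3,h_4}^{+}$ and then invoking Remark~\ref{r4.3}, and part $(b)$ via Lemma~\ref{l3.7} with exactly the splitting $A_1=\mathcal{C}_{h_1,h_2,h_3,h_4}^{++}$, $A_2=\mathcal{C}_{h_1,h_2,h_3,h_4}^{+}\setminus A_1$, $B_1=\mathcal{C}_{h_1-1,h_2,h_3,h_4}^{++}$, $B_2=\mathcal{C}_{h_1-1,h_2,h_3,h_4}^{+}\setminus B_1$, using Lemma~\ref{l5.3} for the degree ratios and Theorem~\ref{t4.2} to show $|A_2|/|A_1|$ and $|B_2|/|B_1|$ are $O(r^6/n^3)$, absorbed into the stated error term. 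The only cosmetic difference is the brief detour in your part $(a)$ before settling on the Remark~\ref{r4.3} bookkeeping, which is what the paper does directly.
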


\begin{proof} $(a)$\ Suppose $\mathcal{C}_{h_1+1,h_2,h_3,h_4}^{++}\neq\emptyset$
and let $H\in\mathcal{C}_{h_1+1,h_2,h_3,h_4}^{++}$.
We apply a Type-$1$ switching from $H$ to obtain an $r$-graph
$H'''\in\mathcal{C}_{h_1,h_2,h_3,h_4}^{+}$. By Remark~\ref{r4.3}, we have
$|\mathcal{C}_{h_1,h_2,h_3,h_4}^{++}|\neq0$.

$(b)$\ By $(a)$,  if $h_1'<M_1$ such that $\mathcal{C}_{h_1',h_2,h_3,h_4}^{++}=\emptyset$, then
$$\mathcal{C}_{h_1'+1,h_2,h_3,h_4}^{++},\ldots,\mathcal{C}_{M_1,h_2,h_3,h_4}^{++} =\emptyset.$$
By the definition of $h_1'$, the left hand ratio is well defined.
By Remark~\ref{r5.1} and Remark~\ref{r5.2}, we take
\begin{align*}
A_1&=\mathcal{C}_{h_1,h_2,h_3,h_4}^{++},&
A_2&=\mathcal{C}_{h_1,h_2,h_3,h_4}^{+}-\mathcal{C}_{h_1,h_2,h_3,h_4}^{++},\\
B_1&=\mathcal{C}_{h_1-1,h_2,h_3,h_4}^{++},&
B_2&=\mathcal{C}_{h_1-1,h_2,h_3,h_4}^{+}-\mathcal{C}_{h_1-1,h_2,h_3,h_4}^{++},
\end{align*}
in Lemma~\ref{l3.7}.
By Theorem~\ref{t4.2}, we have $ \frac{|B_2|}{|B_1|}=O\bigl( \frac{r^6}{n^3})$
and $ \frac{|A_2|}{|A_1|}=O\bigl( \frac{r^6}{n^3}\bigr)$. By Lemma~\ref{l5.3}, we also have
\[
 \frac{d_{min}^{B_1}}{d_{max}^A}= \frac{\min_{H'''\in B_1}|{\mathcal{S}}'(H''')|}{\max_{H\in A}|\mathcal{S}(H)|},\quad
 \frac{d_{max}^B}{d_{min}^{A_1}}= \frac{\max_{H'''\in B}|{\mathcal{S}}'(H''')|}{\min_{H\in A_1}|\mathcal{S}(H)|}
\]
to complete the proof of $(b)$, where
$O\bigl( \frac{r^6}{n^3}\bigr)$ is absorbed into
$O\bigl( \frac{r^6mn\log (r^{-2}n)+r^8m^2}{n^{4}}\bigr)$.
\end{proof}

\subsection{Switchings of Type-$2$ clusters}\label{s:5.2}

Let $H\in \mathcal{C}_{0,h_2,h_3,h_4}^{+}$. A \textit{Type-$2$ switching}
from $H$ is used to reduce the number of Type-$2$ clusters in $H$.
It is defined in the same manner as Steps $0$-$3$ in Section~\ref{s:5.1}.
Since we will only use this switching after all Type-$1$ clusters
have been removed, we can assume~$h_1=0$.

\begin{remark}\label{r5.5}
A Type-$2$ switching reduces the number of Type-$2$
clusters in $H$ by one without affecting other types of clusters.
If $H\in \mathcal{C}_{0,h_2,h_3,h_4}^{++}$, then a Type-$2$
switching from $H$ may violate property
$\bf(g^*)$ for $\mathcal{H}_r^{++}(n,m)$, but
the resulting graph $H'''$ is in $\mathcal{C}_{0,h_2-1,h_3,h_4}^{+}$
because $\deg (v)\leq M_0^*+3= M_0$
for every vertex $v\in[n]$ of $H'''$.
\end{remark}

A \textit{reverse Type-$2$ switching} is  the reverse of a  Type-$2$ switching.
A  reverse Type-$2$ switching from $H'''\in \mathcal{C}_{0,h_2-1,h_3,h_4}^{++}$
is defined by sequentially removing  three edges  not
containing a link, then choosing a $(3r-4)$-set $T$ from $[n]$
such that no two vertices belong to  any remaining edges of $H'''$, then inserting three edges into
$T$ such that they create a Type-$2$ cluster.

\begin{remark}\label{r5.6}
If $H'''\in \mathcal{C}_{0,h_2-1,h_3,h_4}^{++}$, then a reverse
Type-$2$ switching from $H'''$ may also
violate the property $\bf(g^*)$  for $\mathcal{H}_r^{++}(n,m)$, but
the resulting graph $H\in\mathcal{C}_{0,h_2,h_3,h_4}^{+}$ because $\deg (v)\leq M_0^*+3=M_0$
for every vertex $v\in[n]$ of $H$.
\end{remark}

 Next we analyze Type-$2$ switchings to find a  relationship
 between the sizes of $\mathcal{C}_{0,h_2,h_3,h_4}^{++}$ and
 $\mathcal{C}_{0,h_2-1,h_3,h_4}^{++}$.

\begin{lemma}\label{l5.7}
Assume $r^{-2}n\leq m=o(r^{-3}n^{ \frac32})$
and that $1\leq h_2\leq M_2$, $0\leq h_3\leq M_3$
and $0\leq h_4\leq M_4$.\\
$(a)$\ Let $H\in \mathcal{C}_{0,h_2,h_3,h_4}^{+}$.
Then the number of Type-$2$ switchings for $H$ is
\[
h_2\biggl[N- \binom{r}{2}m \binom{n-2}{r-2}\biggr]^3\biggl(1+O\Bigl( \frac{r^6mn\log (r^{-2}n)+r^8m^2}{n^{4}}\Bigr)\biggr).
\]
$(b)$\ Let $H'''\in \mathcal{C}_{0,h_2-1,h_3,h_4}^{+}$.
Then the number of reverse Type-$2$ switchings for $H'''$ is
\begin{align*}
&18 \binom{3r-4}{r} \binom{r}{3} \binom{2r-4}{r-2} \binom{m-3(h_2-1)-3h_3-2h_4}{3}\\
&{\qquad}\times\biggl[ \binom{n}{3r-4}- \binom{r}{2}m \binom{n-2}{3r-6}\biggr]
\biggl(1+O\Bigl( \frac{r^6mn\log (r^{-2}n)+r^8m^2}{n^{4}}\Bigr)\biggr).
\end{align*}
\end{lemma}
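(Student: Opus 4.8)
The plan is to mirror the structure of Lemma~\ref{l5.3}, since a Type-$2$ switching is defined by exactly the same Steps $0$--$3$ as a Type-$1$ switching, the only differences being (i) which kind of cluster is removed in Step~$0$, and (ii) which kind of cluster must be created when reversing. For part $(a)$, I would first note that there are exactly $h_2$ ways to select the Type-$2$ cluster to delete in Step~$0$, since by assumption $H\in\mathcal{C}_{0,h_2,h_3,h_4}^{+}$ has precisely $h_2$ such clusters. After deleting it we obtain $H_0$ with $m-3$ edges, and the argument of Lemma~\ref{l5.3}$(a)$ applies verbatim: in each of Steps~$1$--$3$ we insert a new edge at an $r$-set with no two vertices in a common edge of the current graph, and Lemma~\ref{l3.5} (applied with $t=r$ and the appropriate $\xi\in\{0,1,2\}$, absorbed into the error term) gives $\bigl[N-\binom r2 m\binom{n-2}{r-2}\bigr]\bigl(1+O(\cdot)\bigr)$ choices each time. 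Multiplying the three factors together with the $h_2$ gives exactly the claimed count; one checks as in Remark~\ref{r5.5} that the result lies in $\mathcal{C}_{0,h_2-1,h_3,h_4}^{+}$ so the switching is well-defined.

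For part $(b)$, I would count reverse Type-$2$ switchings from $H'''\in\mathcal{C}_{0,h_2-1,h_3,h_4}^{+}$ exactly as in Lemma~\ref{l5.3}$(b)$. First choose three edges of $H'''$ in sequence, none containing a link: the number of link-free edges of $H'''$ is $m-3h_2'-3h_3'-2h_4'$ with $h_1'=0$, $h_2'=h_2-1$, $h_3'=h_3$, $h_4'=h_4$ here (each Type-$1$, Type-$2$, Type-$3$ cluster uses three link-containing edges and each Type-$4$ cluster uses two), so there are $6\binom{m-3(h_2-1)-3h_3-2h_4}{3}$ ordered triples. Then by Lemma~\ref{l3.5} with $t=3r-4$ there are $\bigl[\binom n{3r-4}-\binom r2 m\binom{n-2}{3r-6}\bigr]\bigl(1+O(\cdot)\bigr)$ ways to pick a $(3r-4)$-set $T$ avoiding pairs in remaining edges. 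Finally I count the ways to install a Type-$2$ cluster inside $T$: a Type-$2$ cluster is the three-edge configuration of Figure~\ref{fig:1} with one common vertex shared by all three edges (as opposed to Type-$1$ which has a pair sharing a vertex differently), so the combinatorial count of embeddings of a Type-$2$ cluster into a labelled $(3r-4)$-set is $\binom{3r-4}{r}\binom r3\binom{2r-4}{r-2}$ up to the symmetry factor that turns the ordered edge-triple into an unordered cluster — here the factor works out to give $18\binom{3r-4}{r}\binom r3\binom{2r-4}{r-2}$ after accounting for the $6$ from the ordered deletion. Multiplying the three stages yields the stated formula.

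The main obstacle, and the only place this differs non-trivially from Lemma~\ref{l5.3}, is getting the cluster-embedding constant right: I need to pin down precisely what a Type-$2$ cluster looks like in Figure~\ref{fig:1} and count its labelled embeddings into a $(3r-4)$-vertex set, then reconcile the ordered-versus-unordered bookkeeping so that the product of the "$6$" from the ordered triple deletion and the raw embedding count collapses to the displayed "$18\binom{3r-4}{r}\binom r3\binom{2r-4}{r-2}$". For Type-$1$ this constant was $36\binom{3r-4}{r}\binom r4\binom{2r-4}{r-2}$ (with $\binom r4$ rather than $\binom r3$ and a factor $36$ rather than $18$), reflecting that Type-$1$ has an extra internal automorphism or a $4$-set where Type-$2$ has a $3$-set; I would verify this by drawing both configurations explicitly. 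Everything else — the applications of Lemma~\ref{l3.5}, the absorption of the $\xi=O(1)$ correction and of $O(r^6/n^3)$ into $O\bigl(r^6mn\log(r^{-2}n)+r^8m^2\bigr)/n^4$, and the well-definedness checks via Remarks~\ref{r5.5} and~\ref{r5.6} — is routine and identical to Section~\ref{s:5.1}. Once $(a)$ and $(b)$ are in hand, the analogue of Corollary~\ref{c5.4} for Type-$2$ clusters will follow immediately by the same application of Lemma~\ref{l3.7} with the four sets $A_1,A_2,B_1,B_2$ taken as the $\mathcal{C}^{++}$ and $\mathcal{C}^{+}\setminus\mathcal{C}^{++}$ pieces at cluster-counts $(0,h_2,h_3,h_4)$ and $(0,h_2-1,h_3,h_4)$, using Theorem~\ref{t4.2} to bound the ratios $|A_2|/|A_1|$ and $|B_2|/|B_1|$.
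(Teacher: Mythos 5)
Your proposal is correct and follows essentially the same route as the paper, which in fact omits this proof entirely with the remark that it ``follows the same logic as the proof of Lemma~\ref{l5.3}''; your adaptation (the $h_2$ choices in Step~0, three applications of Lemma~\ref{l3.5} for part $(a)$, and the ordered triple of link-free edges times the $(3r-4)$-set count times the cluster-embedding count for part $(b)$) is exactly that logic. The one constant you flag as needing verification does work out as you suspect: a Type-$2$ cluster has two links meeting in a common vertex of the central edge, so the embedding count is $\binom{3r-4}{r}\binom{r}{3}\cdot 3\cdot\binom{2r-4}{r-2}$ (choose the three marked vertices of $e$, then which of them is shared by both links), and $6\cdot 3=18$ as stated.
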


\begin{proof}
The proof follows the same logic as the proof of Lemma~\ref{l5.3},
so we omit it.
%
%
\end{proof}

\begin{corollary}\label{c5.8}
With notation as above, for some $0\leq h_i\leq M_i$ with $2\leq i\leq 4$,\\
$(a)$\ If $\mathcal{C}_{0,h_2,h_3,h_4}^{++}=\emptyset$, then $\mathcal{C}_{0,h_2+1,h_3,h_4}^{++}=\emptyset$.
\\
$(b)$\
Let $h_2'=h_2'(h_3,h_4)$ be the first value of $h_2\leq M_2$
such that $\mathcal{C}_{0,h_2,h_3,h_4}^{++}=\emptyset$,
or $h_2'=M_2+1$ if no such value exists. Suppose that $n\to\infty$.
Then uniformly for $1\leq h_2< h_2'$,
\begin{align*}
 \frac{|\mathcal{C}_{0,h_2,h_3,h_4}^{++}|}{|\mathcal{C}_{0,h_2-1,h_3,h_4}^{++}|}
&= \frac{18 \binom{3r-4}{r} \binom{r}{3} \binom{2r-4}{r-2} \binom{m-3(h_2-1)-3h_3-2h_4}{3}
\bigl[ \binom{n}{3r-4}- \binom{r}{2}m \binom{n-2}{3r-6}\bigr]}{h_2\bigl[N- \binom{r}{2}m \binom{n-2}{r-2}\bigr]^3}\\
&{\qquad}\times\biggl(1+O\Bigl( \frac{r^6mn\log (r^{-2}n)+r^8m^2}{n^{4}}\Bigr)\biggr).
\end{align*}
\end{corollary}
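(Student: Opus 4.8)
The plan is to mirror the structure of Corollary~\ref{c5.4} almost verbatim, since the Type-$2$ switchings and their reverses are defined exactly as the Type-$1$ ones (Steps~$0$--$3$), with the only bookkeeping change being that a Type-$2$ cluster is built inside a $(3r-4)$-set in $\binom{3r-4}{r}\binom{r}{3}\binom{4}{2}\binom{2r-4}{r-2}$ ways rather than $\binom{3r-4}{r}\binom{r}{4}\binom{4}{2}\binom{2r-4}{r-2}$ ways, and the count of ``loose'' edges available for deletion is $m-3(h_2-1)-3h_3-2h_4$ because all Type-$1$ clusters have already been removed.

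First I would prove part~$(a)$. Suppose $\mathcal{C}_{0,h_2+1,h_3,h_4}^{++}\neq\emptyset$ and pick $H$ in it; apply a Type-$2$ switching to $H$. By Remark~\ref{r5.5} the result $H'''$ lies in $\mathcal{C}_{0,h_2,h_3,h_4}^{+}$, and then Remark~\ref{r4.3} (which says $|\mathcal{C}_{h_1,h_2,h_3,h_4}^{+}|=0$ iff $|\mathcal{C}_{h_1,h_2,h_3,h_4}^{++}|=0$, with $h_1=0$ here) forces $|\mathcal{C}_{0,h_2,h_3,h_4}^{++}|\neq0$, contrapositive of which is the claim. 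This is identical to the argument for Corollary~\ref{c5.4}$(a)$.

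For part~$(b)$, I would set up the bipartite graph of Lemma~\ref{l3.7} with
\[
A_1=\mathcal{C}_{0,h_2,h_3,h_4}^{++},\quad A_2=\mathcal{C}_{0,h_2,h_3,h_4}^{+}\setminus\mathcal{C}_{0,h_2,h_3,h_4}^{++},
\]
\[
B_1=\mathcal{C}_{0,h_2-1,h_3,h_4}^{++},\quad B_2=\mathcal{C}_{0,h_2-1,h_3,h_4}^{+}\setminus\mathcal{C}_{0,h_2-1,h_3,h_4}^{++},
\]
with edges given by Type-$2$ switchings (using Remarks~\ref{r5.5} and~\ref{r5.6} to check that forward switchings map $A_1$ into $B_1\cup B_2$ and reverse switchings map $B_1$ into $A_1\cup A_2$, both landing inside the relevant ``$+$'' set). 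Theorem~\ref{t4.2} gives $|A_2|/|A_1|=O(r^6/n^3)$ and $|B_2|/|B_1|=O(r^6/n^3)$, which are absorbed into the stated error term. Lemma~\ref{l5.7} supplies the degree counts $d^A_{\max}$, $d^{A_1}_{\min}$, $d^B_{\max}$, $d^{B_1}_{\min}$, all of which agree up to the multiplicative factor $1+O\bigl((r^6mn\log(r^{-2}n)+r^8m^2)/n^4\bigr)$; squeezing via Lemma~\ref{l3.7} then yields the displayed ratio. The definition of $h_2'$ guarantees the ratio is well-defined for $1\leq h_2<h_2'$, and part~$(a)$ guarantees the sequence of empty sets above $h_2'$ behaves consistently, exactly as in the proof of Corollary~\ref{c5.4}$(b)$.

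The only place requiring genuine (rather than transcribed) attention is confirming that the reverse Type-$2$ switching count in Lemma~\ref{l5.7}$(b)$ really is what one gets after the Type-$1$ clusters are gone: one must check that every triple of edges deleted in a reverse switching is indeed ``loose'' (contains no link), so that the relevant population size is $m-3h_2-3h_3-2h_4$ shifted appropriately, and that property $\bf(b)$ forbids any other configuration of three new edges from accidentally forming a different cluster type. Since the switching is structurally the same as Type-$1$, I expect no real obstacle here — the work is entirely a matter of matching indices — and the proof of Lemma~\ref{l5.7} is correctly omitted as ``the same logic'' as Lemma~\ref{l5.3}.
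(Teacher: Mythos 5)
Your proposal is correct and follows essentially the same route as the paper, which proves Corollary~\ref{c5.8} simply by the argument of Corollary~\ref{c5.4} with Lemma~\ref{l5.7} in place of Lemma~\ref{l5.3}: part $(a)$ via a single Type-$2$ switching plus Remark~\ref{r4.3}, and part $(b)$ via Lemma~\ref{l3.7} with the same choice of $A_1,A_2,B_1,B_2$ and the $O(r^6/n^3)$ bounds from Theorem~\ref{t4.2} absorbed into the error term. One small slip in your opening aside (immaterial to the corollary itself, since you invoke Lemma~\ref{l5.7} as stated): the number of ways to build a Type-$2$ cluster in a $(3r-4)$-set carries multiplicity $3$ rather than $\binom{4}{2}=6$, which is why the constant in Lemma~\ref{l5.7}$(b)$ is $18=6\cdot3$ and not $36$.
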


\begin{proof}
This is proved in the same way as Corollary~\ref{c5.4}.
\end{proof}

\subsection{Switchings of Type-$3$ clusters}\label{s:5.3}

Let $H\in \mathcal{C}_{0,0,h_3,h_4}^{+}$. A \textit{Type-$3$ switching}
from $H$ is used to reduce the number of Type-$3$ clusters in $H$,
after the numbers of Type-$1$ and Type-$2$ clusters have been
reduced to zero.
It is  defined in the same manner as Steps $0$-$3$  in Section~5.1.

\begin{remark}\label{r5.9}
A Type-$3$ switching reduces the number of Type-$3$ clusters
in $H$ by one without affecting other types of  clusters.
If $H\in \mathcal{C}_{0,0,h_3,h_4}^{++}$, then a Type-$3$ switching
from $H$ may violate the property
$\bf(g^*)$ for $\mathcal{H}_r^{++}(n,m)$, but
the resulting graph $H'''$ is in $\mathcal{C}_{0,0,h_3-1,h_4}^{+}$
because $\deg (v)\leq M_0^*+3= M_0$
for every vertex $v\in[n]$ of $H'''$.
\end{remark}

A \textit{reverse Type-$3$ switching} is the reverse of a  Type-$3$ switching.
A  reverse Type-$3$ switching from $H'''\in \mathcal{C}_{0,0,h_3-1,h_4}^{++}$
is defined by sequentially removing  three edges not
containing a link, then choosing a $(3r-4)$-set $T$ from $[n]$
such that no two vertices belong to any remaining edges of $H'''$,
then inserting three edges into $T$ such that
 they create a Type-$3$ cluster.

\begin{remark}\label{r5.10}
If $H'''\in \mathcal{C}_{0,0,h_3-1,h_4}^{++}$, then a reverse Type-$3$ switching from $H'''$ may also
violate the property $\bf(g^*)$  for $\mathcal{H}_r^{++}(n,m)$, but
the resulting graph $H$ is $\mathcal{C}_{0,0,h_3-1,h_4}^{+}$
because $\deg (v)\leq M_0^*+3=M_0$ for every vertex $v\in[n]$ of $H$.
\end{remark}

 Next we analyze Type-$3$ switchings to find a  relationship
 between the sizes of $\mathcal{C}_{0,0,h_3,h_4}^{++}$ and
 $\mathcal{C}_{0,0,h_3-1,h_4}^{++}$.

\begin{lemma}\label{l5.11}
Assume $r^{-2}n\leq m=o(r^{-3}n^{ \frac32})$,
$1\leq h_3\leq M_3$ and $0\leq h_4\leq M_4$.\\
$(a)$\ Let $H\in \mathcal{C}_{0,0,h_3,h_4}^{+}$.
Then the number of Type-$3$ switchings for $H$ is
\begin{align*}
h_3\biggl[N- \binom{r}{2}m \binom{n-2}{r-2}\biggr]^3
\biggl(1+O\Bigl( \frac{r^6mn\log (r^{-2}n)+r^8m^2}{n^{4}}\Bigr)\biggr).
\end{align*}
$(b)$\ Let $H'''\in \mathcal{C}_{0,0,h_3-1,h_4}^{+}$. Then the number
of reverse Type-$3$ switchings for $H'''$ is
\begin{align*}
& \binom{3r-4}{r} \binom{r}{2} \binom{2r-4}{r-2} \binom{m-3(h_3-1)-2h_4}{3}\\
&{\qquad}\times\biggl[ \binom{n}{3r-4}- \binom{r}{2}m \binom{n-2}{3r-6}\biggr]
\biggl(1+O\Bigl( \frac{r^6mn\log (r^{-2}n)+r^8m^2}{n^{4}}\Bigr)\biggr).
\end{align*}
\end{lemma}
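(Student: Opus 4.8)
The plan is to follow verbatim the structure used for Type-$1$ switchings in Lemma~\ref{l5.3}, adjusting only the combinatorial factor that counts how many ways a $(3r-4)$-set can be decorated into a Type-$3$ cluster. First, for part $(a)$, fix $H\in\mathcal{C}_{0,0,h_3,h_4}^{+}$ and let $\mathcal{S}(H)$ be the set of Type-$3$ switchings applicable to $H$. Since a Type-$3$ switching is defined by Steps $0$--$3$ exactly as in Section~\ref{s:5.1}, there are $h_3$ ways to select the Type-$3$ cluster to remove, and then in each of the three insertion steps Lemma~\ref{l3.5} (applied with $t=r$, noting $r\le t\le 3r-4$) gives $\bigl[N-\binom{r}{2}m\binom{n-2}{r-2}\bigr]\bigl(1+O(\frac{r^6mn\log(r^{-2}n)+r^8m^2}{n^4})\bigr)$ choices for the new edge; here we use that $H_0,H',H''$ all lie in $\mathcal{H}_r^{+}(n,m-\xi)$ for $\xi=O(1)$, which is what Lemma~\ref{l3.5} requires. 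Multiplying these gives the stated formula for $|\mathcal{S}(H)|$, the three error factors combining into a single one since $3$ is a constant.

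For part $(b)$, fix $H'''\in\mathcal{C}_{0,0,h_3-1,h_4}^{+}$ and let $\mathcal{S}'(H''')$ be the set of reverse Type-$3$ switchings. Such a switching first deletes three edges in sequence, none containing a link; since $H'''$ has only Type-$3$ and Type-$4$ clusters, the number of link-free edges is $m-3(h_3-1)-2h_4$, so there are $6\binom{m-3(h_3-1)-2h_4}{3}$ ordered ways. Next, by Lemma~\ref{l3.5} with $t=3r-4$, there are $\bigl[\binom{n}{3r-4}-\binom{r}{2}m\binom{n-2}{3r-6}\bigr]\bigl(1+O(\frac{r^6mn\log(r^{-2}n)+r^8m^2}{n^4})\bigr)$ ways to choose the $(3r-4)$-set $T$ whose vertices avoid sharing an edge of the remaining graph. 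Finally, one counts the number of ways to partition $T$ and place three edges forming a Type-$3$ cluster: this is where the Type-$3$ count $\binom{3r-4}{r}\binom{r}{2}\binom{2r-4}{r-2}$ replaces the factor $6\binom{3r-4}{r}\binom{r}{4}\binom{2r-4}{r-2}$ that appeared for Type-$1$. I would verify this by reading off the common-vertex structure of a Type-$3$ cluster from Figure~\ref{fig:1} (presumably three edges sharing a single common vertex, or some such configuration where two of the pairwise intersections coincide), and tallying: choose which $r$ vertices of $T$ form the first edge, choose the two shared vertices inside it, then the first edge's remaining vertices together with the chosen shared ones determine how the other $2r-4$ new vertices split between the second and third edges, giving $\binom{2r-4}{r-2}$. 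Combining the three counts, and noting $6\cdot\binom{r}{2}\cdot\binom{3r-4}{r}\binom{2r-4}{r-2}$ divided by an appropriate automorphism/ordering factor yields the coefficient $\binom{3r-4}{r}\binom{r}{2}\binom{2r-4}{r-2}$ with no extra numerical prefactor (in contrast to the $36$ and $18$ for Types $1$ and $2$), because a Type-$3$ cluster presumably has more symmetry; I would double-check this prefactor against the symmetry group of the Type-$3$ configuration.

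The one genuinely nontrivial point — and the one I would flag as the main obstacle — is correctly reading off from Figure~\ref{fig:1} the exact intersection pattern and automorphism group of a Type-$3$ cluster, since all the combinatorial coefficients (and the absence of a numerical prefactor here, as opposed to $36$ for Type~$1$ and $18$ for Type~$2$) hinge on that. Everything else is a transcription of the Type-$1$ argument: the applicability of Lemma~\ref{l3.5} is unchanged because the relevant graphs still lie in $\mathcal{H}_r^{+}(n,m-\xi)$ with $\xi=O(1)$ (guaranteed by Remark~\ref{r5.9} and the analogous remark for the reverse switching), the error terms are identical, and the bookkeeping of how many link-free edges remain is immediate once $h_1=h_2=0$ is assumed. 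Having established parts $(a)$ and $(b)$, the corresponding ratio corollary would then follow exactly as Corollary~\ref{c5.4} follows from Lemma~\ref{l5.3}, via Lemma~\ref{l3.7} with $A_1,A_2,B_1,B_2$ chosen as the $\mathcal{C}^{++}$ and $\mathcal{C}^{+}\setminus\mathcal{C}^{++}$ parts and with the bound $|A_2|/|A_1|,|B_2|/|B_1|=O(r^6/n^3)$ supplied by Theorem~\ref{t4.2}, so I would state it but not reprove it in detail.
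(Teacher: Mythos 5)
Your proposal is correct and follows exactly the route the paper intends: the paper's own proof of Lemma~\ref{l5.11} is omitted with a pointer to Lemma~\ref{l5.3}, and your argument is precisely that proof with the cluster-creation count adjusted. On the one point you flag, a Type-$3$ cluster is three edges all containing the same $2$-set (the only triangle in $G_H$ compatible with $3r-4$ vertices under property $\bf(b)$), so the ordered per-$T$ count $\binom{3r-4}{r}\binom{r}{2}\binom{2r-4}{r-2}$ overcounts by the automorphism factor $3!=6$, which exactly cancels the factor $6$ from the ordered deletion of the three edges and yields the coefficient with no numerical prefactor, as you anticipated.
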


\begin{proof}
The proof follows the same logic as the proof of Lemma~\ref{l5.3},
so we omit it.
%
%
\end{proof}

\begin{corollary}\label{c5.12}
With notation as above, for some $0\leq h_i\leq M_i$ with $3\leq i\leq 4$,\\
$(a)$\ If $\mathcal{C}_{0,0,h_3,h_4}^{++}=\emptyset$,
then $\mathcal{C}_{0,0,h_3+1,h_4}^{++}=\emptyset$.\\
$(b)$\
Let $h_3'=h_3'(h_4)$ be the first value of $h_3\leq M_3$
such that $\mathcal{C}_{0,0,h_3,h_4}^{++}=\emptyset$,
or $h_3'=M_3+1$ if no such value exists. Suppose that $n\to\infty$.
Then uniformly for $1\leq h_3< h_3'$,
\begin{align*}
 \frac{|\mathcal{C}_{0,0,h_3,h_4}^{++}|}{|\mathcal{C}_{0,0,h_3-1,h_4}^{++}|}
&= \frac{ \binom{3r-4}{r} \binom{r}{2} \binom{2r-4}{r-2} \binom{m-3(h_3-1)-2h_4}{3}
\bigl[ \binom{n}{3r-4}- \binom{r}{2}m \binom{n-2}{3r-6}\bigr]}{h_3\bigl[N- \binom{r}{2}m \binom{n-2}{r-2}\bigr]^3}\\
&{\qquad}\times\biggl(1+O\Bigl( \frac{r^6mn\log (r^{-2}n)+r^8m^2}{n^{4}}\Bigr)\biggr).
\end{align*}
\end{corollary}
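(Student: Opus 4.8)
The plan is to carry out the same argument that proves Corollary~\ref{c5.4}, with the Type-$3$ switching counts of Lemma~\ref{l5.11} playing the role of the Type-$1$ counts of Lemma~\ref{l5.3}, Remarks~\ref{r5.9} and~\ref{r5.10} replacing Remarks~\ref{r5.1} and~\ref{r5.2}, and Theorem~\ref{t4.2}, Remark~\ref{r4.3} and Lemma~\ref{l3.7} used unchanged.

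For part $(a)$ I would argue by contradiction: assume $\mathcal{C}_{0,0,h_3+1,h_4}^{++}\neq\emptyset$, pick $H$ in it, and apply one Type-$3$ switching; by Remark~\ref{r5.9} the resulting graph $H'''$ lies in $\mathcal{C}_{0,0,h_3,h_4}^{+}$, so $|\mathcal{C}_{0,0,h_3,h_4}^{+}|\neq 0$, and then Remark~\ref{r4.3} gives $|\mathcal{C}_{0,0,h_3,h_4}^{++}|\neq 0$, contradicting the hypothesis of $(a)$. Consequently, once $\mathcal{C}_{0,0,h_3',h_4}^{++}=\emptyset$ all later terms $\mathcal{C}_{0,0,h_3'+1,h_4}^{++},\dots,\mathcal{C}_{0,0,M_3,h_4}^{++}$ are empty too, so $h_3'$ is well defined and for $1\le h_3<h_3'$ both sets appearing in the ratio of $(b)$ are nonempty and the quotient makes sense.

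For part $(b)$ I would invoke Lemma~\ref{l3.7} with
\begin{align*}
A_1&=\mathcal{C}_{0,0,h_3,h_4}^{++}, & A_2&=\mathcal{C}_{0,0,h_3,h_4}^{+}-\mathcal{C}_{0,0,h_3,h_4}^{++},\\
B_1&=\mathcal{C}_{0,0,h_3-1,h_4}^{++}, & B_2&=\mathcal{C}_{0,0,h_3-1,h_4}^{+}-\mathcal{C}_{0,0,h_3-1,h_4}^{++},
\end{align*}
where a vertex $H\in A=A_1\cup A_2$ is joined to $H'''\in B=B_1\cup B_2$ precisely when $H'''$ arises from $H$ by a Type-$3$ switching. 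Remarks~\ref{r5.9} and~\ref{r5.10} say exactly that a Type-$3$ switching carries $A_1$ into $B$ and a reverse Type-$3$ switching carries $B_1$ into $A$, which is the bipartite structure Lemma~\ref{l3.7} needs. Theorem~\ref{t4.2} gives $|A_2|/|A_1|=O(r^6/n^3)$ and $|B_2|/|B_1|=O(r^6/n^3)$; Lemma~\ref{l5.11}$(a)$ shows that every vertex of $A$ has degree $h_3\bigl[N-\binom{r}{2}m\binom{n-2}{r-2}\bigr]^3\bigl(1+O\bigl((r^6mn\log(r^{-2}n)+r^8m^2)/n^4\bigr)\bigr)$, and Lemma~\ref{l5.11}$(b)$ shows that every vertex of $B$ has degree equal to the numerator of the claimed ratio times the same factor. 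Substituting these bounds into the two inequalities of Lemma~\ref{l3.7} collapses the upper and lower estimates for $|A_1|/|B_1|$ to the asserted formula, the $O(r^6/n^3)$ from Theorem~\ref{t4.2} being absorbed into the larger error term.

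No genuinely new obstacle arises: the whole argument runs parallel to Corollaries~\ref{c5.4} and~\ref{c5.8}. The only points that need a little attention are confirming that the switching images land in the ``$+$'' classes rather than merely the ``$++$'' classes --- which is exactly what the degree bounds $\deg(v)\le M_0^*+3=M_0$ recorded in Remarks~\ref{r5.9} and~\ref{r5.10} provide --- and checking that the several $O$-contributions merge into the single error term displayed in the statement.
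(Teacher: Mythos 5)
Your proposal is correct and follows exactly the route the paper intends: the paper's own proof of Corollary~\ref{c5.12} is simply the statement that it is proved in the same way as Corollary~\ref{c5.4}, and your argument is a faithful transcription of that proof with Lemma~\ref{l5.11}, Remarks~\ref{r5.9} and~\ref{r5.10} substituted for Lemma~\ref{l5.3}, Remarks~\ref{r5.1} and~\ref{r5.2}, together with the same uses of Theorem~\ref{t4.2}, Remark~\ref{r4.3} and Lemma~\ref{l3.7}. No gaps.
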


\begin{proof}
This is proved in the same way as Corollary~\ref{c5.4}.
\end{proof}

\subsection{Switchings of Type-$4$ clusters}\label{s:5.5}

Let $H\in \mathcal{C}_{0,0,0,h_4}^{+}$.
A \textit{Type-$4$ switching} from $H$
is used to reduce the number of Type-$4$ clusters in $H$ after the
Type-$1$, Type-$2$ and Type-$3$ clusters have been removed.

\noindent{\bf Step 0.}\  Take any one of these $h_4$ Type-$4$ clusters in $H$, denoted by $\{e,f\}$,
and remove it from $H$.
Define $H_0$ with the same vertex set $[n]$ and the edge set $E(H_0)=E(H)\backslash \{e,f\}$.

\noindent{\bf Step 1.}\ Take any $r$-set  from $[n]$ such that no two vertices
belong to the same edge of $H_0$ and insert one new edge
to the $r$-set.  The new graph is denoted by $H'$.

\noindent{\bf Step 2.}\ Repeat the process of Step 1 in $H'$, that is, insert another new edge to
an $r$-set of $[n]$ such that %
no two vertices belong to the same edge of $H'$.
The resulting graph is denoted by $H''$.

Note that the two new edges may or may not have a vertex in common.

\begin{remark}\label{r5.13}
If $H\in \mathcal{C}_{0,0,0,h_4}^{++}$, then a Type-$4$ switching
from $H$ may violate the property $\bf(g^*)$ for $\mathcal{H}_r^{++}(n,m)$, but
the resulting graph $H''$ is in $\mathcal{C}_{0,0,0,h_4-1}^{++}$
because $\deg (v)\leq M_0^*+2< M_0$ for each vertex $v\in[n]$ of  $H''$.
\end{remark}

A \textit{reverse Type-$4$ switching} is the reverse of a Type-$4$ switching.
A reverse Type-$4$ switching from $H''\in \mathcal{C}_{0,0,0,h_4-1}^{++}$
is defined by sequentially removing  two edges  not containing a
link in $H''$, then choosing a $(2r-2)$-set $T$ from $[n]$
such that at most two vertices belong to some remaining edge of $H''$,
then inserting  two edges into $T$ such that
they create a Type-$4$ cluster.

\begin{remark}\label{r5.14}
If $H''\in \mathcal{C}_{0,0,0,h_4-1}^{++}$, then a reverse
Type-$4$ switching from $H''$ may also violate the property $\bf(g^*)$
for $\mathcal{H}_r^{++}(n,m)$, but the resulting graph $H$ is
in $\in\mathcal{C}_{0,0,0,h_4}^{+}$
because $\deg (v)\leq M_0^*+2<M_0$ for each vertex $v\in[n]$ of $H$.
\end{remark}

Next we analyze Type-$4$ switchings to find a relationship
between the sizes of $\mathcal{C}_{0,0,0,h_4}^{++}$ and
$\mathcal{C}_{0,0,0,h_4-1}^{++}$.

\begin{lemma}\label{l5.15}
Assume that $r^{-2}n\leq m=o(r^{-3}n^{ \frac32})$ and $1\leq h_4\leq M_4$.\\
$(a)$\ Let $H\in \mathcal{C}_{0,0,0,h_4}^{+}$.
Then the number of Type-$4$ switchings for $H$ is \\
\[
h_4\biggl[N- \binom{r}{2}m \binom{n-2}{r-2}\biggr]^2
\biggl(1+O\Bigl( \frac{r^6mn\log (r^{-2}n)+r^8m^2}{n^{4}}\Bigr)\biggr).
\]
$(b)$\ Let $H''\in \mathcal{C}_{0,0,0,h_4-1}^{+}$. Then the number of reverse
Type-$4$ switchings for $H''$ is
\begin{align*}
& \binom{2r-2}{2} \binom{2r-4}{r-2} \binom{m-2(h_4-1)}{2}
\biggl[ \binom{n}{2r-2}- \frac{(r^2-r-1)}{(r-1)(2r-3)} \binom{r}{2}m \binom{n-2}{2r-4}\biggr]\\
&{\qquad}\times\biggl(1+O\Bigl( \frac{r^6mn\log (r^{-2}n)+r^8m^2}{n^{4}}\Bigr)\biggr).
\end{align*}
\end{lemma}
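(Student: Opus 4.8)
The plan is to mimic the proof of Lemma~\ref{l5.3}, counting Type-$4$ switchings and their reverses directly, the only new feature being that a reverse Type-$4$ switching chooses a $(2r-2)$-set $T$ in which \emph{at most} two vertices lie in a common remaining edge of $H''$, so we must count $N_{2r-2}\cup N_{2r-2}'$ rather than just $N_{2r-2}$; this explains the unusual coefficient $\frac{r^2-r-1}{(r-1)(2r-3)}$ appearing in part~$(b)$.

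For part~$(a)$, fix $H\in\mathcal{C}_{0,0,0,h_4}^{+}$. A Type-$4$ switching selects one of the $h_4$ Type-$4$ clusters (here we use that there are exactly $h_4$ of them and no other clusters remain), deletes its two edges via Step~0, and then in Steps~1 and~2 inserts two new edges, each at an $r$-set meeting no remaining edge in two vertices. By Lemma~\ref{l3.5} with $t=r$ and $\xi=O(1)$, each insertion has $\bigl[N-\binom r2 m\binom{n-2}{r-2}\bigr](1+O(\frac{r^6mn\log(r^{-2}n)+r^8m^2}{n^4}))$ choices, and multiplying the two factors together with the $h_4$ gives the claimed count; the error terms combine by the usual $(1+O(\varepsilon))^2=1+O(\varepsilon)$ since $\varepsilon=o(1)$ under $m=o(r^{-3}n^{3/2})$.

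For part~$(b)$, fix $H''\in\mathcal{C}_{0,0,0,h_4-1}^{+}$. First choose an ordered pair of edges of $H''$ that are not in any cluster (equivalently contain no link); there are $2\binom{m-2(h_4-1)}{2}$ such ordered pairs, giving the $\binom{m-2(h_4-1)}{2}$ factor after the inverse of the ordering is absorbed into the $\binom{2r-2}{2}\binom{2r-4}{r-2}$ ways of splitting a $(2r-2)$-set into a Type-$4$ cluster (the $\binom{2r-2}{2}$ picks the two shared vertices, $\binom{2r-4}{r-2}$ splits the rest). Delete those two edges, then choose a $(2r-2)$-set $T$ of $[n]$ in which no remaining edge has three or more vertices, i.e.\ $|T|_{2r-2}=|N_{2r-2}|+|N_{2r-2}'|$ in the notation of Lemmas~\ref{l3.5}--\ref{l3.6}. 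Using Lemma~\ref{l3.5} with $t=2r-2$ and Lemma~\ref{l3.6}, add the two estimates: the $N_{2r-2}$ term contributes $\binom{n}{2r-2}-\binom r2 m\binom{n-2}{2r-4}$ and the $N_{2r-2}'$ term contributes $+\binom r2 m\binom{n-2}{2r-4}\cdot\frac{\binom{n-2}{2r-4}}{\binom{n}{2r-2}}\cdot(\cdots)$; collecting $\binom{n-2}{2r-4}$ against $\binom{n}{2r-2}$ via $\binom{n-2}{2r-4}/\binom{n}{2r-2}=[2r-2]_2/[n]_2$ produces exactly the correction coefficient $\frac{r^2-r-1}{(r-1)(2r-3)}$, and the relative error stays within the stated $O(\cdot)$ bound since the error in Lemma~\ref{l3.6} is weaker only by a factor $O(rmn/n)=O(rm)$ which, divided against the main $N_{2r-2}$ term of order $\binom{n}{2r-2}$, is again $O(\frac{r^8m^2}{n^4})$ after using $m\ge r^{-2}n$.

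The main obstacle is the bookkeeping of the constant $\frac{r^2-r-1}{(r-1)(2r-3)}$: one must carefully combine the main term from Lemma~\ref{l3.5} with the secondary contribution from Lemma~\ref{l3.6}, check that the binomial ratios $[2r-2]_2/[n]_2$ telescope correctly, and verify that the weaker error term in Lemma~\ref{l3.6} is still absorbed into $O(\frac{r^6mn\log(r^{-2}n)+r^8m^2}{n^4})$ under the standing hypothesis $r^{-2}n\le m=o(r^{-3}n^{3/2})$. Everything else is a direct transcription of the Type-$1$ argument with ``three edges'' replaced by ``two edges'' and ``$(3r-4)$-set'' replaced by ``$(2r-2)$-set'', so I would present part~$(a)$ in full and part~$(b)$ with emphasis on the $T$-counting step, omitting the now-routine details shared with Lemma~\ref{l5.3}.
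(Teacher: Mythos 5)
Part~$(a)$ and the overall skeleton of part~$(b)$ match the paper's proof: you correctly isolate the new feature that the $(2r-2)$-set $T$ in a reverse Type-$4$ switching may contain a pair of vertices lying in a common remaining edge, so that Lemma~\ref{l3.6} must be combined with Lemma~\ref{l3.5}. However, your explanation of where the coefficient $\frac{r^2-r-1}{(r-1)(2r-3)}$ comes from is wrong, and the computation you describe would not produce it. You claim the $N_{2r-2}'$ contribution is $\binom{r}{2}m\binom{n-2}{2r-4}\cdot\frac{\binom{n-2}{2r-4}}{\binom{n}{2r-2}}\cdot(\cdots)$ and that the coefficient emerges from the ratio $\binom{n-2}{2r-4}/\binom{n}{2r-2}=[2r-2]_2/[n]_2$. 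That ratio is $O(r^2/n^2)$ and depends on $n$, whereas $\frac{r^2-r-1}{(r-1)(2r-3)}$ is a bounded function of $r$ alone (roughly $\frac12$); no telescoping of binomials in $n$ can produce it. Relatedly, simply counting admissible sets as $|N_{2r-2}|+|N_{2r-2}'|$ is not the right quantity either: that sum is approximately $\binom{n}{2r-2}$, while the correct bracket is $\binom{n}{2r-2}-\frac{r^2-r-1}{(r-1)(2r-3)}\binom{r}{2}m\binom{n-2}{2r-4}$, which lies strictly between $|N_{2r-2}|$ and $|N_{2r-2}|+|N_{2r-2}'|$.

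The missing idea is that the number of ways to build a Type-$4$ cluster inside $T$ depends on which kind of $T$ was chosen. For $T\in N_{2r-2}$ there are $\frac12\binom{2r-2}{2}\binom{2r-4}{r-2}$ placements; but if $T\in N_{2r-2}'$ contains a pair $\{u,v\}$ lying in a remaining edge $e$, then to keep the resulting graph in $\mathcal{C}_{0,0,0,h_4}^{+}$ each new edge must meet $e$ in at most one vertex, forcing $u$ and $v$ into different new edges and excluding both from the two shared vertices; this leaves only $\binom{2r-4}{2}\binom{2r-6}{r-3}$ placements. The paper therefore takes the weighted sum $\frac12\binom{2r-2}{2}\binom{2r-4}{r-2}\bigl[\binom{n}{2r-2}-\binom{r}{2}m\binom{n-2}{2r-4}\bigr]+\binom{2r-4}{2}\binom{2r-6}{r-3}\binom{r}{2}m\binom{n-2}{2r-4}$ (times the $2\binom{m-2(h_4-1)}{2}$ edge deletions), and the identity $\frac{2\binom{2r-4}{2}\binom{2r-6}{r-3}}{\binom{2r-2}{2}\binom{2r-4}{r-2}}=\frac{(r-2)^2}{(r-1)(2r-3)}$ together with $1-\frac{(r-2)^2}{(r-1)(2r-3)}=\frac{r^2-r-1}{(r-1)(2r-3)}$ yields the stated formula. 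Your error-term bookkeeping for the Lemma~\ref{l3.6} contribution is essentially sound once this weighting is put in place.
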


\begin{proof}
Since the likely number of Type-$4$ clusters in a random hypergraph is greater than
that of the other cluster types, our counting here must be more careful.

$(a)$\ Let $H\in \mathcal{C}_{0,0,0,h_4}^{+}$.
Define the set $\mathcal{S}(H)$ of all Type-$4$ switchings which can be applied to $H$.
By the same proof as given for Lemma~\ref{l5.3}, Lemma~\ref{l5.7} and Lemma~\ref{l5.11}, we have
\[
|\mathcal{S}(H)|=h_4\biggl[N- \binom{r}{2}m \binom{n-2}{r-2}\biggr]^2
\biggl(1+O\Bigl( \frac{r^6mn\log (r^{-2}n)+r^8m^2}{n^{4}}\Bigr)\biggr).
\]

$(b)$\ Conversely, suppose that $H''\in \mathcal{C}_{0,0,0,h_4-1}^{+}$.
Similarly,  let $\mathcal{S}'(H'')$ be the set of all reverse Type-$4$ switchings for $H''$.
There are exactly $2 \binom{m-2(h_4-1)}{2}$ ways to delete two edges in sequence
such that neither of them contain a link in $H''$.
Unlike the reverse Type-$1$, Type-$2$ and Type-$3$ switchings, the chosen $(2r-2)$-set
may include two vertices belong to the same edge of $H''$, as shown by the
dashed lines in Figure~\ref{fig:3}.
\begin{figure}[!htb]
\centering
\includegraphics[width=0.3\textwidth]{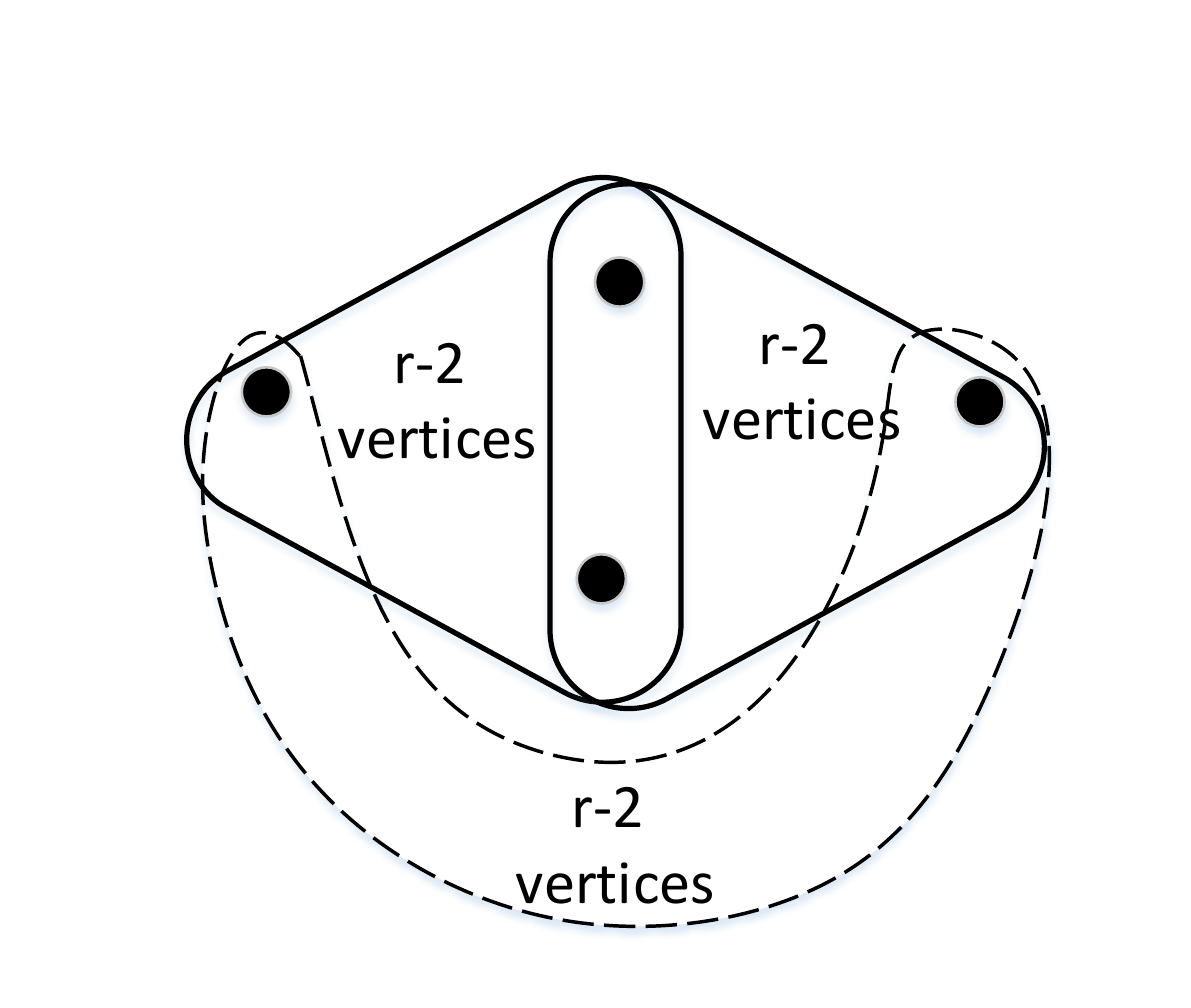}
\caption{$(2r-2)$-subset in the reverse Type-$4$ switching\label{fig:3}}
\end{figure}

Firstly, by Lemma~\ref{l3.5}, there are
\begin{align*}
\biggl[ \binom{n}{2r-2}- \binom{r}{2}m \binom{n-2}{2r-4}\biggr]
\biggl(1+O\Bigl( \frac{r^6mn\log (r^{-2}n)+r^8m^2}{n^{4}}\Bigr)\biggr)
\end{align*}
ways to choose a $(2r-2)$-set $T$ from $[n]$ such that no two vertices
belong to the same edge of~$H''$.
For every such~$T$,
there are $\frac12 \binom{2r-2}{2} \binom{2r-4}{r-2}$ ways to create a Type-$4$ cluster.

Secondly, by Lemma~\ref{l3.6}, there are
\begin{align*}
 \binom{r}{2}m \binom{n-2}{2r-4}\biggl(1+O\Bigl( \frac{r^2n\log(r^{-2}n)+r^4m}{n^2}\Bigr)\biggr)
\end{align*}
ways to choose a $(2r-2)$-set $T$ from $[n]$ such that exactly two vertices
belong to the same edge of $H''$. For every such $T$,
 there are $ \binom{2r-4}{2} \binom{2r-6}{r-3}$ ways to create a Type-$4$
 cluster.  Thus, we  have
\begin{align*}
|\mathcal{S}'&(H'')|\\
&=2 \binom{m-2(h_4-1)}{2}\biggl[ \frac{ \binom{2r-2}{2} \binom{2r-4}{r-2}}{2}
\biggl( \binom{n}{2r-2}- \binom{r}{2}m \binom{n-2}{2r-4}\biggr)\\
&{\qquad}+
 \binom{2r-4}{2} \binom{2r-6}{r-3} \binom{r}{2}m \binom{n-2}{2r-4}\biggr]
\biggl(1+O\Bigl( \frac{r^6mn\log (r^{-2}n)+r^8m^2}{n^{4}}\Bigr)\biggr)\\
&= \binom{2r-2}{2} \binom{2r-4}{r-2} \binom{m-2(h_4-1)}{2}
\biggl[ \binom{n}{2r-2}- \frac{(r^2-r-1)}{(r-1)(2r-3)} \binom{r}{2}m \binom{n-2}{2r-4}\biggr]\\
&{\qquad}\times\biggl(1+O\Bigl( \frac{r^6mn\log (r^{-2}n)+r^8m^2}{n^{4}}\Bigr)\biggr).\qedhere
\end{align*}
\end{proof}

\begin{corollary}\label{c5.16}
With notation as above, for some $0\leq h_4\leq M_4$,\\
$(a)$\ If $\mathcal{C}_{0,0,0,h_4}^{++}=\emptyset$, then $\mathcal{C}_{0,0,0,h_4+1}^{++}=\emptyset$.
\\
$(b)$\
Let $h_4'$ be the first value of $h_4\leq M_4$
such that $\mathcal{C}_{0,0,0,h_4}^{++}=\emptyset$,
or $h_4'=M_4+1$ if no such value exists. Suppose that $n\to\infty$.
Then uniformly for $1\leq h_4< h_4'$,
\begin{align*}
 \frac{|\mathcal{C}_{0,0,0,h_4}^{++}|}{|\mathcal{C}_{0,0,0,h_4-1}^{++}|}
  &=
 \frac{ \binom{2r-2}{2} \binom{2r-4}{r-2} \binom{m-2(h_4-1)}{2}
 \bigl[ \binom{n}{2r-2}- \frac{(r^2-r-1)}{(r-1)(2r-3)} \binom{r}{2}m \binom{n-2}{2r-4}\bigr]}
      {h_4\bigl[N- \binom{r}{2}m \binom{n-2}{r-2}\bigr]^2}\\
 &{\qquad}\times\biggl(1+O\Bigl( \frac{r^6mn\log (r^{-2}n)+r^8m^2}{n^{4}}\Bigr)\biggr).
\end{align*}
\end{corollary}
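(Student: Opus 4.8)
The plan is to derive Corollary~\ref{c5.16} from Lemma~\ref{l5.15} in exactly the same way Corollary~\ref{c5.4} was derived from Lemma~\ref{l5.3} via Lemma~\ref{l3.7}, with the only change being that the bipartite graph records Type-$4$ switchings rather than Type-$1$ switchings. For part~$(a)$, suppose $\mathcal{C}_{0,0,0,h_4+1}^{++}\neq\emptyset$ and pick $H$ in it; apply a Type-$4$ switching from $H$ (Step~0 through Step~2). By Remark~\ref{r5.13} the result $H''$ lies in $\mathcal{C}_{0,0,0,h_4}^{++}$, and in particular $\mathcal{C}_{0,0,0,h_4}^{++}\neq\emptyset$, contradicting the hypothesis. (If one prefers uniformity with the earlier corollaries, one can instead land in $\mathcal{C}_{0,0,0,h_4}^{+}$ and invoke Remark~\ref{r4.3} to conclude $\mathcal{C}_{0,0,0,h_4}^{++}\neq\emptyset$; either route works.)

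For part~$(b)$, first observe that part~$(a)$ forces the sets $\mathcal{C}_{0,0,0,h_4}^{++}$ to vanish for all $h_4\geq h_4'$, so the definition of $h_4'$ is consistent and for $1\leq h_4<h_4'$ both $\mathcal{C}_{0,0,0,h_4}^{++}$ and $\mathcal{C}_{0,0,0,h_4-1}^{++}$ are nonempty and the ratio on the left is well defined. Then set, in the notation of Lemma~\ref{l3.7},
\begin{align*}
A_1&=\mathcal{C}_{0,0,0,h_4}^{++}, &
A_2&=\mathcal{C}_{0,0,0,h_4}^{+}-\mathcal{C}_{0,0,0,h_4}^{++},\\
B_1&=\mathcal{C}_{0,0,0,h_4-1}^{++}, &
B_2&=\mathcal{C}_{0,0,0,h_4-1}^{+}-\mathcal{C}_{0,0,0,h_4-1}^{++},
\end{align*}
and let the edges of the bipartite graph be the Type-$4$ switchings: an edge joins $H\in A$ to $H''\in B$ whenever a single Type-$4$ switching carries $H$ to $H''$. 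Remark~\ref{r5.13} guarantees that a Type-$4$ switching applied to a member of $A_1$ produces a member of $B=B_1\cup B_2$, and Remark~\ref{r5.14} guarantees that a reverse Type-$4$ switching applied to a member of $B_1$ produces a member of $A=A_1\cup A_2$; hence the degree counts from Lemma~\ref{l5.15} apply on the appropriate sides, with $d_{max}^A$ and $d_{min}^{A_1}$ read off from Lemma~\ref{l5.15}$(a)$ and $d_{max}^B$ and $d_{min}^{B_1}$ from Lemma~\ref{l5.15}$(b)$.

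Finally, Theorem~\ref{t4.2} (applied with $(h_1,h_2,h_3,h_4)=(0,0,0,h_4)$ and $(0,0,0,h_4-1)$) gives $|A_2|/|A_1|=O(r^6/n^3)$ and $|B_2|/|B_1|=O(r^6/n^3)$, so the two correction factors $(1+|A_2|/|A_1|)^{-1}$ and $(1+|B_2|/|B_1|)$ in Lemma~\ref{l3.7} contribute only a further $1+O(r^6/n^3)$, which is absorbed into the already-present error term $O\bigl(\frac{r^6mn\log(r^{-2}n)+r^8m^2}{n^4}\bigr)$ since $m\geq r^{-2}n$. Sandwiching $|A_1|/|B_1|$ between the lower bound $\frac{d_{min}^{B_1}}{d_{max}^A}(1+|A_2|/|A_1|)^{-1}$ and the upper bound $\frac{d_{max}^B}{d_{min}^{A_1}}(1+|B_2|/|B_1|)$, and noting that the $H$-independent main terms in Lemma~\ref{l5.15}$(a)$ and the $H''$-independent main terms in Lemma~\ref{l5.15}$(b)$ agree up to the stated relative error, yields the displayed formula. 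There is no real obstacle here; the only point requiring a moment's care is the bookkeeping that the reverse Type-$4$ switching has two distinct $(2r-2)$-set regimes (the ``exactly two vertices in a common edge'' case counted via Lemma~\ref{l3.6}), but that subtlety has already been discharged inside the proof of Lemma~\ref{l5.15}$(b)$, so for the corollary it suffices to quote Lemma~\ref{l5.15} as a black box, exactly as Corollary~\ref{c5.4} quotes Lemma~\ref{l5.3}.
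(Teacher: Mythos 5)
Your proposal is correct and follows essentially the same route as the paper, which proves Corollary~\ref{c5.16} "in the same way as Corollary~\ref{c5.4}": part~$(a)$ by applying a single Type-$4$ switching and invoking Remark~\ref{r4.3}, and part~$(b)$ by feeding the degree bounds of Lemma~\ref{l5.15} and the ratios $|A_2|/|A_1|,|B_2|/|B_1|=O(r^6/n^3)$ from Theorem~\ref{t4.2} into Lemma~\ref{l3.7} with the same choice of $A_1,A_2,B_1,B_2$. Your closing observation that the two-regime count for the $(2r-2)$-sets is already encapsulated in Lemma~\ref{l5.15}$(b)$ is exactly the right reason the corollary needs no extra work beyond quoting that lemma.
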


\begin{proof}
This is proved in the same way as Corollary~\ref{c5.4}.
\end{proof}

\section{Analysis of switchings}\label{s:6}

In this section, we estimate the sum
\[
\sum_{h_4=0}^{M_4}\sum_{h_3=0}^{M_3}\sum_{h_2=0}^{M_2}\sum_{h_1=0}^{M_1}
 \frac{|\mathcal{C}_{h_1,h_2,h_3,h_4}^{++}|}{|\mathcal{C}_{0,0,0,0}^{++}|}
\]
to finish the proof of the case
$r^{-2}n\leq m=o(r^{-3}n^{ \frac32})$ in accordance with Remark~\ref{r4.5}.
We will need the following summation lemmas from \cite{green06}, and
state them here for completeness.

\begin{lemma}[\cite{green06}, Corollary~4.3]\label{l6.1}
Let $K$, $N$ be integers with $N\geq 2$ and $0\leq K\leq N$. Suppose that
there are real numbers $\delta_i$ for $1\leq i\leq N$, and
$\gamma_i$ for $0\leq i\leq K$ such that
$\sum_{j=1}^{i}|\delta_j|\leq \sum_{j=0}^K \gamma_j [i]_j< \frac15$
for $1\leq i\leq N$. Let real numbers $A(i)$, $B(i)$ be given such that
$A(i)\geq 0$ and $1-(i-1)B(i)\geq 0$. Define $A_1=\min_{i=1}^NA(i)$, $A_2=\max_{i=1}^NA(i)$,
$B_1=\min_{i=1}^NB(i)$ and $B_2=\max_{i=1}^NB(i)$. Also suppose all $A\in [A_1,A_2]$,
$B\in [B_1,B_2]$ and $c$ be real numbers such that
$c>2e$, $0\leq Ac< N-K+1$ and $|BN|<1$. Define $n_0,n_1,\ldots, n_N$ by $n_0=1$ and
\[
 \frac{n_i}{n_{i-1}}= \frac{A(i)}{i}\bigl(1-(i-1)B(i)\bigr)\bigl(1+\delta_i\bigr)
\]
for $1\leq i\leq N$, with the following interpretation: if $A(i)= 0$ or
$1-(i-1)B(i)=0$, then $n_j=0$ for $i\leq j\leq N$. Then
\[
\Sigma_1\leq \sum_{i=0}^{N}n_i\leq \Sigma_2,
\]
where
\begin{align*}
\Sigma_1&=\exp\Bigl[A_1- \dfrac12A_1^2B_2-4\sum_{j=0}^{K}\gamma_j
\bigl(3A_1\bigr)^j\Bigr]- \dfrac14\bigl(2e/c\bigr)^N,\\
\Sigma_2&=\exp\Bigl[A_2- \dfrac12A_2^2B_1+ \dfrac12A_2^3B_1^2+
4\sum_{j=0}^{K}\gamma_j\bigl(3A_2\bigr)^j\Bigr]+ \dfrac14\bigl(2e/c\bigr)^N.
\end{align*}
\end{lemma}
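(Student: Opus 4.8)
Since this is Corollary~4.3 of~\cite{green06}, the quickest option is to quote it; but let me indicate the route one would take to reprove it, since the shape of the bounds is forced by a single model computation. Write $n_i=\prod_{j=1}^{i}\frac{A(j)}{j}\bigl(1-(j-1)B(j)\bigr)\bigl(1+\delta_j\bigr)$, with the convention that $n_j=0$ for all $j\geq i$ once a factor in the product vanishes. The plan is fourfold: (i) evaluate $\sum_i n_i$ in the constant-coefficient, unperturbed case; (ii) replace $A(j),B(j)$ throughout by the extreme constants $A_1,A_2,B_1,B_2$, with the choice in each position oriented so as to keep all factors nonnegative (using $1-(i-1)B(i)\geq0$) and to make the inequality go the right way, thereby trapping $\sum_{i\leq N}n_i$ between two model sums; (iii) absorb the perturbations $\delta_j$ via the hypothesis $\sum_{j\leq i}|\delta_j|\leq\sum_{j=0}^{K}\gamma_j[i]_j<\tfrac15$; and (iv) bound the tail $\sum_{i>N}n_i$ geometrically using $c>2e$ together with $Ac<N-K+1$ and $|BN|<1$.

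For step (i): if $n_i/n_{i-1}=\frac{A}{i}\bigl(1-(i-1)B\bigr)$ then $n_i=\frac{A^i}{i!}\prod_{\ell=0}^{i-1}(1-\ell B)$, which for $B=1/K$ with $K$ an integer is exactly $\binom{K}{i}(AB)^i$, so $\sum_{i\geq0}n_i=(1+AB)^{1/B}=\exp\bigl(A-\tfrac12A^2B+\tfrac13A^3B^2-\cdots\bigr)$, and the same expansion serves as a two-sided bound for general small $B$. Truncating after the quadratic term for the lower bound (the discarded tail being nonnegative since $|AB|<1$, which follows from $|BN|<1$ and $A<N$) gives the exponent $A_1-\tfrac12A_1^2B_2$, while truncating after the cubic term with a slightly enlarged coefficient ($\tfrac12$ in place of $\tfrac13$, to dominate the remaining tail even when $B$ is negative) gives $A_2-\tfrac12A_2^2B_1+\tfrac12A_2^3B_1^2$; these are precisely the exponents in $\Sigma_1$ and $\Sigma_2$.

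For step (iii): bound $\prod_{j\leq i}(1+\delta_j)$ above by $\exp\bigl(\sum_{j\leq i}|\delta_j|\bigr)\leq 1+2\sum_{j=0}^{K}\gamma_j[i]_j$ and below by $1-\sum_{j=0}^{K}\gamma_j[i]_j$, both valid because the exponent is below $\tfrac15$. Since $[i]_j\,\frac{A^i}{i!}=A^j\frac{A^{i-j}}{(i-j)!}$, multiplying the model terms by $[i]_j$ and summing over $i$ costs a factor bounded, up to absolute constants, by $(3A)^j$ (the extra slack coming from the $\prod(1-\ell B)$ factors, each within a bounded ratio of $1$ because $|BN|<1$); collecting the contributions over $0\leq j\leq K$ and folding in the numerical constant from the $e^x\leq1+2x$ step yields the correction $\pm 4\sum_{j=0}^{K}\gamma_j(3A)^j$ in the exponent. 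For step (iv): for $i>N$ the recurrence gives $n_i/n_{i-1}\leq\frac{A_2}{i}\cdot O(1)<\frac{2e}{c}$ because $A_2 c<N-K+1$ and the $B$- and $\delta$-factors are $O(1)$, so $\sum_{i>N}n_i$ is dominated by a geometric series of ratio $2e/c<1$ and is at most $\tfrac14(2e/c)^N$ after a crude bound on $n_N$ --- exactly the additive term in $\Sigma_1,\Sigma_2$. None of the ideas is hard; the real work, and the main obstacle, is the bookkeeping: pinning down the numerical constants ($3$, $4$, $\tfrac14$) so that the inequalities hold on the nose, orienting every substitution correctly when $B_1,B_2$ may be of either sign, and handling the degenerate case where some $A(i)$ or $1-(i-1)B(i)$ vanishes so that the sequence terminates early, which is why the hypotheses demand $N\geq K$ and $Ac<N-K+1$.
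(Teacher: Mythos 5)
The paper gives no proof of this lemma at all: it is quoted verbatim from \cite{green06} (Corollary~4.3) and stated ``for completeness,'' so your primary move --- citing that reference --- is exactly what the paper does. Your supplementary sketch (model computation $\sum_i \frac{A^i}{i!}\prod_{\ell<i}(1-\ell B)\approx\exp(A-\tfrac12A^2B+\cdots)$, absorption of the $\delta_j$ via the $\gamma_j[i]_j$ bound, and control of the truncation at $N$ via $c>2e$ and $Ac<N-K+1$) is a fair outline of the argument actually used in \cite{green06}, though as you note it stops short of pinning down the constants and so is not itself a complete proof.
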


\begin{lemma}[\cite{green06}, Corollary~4.5]\label{l6.2}
Let $N\geq 2$ be an integer, and for $1\leq i\leq N$, let real numbers
$A(i)$, $B(i)$ be given such that $A(i)\geq 0$ and $1-(i-1)B(i)\geq 0$.
Define $A_1=\min_{i=1}^NA(i)$, $A_2=\max_{i=1}^NA(i)$, $C_1=\min_{i=1}^NA(i)B(i)$
and $C_2=\max_{i=1}^NA(i)B(i)$. Suppose that there exists a real number  $\hat{c}$ with $0<\hat{c}< \frac{1}{3}$
such that $\max\{A/N,|C|\}\leq \hat{c}$ for all $A\in [A_1,A_2]$, $C\in[C_1,C_2]$.
Define $n_0$, $n_1$, $\ldots$, $n_N$ by $n_0=1$ and
\[
 \frac{n_i}{n_{i-1}}= \frac{A(i)}{i}\left(1-(i-1)B(i)\right)
\]
for $1\leq i\leq N$, with the following interpretation: if $A(i)= 0$ or $1-(i-1)B(i)=0$, then $n_j=0$
for $i\leq j\leq N$. Then
\[
\Sigma_1\leq \sum_{i=0}^{N}n_i\leq \Sigma_2,
\]
where
\begin{align*}
\Sigma_1&=\exp\Bigl[A_1- \dfrac12A_1C_2\Bigr]-\bigl(2e\hat{c}\bigr)^N,\\
\Sigma_2&=\exp\Bigl[A_2- \dfrac12A_2C_1+ \dfrac12A_2C_1^2\Bigr]+\bigl(2e\hat{c}\bigr)^N.
\end{align*}
\end{lemma}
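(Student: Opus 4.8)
The plan is to prove Lemma~\ref{l6.2} by a self-contained argument: bound $(n_i)$ termwise between two sequences whose parameters are constant, evaluate those via a binomial series, and control the truncation at $i=N$. First I would rewrite the recurrence as
\[
\frac{n_i}{n_{i-1}}=\frac{A(i)-(i-1)C(i)}{i},\qquad C(i):=A(i)B(i),
\]
so that $n_i=\frac1{i!}\prod_{j=1}^{i}\bigl(A(j)-(j-1)C(j)\bigr)$. Since $A_1\le A(j)\le A_2$ and, by definition, $C_1\le C(j)\le C_2$, every factor satisfies $A_1-(j-1)C_2\le A(j)-(j-1)C(j)\le A_2-(j-1)C_1$. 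Under the stated convention that the sequence is killed once a factor reaches $0$, a nonpositive upper comparison factor $A_2-(j-1)C_1$ forces the true factor to be nonpositive as well, hence $n_j=0$, and the same on the lower side; so the termwise bounds $\ell_i\le n_i\le u_i$ persist, where $\ell_i,u_i$ have the constant ratios $\tfrac1i\bigl(A_1-(i-1)C_2\bigr)$ and $\tfrac1i\bigl(A_2-(i-1)C_1\bigr)$.

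Second, I would evaluate a generic constant-parameter sequence $m_i$ with $m_0=1$ and $m_i/m_{i-1}=\tfrac{A}{i}\bigl(1-(i-1)B\bigr)$, $B=C/A$. The telescoping product $\prod_{j=0}^{i-1}(1-jB)=B^i[1/B]_i$ gives $m_i=\binom{1/B}{i}(AB)^i=\tfrac1{i!}\prod_{j=0}^{i-1}(A-jC)$, and since $|C|\le\hat c<\tfrac13$ guarantees convergence,
\[
\sum_{i\ge0}m_i=(1+AB)^{1/B}=(1+C)^{A/C}.
\]
Taking logarithms, $\tfrac{A}{C}\log(1+C)=A-\tfrac12AC+\tfrac13AC^2-\cdots$, and truncating this expansion in the correct direction for $|C|<\tfrac13$ (a two-term bound from below, a three-term bound from above) reproduces, at the comparison parameters, exactly the exponents $A_1-\tfrac12A_1C_2$ of $\Sigma_1$ and $A_2-\tfrac12A_2C_1+\tfrac12A_2C_1^2$ of $\Sigma_2$.

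Third, I would control the truncation error of the constant-parameter sums, which is genuinely two-sided since $\binom{1/B}{i}$ can change sign. For $i\ge N$ one has $|A-(i-1)C|\le A+(i-1)|C|\le\hat cN+i\hat c\le2\hat c\,i$, so $|m_i/m_{i-1}|\le2\hat c<1$; hence the tail past $N$ is geometrically dominated by $|m_N|$, and $|m_N|\le\tfrac1{N!}(A+N|C|)^N\le\tfrac{(2\hat cN)^N}{N!}\le(2e\hat c)^N$ using $N!\ge(N/e)^N$. So $\bigl|\sum_{i>N}m_i\bigr|=O\bigl((2e\hat c)^N\bigr)$, matching the additive term in $\Sigma_1,\Sigma_2$. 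Assembling, $\Sigma_1\le\sum_i\ell_i\le\sum_in_i\le\sum_iu_i\le\Sigma_2$.

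The step I expect to be the real obstacle is the degenerate-case bookkeeping threading through Steps~1 and~3: checking that the termwise domination really survives whenever $A_2-(i-1)C_1$ or $A_1-(i-1)C_2$ turns negative, so that a comparison sum truncates early while $(n_i)$ itself has not, and trimming the constants so the tail is the claimed $(2e\hat c)^N$ rather than a larger multiple of it. I would finally note that this is Corollary~4.5 of~\cite{green06} and may simply be quoted; I record the sketch because, unlike the more elaborate Lemma~\ref{l6.1}, this variant does not in general follow from it (the bounds there involve $A_1^2B_2$ and $A_2^2B_1$, which are not controlled by $A_1C_2$ and $A_2C_1$ in general), so the direct telescoping argument is needed.
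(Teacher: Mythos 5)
The paper does not prove this lemma at all: it is imported verbatim as Corollary~4.5 of~\cite{green06} (``we state them here for completeness''), so your closing observation that it may simply be quoted is exactly what the authors do, and your remark that it is not in general a consequence of Lemma~\ref{l6.1} (whose hypotheses and error terms are parametrized differently) is also fair. Judged as a self-contained argument, however, your sketch has a genuine gap on the lower-bound side, at the very spot you flag but with the failure mode misidentified. Write $f_j=A(j)-(j-1)C(j)=A(j)\bigl(1-(j-1)B(j)\bigr)\ge 0$. The upper comparison is sound: since $f_j\le A_2-(j-1)C_1$ and $f_j\ge0$, the upper comparison factors are automatically nonnegative for all $j\le N$, so $n_i\le u_i$ does hold termwise and your tail estimate finishes that direction. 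But on the lower side the inequality points the wrong way: $A_1-(j-1)C_2\le f_j$, so the lower comparison factor can become strictly negative while $f_j$ remains strictly positive, and nothing truncates. From that index on, $\ell_i$ alternates in sign, and its positive terms are $\frac{1}{i!}$ times products of the \emph{magnitudes} $|A_1-(j-1)C_2|$, which are not dominated by the corresponding products of the $f_j$; hence $\ell_i\le n_i$ can fail and $\sum_i\ell_i\le\sum_i n_i$ does not follow termwise. Your sentence ``a nonpositive \dots comparison factor forces the true factor to be nonpositive as well \dots and the same on the lower side'' is precisely the false step: the forcing only works when the comparison factor is an upper bound for the true one.

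Repairing this takes more than trimming constants. One must truncate the lower comparison series at the first nonpositive factor and show that the discarded alternating part of the binomial series $(1+C_2)^{A_1/C_2}$ is absorbed into the $(2e\hat{c})^N$ term or into the main exponent --- for instance by noting that the last positive factor $A_1-(i_0-2)C_2$ is at most $C_2\le\hat{c}$, so the series has already become small before the sign change, and the subsequent ratios have magnitude at most $\hat{c}<\frac{1}{3}$. This is genuine content (it is essentially what the proof in~\cite{green06} has to do), not routine bookkeeping. The rest of your outline --- the closed form $\sum_i m_i=(1+C)^{A/C}$, the direction of the two- and three-term truncations of $\frac{A}{C}\log(1+C)$ for $|C|<\frac{1}{3}$, and the geometric tail bound past $i=N$ via $N!\ge(N/e)^N$ --- is correct.
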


\begin{lemma}\label{l6.3}
Assume
$r^{-2}n\leq m=o(r^{-3}n^{ \frac32})$.
If $\bigl|\mathcal{C}_{0,h_2,h_3,h_4}^{++}\bigl|\neq0$, then
\begin{align*}
\sum_{h_1=0}^{M_1} \frac{|\mathcal{C}_{h_1,h_2,h_3,h_4}^{++}|}{|\mathcal{C}_{0,h_2,h_3,h_4}^{++}|}
=\exp\biggl[ \frac{6 \binom{3r-4}{r} \binom{r}{4} \binom{2r-4}{r-2}m^3\bigl[ \binom{n}{3r-4}-
 \binom{r}{2}m \binom{n-2}{3r-6}\bigr]}{\bigl[N- \binom{r}{2}m \binom{n-2}{r-2}\bigr]^3}+O\Bigl( \frac{r^6m^2}{n^3}\Bigr)\biggr].
\end{align*}
\end{lemma}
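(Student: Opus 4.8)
The plan is to apply the summation Lemma~\ref{l6.1} to the sequence $n_{h_1} = |\mathcal{C}_{h_1,h_2,h_3,h_4}^{++}| / |\mathcal{C}_{0,h_2,h_3,h_4}^{++}|$, using the ratio formula supplied by Corollary~\ref{c5.4}$(b)$. First I would fix $h_2,h_3,h_4$ with $|\mathcal{C}_{0,h_2,h_3,h_4}^{++}|\neq 0$ and set $N=M_1$, $n_0=1$, $n_{h_1}=|\mathcal{C}_{h_1,h_2,h_3,h_4}^{++}|/|\mathcal{C}_{0,h_2,h_3,h_4}^{++}|$. By Corollary~\ref{c5.4}$(a)$ this sequence has the required ``absorbing'' behaviour: once a term vanishes, all subsequent terms vanish, and by the definition of $h_1'$ in Corollary~\ref{c5.4}$(b)$ the ratios are well defined for $1\le h_1 < h_1'$. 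I would then read off from Corollary~\ref{c5.4}$(b)$ the decomposition
\[
\frac{n_{h_1}}{n_{h_1-1}} = \frac{A(h_1)}{h_1}\bigl(1-(h_1-1)B(h_1)\bigr)(1+\delta_{h_1}),
\]
where, writing $R = \binom{3r-4}{r}\binom{r}{4}\binom{2r-4}{r-2}$ and $P = \bigl[\binom{n}{3r-4}-\binom{r}{2}m\binom{n-2}{3r-6}\bigr]/\bigl[N-\binom{r}{2}m\binom{n-2}{r-2}\bigr]^3$, the leading factor $A(h_1)/h_1$ comes from $36R\binom{m-3(h_1-1)-3h_2-3h_3-2h_4}{3}P / h_1$. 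Expanding the binomial coefficient $\binom{m-3(h_1-1)-3h_2-3h_3-2h_4}{3}$ as a cubic in its argument identifies $A(h_1) = 6RmP\cdot(1+O(\cdot))$ modulo lower-order corrections, and the ``$1-(h_1-1)B$'' factor captures the linear-in-$h_1$ part of that expansion; here one should check $B(h_1) = O(1/m)$ so that $|B N| = |B M_1| < 1$ since $M_1 = O(\log(r^{-2}n)+r^8m^3/n^4)$ and $m\ge r^{-2}n$.

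Next I would verify the hypotheses of Lemma~\ref{l6.1} one by one. The error terms $\delta_{h_1}$ from Corollary~\ref{c5.4}$(b)$ are uniformly $O\bigl((r^6mn\log(r^{-2}n)+r^8m^2)/n^4\bigr)$, so I can take the $\gamma_j$-bound to be a single constant $\gamma_0$ of that order (with $\gamma_j=0$ for $j\ge1$), and since this quantity is $o(1)$ under $m=o(r^{-3}n^{3/2})$ the condition $\sum_{j\le i}|\delta_j| \le \gamma_0 < \frac15$ holds. For the parameter $c$ I would choose $c$ a large constant (say $c = 3^4/e > 2e$, matching the $(e/3^4)$-type bounds already appearing in Section~\ref{s:3}); the requirement $0\le A c < N-K+1$ with $K=0$ amounts to $A = O(r^2m^2/n^2)$ — which follows from $N - \binom{r}{2}m\binom{n-2}{r-2} = N(1+O(r^4m/n^2))$ and $\binom{n}{3r-4}/N^3$-type estimates — being comfortably below $M_1+1$, and the tail term $\frac14(2e/c)^{M_1}$ is then $O(r^6/n^3)$ by the very choice of $M_1$ in~\eqref{e3.1} (this is exactly why $M_1$ was defined with that $3^4$). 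Feeding $A_1,A_2 = A(1+O(\cdot))$ and $B_1,B_2 = O(1/m)$ into the formulas for $\Sigma_1,\Sigma_2$, the $-\frac12 A^2 B$ and $\frac12 A^3 B^2$ terms are of order $A^2/m = O(r^4m^3/n^4)$, hence absorbed into $O(r^6m^2/n^3)$ after using $m = o(r^{-3}n^{3/2})$, and the $\gamma$-contribution $4\gamma_0(3A)^0 = O(\cdot)$ is likewise absorbed; what survives is $\exp[A + O(r^6m^2/n^3)]$ with $A = 6RmP(1+\text{negligible})$. Since $6R\binom{m-3(h_1-1)-\cdots}{3}/\binom m3$-style comparison shows the leading coefficient is precisely $6\binom{3r-4}{r}\binom{r}{4}\binom{2r-4}{r-2}m^3 P$ up to the stated error, this yields the claimed formula.

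The main obstacle I anticipate is the bookkeeping in the second paragraph: one must simultaneously (i) expand $\binom{m-3(h_1-1)-3h_2-3h_3-2h_4}{3}$ and separate its constant-in-$h_1$ part (which feeds $A$) from its linear part (which feeds $B$) and quadratic/cubic parts (which must be shown negligible, using $h_1 \le M_1$ and $h_2+h_3+h_4 = O(M_2+M_3+M_4) = o(m)$); (ii) confirm that replacing $\binom{m-\cdots}{3}$ by $\tfrac16 m^3$ in the final exponent costs only $O(r^6m^2/n^3)$, which uses $h_i = o(m)$ together with $mP = O(r^2m^2/n^2)$ so that a relative error of $O((h_1+h_2+h_3+h_4)/m)$ in $A$ translates to an absolute error $O(r^2m^2(M_1+M_2+M_3+M_4)/(n^2 m))$ that is within the target; and (iii) track that every $1+O\bigl((r^6mn\log(r^{-2}n)+r^8m^2)/n^4\bigr)$ factor, once exponentiated and multiplied by $A = O(r^2m^2/n^2)$, stays inside $O(r^6m^2/n^3)$ — the worst term being $A \cdot r^8m^2/n^4 = O(r^{10}m^4/n^6) = O(r^6m^2/n^3)$ exactly because $m = o(r^{-3}n^{3/2})$. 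None of these steps is deep, but the error-absorption must be done carefully and uniformly in $h_2,h_3,h_4$; I would present it as a single consolidated estimate rather than term by term.
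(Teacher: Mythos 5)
Your overall route is the same as the paper's: fix $h_2,h_3,h_4$, set $n_{1,0}=1$ and $n_{1,h_1}=|\mathcal{C}_{h_1,h_2,h_3,h_4}^{++}|/|\mathcal{C}_{0,h_2,h_3,h_4}^{++}|$, use Corollary~\ref{c5.4}(b) together with the expansion of $\binom{m-3(h_1-1)-3h_2-3h_3-2h_4}{3}$ into a constant-in-$h_1$ factor (feeding $A$) and a linear factor (feeding $B=9/m$), feed this into a summation lemma from \cite{green06}, and absorb errors. Two steps, however, would fail as written. First, the paper applies Lemma~\ref{l6.2} here, not Lemma~\ref{l6.1}: the $1+O(\cdot)$ factors are pushed into the varying $A(1,h_1)$, so that $A_2-A_1$ carries the error and no $\delta_i$ are needed; Lemma~\ref{l6.1} is reserved for the Type-$4$ sum (Lemma~\ref{l6.6}), where second-order accuracy is required. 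Using Lemma~\ref{l6.1} is legitimate in principle, but your verification of its hypothesis is incorrect: the condition bounds $\sum_{j\le i}|\delta_j|$, a sum of up to $M_1$ terms, so a single $\gamma_0$ ``of the same order as each $\delta_j$'' does not dominate it. You need $\gamma_0$ of order $M_1\max_j|\delta_j|$ (or $K=1$ with $\gamma_1=\max_j|\delta_j|$); the repaired $\gamma_0$ is still $O(r^{16}m^5/n^8+r^6m\log^2(r^{-2}n)/n^3)=O(r^6m^2/n^3)=o(1)$ under $m=o(r^{-3}n^{\frac32})$, so the conclusion survives, but the step as stated is false. Second, you misidentify the order of $A$: it is $\Theta\bigl([r]_4[r]_2^2m^3/n^4\bigr)=O(r^8m^3/n^4)$, not $O(r^2m^2/n^2)$ --- this is precisely why $M_1$ is defined with the term $3^4r^8m^3/(2n^4)$ in~\eqref{e3.1}, and it is what makes $Ac<M_1-K+1$ (or $A/M_1\le\hat c$ in Lemma~\ref{l6.2}) hold. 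Your downstream error estimates ($A^2B$, $A$ times the relative error, the tail $(2e/c)^{M_1}$) still land inside $O(r^6m^2/n^3)$ once the correct order is substituted, and the stray ``$A=6RmP$'' should of course read $6Rm^3P$. With these corrections your argument coincides with the paper's.
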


\begin{proof} Let $h_1'=h_1'(h_2,h_3,h_4)$ be the first value of $h_1\leq M_1$
such that $\mathcal{C}_{h_1,h_2,h_3,h_4}^{++}=\emptyset$,
or $h_1'=M_1+1$ if no such value exists, which is  defined in Corollary~\ref{c5.4} $(b)$.
Since $|\mathcal{C}_{0,h_2,h_3,h_4}^{++}|\neq0$, we know $h_1'\geq1$.
Define $n_{1,0},\ldots,n_{1,M_1}$ by $n_{1,0}=1$,
\[
n_{1,h_1}= \frac{|\mathcal{C}_{h_1,h_2,h_3,h_4}^{++}|}{|\mathcal{C}_{0,h_2,h_3,h_4}^{++}|}
\]
for $1\leq h_1<h_1'$ and $n_{1,h_1}=0$ for $h_1'\leq h_1\leq M_1$.
Note that
\begin{align*}
& \binom{m-3(h_1-1)-3(h_2+h_3)-2h_4}{3}\\
&{\qquad}= \frac{m^3}{6}\biggl(1+O\Bigl(\frac{r^4m}{n^2}+
 \frac{\log(r^{-2}n)}{m}\Bigr)\biggr)\Bigl(1-(h_1-1) \frac{9}{m}\Bigr).
\end{align*}
By Corollary~\ref{c5.4} $(b)$, we have for $1\leq h_1< h_1'$,
\begin{equation}\label{e6.1}
\begin{split}
 \frac{|\mathcal{C}_{h_1,h_2,h_3,h_4}^{++}|}{|\mathcal{C}_{0,h_2,h_3,h_4}^{++}|}
&= \frac{1}{h_1} \frac{|\mathcal{C}_{h_1-1,h_2,h_3,h_4}^{++}|}{|\mathcal{C}_{0,h_2,h_3,h_4}^{++}|}
 \frac{6 \binom{3r-4}{r} \binom{r}{4} \binom{2r-4}{r-2}m^3\bigl[ \binom{n}{3r-4}-
 \binom{r}{2}m \binom{n-2}{3r-6}\bigr]}{\bigl[N- \binom{r}{2}m \binom{n-2}{r-2}\bigr]^3} \\
&{\qquad}\times\biggl(1+O\biggl(\frac{r^4m}{n^2}+ \frac{\log(r^{-2}n)}{m}
+ \frac{r^6mn\log (r^{-2}n)+r^8m^2}{n^{4}}\biggr)\biggr) \\
&{\qquad}\times\Bigl(1-(h_1-1) \frac{9}{m}\Bigr).
\end{split}
\end{equation}
Define
\begin{align*}
A(1,h_1)&= \frac{6 \binom{3r-4}{r} \binom{r}{4} \binom{2r-4}{r-2}m^3\bigl[ \binom{n}{3r-4}
- \binom{r}{2}m \binom{n-2}{3r-6}\bigr]}{\bigl[N- \binom{r}{2}m \binom{n-2}{r-2}\bigr]^3}\\
&{\qquad}\times\biggl(1+O\biggl(\frac{r^4m}{n^2}+\frac{\log(r^{-2}n)}{m}
+ \frac{r^6mn\log (r^{-2}n)+r^8m^2}{n^{4}}\biggr)\biggr)
\end{align*}
for $1\leq h_1< h_1'$  and $A(1,h_1)=0$ for $h_1'\leq h_1\leq M_1$, and
$B(1,h_1)= \frac{9}{m}$ for $1\leq h_1\leq M_1$.
Let $A_{1,1}=\min_{h_1=1}^{M_1}A(1,h_1)$,
$A_{1,2}=\max_{h_1=1}^{M_1}A(1,h_1)$, $C_{1,1}=\min_{h_1=1}^{M_1}A(1,h_1)B(1,h_1)$
and $C_{1,2}=\max_{h_1=1}^{M_1}A(1,h_1)B(1,h_1)$.
By~\eqref{e6.1} we  have
\[
 \frac{n_{1,h_1}}{n_{1,h_1-1}}= \frac{A(1,h_1)} {h_1}\bigl(1-(h_1-1)B(1,h_1)\bigr).
\]
Let $\hat{c}= \frac{1}{3^4}$, then we have  $\max\bigl\{A/M_1,|C|\bigr\}\leq \hat{c}< \frac{1}{3}$
for all $A\in [A_{1,1},A_{1,2}]$, $C\in[C_{1,1},C_{1,2}]$.
Therefore, Lemma~\ref{l6.2} applies and we obtain
\begin{align*}
\sum_{h_1=0}^{M_1}& \frac{|\mathcal{C}_{h_1,h_2,h_3,h_4}^{++}|}{|\mathcal{C}_{0,h_2,h_3,h_4}^{++}|}\\
&=\exp\biggl[ \frac{6 \binom{3r-4}{r} \binom{r}{4} \binom{2r-4}{r-2}m^3
\bigl[ \binom{n}{3r-4}- \binom{r}{2}m \binom{n-2}{3r-6}\bigr]}
{\bigl[N- \binom{r}{2}m \binom{n-2}{r-2}\bigr]^3}
\\&{\qquad}+O\biggl(\frac{r^{12}m^4}{n^6}+
 \frac{r^{8}m^2\log(r^{-2}n)}{n^4}+
 \frac{r^{14}m^4n\log(r^{-2}n)+r^{16}m^5}{n^8}\biggr)\biggr]+O\Bigl(\Bigl( \frac{2e}{3^4}\Bigr)^{M_1}\Bigr).
\end{align*}
Note that
$O\bigl( \frac{r^{12}m^4}{n^6}+ \frac{r^{8}m^2\log(r^{-2}n)}{n^4}+
 \frac{r^{14}m^4n\log(r^{-2}n)+r^{16}m^5}{n^8}\bigr)
=O\bigl( \frac{r^6m^2}{n^3}\bigr)$ and $O\bigl(\bigl( \frac{2e}{3^4}\bigr)^{M_1}\bigr)=O\bigl( \frac{r^6}{n^3}\bigr)$ as
$r^{-2}n\leq m=o(r^{-3}n^{ \frac32})$,
which gives the expression in the lemma.
\end{proof}

Note that the values of $h_2,h_3,h_4$ disappear into the error term
of Lemma~\ref{l6.3}. This means that Type-$2$ switchings can be
analysed in the same way using Corollary~\ref{c5.8}. The values
of $h_3$ and $h_4$ again disappear into the error term, so we
can analyse Type-$3$ switchings in the same way using Corollary~\ref{c5.12}.
As these two analyses are essentially the same as Lemma~\ref{l6.3}, we
will just state the results without proof.

\begin{lemma}\label{l6.4}
Assume $r^{-2}n\leq m=o(r^{-3}n^{ \frac32})$.
If $|\mathcal{C}_{0,0,h_3,h_4}^{++}|\neq0$, then
\begin{align*}
\sum_{h_2=0}^{M_2}&\sum_{h_1=0}^{M_1} \frac{|\mathcal{C}_{h_1,h_2,h_3,h_4}^{++}|}{|\mathcal{C}_{0,0,h_3,h_4}^{++}|}\\
&{}=\exp\biggl[ \frac{\bigl[6 \binom{r}{4}+3 \binom{r}{3}\bigr] \binom{3r-4}{r}
 \binom{2r-4}{r-2}m^3\bigl[ \binom{n}{3r-4}- \binom{r}{2}m \binom{n-2}{3r-6}\bigr]}
{\bigl[N- \binom{r}{2}m \binom{n-2}{r-2}\bigr]^3}+O\Bigl( \frac{r^6m^2}{n^3}\Bigr)\biggr].\quad\qed
\end{align*}
\end{lemma}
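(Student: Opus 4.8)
The plan is to split the double sum into a product of two single sums and evaluate each.  First note that if $|\mathcal{C}_{0,h_2,h_3,h_4}^{++}|=0$ for some $h_2$, then by Corollary~\ref{c5.4}$(a)$ we have $|\mathcal{C}_{h_1,h_2,h_3,h_4}^{++}|=0$ for every $h_1$, so that value of $h_2$ contributes nothing; and by Corollary~\ref{c5.8}$(a)$ the surviving values of $h_2$ form an initial segment $0\leq h_2<h_2'$, where $h_2'=h_2'(h_3,h_4)\geq1$ since $|\mathcal{C}_{0,0,h_3,h_4}^{++}|\neq0$.  On that segment $|\mathcal{C}_{0,h_2,h_3,h_4}^{++}|\neq0$, so
\[
\sum_{h_2=0}^{M_2}\sum_{h_1=0}^{M_1} \frac{|\mathcal{C}_{h_1,h_2,h_3,h_4}^{++}|}{|\mathcal{C}_{0,0,h_3,h_4}^{++}|}
=\sum_{h_2=0}^{M_2} \frac{|\mathcal{C}_{0,h_2,h_3,h_4}^{++}|}{|\mathcal{C}_{0,0,h_3,h_4}^{++}|}
\Biggl(\sum_{h_1=0}^{M_1} \frac{|\mathcal{C}_{h_1,h_2,h_3,h_4}^{++}|}{|\mathcal{C}_{0,h_2,h_3,h_4}^{++}|}\Biggr),
\]
and each inner sum is covered by Lemma~\ref{l6.3}.

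The crucial point is that the main term
$\theta=6\binom{3r-4}{r}\binom{r}{4}\binom{2r-4}{r-2}m^3\bigl[\binom{n}{3r-4}-\binom{r}{2}m\binom{n-2}{3r-6}\bigr]\big/\bigl[N-\binom{r}{2}m\binom{n-2}{r-2}\bigr]^3$
of Lemma~\ref{l6.3} does not depend on $h_2,h_3,h_4$.  Hence each inner sum equals $\exp[\theta+O(r^6m^2/n^3)]=(1+O(r^6m^2/n^3))\exp[\theta]$, where we used $r^6m^2/n^3=o(1)$, and pulling this common factor out of the $h_2$-sum reduces the problem to evaluating $S:=\sum_{h_2=0}^{M_2}|\mathcal{C}_{0,h_2,h_3,h_4}^{++}|/|\mathcal{C}_{0,0,h_3,h_4}^{++}|$.

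To evaluate $S$ I would rerun the proof of Lemma~\ref{l6.3} essentially verbatim, with Corollary~\ref{c5.4} replaced by Corollary~\ref{c5.8} and $M_1$ by $M_2$.  Put $n_{2,0}=1$, $n_{2,h_2}=|\mathcal{C}_{0,h_2,h_3,h_4}^{++}|/|\mathcal{C}_{0,0,h_3,h_4}^{++}|$ for $1\leq h_2<h_2'$, and $n_{2,h_2}=0$ otherwise.  Expanding $\binom{m-3(h_2-1)-3h_3-2h_4}{3}=\frac{m^3}{6}\bigl(1+O(\frac{r^4m}{n^2}+\frac{\log(r^{-2}n)}{m})\bigr)\bigl(1-(h_2-1)\frac9m\bigr)$ — legitimate since $h_3\leq M_3$, $h_4\leq M_4$ and $m=o(r^{-3}n^{3/2})$, exactly as in Lemma~\ref{l6.3} — Corollary~\ref{c5.8}$(b)$ puts $n_{2,h_2}/n_{2,h_2-1}$ in the form $\frac{A(2,h_2)}{h_2}\bigl(1-(h_2-1)B(2,h_2)\bigr)$ with $B(2,h_2)=9/m$ and $A(2,h_2)$ equal, for $h_2<h_2'$, to $3\binom{3r-4}{r}\binom{r}{3}\binom{2r-4}{r-2}m^3[\binom{n}{3r-4}-\binom{r}{2}m\binom{n-2}{3r-6}]\big/[N-\binom{r}{2}m\binom{n-2}{r-2}]^3$ times the same relative-error factor $1+O(\cdot)$ as in~\eqref{e6.1}, and $A(2,h_2)=0$ for $h_2\geq h_2'$.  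As for Lemma~\ref{l6.3}, one has $A(2,\cdot)=O(r^7m^3/n^4)$, which together with $M_2=\lceil\log(r^{-2}n)+3^4r^7m^3/(2n^4)\rceil$ gives $\max\{A/M_2,|C|\}\leq\hat c$ for a suitable $\hat c<\frac13$ (this is what the constant $3^4$ in $M_2$ is for).  Then Lemma~\ref{l6.2} applies and, after the $(2e\hat c)^{M_2}$ tail and the $\frac12AC,\frac12AC^2$ corrections are absorbed into $O(r^6m^2/n^3)$, yields
\[
S=\exp\biggl[ \frac{3 \binom{3r-4}{r} \binom{r}{3} \binom{2r-4}{r-2}m^3\bigl[ \binom{n}{3r-4}- \binom{r}{2}m \binom{n-2}{3r-6}\bigr]}{\bigl[N- \binom{r}{2}m \binom{n-2}{r-2}\bigr]^3}+O\Bigl( \frac{r^6m^2}{n^3}\Bigr)\biggr].
\]

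Finally, multiplying $S$ by the factored-out $\exp[\theta+O(r^6m^2/n^3)]$ and adding exponents gives the claim: $\theta$ and the exponent of $S$ share the factor $\binom{3r-4}{r}\binom{2r-4}{r-2}m^3[\binom{n}{3r-4}-\binom{r}{2}m\binom{n-2}{3r-6}]\big/[N-\binom{r}{2}m\binom{n-2}{r-2}]^3$, so the leading coefficients $6\binom{r}{4}$ and $3\binom{r}{3}$ combine into $6\binom{r}{4}+3\binom{r}{3}$, which is exactly the coefficient in the statement, while the two $O(r^6m^2/n^3)$ error terms merge into one.  The only steps requiring care are the verification of the hypotheses of Lemma~\ref{l6.2} for the $h_2$-sum and the routine confirmation that every stray error is $O(r^6m^2/n^3)$ when $r^{-2}n\leq m=o(r^{-3}n^{3/2})$; both are identical in form to the corresponding parts of the proof of Lemma~\ref{l6.3}, so this bookkeeping — essentially the same as before — is the main, and entirely routine, obstacle.
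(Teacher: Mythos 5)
Your proposal is correct and follows exactly the route the paper intends: the paper omits the proof of this lemma, remarking only that since the main term of Lemma~\ref{l6.3} is independent of $h_2,h_3,h_4$, the $h_2$-sum can be analysed "in the same way" via Corollary~\ref{c5.8} and Lemma~\ref{l6.2}, which is precisely your factorization into the inner $h_1$-sum and the single sum $S$. Your coefficient bookkeeping ($18\cdot m^3/6=3m^3$ giving $3\binom{r}{3}$, combining with $6\binom{r}{4}$) and the verification of the hypotheses of Lemma~\ref{l6.2} using $M_2$ match the paper's computation for Lemma~\ref{l6.3} transplanted to Type-$2$ clusters.
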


\begin{lemma}\label{l6.5}
Assume $r^{-2}n\leq m=o(r^{-3}n^{ \frac32})$.
If $|\mathcal{C}_{0,0,0,h_4}^{++}|\neq0$, then
\begin{align*}
\sum_{h_3=0}^{M_3}&\sum_{h_2=0}^{M_2}\sum_{h_1=0}^{M_1} \frac{|\mathcal{C}_{h_1,h_2,h_3,h_4}^{++}|}
{|\mathcal{C}_{0,0,0,h_4}^{++}|}\\
&=\exp\biggl[ \frac{\bigl[6 \binom{r}{4}+3 \binom{r}{3}+ \frac{1}{6} \binom{r}{2}\bigr]
 \binom{3r-4}{r} \binom{2r-4}{r-2}m^3\bigl[ \binom{n}{3r-4}- \binom{r}{2}m \binom{n-2}{3r-6}\bigr]}
{\bigl[N- \binom{r}{2}m \binom{n-2}{r-2}\bigr]^3}+O\Bigl( \frac{r^6m^2}{n^3}\Bigr)\biggr].\qed
\end{align*}
\end{lemma}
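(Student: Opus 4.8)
The plan is to mimic the proof of Lemma~\ref{l6.3} one level up, summing over $h_3$ the estimate for the double sum over $h_1,h_2$ supplied by Lemma~\ref{l6.4}. Let $h_3'=h_3'(h_4)$ be the index introduced in Corollary~\ref{c5.12}$(b)$. For $h_3\ge h_3'$ the set $\mathcal{C}_{0,0,h_3,h_4}^{++}$ is empty (Corollary~\ref{c5.12}$(a)$), and then by Corollaries~\ref{c5.4}$(a)$ and \ref{c5.8}$(a)$ so is $\mathcal{C}_{h_1,h_2,h_3,h_4}^{++}$ for all $h_1,h_2$, so those terms vanish; thus only $0\le h_3<h_3'$ matters, and there $|\mathcal{C}_{0,0,h_3,h_4}^{++}|\neq 0$, so we may factor
\[
\sum_{h_3=0}^{M_3}\sum_{h_2=0}^{M_2}\sum_{h_1=0}^{M_1}\frac{|\mathcal{C}_{h_1,h_2,h_3,h_4}^{++}|}{|\mathcal{C}_{0,0,0,h_4}^{++}|}
=\sum_{h_3=0}^{M_3}\frac{|\mathcal{C}_{0,0,h_3,h_4}^{++}|}{|\mathcal{C}_{0,0,0,h_4}^{++}|}\;\sum_{h_2=0}^{M_2}\sum_{h_1=0}^{M_1}\frac{|\mathcal{C}_{h_1,h_2,h_3,h_4}^{++}|}{|\mathcal{C}_{0,0,h_3,h_4}^{++}|}.
\]
By Lemma~\ref{l6.4} the inner double sum equals $\exp[\lambda+O(r^6m^2/n^3)]$, where $\lambda$ is the explicit exponent of Lemma~\ref{l6.4}; the key point (already recorded in the text preceding Lemma~\ref{l6.4}) is that neither $\lambda$ nor the implied constant depends on $h_3$ or $h_4$, so this factor comes out of the $h_3$-sum.

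It then remains to estimate $\sum_{h_3=0}^{M_3}|\mathcal{C}_{0,0,h_3,h_4}^{++}|/|\mathcal{C}_{0,0,0,h_4}^{++}|$, which I would do exactly as the single-variable estimate inside Lemma~\ref{l6.3}. Put $n_{3,0}=1$, $n_{3,h_3}=|\mathcal{C}_{0,0,h_3,h_4}^{++}|/|\mathcal{C}_{0,0,0,h_4}^{++}|$ for $1\le h_3<h_3'$, and $n_{3,h_3}=0$ for $h_3'\le h_3\le M_3$. Using $\binom{m-3(h_3-1)-2h_4}{3}=\frac{m^3}{6}\bigl(1+O(\frac{r^4m}{n^2}+\frac{\log(r^{-2}n)}{m})\bigr)\bigl(1-(h_3-1)\frac{9}{m}\bigr)$ --- the term $2h_4$ being harmless since $h_4\le M_4$ gives $h_4/m=O(\frac{\log(r^{-2}n)}{m}+\frac{r^4m}{n^2})$ --- Corollary~\ref{c5.12}$(b)$ rewrites as $n_{3,h_3}/n_{3,h_3-1}=h_3^{-1}A(3,h_3)\bigl(1-(h_3-1)B(3,h_3)\bigr)$ with $B(3,h_3)=9/m$ and $A(3,h_3)$ the analogue of the quantity $A(1,h_1)$ of Lemma~\ref{l6.3}, now carrying the factor $\frac16\binom{r}{2}$ in place of $6\binom{r}{4}$. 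Taking $\hat c=3^{-4}$, the hypotheses $\max\{A/M_3,|C|\}\le\hat c<\frac13$ of Lemma~\ref{l6.2} hold by the same verification as in Lemma~\ref{l6.3}, so Lemma~\ref{l6.2} yields
\[
\sum_{h_3=0}^{M_3}n_{3,h_3}=\exp\biggl[\frac16\binom{r}{2}\,\frac{\binom{3r-4}{r}\binom{2r-4}{r-2}m^3\bigl[\binom{n}{3r-4}-\binom{r}{2}m\binom{n-2}{3r-6}\bigr]}{\bigl[N-\binom{r}{2}m\binom{n-2}{r-2}\bigr]^3}+O\Bigl(\frac{r^6m^2}{n^3}\Bigr)\biggr]+O\bigl((2e/3^4)^{M_3}\bigr),
\]
and $(2e/3^4)^{M_3}=O(r^6/n^3)$ because $M_3\ge\log(r^{-2}n)$ and $r=o(n^{1/2})$. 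Multiplying by the factor $\exp[\lambda+O(r^6m^2/n^3)]$ extracted above, and noting that $6\binom{r}{4}+3\binom{r}{3}+\frac16\binom{r}{2}$ is the combined coefficient, gives the claimed formula.

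The argument is routine bookkeeping once Lemmas~\ref{l6.3} and \ref{l6.4} are available; the only places needing a modicum of care are that the $h_4$-dependence in $\binom{m-3(h_3-1)-2h_4}{3}$ really is absorbed into the error (using $h_4\le M_4$), and that all the error terms occurring --- those from Corollary~\ref{c5.12}$(b)$, from the expansion of the binomial, from Lemma~\ref{l6.2}, and the tail $(2e/3^4)^{M_3}$ --- collapse to $O(r^6m^2/n^3)$ under $r^{-2}n\le m=o(r^{-3}n^{3/2})$. But these are precisely the collapses already carried out in the proof of Lemma~\ref{l6.3}, so there is no substantive obstacle.
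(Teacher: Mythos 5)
Your proposal is correct and is exactly the argument the paper intends: it omits the proof of Lemma~6.5 on the grounds that the analysis is "essentially the same as Lemma~6.3," and your factorization of the triple sum, the application of Lemma~6.4 to the inner double sum, and the single-variable summation over $h_3$ via Corollary~5.12(b) and Lemma~6.2 (with the $2h_4$ term absorbed into the error since $h_4\le M_4$) is precisely that analysis, yielding the extra coefficient $\frac16\binom{r}{2}$ as claimed.
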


\begin{lemma}\label{l6.6}
Assume $r^{-2}n\leq m=o(r^{-3}n^{ \frac32})$.
Then
\begin{align*}
\sum_{h_4=0}^{M_4}\sum_{h_3=0}^{M_3}&\sum_{h_2=0}^{M_2}\sum_{h_1=0}^{M_1} \frac{
|\mathcal{C}_{h_1,h_2,h_3,h_4}^{++}|}{|\mathcal{C}_{0,0,0,0}^{++}|}\\
&=\exp\biggl[ \frac{ \binom{2r-2}{2} \binom{2r-4}{r-2}m^2\bigl[ \binom{n}{2r-2}-
 \frac{(r^2-r-1)}{(r-1)(2r-3)} \binom{r}{2}m \binom{n-2}{2r-4}\bigr]}{2\bigl[N- \binom{r}{2}m \binom{n-2}{r-2}\bigr]^2}\\
&{\qquad}- \frac{ \binom{2r-2}{2}^2 \binom{2r-4}{r-2}^2m^3\bigl[ \binom{n}{2r-2}- \frac{(r^2-r-1)}{(r-1)(2r-3)}
 \binom{r}{2}m \binom{n-2}{2r-4}\bigr]^2}{2\bigl[N- \binom{r}{2}m \binom{n-2}{r-2}\bigr]^4}\\
&{\qquad}+ \frac{\bigl[6 \binom{r}{4}+3 \binom{r}{3}+ \frac{1}{6} \binom{r}{2}\bigr] \binom{3r-4}{r}
 \binom{2r-4}{r-2}m^3\bigl[ \binom{n}{3r-4}- \binom{r}{2}m \binom{n-2}{3r-6}\bigr]}
{\bigl[N- \binom{r}{2}m \binom{n-2}{r-2}\bigr]^3}+O\Bigl( \frac{r^6m^2}{n^3}\Bigr)\biggr].
\end{align*}
\end{lemma}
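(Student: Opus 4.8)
The plan is to peel off the three inner summations using Lemma~\ref{l6.5}, leaving a single sum over $h_4$ that I will evaluate with Corollary~\ref{c5.16} and Lemma~\ref{l6.2}, exactly as Lemma~\ref{l6.3} was proved.

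First I would note that $|\mathcal{L}_r(n,m)|\neq0$ by Remark~\ref{r4.4}, so Theorem~\ref{t4.1} with Remark~\ref{r4.3} gives $|\mathcal{C}_{0,0,0,0}^{++}|\neq0$ and the quadruple sum is well defined. Let $h_4'$ be the least $h_4\le M_4$ with $\mathcal{C}_{0,0,0,h_4}^{++}=\emptyset$, or $h_4'=M_4+1$ if there is none. Applying Corollary~\ref{c5.16}(a) and then Corollaries~\ref{c5.12}(a), \ref{c5.8}(a), \ref{c5.4}(a) in turn (with $h_4$ fixed) shows that $\mathcal{C}_{h_1,h_2,h_3,h_4}^{++}=\emptyset$ for all $h_1,h_2,h_3$ once $h_4\ge h_4'$, so all such terms of the quadruple sum vanish. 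For $0\le h_4<h_4'$ we have $|\mathcal{C}_{0,0,0,h_4}^{++}|\neq0$, so Lemma~\ref{l6.5} applies, and—crucially—its exponent is independent of $h_4$; hence, uniformly in $h_4$,
\[
\sum_{h_3=0}^{M_3}\sum_{h_2=0}^{M_2}\sum_{h_1=0}^{M_1}\frac{|\mathcal{C}_{h_1,h_2,h_3,h_4}^{++}|}{|\mathcal{C}_{0,0,0,h_4}^{++}|}=\exp\bigl[\Lambda_3+O(r^6m^2/n^3)\bigr],
\]
where $\Lambda_3$ denotes the third term of the present statement. Consequently the quadruple sum equals $\exp[\Lambda_3+O(r^6m^2/n^3)]$ times $\sum_{h_4=0}^{M_4}|\mathcal{C}_{0,0,0,h_4}^{++}|/|\mathcal{C}_{0,0,0,0}^{++}|$.

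To evaluate the remaining sum I would set $n_{4,0}=1$, $n_{4,h_4}=|\mathcal{C}_{0,0,0,h_4}^{++}|/|\mathcal{C}_{0,0,0,0}^{++}|$ for $1\le h_4<h_4'$ and $n_{4,h_4}=0$ otherwise. Writing $\binom{m-2(h_4-1)}{2}=\tfrac12m^2\bigl(1+O(\tfrac{\log(r^{-2}n)}{m}+\tfrac{r^4m}{n^2})\bigr)\bigl(1-(h_4-1)\tfrac4m\bigr)$ and substituting Corollary~\ref{c5.16}(b) yields $n_{4,h_4}/n_{4,h_4-1}=\tfrac{A(4,h_4)}{h_4}(1-(h_4-1)B(4,h_4))$ with $B(4,h_4)=4/m$ and
\[
A(4,h_4)=\frac{\binom{2r-2}{2}\binom{2r-4}{r-2}m^2\bigl[\binom{n}{2r-2}-\tfrac{r^2-r-1}{(r-1)(2r-3)}\binom{r}{2}m\binom{n-2}{2r-4}\bigr]}{2\bigl[N-\binom{r}{2}m\binom{n-2}{r-2}\bigr]^2}\,(1+O(\varepsilon))
\]
for $1\le h_4<h_4'$ and $A(4,h_4)=0$ for $h_4'\le h_4\le M_4$, where $\varepsilon=\tfrac{\log(r^{-2}n)}{m}+\tfrac{r^4m}{n^2}+\tfrac{r^6mn\log(r^{-2}n)+r^8m^2}{n^4}$. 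Routine binomial estimates give $A(4,h_4)=\tfrac{[r]_2^2m^2}{4n^2}(1+o(1))$, so $A(4,h_4)/M_4\le 1/3^4$ by the definition of $M_4$, while $A(4,h_4)B(4,h_4)=O(r^4m/n^2)=o(1)$; thus Lemma~\ref{l6.2} applies with $\hat c=1/3^4$, and its conclusion produces $\exp[A_4-\tfrac12A_4C_4+\cdots]$ with $C_4=A_4B_4$, whose leading term $A_4$ is the first term of the statement and whose term $-\tfrac12A_4C_4=-\tfrac{2A_4^2}{m}$ equals the second term up to the error term.

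It then remains to check that every discrepancy collected above lies in $O(r^6m^2/n^3)$: the range $A_{4,2}-A_{4,1}=O(A_4\varepsilon)$, the term $\tfrac12A_4C_4^2$, the gap between $-\tfrac12A_{4,2}C_{4,1}$ and $-\tfrac12A_{4,1}C_{4,2}$, the error inherited from Lemma~\ref{l6.5}, and the tail $(2e/3^4)^{M_4}$. For the tail one writes $(2e/3^4)^{M_4}\le(2e/3^4)^{\log(r^{-2}n)}(2e/3^4)^{3^4r^4m^2/(2n^2)}$, bounds the first factor by $O((r^{-2}n)^{-1})$ and the second by $O(r^4m^2/n^2)$ (using $r^4m^2/n^2\ge1$), so the product is $O(r^6m^2/n^3)$. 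Each of the other discrepancies reduces, via $m=o(r^{-3}n^{\frac32})$ and $r=o(n^{\frac12})$, to an estimate such as $r^4m\log(r^{-2}n)/n^2=o(1)$ or $r^6m^2/n^3=o(1)$. Carrying out this error bookkeeping, rather than any conceptual difficulty, is the main obstacle. Multiplying the result by $\exp[\Lambda_3+O(r^6m^2/n^3)]$ from the first step then gives the stated formula.
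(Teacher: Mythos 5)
Your overall architecture is exactly the paper's: peel off the $h_1,h_2,h_3$ sums with Lemma~\ref{l6.5} (whose exponent is independent of $h_4$), then evaluate $\sum_{h_4}|\mathcal{C}^{++}_{0,0,0,h_4}|/|\mathcal{C}^{++}_{0,0,0,0}|$ by feeding the recurrence of Corollary~\ref{c5.16} into a summation lemma. The paper invokes Lemma~\ref{l6.1} with $\delta_i=\gamma_j=0$ rather than Lemma~\ref{l6.2}, but with $C=AB$ the two conclusions coincide term by term, and your verification of the hypotheses of Lemma~\ref{l6.2} (including the tail estimate $(2e/3^4)^{M_4}=O(r^6m^2/n^3)$) is sound, so that substitution is not where the trouble lies.

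The gap is in the expansion $\binom{m-2(h_4-1)}{2}=\frac{m^2}{2}\bigl(1+O\bigl(\frac{\log(r^{-2}n)}{m}+\frac{r^4m}{n^2}\bigr)\bigr)\bigl(1-(h_4-1)\frac4m\bigr)$. The entire purpose of factoring out the exact linear factor $1-(h_4-1)\frac{4}{m}$ is that the residual relative error is only $O\bigl(\frac1m+\frac{h_4^2}{m^2}\bigr)=O\bigl(\frac1m+\frac{r^8m^2}{n^4}\bigr)$, which is what the paper uses; your stated error is instead of order $h_4/m$. This is fatal as written: the error is absorbed into $A(4,h_4)$, and since $A_4\asymp\frac{[r]_2^2m^2}{4n^2}$, the resulting uncertainty $A_4\cdot O\bigl(\frac{r^4m}{n^2}\bigr)=\Theta\bigl(\frac{r^8m^3}{n^4}\bigr)$ is precisely the order of the second displayed main term $\frac{2A_4^2}{m}$ that the lemma must isolate (and that term is unbounded when $m$ approaches $r^{-3}n^{3/2}$), while $A_4\cdot O\bigl(\frac{\log(r^{-2}n)}{m}\bigr)=\Theta\bigl(\frac{r^4m\log(r^{-2}n)}{n^2}\bigr)$ exceeds the target error $O\bigl(\frac{r^6m^2}{n^3}\bigr)$ by a factor up to $\log(r^{-2}n)$ when $m=\Theta(r^{-2}n)$. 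Your closing claim that the discrepancies reduce to quantities that are ``$o(1)$'' is not the relevant criterion: an $o(1)$ term in the exponent need not be $O(r^6m^2/n^3)$, and the first discrepancy above is not even $o(1)$. The repair is simply to use the sharper residual $O\bigl(\frac1m+\frac{r^8m^2}{n^4}\bigr)$, after which $A_4\varepsilon=O\bigl(\frac{r^4m}{n^2}+\frac{r^{12}m^4}{n^6}\bigr)=O\bigl(\frac{r^6m^2}{n^3}\bigr)$ and the rest of your bookkeeping closes exactly as in the paper.
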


\begin{proof}
Due to the larger number of Type-$4$ clusters, Lemma~\ref{l6.2} is not
accurate enough and we must use Lemma~\ref{l6.1}.
Let $h_4'$ be the first value of $h_4\leq M_4$
such that $\mathcal{C}_{0,0,0,h_4}^{++}=\emptyset$
or $h_4'=M_4+1$ if no such value exists, which is  defined in Corollary~\ref{c5.16} $(b)$.
Note that
\[
\sum_{h_4=0}^{M_4}\sum_{h_3=0}^{M_3}\sum_{h_2=0}^{M_2}\sum_{h_1=0}^{M_1} \frac{|\mathcal{C}_{h_1,h_2,h_3,h_4}^{++}|}{|\mathcal{C}_{0,0,0,0}^{++}|}
=\sum_{h_4=0}^{h_4'-1}\biggl( \frac{|\mathcal{C}_{0,0,0,h_4}^{++}|}{|\mathcal{C}_{0,0,0,0}^{++}|}
\sum_{h_3=0}^{M_3}\sum_{h_2=0}^{M_2}\sum_{h_1=0}^{M_1} \frac{|\mathcal{C}_{h_1,h_2,h_3,h_4}^{++}|}{|\mathcal{C}_{0,0,0,h_4}^{++}|}\biggr).
\]
Thus, by the definition of $h_4'$, $|\mathcal{C}_{0,0,0,h_4}^{++}|\neq0$ for $0\leq h_4<h_4'$.
By Lemma~\ref{l6.5}, we have
\begin{align*}
\sum_{h_4=0}^{M_4}&\sum_{h_3=0}^{M_3}\sum_{h_2=0}^{M_2}\sum_{h_1=0}^{M_1}
 \frac{|\mathcal{C}_{h_1,h_2,h_3,h_4}^{++}|}{|\mathcal{C}_{0,0,0,0}^{++}|}\\
&=\exp\biggl[ \frac{\bigl[6 \binom{r}{4}+3 \binom{r}{3}+ \frac{1}{6} \binom{r}{2}\bigr]
 \binom{3r-4}{r} \binom{2r-4}{r-2}m^3\bigl[ \binom{n}{3r-4}- \binom{r}{2}m \binom{n-2}{3r-6}\bigr]}
{\bigl[N- \binom{r}{2}m \binom{n-2}{r-2}\bigr]^3}+O\Bigl( \frac{r^6m^2}{n^3}\Bigr)\biggr] \\
&{\qquad}\times\sum_{h_4=0}^{h_4'-1}
 \frac{|\mathcal{C}_{0,0,0,h_4}^{++}|}{|\mathcal{C}_{0,0,0,0}^{++}|}.
\end{align*}

By Remark~\ref{r4.3} and Remark~\ref{r4.4}, we have $|\mathcal{C}_{0,0,0,0}^{++}|\neq0$.
Define $n_{4,0},\ldots,n_{4,M_4}$ by $n_{4,0}=1$,
\[
n_{4,h_4}= \frac{|\mathcal{C}_{0,0,0,h_4}^{++}|}{|\mathcal{C}_{0,0,0,0}^{++}|}
\]
for $1\leq h_4<h_4'$ and $n_{4,h_4}=0$ for $h_4'\leq h_4\leq M_4$,  where
$M_4$ is shown in the equation~\eqref{e3.1}.
Note that as $n\to\infty$, $r^{-2}n\leq m=o(r^{-3}n^{ \frac32})$ and $h_4\leq M_4$,
\[
 \binom{m-2(h_4-1)}{2}= \frac{m^2}{2}\biggl(1+O\Bigl( \frac{r^8m^2}{n^4}+
 \frac{1}{m}\Bigr)\biggr)\Bigl(1-(h_4-1) \frac{4}{m}\Bigr).
\]
By Corollary~\ref{c5.16} $(b)$, we have for $1\leq h_4\leq h_4'$,
\begin{equation}\label{e6.2}
\begin{split}
 \frac{|\mathcal{C}_{0,0,0,h_4}^{++}|}{|\mathcal{C}_{0,0,0,0}^{++}|}
&= \frac{1}{h_4} \frac{|\mathcal{C}_{0,0,0,h_4-1}^{++}|}{|\mathcal{C}_{0,0,0,0}^{++}|}
 \frac{ \binom{2r-2}{2} \binom{2r-4}{r-2}m^2\bigl[ \binom{n}{2r-2}- \frac{(r^2-r-1)}{(r-1)
(2r-3)} \binom{r}{2}m \binom{n-2}{2r-4}\bigr]}
 {2\bigl[N- \binom{r}{2}m \binom{n-2}{r-2}\bigr]^2} \\
&{}\times\biggl(1+O\biggl(\frac{r^8m^2}{n^4}+ \frac{1}{m}
+ \frac{r^6mn\log (r^{-2}n)+r^8m^2}{n^{4}}\biggr)\biggr)\Bigl(1-(h_4-1) \frac{4}{m}\Bigr).
\end{split}
\end{equation}
Define
\begin{align*}
A(4,h_4)&= \frac{ \binom{2r-2}{2} \binom{2r-4}{r-2}m^2\bigl[ \binom{n}{2r-2}-
 \frac{(r^2-r-1)}{(r-1)(2r-3)} \binom{r}{2}m \binom{n-2}{2r-4}\bigr]}
{2\bigl[N- \binom{r}{2}m \binom{n-2}{r-2}\bigr]^2}\\
&{\qquad}\times
\biggl(1+O\biggl( \frac{r^8m^2}{n^4}+ \frac{1}{m}+
 \frac{r^6mn\log (r^{-2}n)+r^8m^2}{n^{4}}\biggr)\biggr)
\end{align*}
for $1\leq h_4< h_4'$ and $A(4,h_4)=0$ for $h_4'\leq h_4\leq M_4$,
\[
B(4,h_4)= \frac{4}{m}\text{~~and~~}
\delta_{h_4}=0
\]
for $1\leq h_4\leq M_4$. Define $K=1$, $c=3^{4}$ and $\gamma_j=0$ for $0\leq j\leq K$.
By~\eqref{e6.2}, we further have
\[
 \frac{n_{4,h_4}}{n_{4,h_4-1}}=
 \frac{A(4,h_4)}{h_4}\bigl(1-(h_4-1)B(4,h_4)\bigr).
\]
Let $A_{4,1}=\min_{h_4=1}^{M_4}A(4,h_4)$, $A_{4,2}=\max_{h_4=1}^{M_4}A(4,h_4)$,
$B_{4,1}=\min_{h_4=1}^{M_4}B(4,h_4)= \frac{4}{m}$, $B_{4,2}=\max_{h_4=1}^{M_4}B(4,h_4)= \frac{4}{m}$.
Since $Ac\leq \frac{3^4r^4m^2}{4n^2}<M_4-K+1$ and $BM_4=o(1)<1$ for all $A\in[A_{4,1},A_{4,2}]$
and  $B\in[B_{4,1},B_{4,2}]$, Lemma~\ref{l6.1} applies and we obtain
\begin{align*}
\sum_{h_4=0}^{M_4} \frac{|\mathcal{C}_{0,0,0,h_4}^{++}|}{|\mathcal{C}_{0,0,0,0}^{++}|}
&=\exp\biggl[ \frac{ \binom{2r-2}{2} \binom{2r-4}{r-2}m^2\bigl[ \binom{n}{2r-2}-
 \frac{(r^2-r-1)}{(r-1)(2r-3)} \binom{r}{2}m \binom{n-2}{2r-4}\bigr]}{2\bigl[N- \binom{r}{2}m \binom{n-2}{r-2}\bigr]^2}\\
&{\qquad}- \frac{ \binom{2r-2}{2}^2 \binom{2r-4}{r-2}^2m^3\bigl[ \binom{n}{2r-2}-
 \frac{(r^2-r-1)}{(r-1)(2r-3)} \binom{r}{2}m \binom{n-2}{2r-4}\bigr]^2}{2\bigl[N- \binom{r}{2}m \binom{n-2}{r-2}\bigr]^4}\\
 &{\qquad}+O\biggl(\frac{r^{12}m^4}{n^6}+ \frac{r^4m}{n^2}+
  \frac{r^{10}m^3n\log(r^{-2}n) +r^{12}m^4}{n^6}\biggr)\biggr]+O\Bigl(\Bigl( \frac{2e}{3^{4}}\Bigr)^{M_4}\Bigr).
\end{align*}
Note that $O\bigl( \frac{r^{12}m^4}{n^6}+ \frac{r^4m}{n^2}+
 \frac{r^{10}m^3n\log(r^{-2}n) +r^{12}m^4}{n^6}\bigr)=O\bigl(
 \frac{r^6m^2}{n^3}\bigr)$  and $O\bigl(\bigl( \frac{2e}{3^{4}}\bigr)^{M_3}\bigr)=O\bigl( \frac{r^6}{n^3}\bigr)$,
which further leads to
\begin{align*}
\sum_{h_4=0}^{M_4} \frac{|\mathcal{C}_{0,0,0,h_4}^{++}|}{|\mathcal{C}_{0,0,0,0}^{++}|}
&=\exp\biggl[ \frac{ \binom{2r-2}{2} \binom{2r-4}{r-2}m^2\bigl[ \binom{n}{2r-2}-
 \frac{(r^2-r-1)}{(r-1)(2r-3)} \binom{r}{2}m \binom{n-2}{2r-4}\bigr]}{2\bigl[N-
 \binom{r}{2}m \binom{n-2}{r-2}\bigr]^2}\\
&{\qquad}- \frac{ \binom{2r-2}{2}^2 \binom{2r-4}{r-2}^2m^3
\bigl[ \binom{n}{2r-2}- \frac{(r^2-r-1)}{(r-1)(2r-3)} \binom{r}{2}m \binom{n-2}{2r-4}\bigr]^2}
{2\bigl[N- \binom{r}{2}m \binom{n-2}{r-2}\bigr]^4}+O\Bigl( \frac{r^6m^2}{n^3}\Bigr)\biggr].
\end{align*}
This gives the expression in the lemma statement.
\end{proof}


\section{The case $\log\, (r^{-2}n)\leq m=O(r^{-2}n)$}\label{s:7}

We consider the case $\log (r^{-2}n)\leq m=O(r^{-2}n)$
in Theorem~\ref{t1.1}.
Recall that these inequalities imply $r=o(n^{\frac12})$.
Define
\begin{equation}\label{e7.1}
\begin{split}
M_0^*&=\Bigl\lceil\log (r^{-2}n)\Bigr\rceil, \\
M_0&=\Bigl\lceil\log (r^{-2}n)\Bigr\rceil+2, \\
M_4&=\Bigl\lceil\log (r^{-2}n)\Bigr\rceil.
\end{split}
\end{equation}
Let $\mathcal{H}_r^+(n,m)\subseteq\mathcal{H}_r(n,m)$
be the set of $r$-graphs $H$ which satisfy
properties $\bf(a)$ to $\bf(e)$:

$\bf(a)$\  The intersection of any two edges contains at most two vertices.

$\bf(b)$\  $H$ only contains one type of cluster (Type-$4$ cluster).
(This implies that any three edges involve at least $3r-3$ vertices. Thus, if there are
two edges, for example $\{e_1,e_2\}$, such that $|e_1\cup e_2|= 2r-2$, then $|(e_1\cup e_2)\cap e|\leq 1$  for any
edge $e$ other than $\{e_1,e_2\}$ of $H$.)

$\bf(c)$\ Any two distinct Type-$4$ clusters in $H$ are vertex-disjoint.
(This implies that any four edges involve at least $4r-4$ vertices.)

$\bf(d)$\ There are at most $M_4$  Type-$4$ clusters in $H$.

$\bf(e)$\ $\deg (v)\leq M_0$ for every vertex $v\in [n]$.

Similarly, we further define  $\mathcal{H}_r^{++}(n,m)\subseteq\mathcal{H}_r^+(n,m)$
to be the set of $r$-graphs $H$ obtained
by replacing the property $\bf(e)$
with a stronger constraint

$\bf(e^*)$\ $\deg (v)\leq M_0^*$ for every vertex $v\in [n]$.

\begin{remark}\label{r7.1}
From property $\bf(e)$, it easily follows that
\[
\sum_{v\in [n]} \binom{\deg (v)}{2}=O\Bigl(M_0\sum_{v\in [n]}\deg _0(v)\Bigr)
=O\Bigl(rm\log (r^{-2}n)\Bigr)
\]
for $H\in\mathcal{H}_r^+(n,m)$.
\end{remark}

Similarly to Section~\ref{s:3}, we find that the number of $r$-graphs
 in $\mathcal{H}_r(n,m)$
not satisfying the properties of $\mathcal{H}^+_r(n,m)$ and
$\mathcal{H}^{++}_r(n,m)$ is quite small.

\begin{theorem}\label{t7.2}
Assume that $\log (r^{-2}n)\leq m=O(r^{-2}n)$ and
$n\to \infty$. Then
\[
 \frac{|\mathcal{H}^+_r(n,m)|}{|\mathcal{H}_r(n,m)|}=1-O\Bigl( \frac{r^6m^2}{n^3}\Bigr),\quad  \frac{|\mathcal{H}^{++}_r(n,m)|}{|\mathcal{H}_r(n,m)|}=1-O\Bigl( \frac{r^6m^2}{n^3}\Bigr).
\]
\end{theorem}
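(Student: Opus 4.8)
The plan is to follow the proof of Theorem~\ref{t3.2} essentially line by line. The present range $\log(r^{-2}n)\le m=O(r^{-2}n)$ is far more restrictive than $m=o(r^{-3}n^{3/2})$: from $m=O(r^{-2}n)$ we directly get $r^2m/n=O(1)$ and $rm/n=O(1/r)$, so the $r$-dependent additive corrections in the thresholds~\eqref{e3.1} --- present in Section~\ref{s:3} only to absorb the larger $m$ --- are $O(1)$ here and can be discarded, which is exactly why the trimmed thresholds~\eqref{e7.1} suffice. Since $\mathcal H^{++}_r(n,m)\subseteq\mathcal H^+_r(n,m)\subseteq\mathcal H_r(n,m)$, it is enough, just as in Theorem~\ref{t3.2}, to prove the second equation, and for that it suffices to show that a uniformly random $H\in\mathcal H_r(n,m)$ violates each of $\bf(a)$--$\bf(e^*)$ with probability $O(r^6m^2/n^3)$ and then to apply a union bound.

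Properties $\bf(a)$, $\bf(b)$, $\bf(c)$ involve only a bounded number of edges, so Lemma~\ref{l2.2} applies directly. Property $\bf(a)$ fails only if two edges span at most $2r-3$ vertices, of which there are $O(r^6m^2/n^3)$ in expectation ($t=2$, $\alpha=3$). Property $\bf(b)$ fails only if some cluster has at least three edges; since the corresponding component of $G_H$ then contains a path of length two, this forces three edges $e_1,e_2,e_3$ with $|e_1\cap e_2|=|e_2\cap e_3|=2$ and hence $|e_1\cup e_2\cup e_3|\le3r-4$, and by Lemma~\ref{l2.2} ($t=3$, $\alpha=4$) the expected number of such triples is $O(r^8m^3/n^4)=O(r^6m^2/n^3)$ using $r^2m/n=O(1)$. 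Property $\bf(c)$ fails only if two Type-$4$ clusters share a vertex, i.e.\ four edges span at most $4r-5$ vertices, of which there are $O(r^{10}m^4/n^5)=O(r^6m^2/n^3)$ in expectation ($t=4$, $\alpha=5$) using $(r^2m/n)^2=O(1)$.

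Properties $\bf(d)$ and $\bf(e^*)$ involve an unbounded number of edges, so --- exactly as for $\bf(f)$ (the case $i=4$) and $\bf(g^*)$ in Theorem~\ref{t3.2} --- we use Lemma~\ref{l2.1} instead. For $\bf(d)$: given $\bf(b)$, the number of Type-$4$ clusters is at most the number of paired-distinct links, so with $\ell_4=M_4+1$ the conditional probability of having more than $M_4$ of them is
\[
O\!\left(\binom{\binom n2}{\ell_4}\binom{n-2}{r-2}^{2\ell_4}\Bigl(\tfrac mN\Bigr)^{2\ell_4}\right)
=O\!\left(\Bigl(\tfrac{e\,r^4m^2}{2\ell_4 n^2}\Bigr)^{\!\ell_4}\right),
\]
and total probability against $\bf(b)$ then yields $O(r^6m^2/n^3)$. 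For $\bf(e^*)$: the expected number of vertices of degree at least $d=M_0^*+1$ is $n\binom{n-1}{r-1}^d(m/N)^d/d!=O\bigl(n(e\,rm/(dn))^d\bigr)$, where $rm/n=O(1/r)$.

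The one step needing genuine attention is checking that, with the corrections removed in~\eqref{e7.1}, the factorial decay still dominates. Here one uses that $r=o(n^{1/2})$ forces $\log(r^{-2}n)\to\infty$, that $\ell_4,d\ge\log(r^{-2}n)$, and that $r^4m^2/n^2=O(1)$: the $\bf(d)$ bound is $\ell_4^{-\ell_4(1+o(1))}$, which is $\le(r^{-2}n)^{-3}$ for $n$ large, while the $\bf(e^*)$ bound is $n\,(rd)^{-d(1+o(1))}$, in which the factor $r^{-d}$ is what absorbs the leading $n$ (and is what makes the estimate survive the regime $r$ close to $n^{1/2}$), again giving $O((r^{-2}n)^{-3})=O(r^6/n^3)\le O(r^6m^2/n^3)$. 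Everything else is a verbatim transcription of Section~\ref{s:3}, with Types $1$, $2$ and $3$ simply not occurring; the first equation of the theorem follows from the second since $|\mathcal H^{++}_r(n,m)|\le|\mathcal H^+_r(n,m)|\le|\mathcal H_r(n,m)|$.
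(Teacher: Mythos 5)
Your proposal is correct and follows essentially the same route as the paper, which itself proves Theorem~7.2 by declaring properties $\bf(a)$--$\bf(d)$ "much the same" as Theorem~3.2 and writing out only the $\bf(e^*)$ computation $n\binom{n-1}{r-1}^d(m/N)^d/d!=O(n(e/(rd))^d)=O(r^6/n^3)$, exactly as you do. The details you supply that the paper omits --- in particular using $t=3$, $\alpha=4$ for property $\bf(b)$ (since three-edge clusters are now entirely forbidden, unlike in Section~3) and replacing the $\frac{3^4r^4m^2}{2n^2}$ term in the thresholds by the observation $r^4m^2/n^2=O(1)$ --- are the right adaptations.
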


\begin{proof}
The proof is much the same as that of Theorem~\ref{t3.2}, so we will
omit the proofs for $\bf(a)$--$\bf(d)$.

To prove property $\bf(e^*)$, define $d=M_0^*+1$.
The expected number of sets consisting of a vertex $v$
and $d$ edges that include $v$ is
\[
n \binom{n-1 }{r-1}^d \frac{1}{d!}\Bigl(\frac{m}{N}\Bigr)^{d}
=O\Bigl(n\Bigl( \frac{rem}{dn}\Bigr)^{d}\Bigr)
=O\Bigl(n\Bigl( \frac{e}{rd}\Bigr)^{d}\Bigr)
=O\Bigl( \frac{r^6}{n^3}\Bigr),
\]
where the second equality is true because $d!\geq \bigl( \frac{d}{e}\bigr)^{d}$ and $m=O(r^{-2}n)$,
and the last equality is true because of the choice $d>\log (r^{-2}n)$.
\end{proof}

\begin{remark}\label{r7.3}
Let
$\mathcal{C}_{h_4}^{+}$ {\rm(}resp. $\mathcal{C}_{h_4}^{++}${\rm)}
be the set of $r$-graphs $H\in \mathcal{H}_r^+(n,m)$ {\rm(}resp.
$H\in \mathcal{H}_r^{++}(n,m)${\rm)}
with exactly  $h_4$ Type-$4$ clusters. Then, by Theorem~\ref{t7.2},
\[
|\mathcal{H}_r^+(n,m)|=\sum_{h_4=0}^{M_4}|\mathcal{C}_{h_4}^{+}|,\quad
|\mathcal{H}_r^{++}(n,m)|=\sum_{h_4=0}^{M_4}|\mathcal{C}_{h_4}^{++}|.
\]
By the same discussion as in Section~\ref{s:4}, we also have
\[
|\mathcal{C}_{0}^{++}|=\Bigl(1-O\Bigl( \frac{r^6}{n^3}\Bigr)\Bigr)|\mathcal{L}_r(n,m)|\quad
\text{and}\quad |\mathcal{C}_{h_4}^{++}|=\Bigl(1-O\Bigl( \frac{r^6}{n^3}\Bigr)\Bigr)|\mathcal{C}_{h_4}^{+}|.
\]
We also have $|\mathcal{L}_r(n,m)|\neq0$
and $|\mathcal{C}_{0}^{++}|\neq0$.
Thus,
\[
 \frac{1}{\mathbb{P}_r(n,m)}=\sum_{h_4=0}^{M_4} \frac{|\mathcal{C}_{h_4}^{++}|}{|\mathcal{L}_r(n,m)|}
\Bigl(1-O\Bigl( \frac{r^6m^2}{n^3}\Bigr)\Bigr)
=\sum_{h_4=0}^{M_4} \frac{|\mathcal{C}_{h_4}^{++}|}{|\mathcal{C}_{0}^{++}|}
\Bigl(1-O\Bigl( \frac{r^6m^2}{n^3}\Bigr)\Bigr).
\]
\end{remark}

By Remark~\ref{r7.1} and the same arguments as used for Lemma~\ref{l3.5}
and Corollary~\ref{c5.4}, we also have the following two lemmas.

\begin{lemma}\label{l7.5}
Assume  $\log(r^{-2}n)\leq m=O(r^{-2}n)$.
Let $H\in \mathcal{H}_r^{+}(n,m-\xi)$
and let $N_t$ be  the set of $t$-sets of $[n]$ of which
no two vertices belong to the same edge of $H$, where
$r\leq t\leq 2r-2$ and $\xi=O(1)$. Then
\[
|N_t|=\biggl[ \binom{n}{t}- \binom{r}{2}m \binom{n-2}{t-2}\biggr]
\biggl(1+O\Bigl( \frac{r^4}{n^2}+ \frac{r^6m\log(r^{-2}n)}{n^3}\Bigr)\biggr).
\]
\end{lemma}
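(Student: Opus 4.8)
The plan is to transcribe the inclusion--exclusion proof of Lemma~\ref{l3.5}, which is legitimate because the $\mathcal{H}_r^+(n,m)$ of this section has the same ``almost-disjoint edges'' structure (now only Type-$4$ clusters occur), while the stronger hypothesis $m=O(r^{-2}n)$ collapses several of the error contributions. For an edge $e$ of $H$ and a $2$-set $\{i,j\}\subseteq e$, let $A_{e(i,j)}$ denote the set of $t$-subsets of $[n]$ containing both $i$ and $j$. A $t$-set fails to lie in $N_t$ iff it lies in some $A_{e(i,j)}$, so Bonferroni's inequalities give
\[
\binom{n}{t}-\sum_{\{e,\{i,j\}\}}|A_{e(i,j)}|\le|N_t|\le\binom{n}{t}-\sum_{\{e,\{i,j\}\}}|A_{e(i,j)}|+\sum_{\{e,\{i,j\}\}\neq \{e',\{i',j'\}\}}|A_{e(i,j)}\cap A_{e'(i',j')}|.
\]
Here $|A_{e(i,j)}|=\binom{n-2}{t-2}$ and $H$ has $m-\xi$ edges, so the first-order term is $\binom{r}{2}(m-\xi)\binom{n-2}{t-2}$; since $r\le t\le 2r-2$ we have $\binom{r}{2}\binom{n-2}{t-2}=O(r^4/n^2)\binom{n}{t}$, hence replacing $m-\xi$ by $m$ costs only a factor $1+O(r^4/n^2)$, which already yields the lower bound and the first piece of the error term.

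For the upper bound I would split the double sum into the four cases of Lemma~\ref{l3.5}: $(i)$ $e=e'$; $(ii)$ $e\neq e'$ with $\{i,j\}\cap\{i',j'\}=\emptyset$; $(iii)$ $e\neq e'$ with $|\{i,j\}\cap\{i',j'\}|=1$; and $(iv)$ $e\neq e'$ with $\{i,j\}=\{i',j'\}$. Cases $(i)$ and $(ii)$ contribute $O\bigl(mr^3\binom{n-3}{t-3}\bigr)$ and $O\bigl(m^2r^4\binom{n-4}{t-4}\bigr)$ respectively; dividing by $\binom{n}{t}$, using $\binom{n-3}{t-3}/\binom{n}{t}=O(r^3/n^3)$, $\binom{n-4}{t-4}/\binom{n}{t}=O(r^4/n^4)$ and $m=O(r^{-2}n)$, makes both $O(r^4/n^2)$. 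In case $(iii)$ the pair $\{i,j\}\cup\{i',j'\}$ spans three vertices, so the count is $O\bigl(\bigl(\sum_v\binom{\deg(v)}{2}\bigr)(r-1)^2\binom{n-3}{t-3}\bigr)$, and I would invoke Remark~\ref{r7.1} to replace $\sum_v\binom{\deg(v)}{2}$ by $O\bigl(rm\log(r^{-2}n)\bigr)$; dividing by $\binom{n}{t}$ then gives $O\bigl(r^6m\log(r^{-2}n)/n^3\bigr)$, the second piece of the error. In case $(iv)$ the common pair $\{i,j\}$ is a link, so its two edges form a Type-$4$ cluster, and by property~$\bf(d)$ there are at most $M_4=O(\log(r^{-2}n))$ such clusters, so this case contributes $O\bigl(\log(r^{-2}n)\binom{n-2}{t-2}\bigr)$, which I would absorb into the error term.

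Collecting the four contributions and the $\xi$-correction, $|N_t|$ differs from $\binom{n}{t}-\binom{r}{2}m\binom{n-2}{t-2}$ by $O\bigl((r^4/n^2+r^6m\log(r^{-2}n)/n^3)\binom{n}{t}\bigr)$, and since $\binom{r}{2}m\binom{n-2}{t-2}=O(r^2/n)\binom{n}{t}$ the bracket equals $\binom{n}{t}\bigl(1-O(r^2/n)\bigr)$, which converts this additive error into the stated multiplicative one. The proof is a routine copy of Lemma~\ref{l3.5}, the only new ingredient being Remark~\ref{r7.1}, which shows that property~$\bf(e)$ (not the stronger $\bf(e^*)$) already controls $\sum_v\binom{\deg(v)}{2}$. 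I anticipate no genuine obstacle; the point needing the most care is verifying that each of the four case-contributions is dominated by $r^4/n^2+r^6m\log(r^{-2}n)/n^3$ throughout the range $\log(r^{-2}n)\le m=O(r^{-2}n)$, so I would check case $(iv)$ last as it gives the most delicate comparison.
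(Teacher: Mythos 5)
Your proposal follows exactly the route the paper intends: Lemma~\ref{l7.5} is stated without proof, only with the remark that it follows ``by Remark~\ref{r7.1} and the same arguments as used for Lemma~\ref{l3.5}'', and your four-case inclusion--exclusion, with Remark~\ref{r7.1} handling case $(iii)$ and property $\bf(d)$ handling case $(iv)$, is precisely that argument. Your computations for cases $(i)$--$(iii)$ and for the $\xi$-correction are correct and land inside the stated error term.

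The one step you explicitly postponed --- absorbing case $(iv)$ into the error --- is the step that does not go through as the lemma is written. Case $(iv)$ contributes $O\bigl(M_4\binom{n-2}{t-2}\bigr)$ with $M_4=\lceil\log(r^{-2}n)\rceil$, i.e.\ a relative error of order $r^2\log(r^{-2}n)/n^2$ when $t=r$. This is dominated by $r^4/n^2$ only if $\log(r^{-2}n)=O(r^2)$, and by $r^6m\log(r^{-2}n)/n^3$ only if $n=O(r^4m)$; in the corresponding step of Lemma~\ref{l3.5} the second comparison is rescued by the hypothesis $m\geq r^{-2}n$, which is unavailable in the present range. For $r$ bounded and, say, $m$ of order $\sqrt n$ (allowed, since $\log(r^{-2}n)\leq\sqrt n\leq r^{-2}n$), a hypergraph carrying the full quota of $M_4$ Type-$4$ clusters genuinely has $|N_t|$ exceeding $\binom nt-\binom r2 m\binom{n-2}{t-2}$ by about $\log(r^{-2}n)\binom{n-2}{t-2}$ --- the doubly counted links are an actual overcount of $\sum|A_{e(i,j)}|$, not mere Bonferroni slack --- so the error term really needs an additional $r^2\log(r^{-2}n)/n^2$. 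This is an imprecision you have inherited from the lemma statement rather than a defect of your method, and it is harmless downstream: multiplied by $A(h_4)=O(r^4m^2/n^2)$ in Lemma~\ref{l7.7}, the extra term contributes $O\bigl(r^6m^2\log(r^{-2}n)/n^4\bigr)=O\bigl(r^6m^2/n^3\bigr)$. But as a proof of the lemma exactly as stated, the final verification you deferred cannot be completed, and you should either enlarge the error term or record why the discrepancy is immaterial.
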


\begin{lemma}\label{l7.6}
Assume $\log(r^{-2}n)\leq m=O(r^{-2}n)$.
With notation as above, \\
$(a)$\ If $\mathcal{C}_{h_4}^{++}=\emptyset$, then $\mathcal{C}_{h_4+1}^{++}=\emptyset$.\\
$(b)$\ Let $h_4'$ be the first
value of $h_4\leq M_4$
such that $\bigl|\mathcal{C}_{h_4}^{++}\bigl|=0$, or $h_4'=M_4+1$ if no such value exists.
Suppose that $n\to\infty$, then uniformly for $1\leq h_4< h_4'$,
\begin{align*}
 \frac{|\mathcal{C}_{h_4}^{++}|}{|\mathcal{C}_{h_4-1}^{++}|}&=
 \frac{ \binom{2r-2}{2} \binom{2r-4}{r-2} \binom{m-2(h_4-1)}{2}\bigl[ \binom{n}{2r-2}
- \binom{r}{2}m \binom{n-2}{2r-4}\bigr]}{h_4\bigl[N- \binom{r}{2}m \binom{n-2}{r-2}\bigr]^2}\\
&{\qquad}\times\biggl(1+O\Bigl( \frac{r^4}{n^2}+ \frac{r^6m\log(r^{-2}n)}{n^3}\Bigr)\biggr).
\end{align*}
\end{lemma}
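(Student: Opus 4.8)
The plan is to transplant the argument of Section~\ref{s:5.1} (Lemma~\ref{l5.3} and Corollary~\ref{c5.4}) to the present regime, in which the only clusters are of Type~$4$ and hence a single family of switchings suffices. For $H\in\mathcal{C}_{h_4}^{+}$ with $h_4\ge1$, a \emph{Type-$4$ switching} chooses one of the $h_4$ Type-$4$ clusters $\{e,f\}$, deletes both edges, and then in two further steps inserts two new edges, each placed on an $r$-set of $[n]$ no two of whose vertices lie in a common edge of the current graph; its reverse, applied to $H''\in\mathcal{C}_{h_4-1}^{+}$, deletes two edges containing no link (these are precisely the $m-2(h_4-1)$ edges outside Type-$4$ clusters), chooses a $(2r-2)$-set $T$ of $[n]$ no two of whose vertices lie in a common remaining edge, and builds a Type-$4$ cluster on $T$. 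Exactly as in Remarks~\ref{r5.1}--\ref{r5.2}, a Type-$4$ switching maps $\mathcal{C}_{h_4}^{++}$ into $\mathcal{C}_{h_4-1}^{+}$ and a reverse Type-$4$ switching maps $\mathcal{C}_{h_4-1}^{++}$ into $\mathcal{C}_{h_4}^{+}$, the (at most two) units of degree a vertex can gain being absorbed by $M_0=M_0^{*}+2$. Here, unlike in Lemma~\ref{l5.15}, the simpler ``no two vertices in a common edge'' condition on $T$ suffices and no $\frac{(r^2-r-1)}{(r-1)(2r-3)}$ correction appears, because property~\textbf{(b)} of this regime forces every edge outside a Type-$4$ cluster to meet that cluster's vertex set in at most one vertex, so the vertex set of the deleted cluster is always an admissible~$T$.

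Part~(a) is the argument of Corollary~\ref{c5.4}\,(a): if $\mathcal{C}_{h_4}^{++}=\emptyset$ while $\mathcal{C}_{h_4+1}^{++}\ne\emptyset$, a single Type-$4$ switching carries a member of $\mathcal{C}_{h_4+1}^{++}$ into $\mathcal{C}_{h_4}^{+}$, whence $\mathcal{C}_{h_4}^{+}\ne\emptyset$ and, by Remark~\ref{r7.3}, $\mathcal{C}_{h_4}^{++}\ne\emptyset$, a contradiction. For part~(b) I would count switchings exactly as in Lemma~\ref{l5.3}, but invoking Lemma~\ref{l7.5} in place of Lemma~\ref{l3.5}. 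Writing $\varepsilon=r^4/n^2+r^6m\log(r^{-2}n)/n^3$, for $H\in\mathcal{C}_{h_4}^{+}$ there are $h_4$ choices of cluster and, by Lemma~\ref{l7.5} with $t=r$, $[N-\binom{r}{2} m\binom{n-2}{r-2}](1+O(\varepsilon))$ choices for each of the two inserted edges, so the number of Type-$4$ switchings of $H$ equals $h_4[N-\binom{r}{2} m\binom{n-2}{r-2}]^2(1+O(\varepsilon))$; for $H''\in\mathcal{C}_{h_4-1}^{+}$ there are $2\binom{m-2(h_4-1)}{2}$ ordered choices of the two deleted edges, $[\binom{n}{2r-2}-\binom{r}{2} m\binom{n-2}{2r-4}](1+O(\varepsilon))$ choices of $T$ by Lemma~\ref{l7.5} with $t=2r-2$, and $\frac12\binom{2r-2}{2}\binom{2r-4}{r-2}$ ways to place the cluster on $T$, so the number of reverse Type-$4$ switchings of $H''$ equals $\binom{2r-2}{2}\binom{2r-4}{r-2}\binom{m-2(h_4-1)}{2}[\binom{n}{2r-2}-\binom{r}{2} m\binom{n-2}{2r-4}](1+O(\varepsilon))$. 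Applying Lemma~\ref{l3.7} with $A_1=\mathcal{C}_{h_4}^{++}$, $A_2=\mathcal{C}_{h_4}^{+}\setminus\mathcal{C}_{h_4}^{++}$, $B_1=\mathcal{C}_{h_4-1}^{++}$, $B_2=\mathcal{C}_{h_4-1}^{+}\setminus\mathcal{C}_{h_4-1}^{++}$ and $G$ the bipartite graph whose edges are the Type-$4$ switchings, Remark~\ref{r7.3} gives $|A_2|/|A_1|$ and $|B_2|/|B_1|$ equal to $O(r^6/n^3)=o(\varepsilon)$ (using $r=o(n^{1/2})$), so these are absorbed into the error term; dividing the two switching counts then yields the stated ratio, uniformly for $1\le h_4<h_4'$, just as in Corollary~\ref{c5.4}\,(b).

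The main work, as always with these combined switchings, is the bookkeeping that keeps the switched graph in the correct family while confining every degenerate configuration to the error term: one must verify that two inserted edges, each meeting every other edge in at most one vertex, cannot create a cluster of any kind; that the reverse switchings one discards — those whose output violates a property of $\mathcal{H}_r^{+}(n,m)$, such as colliding with an existing Type-$4$ cluster or producing a too-small union of three or four edges — contribute negligibly; and that the incidental coincidences among the chosen edges and small vertex unions contribute only $O(\varepsilon)$; together with the routine checks that $r^6/n^3=o(\varepsilon)$ and that $m\ge\log(r^{-2}n)$ makes Remark~\ref{r7.1} and Lemma~\ref{l7.5} available. I expect this bookkeeping, rather than any new idea, to be the only obstacle.
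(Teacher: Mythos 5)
Your proposal is correct and is exactly the argument the paper intends (the paper itself only says ``by the same arguments as used for Lemma~\ref{l3.5} and Corollary~\ref{c5.4}''): forward and reverse Type-$4$ switchings counted via Lemma~\ref{l7.5}, combined through Lemma~\ref{l3.7} with the $\mathcal{C}^{+}\setminus\mathcal{C}^{++}$ defect controlled by Remark~\ref{r7.3}. You also correctly identify the one genuine point of divergence from Lemma~\ref{l5.15}, namely that property $\bf(b)$ of this regime forbids a third edge from meeting a Type-$4$ cluster in two vertices, which is why the $\frac{(r^2-r-1)}{(r-1)(2r-3)}$ correction is absent here.
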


\begin{lemma}\label{l7.7}
Assume $\log(r^{-2}n)\leq m=O(r^{-2}n)$. Then
\[
\sum_{h_4=0}^{M_4} \frac{|\mathcal{C}_{h_4}^{++}|}{|\mathcal{C}_{0}^{++}|}
=\exp\biggl[ \frac{[m]_2 \binom{2r-2}{2} \binom{2r-4}{r-2}\bigl[ \binom{n}{2r-2}-
 \binom{r}{2}m \binom{n-2}{2r-4}\bigr]}{2\bigl[N- \binom{r}{2}m \binom{n-2}{r-2}\bigr]^2}
+O\Bigl( \frac{r^6m^2}{n^3}\Bigr)\biggr].
\]
\end{lemma}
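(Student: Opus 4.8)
The plan is to follow the pattern of Lemma~\ref{l6.6}, using the simplifications afforded by this sparser regime: here $\mathcal{H}_r^{+}(n,m)$ contains only Type-$4$ clusters (property $\bf(b)$ of Section~\ref{s:7}), so the quadruple sum collapses to a single sum over $h_4$, and since $m=O(r^{-2}n)$ the cubic correction present in Lemma~\ref{l6.6} turns out to be $O(r^6m^2/n^3)$ and disappears into the error term, which is why the stated exponent has just one summand.

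First I would record, from Remark~\ref{r7.3}, that $|\mathcal{C}_{0}^{++}|\neq0$, and from Lemma~\ref{l7.6}$(a)$ that there is a well-defined first index $h_4'\le M_4+1$ with $\mathcal{C}_{h_4}^{++}=\emptyset$ for $h_4\ge h_4'$. Set $n_{4,0}=1$, $n_{4,h_4}=|\mathcal{C}_{h_4}^{++}|/|\mathcal{C}_{0}^{++}|$ for $1\le h_4<h_4'$, and $n_{4,h_4}=0$ for $h_4'\le h_4\le M_4$, so that the quantity to be estimated is $\sum_{h_4=0}^{M_4}n_{4,h_4}$. For $1\le h_4<h_4'$ the ratio $n_{4,h_4}/n_{4,h_4-1}$ is given by Lemma~\ref{l7.6}$(b)$; the one genuine manipulation is to write the binomial factor there \emph{exactly} as $\binom{m-2(h_4-1)}{2}=\binom m2\bigl(1-(h_4-1)B(h_4)\bigr)$, which simply defines $B(h_4)$. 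One checks $B(h_4)=\frac4m\bigl(1+O(h_4/m)\bigr)$ and $1-(h_4-1)B(h_4)=\binom{m-2(h_4-1)}{2}/\binom m2\ge0$, vanishing precisely where the reverse-switching count does, consistent with the definition of $h_4'$. Absorbing the relative error $O\bigl(r^4/n^2+r^6m\log(r^{-2}n)/n^3\bigr)$ of Lemma~\ref{l7.6}$(b)$ into $A(h_4)=A_{\mathrm{main}}(1+O(\cdots))$, where
\[
A_{\mathrm{main}}=\binom m2\,\frac{\binom{2r-2}{2}\binom{2r-4}{r-2}\bigl[\binom n{2r-2}-\binom r2 m\binom{n-2}{2r-4}\bigr]}{\bigl[N-\binom r2 m\binom{n-2}{r-2}\bigr]^{2}}
\]
does \emph{not} depend on $h_4$, and setting $A(h_4)=0$ for $h_4\ge h_4'$, one gets exactly $n_{4,h_4}/n_{4,h_4-1}=\frac{A(h_4)}{h_4}\bigl(1-(h_4-1)B(h_4)\bigr)$.

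Next I would verify the hypotheses of Lemma~\ref{l6.2}. Since $m=O(r^{-2}n)$ we have $A_{\mathrm{main}}=\Theta\bigl((r^2m/n)^2\bigr)=O(1)$, while $B(h_4)=\Theta(1/m)$ and $M_4=\Theta(\log(r^{-2}n))\to\infty$; hence $\max_{h_4}A(h_4)/M_4\to0$ and $\max_{h_4}|A(h_4)B(h_4)|=O(1/m)\to0$, so Lemma~\ref{l6.2} applies with some $\hat c\to0$. With $A_1=\min_{h_4}A(h_4)$, $A_2=\max_{h_4}A(h_4)$ (both $A_{\mathrm{main}}(1+o(1))$) and $C_1,C_2$ the corresponding extrema of $A(h_4)B(h_4)$, the remaining work is to show every error term is $O(r^6m^2/n^3)$: (i) $A_2-A_1=O\bigl(A_{\mathrm{main}}(r^4/n^2+r^6m\log(r^{-2}n)/n^3)\bigr)=O\bigl(r^8m^2/n^4+r^{10}m^3\log(r^{-2}n)/n^5\bigr)$; (ii) the corrections $\frac12A_2C_1=O(A_{\mathrm{main}}^2/m)=O(r^8m^3/n^4)$ and $\frac12A_2C_1^2=O(A_{\mathrm{main}}^3/m^2)=O(r^{12}m^4/n^6)$; (iii) the additive tail $(2e\hat c)^{M_4}$. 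The bounds in (i) and (ii) use $r^2m/n=O(1)$ and $r^2\log(r^{-2}n)/n=\log(r^{-2}n)/(r^{-2}n)\to0$; for (iii) one uses $m\ge\log(r^{-2}n)\ge1$, $r^6/n^3=(r^{-2}n)^{-3}$ and $\hat c\to0$ to obtain $(2e\hat c)^{M_4}\le(r^{-2}n)^{-3}\le r^6m^2/n^3$ once $n$ is large. Since $\binom m2=[m]_2/2$ and the leading part of both $A_1$ and $A_2$ is $A_{\mathrm{main}}$, both $\Sigma_1$ and $\Sigma_2$ of Lemma~\ref{l6.2} equal $\exp[A_{\mathrm{main}}+O(r^6m^2/n^3)]$, which is the claimed formula.

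The main obstacle I anticipate is precisely this error bookkeeping in the overlap region $m=\Theta(\log(r^{-2}n))$, where $h_4$ need not be $o(m)$ and $r^6m^2/n^3$ can be as small as $\log^2(r^{-2}n)/(r^{-2}n)^3$; there one must check carefully that every error — in particular the additive $(2e\hat c)^{M_4}$ coming out of the summation lemma, hence the need to keep $\hat c$ under explicit control — really is dominated by $r^6m^2/n^3$. Should that term be awkward with Lemma~\ref{l6.2}, the fallback is Lemma~\ref{l6.1} with $K=0$, $\gamma_0=0$ and a sufficiently large constant $c$ (allowed since $Ac=O(1)<M_4-K+1$ eventually), for which $(2e/c)^{M_4}\le(r^{-2}n)^{-3}$ directly.
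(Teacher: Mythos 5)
Your proposal follows essentially the same route as the paper: set up $n_{4,h_4}$ via the cutoff $h_4'$, rewrite $\binom{m-2(h_4-1)}{2}$ exactly as $\frac{[m]_2}{2}\bigl(1-(h_4-1)B(h_4)\bigr)$ with $B(h_4)=\frac{4m-4h_4+2}{m(m-1)}=O(1/m)$, feed the ratio from Lemma~\ref{l7.6}(b) into Lemma~\ref{l6.2}, and absorb the quadratic correction and the additive tail $(2e\hat c)^{M_4}$ into $O(r^6m^2/n^3)$. The only cosmetic difference is that the paper fixes $\hat c=\frac{1}{2\cdot3^4}$ rather than letting $\hat c\to0$; your error bookkeeping matches the paper's.
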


\begin{proof} Let $h_4'$ be the first
value of $h_4\leq M_4$
such that $|\mathcal{C}_{h_4}^{++}|=0$ or $h_4'=M_4+1$ if no such value exists,
which is defined in Lemma~\ref{l7.5}. By Remark~\ref{r7.3},
we have $|\mathcal{C}_{0}^{++}|\neq0$.
Define $n_{0},\ldots,n_{M_4}$ by $n_{0}=1$,
\[
n_{h_4}= \frac{|\mathcal{C}_{h_4}^{++}|}{|\mathcal{C}_{0}^{++}|}
\]
for $1\leq h_4<h_4'$ and $n_{h_4}=0$ for $h_4'\leq h_4\leq M_4$,  where
$M_4$ is shown in~\eqref{e7.1}.

Note that
\[
 \binom{m-2(h_4-1)}{2}= \frac{[m]_2}{2}\Bigl(1-(h_4-1) \frac{4m-4h_4+2}{m(m-1)}\Bigr).
\]
By Lemma~\ref{l7.5}, for $1\leq h_4\leq h_4'$, we have
\begin{equation}\label{e7.2}
\begin{split}
 \frac{|\mathcal{C}_{h_4}^{++}|}{|\mathcal{C}_{0}^{++}|}&= \frac{1}{h_4} \frac{|\mathcal{C}_{h_4-1}^{++}|}
{|\mathcal{C}_{0}^{++}|}
 \frac{[m]_2 \binom{2r-2}{2} \binom{2r-4}{r-2}\bigl[ \binom{n}{2r-2}- \binom{r}{2}
m \binom{n-2}{2r-4}\bigr]}{2\bigl[N- \binom{r}{2}m \binom{n-2}{r-2}\bigr]^2} \\
&{\qquad}\times\biggl(1+O\Bigl(\frac{r^4}{n^2}+ \frac{r^6m\log(r^{-2}n)}
{n^3}\Bigr)\biggr)\biggl(1-(h_4-1) \frac{4m-4h_4+2}{m(m-1)}\biggr).
\end{split}
\end{equation}
Define
\[
A(h_4)= \frac{[m]_2 \binom{2r-2}{2} \binom{2r-4}{r-2}\bigl[ \binom{n}{2r-2}-
 \binom{r}{2}m \binom{n-2}{2r-4}\bigr]}{2\bigl[N- \binom{r}{2}m \binom{n-2}{r-2}\bigr]^2}
 \biggl(1+O\Bigl(\frac{r^4}{n^2}+ \frac{r^6m\log(r^{-2}n)}{n^3}\Bigr)\biggr)
\]
for $1\leq h_4< h_4'$, $A(h_4)=0$ for $h_4'\leq h_4\leq M_4$ and
$B(h_4)= \frac{4m-4h_4+2}{m(m-1)}=O\bigl( \frac{1}{m}\bigr)$ as $m\geq \log (r^{-2}n)$. Let $A_{1}=\min_{h_4=1}^{M_4}A(h_4)$,
$A_{2}=\max_{h_4=1}^{M_4}A(h_4)$, $C_{1}=\min_{h_4=1}^{M_4}A(h_4)B(h_4)$
and $C_{2}=\max_{h_4=1}^{M_4}A(h_4)B(h_4)$.

We further have
\[
 \frac{|\mathcal{C}_{h_4}^{++}|}{|\mathcal{C}_{0}^{++}|}=
\frac{A(h_4)}{h_4} \frac{|\mathcal{C}_{h_4-1}^{++}|}{|\mathcal{C}_{0}^{++}|}
   \bigl(1-(h_4-1)B(h_4)\bigr).
\]
Note that $\max\{A/M_4,|C|\}=o(1)$ for all $A\in [A_{1},A_{2}]$ and $C\in[C_{1},C_{2}]$
as $\log(r^{-2}n)\leq m=O(r^{-2}n)$ and $h_4\leq M_4=\lceil\log (r^{-2}n)\rceil$.
Let $\hat{c}= \frac{1}{2\cdot 3^4}$, then $\max\{A/M_4,|C|\}\leq \hat{c}< \frac{1}{3}$.
Lemma~\ref{l6.2} applies to obtain
\begin{align*}
\sum_{h_4=0}^{M_4} \frac{|\mathcal{C}_{h_4}^{++}|}{|\mathcal{C}_{0}^{++}|}
&=\exp\biggl[ \frac{[m]_2 \binom{2r-2}{2} \binom{2r-4}{r-2}\bigl[ \binom{n}{2r-2}- \binom{r}{2}
m \binom{n-2}{2r-4}\bigr]}{2\bigl[N- \binom{r}{2}m \binom{n-2}{r-2}\bigr]^2}\\
&{\qquad}+O\Bigl( \frac{r^8m^2}{n^{4}}+ \frac{r^{10}
m^3\log(r^{-2}n)}{n^5}\Bigr)\biggr]+O\Bigl(\Bigl( \frac{e}{3^{4}}\Bigr)^{M_4}\Bigr)\\
&=\exp\biggl[ \frac{[m]_2 \binom{2r-2}{2} \binom{2r-4}{r-2}\bigl[ \binom{n}{2r-2}-
 \binom{r}{2}m \binom{n-2}{2r-4}\bigr]}{2\bigl[N- \binom{r}{2}
m \binom{n-2}{r-2}\bigr]^2}+O\Bigl( \frac{r^6m^2}{n^3}\Bigr)\biggr],
\end{align*}
where the last equality is true because $O\bigl(  \frac{r^8m^2}{n^{4}}+
 \frac{r^{10}m^3\log(r^{-2}n)}{n^5}\bigr)
 =O\bigl( \frac{r^6m^2}{n^3}\bigr)$ as $m=O(r^{-2}n)$ and $O\bigl(\bigl( \frac{e}{3^{4}}\bigr)^{M_4}\bigr)=O\bigl( \frac{r^6}{n^3}\bigr)$.
\end{proof}

%

\section{The case $1\leq m= O(\log (r^{-2}n))$}\label{s:8}

Let $\mathcal{H}_r^+(n,m)\subset\mathcal{H}_r(n,m)$  be the set of
$r$-graphs $H$ which satisfy properties $\bf(a)$ to $\bf(d)$:

$\bf(a)$\  The intersection of any two edges contains at most two vertices.

$\bf(b)$\  $H$  only contains one type of cluster (Type-$4$ cluster).
(This implies that any three edges involve at least $3r-3$ vertices. Thus, if there are
two edges, for example $\{e_1,e_2\}$, such that $|e_1\cup e_2|= 2r-2$,
then $|(e_1\cup e_2)\cap e|\leq 1$ for any
edge $e$ other than $\{e_1,e_2\}$ of $H$.)

$\bf(c)$\ Any two distinct Type-$4$ clusters in $H$ are vertex-disjoint.
(This implies that any four edges involve at least $4r-4$ vertices.)

$\bf(d)$\ There are at most two  Type-$4$ clusters in $H$. (This implies that
any six edges involve at least $6r-5$ vertices.)

\begin{theorem}\label{t8.1}
Assume $1\leq m= O(\log (r^{-2}n))$ and $n\to \infty$. Then
\[
 \frac{|\mathcal{H}^+_r(n,m)|}{|\mathcal{H}_r(n,m)|}=1-O\Bigl( \frac{r^6m^2}{n^3}\Bigr).
\]
\end{theorem}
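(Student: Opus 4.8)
The plan is to mimic the proof of Theorem~\ref{t3.2} (and the even closer Theorem~\ref{t7.2}), but the task is lighter here: $\mathcal{H}_r^+(n,m)$ in Section~\ref{s:8} is cut out by only the four forbidden-configuration conditions $\bf(a)$--$\bf(d)$, with no degree bound and no ``at most $M_i$ clusters'' condition of the delicate type $\bf(f)$ from Section~\ref{s:3}, so no total-probability/conditioning step is needed. I would take $H\in\mathcal{H}_r(n,m)$ uniformly at random and show that each of $\bf(a)$--$\bf(d)$ fails with probability $O(r^6m^2/n^3)$; a union bound over the four events then gives the claim. In every case the failure of the property forces the existence of a small set of edges spanning too few vertices, so Lemma~\ref{l2.2} together with Markov's inequality applies directly. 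Recall that the hypothesis forces $r=o(n^{1/2})$, hence $x:=r^{-2}n\to\infty$ and $m=O(\log x)=o(x)$, so $r^2m/n=o(1)$; this last estimate is what collapses the error terms.

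In detail: $\bf(a)$ fails only if two edges span at most $2r-3$ vertices, so by Lemma~\ref{l2.2} with $t=2$, $\alpha=3$ its probability is $O(r^6m^2/n^3)$. For $\bf(b)$, I would first note that any cluster on at least three edges contains an induced path $e_a-e_b-e_c$ in the corresponding component of $G_H$, so $|e_a\cap e_b|=|e_b\cap e_c|=2$ and inclusion--exclusion gives $|e_a\cup e_b\cup e_c|\le 3r-4$; conversely, forbidding \emph{all} triples of edges spanning at most $3r-4$ vertices is exactly the parenthetical statement ``any three edges involve at least $3r-3$ vertices.'' Hence $\bf(b)$ fails only if some three edges span at most $3r-4$ vertices, and Lemma~\ref{l2.2} with $t=3$, $\alpha=4$ bounds this by $O(r^8m^3/n^4)=O(r^6m^2/n^3)$ since $r^2m/n=o(1)$. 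Likewise two distinct Type-$4$ clusters sharing a vertex consist of four distinct edges spanning at most $(2r-2)+(2r-2)-1=4r-5$ vertices, so $\bf(c)$ fails only if some four edges span at most $4r-5$ vertices, bounded by Lemma~\ref{l2.2} with $t=4$, $\alpha=5$ as $O(r^{10}m^4/n^5)=O(r^6m^2/n^3)$. Finally three Type-$4$ clusters (pairwise vertex-disjoint when $\bf(c)$ holds, and spanning still fewer vertices otherwise) consist of six distinct edges spanning at most $6r-6$ vertices, so $\bf(d)$ fails only if some six edges span at most $6r-6$ vertices; Lemma~\ref{l2.2} with $t=6$, $\alpha=6$ bounds this by $O(r^{12}m^6/n^6)$.

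The only step that needs genuine care, and the main (mild) obstacle, is confirming that this last bound is $O(r^6m^2/n^3)$: one needs $r^6m^4/n^3=O(1)$, and here using only $m=O(r^{-2}n)$ would be too weak. Writing $x=r^{-2}n$ we have $r^2=n/x$, so $r^6/n^3=x^{-3}$, and $m^4=O(\log^4 x)$, whence $r^6m^4/n^3=O(x^{-3}\log^4 x)=o(1)$ because $x\to\infty$. (The exponents in the $\bf(a)$--$\bf(c)$ estimates are comfortably smaller and reduce to $O(r^6m^2/n^3)$ by the single estimate $r^2m/n=o(1)$.) Adding the four probabilities yields $|\mathcal{H}_r^+(n,m)|/|\mathcal{H}_r(n,m)|=1-O(r^6m^2/n^3)$, completing the proof.
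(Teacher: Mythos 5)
Your proposal is correct and follows essentially the same route as the paper: a union bound over the four properties, each bounded by a first-moment application of Lemma~\ref{l2.2}, with the key computation being the $t=6$, $\alpha=6$ case for property $\bf(d)$ where $m=O(\log(r^{-2}n))$ is needed to absorb $O(r^{12}m^6/n^6)$ into $O(r^6m^2/n^3)$. The paper only writes out property $\bf(d)$ and defers $\bf(a)$--$\bf(c)$ to Theorem~\ref{t3.2}; your explicit parameter choices ($\alpha=4$ for $\bf(b)$, $\alpha=5$ for $\bf(c)$, reflecting the stricter Section~\ref{s:8} definitions) are the right way to fill in that deferral.
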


\begin{proof} Consider $H\in \mathcal{H}_r(n, m)$ chosen uniformly at random.
We can apply Lemma~\ref{l2.2} several times to show that  $H$ satisfies properties
$\bf(a)$-$\bf(d)$ with probability $1-O\bigl( \frac{r^6m^2}{n^3}\bigr)$.
We only prove the property $\bf(d)$ because the proof of other conditions are
exactly same with the proof in Theorem~\ref{t3.2}.

Applying Lemma~\ref{l2.2} with $t=6$ and $\alpha=6$,
the expected number of sets of six edges involving at most $6r-6$ vertices is
\[
O\Bigl( \frac{r^{12}m^6}{n^{6}}\Bigr)=O\Bigl( \frac{r^6m^2}{n^3}\Bigr)
\]
because $m=O(\log (r^{-2}n))$.
Hence, property $\bf(d)$ holds with probability $1-O\bigl( \frac{r^6m^2}{n^3}\bigr)$.
\end{proof}

\begin{remark}\label{r8.2}
By Theorem~\ref{t8.1}, for a nonnegative integer  $h_4$, let $\mathcal{C}_{h_4}^{+}$
be the set of $r$-graphs $H\in \mathcal{H}_r^+(n,m)$
with exactly  $h_4$
Type-$4$ clusters. Thus, we have $|\mathcal{H}_r^+(n,m)|=\sum_{h_4=0}^2|\mathcal{C}_{h_4}^{+}|$,
$|\mathcal{C}_{0}^{+}|=|\mathcal{L}_r(n,m)|\neq\emptyset$ and
\[
 \frac{1}{\mathbb{P}_r(n,m)}=\sum_{h_4=0}^2 \frac{|\mathcal{C}_{h_4}^{+}|}
{|\mathcal{L}_r(n,m)|}\biggl(1-O\Bigl( \frac{r^6m^2}{n^3}\Bigr)\biggr)
=\sum_{h_4=0}^2 \frac{|\mathcal{C}_{h_4}^{+}|}{|\mathcal{C}_{0}^{+}|}
  \biggl(1-O\Bigl( \frac{r^6m^2}{n^3}\Bigr)\biggr).
\]
\end{remark}

By arguments similar to Lemma~\ref{l3.5} and
Lemma~\ref{l7.5}, Corollary~\ref{c5.4} and Lemma~\ref{l7.6}, we have
the following two lemmas.
\begin{lemma}\label{l8.4}
Assume $1\leq m=O(\log (r^{-2}n))$.
Let $H\in \mathcal{H}_r^{+}(n,m-\xi)$
and $N_t$ be  the set of $t$-sets of $[n]$ such that no two vertices belong to the same edge of $H$,
where $r\leq t\leq 2r-2$ and $\xi=O(1)$ are positive integers. Suppose that $n\to\infty$. Then
\[
|N_t|=\biggl[ \binom{n}{t}- \binom{r}{2}m \binom{n-2}{t-2}\biggr]\biggl(1+O\Bigl( \frac{r^4}{n^2}\Bigr)\biggr).
\]
\end{lemma}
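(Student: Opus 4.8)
The plan is to reuse the inclusion--exclusion argument behind Lemma~\ref{l3.5} and Lemma~\ref{l7.5}, which is simpler here because $\mathcal{H}_r^+(n,m)$ in this range satisfies only $\bf(a)$--$\bf(d)$ and $m=O(\log(r^{-2}n))$ is tiny. For each edge $e$ of $H$ and each $2$-set $\{i,j\}\subseteq e$, let $A_{e(i,j)}$ be the event that a counted $t$-set of $[n]$ contains both $i$ and $j$, so that $N_t$ is the family of $t$-sets lying in none of the $A_{e(i,j)}$; there are $\binom r2(m-\xi)$ such pairs $(e,\{i,j\})$ and $|A_{e(i,j)}|=\binom{n-2}{t-2}$ for each. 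The Bonferroni inequalities then give
\[
\binom nt-\binom r2(m-\xi)\binom{n-2}{t-2}\le|N_t|\le\binom nt-\binom r2(m-\xi)\binom{n-2}{t-2}+S,
\]
where $S=\sum_{\{e,\{i,j\}\}\neq\{e',\{i',j'\}\}}|A_{e(i,j)}\cap A_{e'(i',j')}|$, so the whole task is to bound $S$.

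I would split $S$ into the four cases used in the proof of Lemma~\ref{l3.5}. The case $e=e'$ (two distinct $2$-subsets of a single edge) contributes $O\bigl(mr^3\binom{n-3}{t-3}\bigr)$; the case $e\neq e'$ with $\{i,j\}\cap\{i',j'\}=\emptyset$ contributes $O\bigl(m^2r^4\binom{n-4}{t-4}\bigr)$; the case $e\neq e'$ with $|\{i,j\}\cap\{i',j'\}|=1$ is at most $r^2\binom{n-3}{t-3}\sum_v\binom{\deg(v)}{2}$; and the case $e\neq e'$ with $\{i,j\}=\{i',j'\}$ is governed by the links of $H$, and since $\bf(b)$ forces every $2$-set to have codegree at most $2$ and $\bf(d)$ allows at most two Type-$4$ clusters, it contributes only $O\bigl(\binom{n-2}{t-2}\bigr)$. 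The one deviation from Sections~\ref{s:3} and~\ref{s:7} is the third case, because $\mathcal{H}_r^+(n,m)$ in this range carries no degree constraint; there I would use the crude bound $\sum_v\binom{\deg(v)}{2}=\sum_{\{e,e'\}}|e\cap e'|\le m(m-1)$, valid by $\bf(a)$.

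To finish, divide through by $\binom nt$ and use $[t]_j/[n]_j=O(r^j/n^j)$ for $r\le t\le 2r-2$: the four contributions become $O(mr^6/n^3)$, $O(m^2r^8/n^4)$, $O(m^2r^5/n^3)$ and $O(r^2/n^2)$. Since $r=o(n^{1/2})$ forces $r^{-2}n\to\infty$, and hence $m=O(\log(r^{-2}n))=o(r^{-2}n)$, the first three are $o(r^4/n^2)$ and the last is $O(r^4/n^2)$; likewise $\binom r2m\binom{n-2}{t-2}/\binom nt=O(r^4m/n^2)=o(1)$, so $\binom nt-\binom r2m\binom{n-2}{t-2}=(1-o(1))\binom nt$ is a legitimate normalising factor. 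The gap between $H\in\mathcal{H}_r^+(n,m-\xi)$ and the $m$ written in the statement is absorbed via $\binom r2\xi\binom{n-2}{t-2}/\binom nt=O(r^4/n^2)$ since $\xi=O(1)$. Combining the two bounds then gives $|N_t|=\bigl[\binom nt-\binom r2m\binom{n-2}{t-2}\bigr]\bigl(1+O(r^4/n^2)\bigr)$, as claimed. The main obstacle is really just this third-case bookkeeping --- verifying that dropping the degree constraint in this $m$-regime is harmless --- and the trivial estimate above is comfortably good enough.
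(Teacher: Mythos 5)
Your proof is correct and follows essentially the route the paper intends (the paper omits the proof, citing only "arguments similar to Lemma~\ref{l3.5} and Lemma~\ref{l7.5}"): Bonferroni plus the same four-case split of the pairwise intersections. Your one substantive addition --- replacing the degree-constraint bound on $\sum_v\binom{\deg(v)}{2}$ by the crude estimate $\sum_{\{e,e'\}}|e\cap e'|\le 2\binom{m}{2}$, which suffices because $m=O(\log(r^{-2}n))$ --- is exactly the adjustment needed in this regime, and your arithmetic showing all contributions are $O(r^4/n^2)$ checks out.
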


\begin{lemma}\label{l8.5}
Assume $1\leq m=O(\log (r^{-2}n))$.
With notation as above, \\
$(a)$\ If $\mathcal{C}_{h_4}^{+}=\emptyset$, then $\mathcal{C}_{h_4+1}^{+}=\emptyset$.
\\
$(b)$\ Let $h_4'$ be the first
value of $h_4\leq 2$
such that $\bigl|\mathcal{C}_{h_4}^{+}\bigl|=0$ or $h_4'=3$ if no such value exists.
Suppose that $n\to\infty$, then uniformly for $1\leq h_4< h_4'$,
\[
 \frac{|\mathcal{C}_{h_4}^{+}|}{|\mathcal{C}_{h_4-1}^{+}|}=
 \frac{ \binom{2r-2}{2} \binom{2r-4}{r-2} \binom{m-2(h_4-1)}{2}\bigl[ \binom{n}{2r-2}
- \binom{r}{2}m \binom{n-2}{2r-4}\bigr]}{h_4\bigl[N- \binom{r}{2}m \binom{n-2}{r-2}\bigr]^2}\biggl(1+O\Bigl( \frac{r^4}{n^2}\Bigr)\biggr).
\]
\end{lemma}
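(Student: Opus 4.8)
The plan is to run the Type-$4$ switching argument of Corollary~\ref{c5.16} (equivalently Lemma~\ref{l7.6}) in the present, simpler setting: there is no auxiliary set $\mathcal{H}_r^{++}$ to track, property~$\bf(d)$ forces $h_4\le 2$, and property~$\bf(b)$ is the \emph{strong} one, asserting that any three edges span at least $3r-3$ vertices. Define a \emph{Type-$4$ switching} from $H\in\mathcal{C}_{h_4}^{+}$ exactly as in Section~\ref{s:5.5}: pick one of the $h_4$ Type-$4$ clusters $\{e,f\}$, delete it, and then insert two new edges in turn, each at an $r$-set of $[n]$ no two of whose vertices lie in a common edge of the current graph. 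One checks as in Remark~\ref{r5.13} that this lands in $\mathcal{C}_{h_4-1}^{+}$: the two new edges meet every old edge, and one another, in at most one vertex, so no new cluster appears, and property~$\bf(b)$ survives. Define a \emph{reverse Type-$4$ switching} from $H''\in\mathcal{C}_{h_4-1}^{+}$ by deleting an ordered pair of cluster-free edges, choosing a $(2r-2)$-set $T$ no two of whose vertices lie in a common remaining edge, and placing a Type-$4$ cluster on $T$; because property~$\bf(b)$ is the strong version, the $(2r-2)$-set $e\cup f$ arising from a Type-$4$ switching never contains two vertices of a single edge, so---unlike in Lemma~\ref{l5.15}---there is no $N_{2r-2}'$ contribution and the coefficient of $\binom{r}{2}m\binom{n-2}{2r-4}$ remains $1$.

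Part~$(a)$ is then immediate: if $\mathcal{C}_{h_4+1}^{+}\neq\emptyset$, apply a Type-$4$ switching to one of its members and land in $\mathcal{C}_{h_4}^{+}$. For part~$(b)$ I would count both families. Given $H\in\mathcal{C}_{h_4}^{+}$ there are $h_4$ choices of Type-$4$ cluster, and by Lemma~\ref{l8.4} applied first to $H-\{e,f\}$ and then to that graph with the first new edge added (with $\xi=2$ and $\xi=1$ respectively, the $\xi$-term absorbed into the error), each insertion has $\bigl[N-\binom{r}{2}m\binom{n-2}{r-2}\bigr]\bigl(1+O(r^4/n^2)\bigr)$ possibilities, so the number of Type-$4$ switchings from $H$ equals $h_4\bigl[N-\binom{r}{2}m\binom{n-2}{r-2}\bigr]^{2}\bigl(1+O(r^4/n^2)\bigr)$. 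Given $H''\in\mathcal{C}_{h_4-1}^{+}$ there are $2\binom{m-2(h_4-1)}{2}$ ordered pairs of cluster-free edges to delete, then $\bigl[\binom{n}{2r-2}-\binom{r}{2}m\binom{n-2}{2r-4}\bigr]\bigl(1+O(r^4/n^2)\bigr)$ admissible sets $T$ by Lemma~\ref{l8.4}, and $\tfrac12\binom{2r-2}{2}\binom{2r-4}{r-2}$ ways to fill each $T$ with a Type-$4$ cluster, so the number of reverse Type-$4$ switchings from $H''$ equals $\binom{2r-2}{2}\binom{2r-4}{r-2}\binom{m-2(h_4-1)}{2}\bigl[\binom{n}{2r-2}-\binom{r}{2}m\binom{n-2}{2r-4}\bigr]\bigl(1+O(r^4/n^2)\bigr)$. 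Since the two operations are mutually inverse and there is no secondary constraint, Lemma~\ref{l3.7} with $A=A_1=\mathcal{C}_{h_4}^{+}$, $B=B_1=\mathcal{C}_{h_4-1}^{+}$ and $A_2=B_2=\emptyset$ turns the ratio of these two counts into the claimed estimate, uniformly for $1\le h_4<h_4'$.

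The two enumerations are routine; the step that needs care is verifying that the switchings respect membership in $\mathcal{C}_{h_4}^{+}$ and $\mathcal{C}_{h_4-1}^{+}$ and that the choices failing this---an inserted edge that would make three edges span only $3r-4$ vertices, or a $(2r-2)$-set meeting a surviving Type-$4$ cluster (which would force two Type-$4$ clusters to share a vertex)---form a fraction small enough to be absorbed into the error term. The conceptual crux, and the place I expect to spend most effort, is pinning down that the strengthened property~$\bf(b)$ is precisely what eliminates the $N_{2r-2}'$ correction of Lemma~\ref{l5.15}, so that the formula here carries the plain coefficient $1$ rather than $\frac{r^2-r-1}{(r-1)(2r-3)}$.
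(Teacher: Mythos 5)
Your reconstruction is the proof the paper intends (it omits the argument, deferring to ``arguments similar to'' Corollary~\ref{c5.4} and Lemma~\ref{l7.6}): forward and reverse Type-$4$ switchings counted via Lemma~\ref{l8.4}, the ratio extracted from Lemma~\ref{l3.7} with $A_2=B_2=\emptyset$, and you correctly isolate why the coefficient of $\binom{r}{2}m\binom{n-2}{2r-4}$ is $1$ here rather than $\frac{r^2-r-1}{(r-1)(2r-3)}$: the strengthened property~$\bf(b)$ forbids any edge from meeting the $(2r-2)$-vertex set of a Type-$4$ cluster in two vertices, so the $N_{2r-2}'$ contribution of Lemma~\ref{l5.15} disappears. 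The two enumerations and the factor $\tfrac12\binom{2r-2}{2}\binom{2r-4}{r-2}$ for filling $T$ are all as they should be.

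The one step you defer --- that the switchings violating membership ``form a fraction small enough to be absorbed into the error term'' --- does not hold at the stated precision when $h_4=2$. A reverse switching from $H''\in\mathcal{C}_{1}^{+}$ must produce a new Type-$4$ cluster that is \emph{vertex-disjoint} from the surviving one (property~$\bf(c)$), and requiring only that no two vertices of $T$ lie in a common edge does not enforce this: the fraction of admissible $(2r-2)$-sets $T$ meeting the $2r-2$ vertices of the surviving cluster in at least one vertex is of order $r^2/n$, which dominates $r^4/n^2$. (The failures you do name --- an inserted edge meeting both edges of the surviving cluster, or $T$ meeting a single edge twice --- are indeed only $O(r^4/n^2)$ fractions, and $h_4=1$ is clean in both directions.) So as set up, the ratio for $h_4=2$ is established only with relative error $O(r^2/n)$; to reach the lemma's stated error you would need the disjointness requirement built into the reverse switching and its cost shown to be $O(r^4/n^2)$, which it is not. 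This is harmless downstream --- the $h_4=2$ term of Lemma~\ref{l8.6} is itself $O(r^8m^4/n^4)$, so an extra relative $O(r^2/n)$ is swallowed by the final $O(r^6m^2/n^3)$ --- but either your error term for $h_4=2$, or the lemma's, should really be $O(r^2/n)$.
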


\begin{lemma}\label{l8.6}
Assume $1\leq m=O(\log (r^{-2}n))$. Then
\begin{align*}
 \frac{|\mathcal{C}_{0}^{+}|+|\mathcal{C}_{1}^{+}|+|\mathcal{C}_{2}^{+}|}
{|\mathcal{C}_{0}^{+}|}
&=\exp\biggl[ \frac{[m]_2 \binom{2r-2}{2} \binom{2r-4}{r-2}\bigl[ \binom{n}{2r-2}- \binom{r}{2}m \binom{n-2}{2r-4}\bigr]}
{2\bigl[N- \binom{r}{2}m \binom{n-2}{r-2}\bigr]^2}+O\Bigl( \frac{r^6m^2}{n^3}\Bigr)\biggr].\\
\end{align*}
\end{lemma}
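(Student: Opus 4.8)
The plan is as follows. Because $m=O(\log(r^{-2}n))$ is so small, property $\bf(d)$ of Section~\ref{s:8} leaves at most two Type-$4$ clusters, so the left-hand side is just the three-term sum $1+|\mathcal{C}_1^{+}|/|\mathcal{C}_0^{+}|+|\mathcal{C}_2^{+}|/|\mathcal{C}_0^{+}|$; by Remark~\ref{r8.2} we have $|\mathcal{C}_0^{+}|\neq 0$, so these ratios are defined, and Lemma~\ref{l8.5}$(a)$ guarantees the partial telescoping remains valid even if $\mathcal{C}_1^{+}$ or $\mathcal{C}_2^{+}$ happens to be empty. Note that, unlike Sections~\ref{s:6} and~\ref{s:7}, we cannot invoke Lemma~\ref{l6.2} here: with only $N=2$ nonzero terms the tail $(2e\hat c)^{N}$ is a positive constant, not $o(1)$, so it would overwhelm the claimed error $O(r^6m^2/n^3)$. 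Hence we sum by hand.

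Write $\Lambda$ for the ratio
\[
\Lambda=\frac{\binom{2r-2}{2}\binom{2r-4}{r-2}\bigl[\binom{n}{2r-2}-\binom{r}{2}m\binom{n-2}{2r-4}\bigr]}{\bigl[N-\binom{r}{2}m\binom{n-2}{r-2}\bigr]^{2}}.
\]
First I would apply Lemma~\ref{l8.5}$(b)$ at $h_4=1$ and then, telescoping, at $h_4=2$, to obtain
\[
\frac{|\mathcal{C}_1^{+}|}{|\mathcal{C}_0^{+}|}=\binom{m}{2}\Lambda\bigl(1+O(r^4/n^2)\bigr),
\qquad
\frac{|\mathcal{C}_2^{+}|}{|\mathcal{C}_0^{+}|}=\tfrac12\binom{m}{2}\binom{m-2}{2}\Lambda^{2}\bigl(1+O(r^4/n^2)\bigr).
\]
The small cases $m\le 3$ need no separate argument: then $\binom{m-2}{2}=0$ (and $\binom m2=0$ when $m\le 1$), in accordance with $\mathcal{C}_2^{+}=\emptyset$ (resp.\ $\mathcal{C}_1^{+}=\mathcal{C}_2^{+}=\emptyset$).

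Set $x=\binom m2\Lambda=\tfrac12[m]_2\Lambda$. A standard ratio-of-binomials estimate gives $\Lambda=\bigl(1+o(1)\bigr)[r]_2^2/(2n^2)=O(r^4/n^2)$, so that $\tfrac12[m]_2\Lambda$ is exactly the claimed main exponent and $x=O(r^4m^2/n^2)=o(1)$, using $r=o(n^{1/2})$ and $r^{-2}n\to\infty$. Summing the three terms and using $\binom{m-2}{2}/\binom m2=1+O(1/m)$, the left-hand side equals
\[
1+x+\tfrac12 x^{2}+O\bigl(x^{2}/m+x r^4/n^2\bigr)=e^{x}\bigl(1+O(x^{2}/m+x^{3}+x r^4/n^2)\bigr),
\]
hence $\exp\bigl[x+O(x^{2}/m+x^{3}+xr^4/n^2)\bigr]$. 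Finally I would substitute $x=O(r^4m^2/n^2)$: the largest of the three error terms is $x^{2}/m=O(r^{8}m^{3}/n^{4})$, which is $O(r^6m^2/n^3)$ precisely because $m=O(r^{-2}n)$, while $x^{3}$ and $xr^4/n^2$ are smaller still (using $m=O(\log(r^{-2}n))$ and $r=o(n^{1/2})$). This gives the stated formula.

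There is no genuine obstacle here. The only point requiring care is the error bookkeeping — checking that the Taylor remainder of $e^{x}$ and the $\binom{m-2}{2}$-versus-$\binom m2$ discrepancy both land inside $O(r^6m^2/n^3)$ — together with noticing that the generic summation lemmas of Section~\ref{s:6}, used everywhere else in the paper, are unavailable when only a bounded number of terms is present.
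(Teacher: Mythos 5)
Your proposal is correct and follows essentially the same route as the paper: apply the ratio formula of Lemma~\ref{l8.5}(b) at $h_4=1$ and $h_4=2$, sum the three terms by hand (rather than via the summation lemmas of Section~\ref{s:6}), and absorb the Taylor remainder of $e^x$ and the $\binom{m-2}{2}$ versus $\binom{m}{2}$ discrepancy into $O(r^6m^2/n^3)$ using $m=O(\log(r^{-2}n))$ and $r=o(n^{1/2})$. Your explicit remark on why Lemma~\ref{l6.2} is unusable here (the tail $(2e\hat c)^N$ is constant for $N=2$) is a point the paper leaves implicit but is accurate.
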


\begin{proof} By Remark~\ref{r8.2}, we have $|\mathcal{C}_{0}^{+}|\neq0$.
By Lemma~\ref{l8.4}, we have
\[
 \frac{|\mathcal{C}_{1}^{+}|}{|\mathcal{C}_{0}^{+}|}=
 \frac{[m]_2 \binom{2r-2}{2} \binom{2r-4}{r-2}\bigl[ \binom{n}{2r-2}- \binom{r}{2}m \binom{n-2}{2r-4}\bigr]}
{2\bigl[N- \binom{r}{2}m \binom{n-2}{r-2}\bigr]^2}\biggl(1+O\Bigl( \frac{r^4}{n^2}\Bigr)\biggr)
\]
and
\begin{align*}
 \frac{|\mathcal{C}_{2}^{+}|}{|\mathcal{C}_{1}^{+}|}=
 \frac{[m-1]_2 \binom{2r-2}{2} \binom{2r-4}{r-2}
   \bigl[ \binom{n}{2r-2}- \binom{r}{2}m \binom{n-2}{2r-4}\bigr]}
{4\bigl[N- \binom{r}{2}m \binom{n-2}{r-2}\bigr]^2}
  \biggl(1+O\Bigl( \frac{r^4}{n^2}\Bigr)\biggr).
\end{align*}
Thus, we have
\begin{align*}
\sum_{h_4=0}^2 \frac{|\mathcal{C}_{h_4}^{+}|}{|\mathcal{C}_{0}^{+}|}
&=\biggl(1+ \frac{[m]_2 \binom{2r-2}{2} \binom{2r-4}{r-2}\bigl[ \binom{n}{2r-2}- \binom{r}{2}
m \binom{n-2}{2r-4}\bigr]}{2\bigl[N- \binom{r}{2}m \binom{n-2}{r-2}\bigr]^2}\\
&{\qquad}+ \frac{[m]_4 \binom{2r-2}{2}^2 \binom{2r-4}{r-2}^2\bigl[ \binom{n}{2r-2}- \binom{r}{2}
m \binom{n-2}{2r-4}\bigr]^2}{8\bigl[N- \binom{r}{2}m \binom{n-2}{r-2}\bigr]^4}+O\Bigl( \frac{r^{8}m^2}{n^4}\Bigr)\biggr),
\end{align*}
which simplifies to the expression in the lemma statement.
\end{proof}


\section{Proof of Theorem~\ref{t1.3}}\label{s:9}

As in the statement of Theorem~\ref{t1.3}, we use $m_0=Np$.
The binomial distribution is $\Bin(n,p)$.
Recall that $\mathbb{P}_r(n,m)$ denotes the probability that
an $r$-graph $H\in \mathcal{H}_r(n,m)$ chosen uniformly
 at random is linear. We found an expression for $\mathbb{P}_r(n,m)$
 in Theorem~\ref{t1.1}.
By the law of total probability, we have
\begin{equation}\label{e9.2}
\mathbb{P}\bigl[H_r(n,p)\in \mathcal{L}_r\bigr]=\sum_{m=0}^{N}\mathbb{P}_r(n,m) \binom{N}{m}p^mq^{N-m}.
\end{equation}

In the proof of Theorem~\ref{t1.3},
we need the following lemmas. We firstly show $\mathbb{P}_r(n,m)$
is decreasing in $m$ (Lemma~\ref{l9.1}).  Some approximations will make
use of the
Chernoff inequality (Lemma~\ref{l9.3}) and the normal approximation
of the binomial distribution (Lemmas~\ref{l9.4} and~\ref{l9.5}).

\begin{lemma}\label{l9.1}
 $\mathbb{P}_r(n,m)$ is a non-increasing function of $m$.
\end{lemma}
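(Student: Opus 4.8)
The plan is to prove the slightly stronger quantitative statement that $|\mathcal{L}_r(n,m+1)|\le \frac{N-m}{m+1}\,|\mathcal{L}_r(n,m)|$ for every $0\le m\le N-1$, and then deduce the lemma. Indeed, since $\binom{N}{m+1}=\frac{N-m}{m+1}\binom{N}{m}$, this inequality gives
\[
\mathbb{P}_r(n,m+1)=\frac{|\mathcal{L}_r(n,m+1)|}{\binom{N}{m+1}}\le\frac{|\mathcal{L}_r(n,m)|}{\binom{N}{m}}=\mathbb{P}_r(n,m),
\]
as required. Note that $\mathbb{P}_r(n,m)$ is well defined for all $0\le m\le N$ because $\mathcal{H}_r(n,m)\neq\emptyset$ there, and the inequality is trivial when $\mathcal{L}_r(n,m+1)=\emptyset$, so we may assume both sides are nonempty.

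First I would form a bipartite graph $\Gamma$ with vertex classes $\mathcal{L}_r(n,m+1)$ and $\mathcal{L}_r(n,m)$, joining $H'\in\mathcal{L}_r(n,m+1)$ to $H\in\mathcal{L}_r(n,m)$ whenever $E(H)\subset E(H')$, i.e.\ whenever $H$ arises from $H'$ by deleting one edge. The crucial (and elementary) observation is that any subhypergraph of a linear hypergraph is again linear: deleting an edge cannot create a pair of edges meeting in two or more vertices. Consequently, for every $H'\in\mathcal{L}_r(n,m+1)$, each of its $m+1$ single-edge deletions lands in $\mathcal{L}_r(n,m)$, so $H'$ has degree exactly $m+1$ in $\Gamma$. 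In the opposite direction, every neighbour of a fixed $H\in\mathcal{L}_r(n,m)$ is obtained by inserting one of the $N-m$ non-edges of $H$ as a new edge; hence $H$ has degree at most $N-m$ in $\Gamma$ (equality need not hold, since some insertions destroy linearity).

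Counting the edges of $\Gamma$ from both sides then yields
\[
(m+1)\,|\mathcal{L}_r(n,m+1)|=|E(\Gamma)|\le (N-m)\,|\mathcal{L}_r(n,m)|,
\]
which is exactly the inequality claimed above, completing the proof. There is no genuine obstacle here: the whole argument rests on the asymmetry that edge-deletion automatically preserves linearity whereas edge-addition may not, which is precisely what forces one side-degree to be exact and the other only an upper bound, in the direction needed. The only things to be careful about are the trivial boundary cases mentioned in the first paragraph.
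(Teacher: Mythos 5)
Your proof is correct and rests on the same underlying idea as the paper's: deleting an edge from a linear hypergraph always preserves linearity, while adding one need not. The paper packages this as a one-line coupling (generate a uniform $m$-set of edges by choosing a uniform $(m-1)$-set and then one more distinct edge), whereas you give the equivalent explicit double count $(m+1)\,|\mathcal{L}_r(n,m+1)|\le(N-m)\,|\mathcal{L}_r(n,m)|$; both are valid and essentially identical in content.
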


\begin{proof}
Choosing $m$ distinct edges at random gives the same distribution as
choosing $m-1$ distinct edges at random and then an $m$-th edge at
random distinct from the first $m-1$.  So $\mathbb{P}_r(n,m)\le\mathbb{P}_r(n,m-1)$.
%
\end{proof}


\begin{lemma}[\cite{cher52}]\label{l9.3}
For $X\sim \Bin(N, p)$ and any $0<t\leq Np$,
\[
\mathbb{P}\bigl[|X - Np|>t\bigr]<2\exp\left[-t^2/\left(3Np\right)\right].
\]
\end{lemma}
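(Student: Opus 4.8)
The plan is to derive the inequality by the classical exponential-moment (Chernoff--Bernstein) method, treating the upper and lower tails separately and then combining them by a union bound. Write $X=\sum_{i=1}^{N}X_i$, where the $X_i$ are independent Bernoulli$(p)$ variables, and set $\mu=Np$ and $\delta=t/\mu\in(0,1]$, so that $\mathbb{E}[e^{\lambda X}]=(q+pe^{\lambda})^N$ for every real $\lambda$.

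First I would bound the upper tail. For $\lambda>0$, Markov's inequality applied to $e^{\lambda X}$ gives $\mathbb{P}[X\ge\mu+t]\le e^{-\lambda(\mu+t)}(q+pe^{\lambda})^N$, and since $\log(1+x)\le x$ we have $(q+pe^{\lambda})^N\le\exp\bigl(\mu(e^{\lambda}-1)\bigr)$. Minimising the exponent $\mu(e^{\lambda}-1)-\lambda(\mu+t)$ over $\lambda>0$ (the optimum being $e^{\lambda}=1+\delta$) yields $\mathbb{P}[X\ge\mu+t]\le\exp\bigl(-\mu\,h(\delta)\bigr)$ with $h(\delta)=(1+\delta)\log(1+\delta)-\delta$. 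The standard elementary estimate $h(\delta)\ge \delta^{2}/(2+\tfrac{2}{3}\delta)$, together with $2+\tfrac{2}{3}\delta\le\tfrac{8}{3}$ when $\delta\le1$, gives $h(\delta)\ge\tfrac{3}{8}\delta^{2}>\tfrac{1}{3}\delta^{2}$, hence $\mathbb{P}[X>\mu+t]\le\exp\bigl(-\tfrac{3}{8}t^{2}/\mu\bigr)<\exp\bigl(-t^{2}/(3\mu)\bigr)$.

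The lower tail is handled symmetrically: applying Markov to $e^{-\lambda X}$ with $\lambda>0$ and optimising gives $\mathbb{P}[X\le\mu-t]\le\exp\bigl(-\mu\,g(\delta)\bigr)$, where $g(\delta)=(1-\delta)\log(1-\delta)+\delta$, and the Taylor expansion $(1-\delta)\log(1-\delta)+\delta=\sum_{k\ge2}\delta^{k}/(k(k-1))\ge\delta^{2}/2$ shows $\mathbb{P}[X<\mu-t]<\exp\bigl(-t^{2}/(2\mu)\bigr)<\exp\bigl(-t^{2}/(3\mu)\bigr)$. Since $\{|X-Np|>t\}$ is the disjoint union of $\{X>\mu+t\}$ and $\{X<\mu-t\}$, adding the two estimates gives $\mathbb{P}[|X-Np|>t]<2\exp\bigl(-t^{2}/(3Np)\bigr)$, as required; the strictness comes for free from $\tfrac{3}{8}>\tfrac{1}{3}$ and $\tfrac{1}{2}>\tfrac{1}{3}$ once $t>0$.

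There is no genuine obstacle here — this is the classical Chernoff bound, and the only point requiring care is the bookkeeping that converts the sharp rate functions $h(\delta)$ and $g(\delta)$ into the single loose constant $1/3$: this is precisely where the hypothesis $t\le Np$ (equivalently $\delta\le1$) enters, via $2+\tfrac{2}{3}\delta\le\tfrac{8}{3}$. Alternatively, one may simply quote the bound from Chernoff's paper~\cite{cher52} verbatim, since the constant $1/3$ is deliberately non-optimal and suffices for all later applications in the paper.
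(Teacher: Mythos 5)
Your proof is correct. Note, however, that the paper does not prove Lemma~\ref{l9.3} at all: it is stated with a citation to Chernoff's 1952 paper and used as a black box, so there is nothing in the source to compare your argument against step by step. Your self-contained derivation via the exponential-moment method is the standard one and all the estimates check out: the bound $h(\delta)=(1+\delta)\log(1+\delta)-\delta\ge\delta^2/(2+\tfrac23\delta)\ge\tfrac38\delta^2$ for $\delta\le1$ handles the upper tail (and this is exactly where the hypothesis $t\le Np$ is needed, as you say), the series identity $(1-\delta)\log(1-\delta)+\delta=\sum_{k\ge2}\delta^k/(k(k-1))\ge\delta^2/2$ handles the lower tail, and the union bound with the strict inequalities $\tfrac38>\tfrac13$ and $\tfrac12>\tfrac13$ gives the stated strict bound $2\exp[-t^2/(3Np)]$. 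The only thing your write-up buys beyond the paper is self-containment; the only thing the paper's approach buys is brevity.
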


\begin{lemma}\label{l9.4}
Let $r=r(n)\geq 3$ be an integer with $r=o(n^{ \frac12})$ and
$X\sim \Bin(N,p)$, where $r^{-2}n\leq Np=o(r^{-3}n^{ \frac32})$.
Then
\[
\mathbb{P}\bigl[X=\lfloor Np\rfloor+t\bigr]= \frac{1}{\sqrt{2\pi Np}}
\exp\biggl[- \frac{t^2}{2Np}+O\biggl( \frac{\log^3 (r^{-2}n)}{\sqrt{Np}}
+ \frac{r^6(Np)^2}{n^3}\biggr)\biggr],
\]
where $t\in \mathbb{Z}$ and
\[
t\in\biggl[-\log(r^{-2}n)\sqrt{Npq}- \frac{[r]_2^2(Np)^2}{2n^2},\log (r^{-2}n)\sqrt{Npq}\biggr].
\]
\end{lemma}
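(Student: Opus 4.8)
\medskip
\noindent\emph{Proof sketch.}
The plan is to exploit that $p=m_0/N$ is minuscule (since $m_0=o(r^{-3}n^{3/2})$ while $N=\binom nr$ is enormous), so that $\Bin(N,p)$ is, to very high precision, $\operatorname{Poisson}(m_0)$; a Stirling expansion then turns the Poisson point mass into the claimed Gaussian. Write $k=\lfloor m_0\rfloor+t$ and $p=m_0/N$, so that
\[
\mathbb{P}[X=k]=\binom{N}{k}p^k q^{N-k}=\frac{[N]_k}{k!}\cdot\frac{m_0^k}{N^k}\cdot q^{N-k}.
\]
The first step is to replace this by the Poisson mass $m_0^k e^{-m_0}/k!$. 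I would expand $[N]_k=N^k\prod_{i=0}^{k-1}(1-i/N)=N^k\exp\bigl(O(k^2/N)\bigr)$ and $q^{N-k}=(1-m_0/N)^{N-k}=\exp\bigl(-m_0+O(m_0^2/N)\bigr)$, which is valid because $k=\Theta(m_0)$ over the whole admissible range of $t$ (as $\log(r^{-2}n)=o(\sqrt{m_0})$ and $r^4m_0/n^2=o(1)$ force $|t|=o(m_0)$). The factors $N^k$ cancel, leaving $\mathbb{P}[X=k]=(m_0^k e^{-m_0}/k!)\exp\bigl(O(m_0^2/N)\bigr)$. Since $N=\binom nr\ge(n/r)^r\ge(n/r)^3$, we get $m_0^2/N\le r^3m_0^2/n^3=O(r^6m_0^2/n^3)$, so this first error is within the stated bound.

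The second step is the Poisson local limit law. Put $s=k-m_0=t-\{m_0\}$, so $|s|\le|t|+1$, and apply Stirling, $k!=\sqrt{2\pi k}\,(k/e)^k\exp(O(1/k))$, to get
\[
\frac{m_0^k e^{-m_0}}{k!}=\frac{1}{\sqrt{2\pi k}}\Bigl(\frac{m_0}{k}\Bigr)^{k}e^{\,k-m_0}\exp\bigl(O(1/m_0)\bigr).
\]
Writing $k/m_0=1+s/m_0$ and Taylor-expanding, $-k\log(k/m_0)+(k-m_0)=-s^2/(2m_0)+O\bigl(|s|^3/m_0^2\bigr)$, while $(2\pi k)^{-1/2}=(2\pi m_0)^{-1/2}\exp\bigl(O(|s|/m_0)\bigr)$. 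Combining these with the first step, and then replacing $s^2$ by $t^2$ at the cost of $O\bigl(|t|/m_0+1/m_0\bigr)$, yields
\[
\mathbb{P}[X=k]=\frac{1}{\sqrt{2\pi m_0}}\exp\Bigl[-\frac{t^2}{2m_0}+O\Bigl(\frac{|t|^3}{m_0^2}+\frac{|t|}{m_0}+\frac{1}{m_0}+\frac{m_0^2}{N}\Bigr)\Bigr].
\]

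The final step is to check that every error term is $O\bigl(\log^3(r^{-2}n)/\sqrt{m_0}+r^6m_0^2/n^3\bigr)$, using the admissible range $|t|\le\log(r^{-2}n)\sqrt{m_0q}+[r]_2^2m_0^2/(2n^2)=O\bigl(\log(r^{-2}n)\sqrt{m_0}+r^4m_0^2/n^2\bigr)$ together with $r^{-2}n\le m_0=o(r^{-3}n^{3/2})$ and $r=o(n^{1/2})$. These hypotheses give $m_0\to\infty$ (so $1/m_0=O(1/\sqrt{m_0})$), $\log(r^{-2}n)\le\log m_0=O(\sqrt{m_0})$, $r^4m_0/n^2=O(r^6m_0^2/n^3)$ (equivalent to $m_0\ge r^{-2}n$), and $r^6m_0^2/n^3=o(1)$. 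I expect the bookkeeping for $|t|^3/m_0^2$ to be the crux: on the part of the $t$-range of order $\log(r^{-2}n)\sqrt{m_0}$ it contributes $O\bigl(\log^3(r^{-2}n)/\sqrt{m_0}\bigr)$, while on the asymmetric lower tail of order $r^4m_0^2/n^2$ it contributes $O\bigl(r^{12}m_0^4/n^6\bigr)=O\bigl((r^6m_0^2/n^3)^2\bigr)=o\bigl(r^6m_0^2/n^3\bigr)$; verifying that these two widely separated scales (and $|t|/m_0$) are both absorbed is the only delicate point. The probabilistic content — Poisson approximation of a very sparse binomial, plus Stirling — is routine.
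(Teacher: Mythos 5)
Your proof is correct; the only difference from the paper is in how the binomial point mass is decomposed before the Gaussian appears. The paper writes $\binom{N}{k}p^kq^{N-k}$ (with $k=\lfloor Np\rfloor+t$) as the modal probability $\binom{N}{\lfloor Np\rfloor}p^{\lfloor Np\rfloor}q^{\lceil Nq\rceil}$, evaluated by Stirling as $(2\pi Np)^{-1/2}\exp\bigl(O(1/(Np))\bigr)$, times the telescoping ratio $\prod_{i=0}^{t-1}\bigl(1-\frac{i}{\lceil Nq\rceil}\bigr)\big/\bigl(1+\frac{1+i}{\lfloor Np\rfloor}\bigr)$, whose logarithm is expanded to give $-t^2/(2Np)+O\bigl(t/(Np)+t^3/(Np)^2\bigr)$. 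You instead interpose the Poisson law: you discard $[N]_k/N^k$ and replace $q^{N-k}$ by $e^{-m_0}$ at a cost of $\exp\bigl(O(m_0^2/N)\bigr)$, correctly bounded by $\exp\bigl(O(r^6m_0^2/n^3)\bigr)$ via $N\geq (n/r)^3$, and then apply Stirling to $k!$ at the evaluation point rather than at the mode. The resulting error $O\bigl(|t|^3/m_0^2+|t|/m_0+1/m_0+m_0^2/N\bigr)$ matches the paper's $O\bigl(t/(Np)+t^3/(Np)^2+1/(Np)\bigr)$ up to the harmless Poissonization cost, and your final bookkeeping over the asymmetric $t$-range --- the scale $\log(r^{-2}n)\sqrt{m_0}$ feeding $\log^3(r^{-2}n)/\sqrt{m_0}$, the scale $r^4m_0^2/n^2$ feeding $(r^6m_0^2/n^3)^2=o(r^6m_0^2/n^3)$, and $|t|/m_0$ absorbed using $m_0\geq r^{-2}n$ --- is exactly the computation the paper performs at the end of its proof. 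Neither route is stronger or more general than the other; both are standard local-limit expansions of the same precision, and the Poisson detour neither buys nor costs anything beyond one extra (negligible) error term.
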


\begin{proof} Note that
\[
 \binom{N}{\lfloor Np\rfloor+t}p^{\lfloor Np\rfloor+t}q^{\lceil Nq\rceil -t}=
 \binom{N}{\lfloor Np\rfloor}p^{\lfloor Np\rfloor}q^{\lceil Nq\rceil }
 \frac{ \binom{N}{\lfloor Np\rfloor+t}p^tq^{-t}}{ \binom{N}{\lfloor Np\rfloor}}.
\]
By Stirling's formula for $\lfloor Np\rfloor!$ and $\lceil Nq\rceil !$,
we have
\[
 \binom{N}{\lfloor Np\rfloor}p^{\lfloor Np\rfloor}q^{\lceil Nq\rceil}
= \frac{1}{\sqrt{2\pi Np}}\exp\Bigl(O\Bigl( \frac{1}{Np}\Bigr)\Bigr)
\]
and as $Np\to\infty$,
\begin{align}
 \frac{ \binom{N}{\lfloor Np\rfloor+t}p^tq^{-t}}{ \binom{N}{\lceil Np\rceil}}
&=\prod_{i=0}^{t-1} \frac{\Bigl(1- \frac{i}{\lceil Nq\rceil}\Bigr)}
        {\Bigl(1+ \frac{1+i}{\lfloor Np\rfloor}\Bigr)} \notag\\
&=\exp\biggl[\sum_{i=0}^{t-1}\ln\biggl( \frac{1- \frac{i}{\lceil Nq\rceil}}
        {1+ \frac{1+i}{\lfloor Np\rfloor}}\biggr)\biggr]\notag\\
&=\exp\biggl[- \frac{t^2}{2Np}+O\biggl(\frac{t}{Np}+ \frac{t^3}{(Np)^2}\biggr)\biggr].\label{e9.3}
\end{align}

Since
\[
t\in\biggl[-\log (r^{-2}n)\sqrt{Npq}- \frac{[r]_2^2(Np)^2}{2n^2},\log (r^{-2}n)\sqrt{Npq}\biggr],
\]
then we have
\[
O\biggl(\frac{t}{Npq}+ \frac{t^3}{(Npq)^2}\biggr)
=O\biggl(\frac{\log^3 (r^{-2}n)}{\sqrt{Np}}+
 \frac{r^6(Np)^2}{n^3}\biggr).\qedhere
\]
\end{proof}

By the proof of Lemma~\ref{l9.4}, we also have the following lemma.
\begin{lemma}\label{l9.5}
Let $r=r(n)\geq 3$ be an integer with $r=o(n^{ \frac12})$ and
$X\sim \Bin(N,p)$, where $r^{-2}n\leq Np=o(r^{-3}n^{ \frac32})$.
Then
\[
\mathbb{P}\bigl[X=\lfloor Np\rfloor+t\bigr]= \frac{1}{\sqrt{2\pi Np}}
 \,O\biggl(\exp\biggl(- \frac{t^2}{2Np}\biggr)\biggr),
\]
where $t\in \mathbb{Z}$ and
\[
t\in\biggl[- Np+\log (r^{-2}n),- \frac{[r]_2^2(Np)^2}{2n^2}-\log (r^{-2}n)\sqrt{Npq}\biggr].
\]
\end{lemma}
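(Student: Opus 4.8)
The plan is to follow the proof of Lemma~\ref{l9.4} verbatim up to the point where it Taylor-expands the logarithm of the product in~\eqref{e9.3}: that step is no longer legitimate here, because $t$ can be as negative as roughly $-Np+\log(r^{-2}n)$, so the individual factors are no longer close to $1$. Instead I would bound each factor of the product from above by an elementary linear expression, which costs only an absolute multiplicative constant while still producing the Gaussian factor $\exp(-t^2/(2Np))$; since the claim is merely an $O(\cdot)$ upper bound, that is enough.

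In detail, write $\mu=\lfloor Np\rfloor$ and $\nu=\lceil Nq\rceil=N-\mu$, and put $t=-s$ for a positive integer $s$; the stated range for $t$ gives $1\le s\le Np-\log(r^{-2}n)$, hence $s<Np$ and, for $n$ large, $\mu-s\ge 0$. From the proof of Lemma~\ref{l9.4} we already have $\binom{N}{\mu}p^{\mu}q^{\nu}=\frac{1}{\sqrt{2\pi Np}}\exp\bigl(O(1/Np)\bigr)$. Since $\mathbb{P}[X=k-1]/\mathbb{P}[X=k]=kq/\bigl((N-k+1)p\bigr)$, telescoping downwards from $\mu$ gives
\[
\mathbb{P}[X=\mu-s]=\binom{N}{\mu}p^{\mu}q^{\nu}\prod_{j=1}^{s}\frac{(\mu-j+1)\,q}{(\nu+j)\,p}.
\]
For $1\le j\le s$, using $\mu\le Np$ and $\nu+j\ge Nq$ we get
\[
0\le\frac{(\mu-j+1)\,q}{(\nu+j)\,p}\le\frac{(Np-j+1)\,q}{Nq\cdot p}=1-\frac{j-1}{Np},
\]
the non-negativity holding because $j\le s<Np$. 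Therefore
\[
\prod_{j=1}^{s}\frac{(\mu-j+1)\,q}{(\nu+j)\,p}\le\exp\biggl(-\frac{1}{Np}\sum_{j=1}^{s}(j-1)\biggr)=\exp\biggl(-\frac{s(s-1)}{2Np}\biggr),
\]
and since $s=|t|\le Np$ we have $\frac{s(s-1)}{2Np}\ge\frac{t^2}{2Np}-\frac12$. Combining the last three displays,
\[
\mathbb{P}\bigl[X=\lfloor Np\rfloor+t\bigr]\le\frac{1}{\sqrt{2\pi Np}}\exp\biggl(O\Bigl(\frac{1}{Np}\Bigr)+\frac12-\frac{t^2}{2Np}\biggr)=\frac{1}{\sqrt{2\pi Np}}\,O\biggl(\exp\Bigl(-\frac{t^2}{2Np}\Bigr)\biggr),
\]
which is the assertion.

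There is no genuine obstacle here; the only care needed is bookkeeping. One must verify that the prescribed interval for $t$ really forces $1\le s<Np$ (so that every factor $1-(j-1)/(Np)$ is non-negative and $X=\mu-s$ is a legal value), and that both the Stirling error $O(1/Np)$ and the additive constant $\frac12$ are absorbed into the implied constant of the final $O(\cdot)$ — immediate, since $Np\to\infty$. No estimate beyond those already used in the proof of Lemma~\ref{l9.4} enters; equivalently, one could start from the exact product in~\eqref{e9.3}, read for negative $t$ as a reciprocal product, and bound each of its factors above by $1-(j-1)/\mu$.
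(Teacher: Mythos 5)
Your proof is correct. The paper disposes of this lemma in a single sentence, asserting that the expansion~\eqref{e9.3} from the proof of Lemma~\ref{l9.4} ``still holds'' on the wider range of $t$ and that this implies the bound. You rightly observe that the Taylor expansion behind~\eqref{e9.3} is not legitimate when $|t|$ is comparable to $Np$: the factors $1-(j-1)/\mu$ are then far from $1$, and the two-sided error term $O\bigl(t/(Np)+t^3/(Np)^2\bigr)$ would in any case be of order $Np$ at the left end of the interval. You replace that step by the one-sided inequality $1-x\le e^{-x}$ applied to each factor of the telescoped ratio $\mathbb{P}[X=k-1]/\mathbb{P}[X=k]=kq/\bigl((N-k+1)p\bigr)$, which is exactly the right move because the lemma only asserts an upper bound of the form $O\bigl(\exp(-t^2/(2Np))\bigr)$; the additive constant $\frac12$ coming from $s(s-1)\ge s^2-Np$ and the Stirling error $O(1/(Np))$ are then absorbed into the implied constant, and your bookkeeping on the range of $t$ (which guarantees $1\le s<Np$, hence nonnegative factors and a legal value $\mu-s$) is accurate. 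So you work with the same product representation of the binomial ratio as the paper, but you carry out the critical estimate one-sidedly rather than as a two-sided asymptotic expansion; this is the honest justification of the lemma, and it quietly repairs the paper's overstatement that \eqref{e9.3} holds verbatim in this range (harmless there, since only the upper-bound direction is ever used).
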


\begin{proof} By the same proof as Lemma~\ref{l9.4}, we find that~\eqref{e9.3}
still holds, which implies the lemma.
\end{proof}

We  prove Theorem~\ref{t1.3} separately for the two cases
$r^{-2}n\leq Np=o(r^{-3}n^{ \frac32})$ and $0<Np=O(r^{-2}n)$.

\begin{theorem}\label{t9.6}
Assume $Np=m_0$ with $r^{-2}n\leq m_0=o(r^{-3}n^{ \frac32})$.
Then
\[
\mathbb{P}\bigl[H_r(n,p)\in \mathcal{L}_r(n)\bigr]
=\exp\biggl[- \frac{[r]_2^2m_0^2}{4n^2}+
 \frac{[r]_2^3(3r-5)m_0^3}{6n^4}+
 O\biggl( \frac{\log^3 (r^{-2}n)}{\sqrt{m_0}}+ \frac{r^6m_0^2}{n^3}\biggr)\biggr].
\]
\end{theorem}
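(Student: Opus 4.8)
The plan is to evaluate~\eqref{e9.2}, which writes $\mathbb{P}[H_r(n,p)\in\mathcal{L}_r]=\sum_{m=0}^N\mathbb{P}_r(n,m)\binom{N}{m}p^mq^{N-m}=\mathbb{E}[\mathbb{P}_r(n,X)]$ with $X\sim\Bin(N,p)$, by inserting the formula of Theorem~\ref{t1.1} for $\mathbb{P}_r(n,m)$. Since $p=m_0/N$ is minuscule, $q=1-p\to1$ and $X$ concentrates within $O(\sqrt{m_0}\log(r^{-2}n))$ of $m_0$. Writing $m=\lfloor m_0\rfloor+t$, $\mu=\frac{[r]_2^2m_0^2}{2n^2}$ and $\Lambda=-\frac{[r]_2^2m_0^2}{4n^2}+\frac{[r]_2^3(3r-5)m_0^3}{6n^4}$ (the claimed exponent), I would partition the range of $t$ into: (i) the \emph{core} $-\log(r^{-2}n)\sqrt{m_0q}-\mu\le t\le\log(r^{-2}n)\sqrt{m_0q}$, handled by Lemma~\ref{l9.4}; (ii) the left interval $\log(r^{-2}n)\le m\le m_0-\mu-\log(r^{-2}n)\sqrt{m_0q}$, handled by Lemma~\ref{l9.5}; (iii) the near upper tail $m_0+\log(r^{-2}n)\sqrt{m_0q}<m\le 2m_0$; and (iv) the extreme tails $m<\log(r^{-2}n)$ and $m>2m_0$. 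The core is deliberately asymmetric because $\mathbb{P}_r(n,\cdot)$ is decreasing, so the summand $\mathbb{P}_r(n,m)\,\mathbb{P}[X=m]$ peaks near $m^*=m_0-\mu$, not at $m_0$; the core is built to contain $m^*$ together with enough of the Gaussian bulk on either side.

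On the core I would Taylor-expand $\mathbb{P}_r(n,m)$ about $m_0$, using $[m]_2=m_0^2+2m_0t+t^2+O(m_0+|t|)$ and $m^3=m_0^3+3m_0^2t+O(m_0^2+m_0t^2)$ and checking (via $m_0\ge r^{-2}n$ and $r=o(n^{1/2})$, so that $|t|=O(\mu+\log(r^{-2}n)\sqrt{m_0})$ and $m=m_0(1+o(1))$) that every discarded remainder is $O(\frac{r^6m_0^2}{n^3})$, to obtain
\[
\mathbb{P}_r(n,m)=\exp\biggl[-\frac{[r]_2^2m_0^2}{4n^2}-\frac{[r]_2^3(3r^2-15r+20)m_0^3}{24n^4}-\frac{[r]_2^2m_0t}{2n^2}-\frac{[r]_2^2t^2}{4n^2}+O\Bigl(\frac{r^6m_0^2}{n^3}\Bigr)\biggr].
\]
I would multiply this by the Lemma~\ref{l9.4} weight $\frac1{\sqrt{2\pi m_0}}\exp[-\frac{t^2}{2m_0}+O(\frac{\log^3(r^{-2}n)}{\sqrt{m_0}}+\frac{r^6m_0^2}{n^3})]$ and complete the square in the part $-\frac{t^2}{2}\bigl(\frac1{m_0}+\frac{[r]_2^2}{2n^2}\bigr)-\frac{[r]_2^2m_0t}{2n^2}$; this produces a shifted discrete Gaussian of variance $\sigma^2=m_0(1+O(\frac{r^4m_0}{n^2}))$ plus a residual factor $\exp[\frac{[r]_2^4m_0^3}{8n^4}+O(\frac{r^6m_0^2}{n^3})]$. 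Summing the shifted Gaussian over the core gives $\frac{\sigma}{\sqrt{m_0}}$ plus Poisson-summation and truncation errors $O(e^{-\Omega(\log^2(r^{-2}n))})$, i.e.\ $1+O(\frac{r^6m_0^2}{n^3})$; and since $-\frac{[r]_2^3(3r^2-15r+20)}{24}+\frac{[r]_2^4}{8}=\frac{[r]_2^3(3r-5)}{6}$, the surviving exponent is exactly $\Lambda$. Thus the core alone already yields $\exp[\Lambda+O(\frac{\log^3(r^{-2}n)}{\sqrt{m_0}}+\frac{r^6m_0^2}{n^3})]$.

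It remains to show the other three ranges each contribute only $\exp[\Lambda]\cdot O(e^{-\Omega(\log^2(r^{-2}n))})$. For (ii), the bound $\mathbb{P}_r(n,m)\le\exp[-\frac{[r]_2^2m^2}{4n^2}+O(\frac{r^6m_0^2}{n^3})]$ from Theorem~\ref{t1.1} (the cubic term is negative) times the Gaussian upper bound of Lemma~\ref{l9.5} makes the exponent of each summand at most the core maximum minus $\tfrac12\log^2(r^{-2}n)(1+o(1))$, which remains negligible after summing the $O(m_0)$ terms. For (iii), monotonicity (Lemma~\ref{l9.1}) gives $\sum_{m>m_0+t_0}\mathbb{P}_r(n,m)\,\mathbb{P}[X=m]\le\mathbb{P}_r(n,m_0+t_0)\,\mathbb{P}[X>m_0+t_0]$ with $t_0=\log(r^{-2}n)\sqrt{m_0q}$; here $\mathbb{P}_r(n,m_0+t_0)\le\mathbb{P}_r(n,m_0)=e^{O(r^6m_0^2/n^3)}\exp[\Lambda]$ by Theorem~\ref{t1.1}, while $\mathbb{P}[X>m_0+t_0]\le 2e^{-t_0^2/(3m_0)}$ by Lemma~\ref{l9.3}. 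For (iv), $\mathbb{P}_r\le1$ and Lemma~\ref{l9.3} give singly/doubly exponentially small contributions. Summing the four ranges yields Theorem~\ref{t9.6}. I expect the main obstacle to be exactly the upper tail~(iii) together with the precision demanded in~(ii): because $\exp[\Lambda]$ may itself be nearly exponentially small in $r^{-2}n$, the trivial estimate $\mathbb{P}_r(n,m)\le1$ is far too weak above $m_0$, so one must feed the explicit Theorem~\ref{t1.1} formula back in — carefully comparing $\mathbb{P}_r(n,m_0+t_0)$ to $\exp[\Lambda]$ — to certify that the tail contributions really are dominated by $\exp[\Lambda]\cdot O(e^{-\Omega(\log^2(r^{-2}n))})$.
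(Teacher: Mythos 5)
Your proposal is correct and follows essentially the same route as the paper: the same four-interval decomposition centred on the shifted mode $m_0-\frac{[r]_2^2m_0^2}{2n^2}$, Lemma~\ref{l9.4} plus completion of the square on the core (producing the same residual $\frac{[r]_2^4m_0^3}{8n^4}$ that converts $-\frac{[r]_2^3(3r^2-15r+20)}{24}$ into $\frac{[r]_2^3(3r-5)}{6}$), Lemma~\ref{l9.5} on the left bulk, and monotonicity (Lemma~\ref{l9.1}) with Chernoff on the tails, exactly as in Claims~1--8. The only differences are cosmetic: you Taylor-expand about $m_0$ rather than $m_0^*$ and fold the $-\frac{[r]_2^2t^2}{4n^2}$ term into the Gaussian variance instead of the error term, which if anything is slightly cleaner.
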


\begin{proof} Let
\[
m_0^*=m_0- \frac{[r]_2^2m_0^2}{2n^2}.
\]
Note that $m_0^*=m_0(1+o(1))$, enabling us to use
$m_0$ in place of $m_0^*$ in error terms.
We will divide the sum~\eqref{e9.2} into four domains:
\[
\mathbb{P}[H_r(n,p)\in \mathcal{L}_r]
=\sum_{m\in I_0\cup I_1\cup I_2\cup I_3}\mathbb{P}_r(n,m) \binom{N}{m}p^mq^{N-m},
\]
where
\begin{align*}
I_0&=\bigl[0,\log (r^{-2}n)\bigr], \\
I_1&=\bigl[\log (r^{-2}n), m_0^*-\log (r^{-2}n)\sqrt{m_0q}\bigr], \\
I_2&=\bigl[m_0^*-\log (r^{-2}n)\sqrt{m_0q},m_0+\log (r^{-2}n)\sqrt{m_0q}\bigr], \\
I_3&=\bigl[m_0+\log (r^{-2}n)\sqrt{m_0q},N\bigr].
\end{align*}

The theorem follows from a sequence of claims which we show next.

\bigskip
{\bf Claim 1}.~~$\displaystyle\mathbb{P}_r(n,m_0^*)=
\exp\Bigl[- \frac{[r]_2^2m_0^2}{4n^2}+ \frac{[r]_2^3(3r^2+9r-20)m_0^3}{24n^4}+O\Bigl( \frac{r^6m_0^2}{n^3}\Bigr)\Bigr].$

\begin{proof}[Proof of Claim 1] From Theorem~\ref{t1.1} we have
 \[
 \mathbb{P}_r(n,m_0^*)
  =\exp\biggl[- \frac{[r]_2^2\bigl[m_0- \frac{[r]_2^2m_0^2}{2n^2}\bigr]_2}{4n^2}- \frac{[r]_2^3(3r^2-15r+20)\bigl(m_0- \frac{[r]_2^2m_0^2}{2n^2}\bigr)^3}{24n^4}+
 O\Bigl( \frac{r^6{m_0^*}^2}{n^3}\Bigr)\biggr],
 \]
which simplifies to give the claim.
\end{proof}

\bigskip
{\bf Claim 2}.~~If
$m=m_0^*+s \in I_1\cup I_2$, then
\[
\mathbb{P}_r(n,m)=\mathbb{P}_r(n,m_0^*)\,
\exp\biggl[- \frac{[r]_2^2m_0^*s}{2n^2}- \frac{[r]_2^2s^2}{4n^2}
  +O\Bigl( \frac{r^6{m_0^2}}{n^3}\Bigr)\biggr].
\]

\begin{proof}[Proof of Claim 2]
 Since $m\in I_1\cup I_2$, we have
\[
s\in\Bigl[-m_0+ \frac{[r]_2^2m_0^2}{2n^2}+\log (r^{-2}n),\log (r^{-2}n)\sqrt{m_0q}+ \frac{[r]_2^2m_0^2}{2n^2}\Bigr].
\]
By Theorem~\ref{t1.1},  we have
\begin{align*}
\mathbb{P}_r(n,m)
&=\mathbb{P}_r(n,m_0^*+s)\\
&=\exp\biggl[- \frac{[r]_2^2[m_0^*+s]_2}{4n^2}
 - \frac{[r]_2^3(3r^2-15r+20)(m_0^*+s)^3}{24n^4}
 +O\Bigl( \frac{r^6(m_0^*+s)^2}{n^3}\Bigr)\biggr],
\end{align*}
which gives the claim because
$O\bigl( \frac{r^4(m_0^*+s)}{n^2}\bigr)=O\bigl( \frac{r^6{m_0^2}}{n^3}\bigr)$,
$O\bigl( \frac{r^8{m_0^*}^2s}{n^4}\bigr)=O\bigl( \frac{r^6{m_0^2}}{n^3}\bigr)$ and
$O\bigl( \frac{r^8{m_0^*}s^2}{n^4}\bigr)=O\bigl( \frac{r^6{m_0^2}}{n^3}\bigr)$.
\end{proof}

\bigskip
%
{\bf Claim 3}.~~$\displaystyle
\sum_{s=-\log (r^{-2}n)\sqrt{m_0q}}^{\log (r^{-2}n)
\sqrt{m_0q}+ \frac{[r]_2^2m_0^2}{2n^2}}\exp\biggl[- \frac{s^2}{2m_0}\biggr]=\sqrt{2\pi m_0}
\biggl(1+O\Bigl( \frac{r^6m_0^2}{n^3}\Bigr)\biggr)$.

\begin{proof}[Proof of Claim 3]
This is an elementary summation that is easily proved either using the
Euler-Maclaurin summation formula or the Poisson summation formula.
\end{proof}

%
%

\bigskip
{\bf Claim 4}.~~$\displaystyle
\sum_{m\in I_2} \mathbb{P}_r(n,m) \binom{N}{m}p^mq^{N-m} \newline
{\kern10em}=\exp\biggl[- \frac{[r]_2^2m_0^2}{4n^2}+ \frac{[r]_2^3(3r-5)m_0^3}{6n^4}+O\Bigl(\frac{\log^3 (r^{-2}n)}{\sqrt{m_0}}+
 \frac{r^6m_0^2}{n^3}\Bigr)\biggr]$.
%

\begin{proof}[Proof of Claim 4]
For $m=m_0- \frac{[r]_2^2m_0^2}{2n^2}+s\in I_2$, we have
\[
s\in\Bigl[-\log (r^{-2}n)\sqrt{m_0q},\log (r^{-2}n)\sqrt{m_0q}+ \frac{[r]_2^2m_0^2}{2n^2}\Bigr].
\]
By Lemma~\ref{l9.4}, we have
\[
 \binom{N}{m}p^mq^{N-m}= \frac{1}{\sqrt{2\pi m_0}}
\exp\biggl[- \frac{{\bigl(s- \frac{[r]_2^2m_0^2}{2n^2}\bigr)}^2}{2m_0}+O\Bigl( \frac{\log^3 (r^{-2}n)}{\sqrt{m_0}}+ \frac{r^6m_0^2}{n^3}\Bigr)\biggr].
\]
By Claim 2, we further have
\begin{align*}
\sum_{m\in I_2} & \mathbb{P}_r(n,m) \binom{N}{m}p^mq^{N-m} \\
&= \frac{\mathbb{P}_r(n,m_0^*)}{\sqrt{2\pi m_0}}\sum_{s=-\log (r^{-2}n)\sqrt{m_0q}}^{\log (r^{-2}n)\sqrt{m_0q}+ \frac{[r]_2^2m_0^2}{2n^2}}\exp\biggl[- \frac{{\bigl(s- \frac{[r]_2^2m_0^2}{2n^2}\bigr)}^2}{2m_0q}
- \frac{[r]_2^2s^2}{4n^2}- \frac{[r]_2^2\bigl(m_0- \frac{[r]_2^2m_0^2}{2n^2}\bigr)s}{2n^2} \\
&{\qquad}+O\Bigl( \frac{\log^3 (r^{-2}n)}{\sqrt{m_0}}+
 \frac{r^6m_0^2}{n^3}\Bigr)\biggr] \\
&= \frac{\mathbb{P}_r(n,m_0^*)}{\sqrt{2\pi m_0}}\exp\biggl[- \frac{[r]_2^4m_0^3}{8n^4}+O\Bigl( \frac{\log^3 (r^{-2}n)}{\sqrt{m_0}}+
 \frac{r^6m_0^2}{n^3}\Bigr)\biggr]\sum_{s=-\log (r^{-2}n)\sqrt{m_0q}}^{\log (r^{-2}n)\sqrt{m_0q}+ \frac{[r]_2^2m_0^2}{2n^2}}\!\!\!\exp\biggl[- \frac{s^2}{2m_0}\biggr],
\end{align*}
because $O\bigl( \frac{r^8{m_0^*}s^2}{n^4}\bigr)=O\bigl( \frac{r^6{m_0^2}}{n^3}\bigr)$ and $ \frac{[r]_2^2s^2}{4n^2}=O\bigl(\frac{\log^3 (r^{-2}n)}{\sqrt{m_0}}+
 \frac{r^6m_0^2}{n^3}\bigr)$.
Now we apply the value of $\mathbb{P}_r(n,m_0^*)$ from Claim~1 and the
summation from Claim~3.
\end{proof}

Secondly, we show the value of
\[
\sum_{m\in I_1}\mathbb{P}_r(n,m) \binom{N}{m}p^mq^{N-m}
\]
in the following two claims.
\vskip 0.3cm
{\bf Claim 5}.~~$\displaystyle
 \frac{1}{\sqrt{m_0}}\!\!\sum_{s=-m_0+ \frac{[r]_2^2m_0^2}{2n^2}
 +\log (r^{-2}n)}^{-\log (r^{-2}n)\sqrt{m_0q}}\!\!
\exp\biggl[- \frac{[r]_2^2s^2}{4n^2}- \frac{s^2}{2m_0}
 + \frac{[r]_2^4m_0^2s}{4n^4}\biggr]=O\Bigl( \frac{r^6m_0^2}{n^3}\Bigr)$.

\begin{proof}[Proof of Claim 5]
Since the summand is increasing in the range of summation, it suffices to
take the number of terms times the last term.
\end{proof}

\bigskip
{\bf Claim 6}.~~$\displaystyle
\sum_{m\in I_1}\mathbb{P}_r(n,m) \binom{N}{m}p^mq^{N-m}=
O\Bigl( \frac{r^6m_0^2}{n^3}\Bigr)\sum_{m\in I_2}\mathbb{P}_r(n,m) \binom{N}{m}p^mq^{N-m}$.


\begin{proof}[Proof of Claim 6]
If $m=m_0^*+s\in I_1$, then we have
\[
s\in\Bigl[-m_0+ \frac{[r]_2^2m_0^2}{2n^2}+\log (r^{-2}n),-\log (r^{-2}n)\sqrt{m_0q}\Bigr].
\]
By Lemma~\ref{l9.5} and Claim 2, we have
\begin{align*}
&\sum_{m\in I_1}\mathbb{P}_r(n,m) \binom{N}{m}p^mq^{N-m}
   = \frac{\mathbb{P}_r(n,m_0^*)}{\sqrt{2\pi m_0}} \\
&{\qquad}\times\sum_{s=-m_0+ \frac{[r]_2^2m_0^2}{2n^2}+\log (r^{-2}n)}^{-\log (r^{-2}n)\sqrt{m_0q}}
\exp\biggl[- \frac{[r]_2^2m_0^*s}{2n^2}- \frac{[r]_2^2s^2}{4n^2}+O\Bigl( \frac{r^6m^2}{n^3}\Bigr)\biggr]
O\biggl(\exp\biggl[- \frac{\bigl(s- \frac{[r]_2^2m_0^2}{2n^2}\bigr)^2}{2m_0}\biggr]\biggr)
 \displaybreak[1]\\
&= \frac{\mathbb{P}_r(n,m_0^*)}{\sqrt{2\pi m_0}}
\kern-0.4em\sum_{s=-m_0+ \frac{[r]_2^2m_0^2}{2n^2}
  +\log (r^{-2}n)}^{-\log (r^{-2}n)\sqrt{m_0q}}\kern-0.4em
O\biggl(\exp\biggl[- \frac{[r]_2^2s^2}{4n^2}- \frac{s^2}{2m_0}+ \frac{[r]_2^4m_0^2s}{4n^4}- \frac{[r]_2^4m_0^3}{8n^4}+O\Bigl( \frac{r^6m^2}{n^3}\Bigr)\biggr]\biggr).
\end{align*}

By Claim 1 and Claim 4, we further have
\[
\mathbb{P}_r(n,m_0^*)\,O\biggl(\exp\Bigl[- \frac{[r]_2^4m_0^3}{8n^4}+O\Bigl( \frac{r^6m^2}{n^3}\Bigr)\Bigr]\biggr)=
O\biggl(\sum_{m\in I_2}\mathbb{P}_r(n,m) \binom{N}{m}p^mq^{N-m}\biggr),
\]
which completes the proof together with Claim~5.
\end{proof}

%

\bigskip
{\bf Claim 7}.~~$\displaystyle
\sum_{m\in I_0}\mathbb{P}_r(n,m) \binom{N}{m}p^mq^{N-m}=O\Bigl( \frac{r^6m_0^2}{n^3}\Bigr)\sum_{m\in I_2}\mathbb{P}_r(n,m) \binom{N}{m}p^mq^{N-m}$.


\begin{proof}[Proof of Claim 7]
Since $m=m_0+t\in I_0$, then we have
\[
t\in\Bigl[-m_0,-m_0+\log (r^{-2}n)\Bigr].
\]
Since $\mathbb{P}_r(n,m)\leq 1$ for $m\in I_0$,
Lemma~\ref{l9.3} gives
\[
\sum_{m\in I_0}\mathbb{P}_r(n,m) \binom{N}{m}p^mq^{N-m}
=O\bigl(\exp\bigl[- \dfrac13 m_0\bigr]\bigr).
\]
Together with Claim~4, this proves the required bound.
\end{proof}

\bigskip
{\bf Claim 8}.~~$\displaystyle
\sum_{m\in I_3}\mathbb{P}_r(n,m) \binom{N}{m}p^mq^{N-m}=
O\Bigl( \frac{r^6m_0^2}{n^3}\Bigr)\sum_{m\in I_2}\mathbb{P}_r(n,m) \binom{N}{m}p^mq^{N-m}$.

%

\begin{proof}[Proof of Claim 8]
Since $m=m_0+t\in I_3$,  then we have
\[
 t\in\Bigl[\log (r^{-2}n)\sqrt{m_0q},N-m_0\Bigr].
\]
By Lemma~\ref{l9.1}, we have $\mathbb{P}_r(n,m)\leq \mathbb{P}_r(n,m_0)$
for $m_0\in I_3$.  Therefore
\[
  \sum_{m\in I_3}\mathbb{P}_r(n,m) \binom{N}{m}p^mq^{N-m}
  \leq \mathbb{P}_r(n, m_0) \sum_{m\in I_3}
     \binom{N}{m}p^mq^{N-m},
\]
from which the claim follows using Theorem~\ref{t1.1} and
Lemma~\ref{l9.3}.
%
\end{proof}

\bigskip

To complete the proof of Theorem~\ref{t9.6}, add together Claims 4, 6, 7 and 8.
\end{proof}

Note that in the process of proving Theorem~\ref{t9.6} we also proved
Corollary~\ref{c1.4}.
%

The second case of Theorem~\ref{t1.3} is $0<m_0=O(r^{-2}n)$.

\begin{theorem}\label{t9.8}
Assume that $0<m_0=O(r^{-2}n)$. Then
\[
\mathbb{P}\bigl[H_r(n,p)\in \mathcal{L}_r\bigr]=\exp\biggl[- \frac{[r]_2^2m_0^2}{4n^2}+O\biggl( \frac{r^6m_0^2}{n^3}\biggr)\biggr].
\]
\end{theorem}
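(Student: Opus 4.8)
The idea is to write the probability as a ``no bad event'' probability and finish with Janson's inequality, so that Theorem~\ref{t1.1} is not needed in this range. Call an unordered pair $\{e,e'\}$ of distinct $r$-subsets of $[n]$ a \emph{bad pair} if $|e\cap e'|\ge 2$, and let $\mathcal B$ be the set of all bad pairs. For $\alpha=\{e,e'\}\in\mathcal B$ let $\mathbf 1_\alpha$ be the indicator that $e$ and $e'$ are both edges of $H_r(n,p)$, and put $X=\sum_{\alpha\in\mathcal B}\mathbf 1_\alpha$. By the definition of linearity, $H_r(n,p)\in\mathcal L_r$ if and only if $X=0$, so $\mathbb P[H_r(n,p)\in\mathcal L_r]=\mathbb P[X=0]$. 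Each $\mathbf 1_\alpha$ is an increasing function of the independent edge indicators $\{\mathbf 1[e\in E_{n,p}]\}_e$; so, writing $\mu=\mathbb E[X]$ and letting $\Delta$ be the sum of $\mathbb P[\mathbf 1_\alpha=\mathbf 1_\beta=1]$ over all ordered pairs of distinct bad pairs $\alpha,\beta$ having an $r$-set in common, the Harris/FKG inequality gives $\mathbb P[X=0]\ge\prod_{\alpha\in\mathcal B}(1-p^2)$ and Janson's inequality gives $\mathbb P[X=0]\le\exp(-\mu+\tfrac12\Delta)$. Since $\mu p^2$ is negligible beside the target error, these bounds together yield $\log\mathbb P[X=0]=-\mu+O(\Delta)$, and the problem reduces to estimating $\mu$ and $\Delta$.

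\textbf{The two estimates.} First I would evaluate $\mu=|\mathcal B|\,p^2=\tfrac12\,Np^2\sum_{j=2}^{r-1}\binom rj\binom{n-r}{r-j}$ by isolating the dominant term $j=2$: using $\binom{n-r}{r-2}\big/\binom nr=[r]_2\,[n-r]_{r-2}/[n]_r=\bigl(1+O(r^2/n)\bigr)[r]_2/n^2$ and $N^2p^2=m_0^2$, the $j=2$ term is $\tfrac{[r]_2^2m_0^2}{4n^2}\bigl(1+O(r^2/n)\bigr)$, while comparing successive terms shows the full sum over $j\ge 2$ only multiplies this by a further $1+O(r^2/n)$; since $[r]_2^2=O(r^4)$ this gives $\mu=\tfrac{[r]_2^2m_0^2}{4n^2}+O\bigl(r^6m_0^2/n^3\bigr)$. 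Next, two distinct bad pairs with a common $r$-set must share exactly one, say $e$, so $\mathbb P[\mathbf 1_\alpha=\mathbf 1_\beta=1]=p^3$; a fixed $r$-set meets $O(r^4N/n^2)$ other $r$-sets in at least two vertices, so the number of such ordered pairs is $O\bigl(N\,(r^4N/n^2)^2\bigr)=O(r^8N^3/n^4)$, hence $\Delta=O(r^8m_0^3/n^4)$. This is the only place the hypothesis enters: $m_0=O(r^{-2}n)$ gives $r^2m_0/n=O(1)$, so $\Delta=O(r^6m_0^2/n^3)$. Combining, $\log\mathbb P[X=0]=-\mu+O(\Delta)=-\tfrac{[r]_2^2m_0^2}{4n^2}+O\bigl(r^6m_0^2/n^3\bigr)$, which is the assertion.

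\textbf{Main difficulty.} The delicate step is the mean: when $m_0=\Theta(r^{-2}n)$ the quantity $\mu$ stays of order $1$ (it neither tends to $0$ nor to $\infty$), and the target error $O(r^6m_0^2/n^3)=\Theta\bigl((r^2/n)\mu\bigr)$ is exactly the order of the first correction to $\mu$, so the ratios $\binom{n-r}{r-j}/\binom nr$ must be pinned down with relative accuracy $O(r^2/n)$ uniformly in $j$; the bound on $\Delta$ is routine but must be tracked carefully so that $m_0=O(r^{-2}n)$ absorbs it into the stated error. A workable alternative would be to imitate the proof of Theorem~\ref{t9.6}, expanding $\mathbb P[H_r(n,p)\in\mathcal L_r]=\sum_m\mathbb P_r(n,m)\binom Nm p^mq^{N-m}$ and inserting Theorem~\ref{t1.1} (for $m=O(r^{-2}n)$ its cubic term is $O(r^6m^2/n^3)$ and is absorbed), but that approach needs a case split on the size of $m_0$ and a separate Chernoff estimate for the tail, whereas Janson's inequality handles all values of $m_0$ at once.
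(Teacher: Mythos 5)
Your argument is correct, and it reaches the stated estimate by a genuinely different route from the paper. The paper's proof introduces $X_i$, the number of pairs of edges meeting in exactly $i$ vertices, computes $\mathbb{E}[X_2]=\frac{[r]_2^2m_0^2}{4n^2}\bigl(1+O(r/n)\bigr)$, bounds $\mathbb{E}\bigl[\sum_{i\ge3}X_i\bigr]=O(r^6m_0^2/n^3)$ and the second (joint) moment by $O(r^8m_0^3/n^4)$, and then concludes by a two-term inclusion--exclusion (Bonferroni) that $\mathbb{P}[X_{\rm link}=0]=1-\frac{[r]_2^2m_0^2}{4n^2}+O(r^6m_0^2/n^3)$. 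Your first- and second-moment computations are the same ones (your $\mu$ is their $\mathbb{E}[X_{\rm link}]$ and your $\Delta$ is their double sum, with the identical use of $m_0=O(r^{-2}n)$ to absorb $r^8m_0^3/n^4$ into $r^6m_0^2/n^3$), but you replace the Bonferroni sandwich by Janson's inequality for the upper bound and Harris/FKG for the lower bound. This buys something real: in the range $m_0=\Theta(r^{-2}n)$ the mean $\mu=\frac{[r]_2^2m_0^2}{4n^2}$ is $\Theta(1)$, and passing from $1-\mu+O(\varepsilon)$ to $\exp[-\mu+O(\varepsilon)]$ is not automatic from a two-term truncation (one would need $\mu^2=O(\varepsilon)$, i.e.\ $m_0=O(r^{-1}n^{1/2})$, or else higher-order Bonferroni terms); your Janson/Harris sandwich delivers the exponential form directly and uniformly in $m_0$, so it is arguably the more robust write-up of this step. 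The only points to make explicit in a final version are the routine ones you already flag: the uniform $1+O(r^2/n)$ control of $\binom{n-r}{r-j}\big/N$ and the verification that $\mu p^2=O\bigl(\mu\,m_0^2/N^2\bigr)$ is indeed dominated by $r^6m_0^2/n^3$ (it is, since $N\ge\binom n3$ for $3\le r=o(n^{1/2})$).
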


\begin{proof} Let $X_{i}$
denote the number of pairs of edges with $i$ common vertices in $H_r(n,p)$,
where $2\leq i\leq r-1$. Let $X_{\rm{link}}=\sum_{i=2}^{r-1}X_i$.

Let $M_{i,1},\ldots,M_{i,t_i}$ be all unordered pairs of $r$-sets $\{e_1,e_{2}\}$ in
$[n]$ with $|e_1\cap e_2|=i$, where $2\leq i\leq r-1$ and
\[
t_i= \dfrac12N \binom{r}{i} \binom{n-r}{r-i}.
\]
 Firstly,  we have
 \begin{align*}
\mathbb{E}\left[X_2\right]&=\sum_{j=1}^{t_2}\mathbb{P}\bigl[M_{2,j}
 \text{ is in }H_r(n,p)\bigr]\\
&= \dfrac12N \binom{r}{2} \binom{n-r}{r-2}\biggl( \frac{m_0}{N}\biggr)^2
= \frac{[r]_2^2m_0^2}{4n^2}\Bigl(1+\Bigl( \frac{r}{n}\Bigr)\Bigr).
 \end{align*}
We also have
\[
\mathbb{E}\biggl[\sum_{i\geq 3}X_i\biggr]
=O\biggl(\sum_{i\geq 3}N \binom{r}{i} \binom{n-r}{r-i}\biggl( \frac{m_0}{N}\biggr)^2\biggr)
=O\Bigl( \frac{r^6m_0^2}{n^3}\Bigr).
 \]
Thus,  we have
\[
\mathbb{E}\bigl[X_{\rm{link}}\bigr]=\sum_{i=2}^{r-1}
\sum_{j=1}^{t_i}\mathbb{P}\bigl[M_{i,j}\text{ is in }H_r(n,p)\bigr]
= \frac{[r]_2^2m_0^2}{4n^2}+O\Bigl( \frac{r^6m_0^2}{n^3}\Bigr).
 \]
 Similarly,
 \[
 \sum_{i_1,i_2=2}^{r-1}\sum_{j_1=1}^{t_{i_1}}\sum_{j_2=1}^{t_{i_2}}\mathbb{P}\bigl[M_{i_1,j_1},M_{i_2,j_2}\text{ are in }H_r(n,p)\bigr]=O\Bigl( \frac{r^8m_0^3}{n^4}\Bigr).
\]
By inclusion-exclusion, we conclude that
\[
\mathbb{P}\bigl[X_{{\rm link}}=0\bigr]= 1- \frac{[r]_2^2m_0^2}{4n^2}+O\Bigl( \frac{r^6m_0^2}{n^3}\Bigr),
\]
where $O\bigl( \frac{r^8m_0^3}{n^4}\bigr)=O\bigl( \frac{r^6m_0^2}{n^3}\bigr)$ because $0<m_0=O(r^{-2}n)$.
\end{proof}

Theorems~\ref{t9.6} and~\ref{t9.8} together complete the proof of Theorem~\ref{t1.3}.

\section{Proof of Theorem~\ref{t1.5}}\label{s:10}


As in the theorem statement, we will assume  $m=o(r^{-3}n^{\frac32})$
and $k=o\bigl(\frac{n^3}{r^6m^2}\bigr)$.
The bound on $m$ implies that either $m=0$ (a trivial case we will ignore)
or $r=o(n^{\frac12})$.
We will also assume that $k\leq m$, since otherwise the theorem is trivially true
because $[m]_k=0$ if~$k>m$.

Let $K=K(n)$ be a given linear $r$-graph on $[n]$
vertices  with edges $\{e_1,\ldots,e_k\}$.
Consider
$H\in\mathcal{L}_r(n,m)$ chosen uniformly at random. Let $\mathbb{P}[K\subseteq H]$
be the probability that
$H$ contains $K$ as a subhypergraph.
If $\mathbb{P}[K\subseteq H]\neq 0$, then we have
\begin{align}
\mathbb{P}[K\subseteq H]&=\mathbb{P}[e_1,\ldots,e_k\in H] \notag\\
&=\prod_{i=1}^{k}\, \frac{\mathbb{P}[e_1,\ldots,e_i\in H]}{\mathbb{P}[e_1,\ldots,e_i\in H]+\mathbb{P}[e_1,\ldots,e_{i-1}\in H,e_i\notin H]} \notag\\
&=\prod_{i=1}^{k}\, \biggl(1+ \frac{\mathbb{P}[e_1,\ldots,e_{i-1}\in H,e_i\notin H]}
{\mathbb{P}[e_1,\ldots,e_i\in H]}\biggr)^{\!\!-1}.\label{e10.1}
\end{align}
For $i=1,\ldots,k$, let $\mathcal{L}_r(n,m:\overline{e}_i)$ be the set of all linear
hypergraphs in $\mathcal{L}_r(n,m)$ which contain edges
$e_1,\ldots,e_{i-1}$ but not edge $e_i$.
Let $\mathcal{L}_r(n,m:e_i)$ be the set of all linear
hypergraphs in $\mathcal{L}_r(n,m)$ which contain edges
$e_1,\ldots,e_{i}$. We have the ratio
\begin{equation}\label{e10.2}
 \frac{\mathbb{P}[e_1,\ldots,e_{i-1}\in H,e_i\notin H]}{\mathbb{P}[e_1,\ldots,e_i\in H]}
= \frac{|\mathcal{L}_r(n,m:\overline{e}_i)|}{|\mathcal{L}_r(n,m:e_i)|}.
\end{equation}
Note that  $|\mathcal{L}_r(n,m)|\neq 0$
by Theorem~\ref{t1.1}.
We will show below that none of the denominators in~\eqref{e10.2} are zero.

\medskip

Let $H\in\mathcal{L}_r(n,m:e_i)$ with $1\leq i\leq k$.
An \textit{$e_i$-displacement} is defined in two steps:

\noindent{\bf Step 0.}\ Remove the edge $e_i$ from $H$.
Define $H_0$ with the same vertex set $[n]$
and the edge set $E(H_0)=E(H)\setminus\{e_i\}$.

\noindent{\bf Step 1.}\ Take any $r$-set distinct from $e_i$ of which no
two vertices belong to the same edge
of $H_0$ and add it as an edge to~$H_0$. The new graph is denoted by $H'$.

\begin{lemma}\label{l10.1}
Assume $m=o(r^{-3}n^{ \frac32})$ and $1\leq i\leq k$.
Let $H\in\mathcal{L}_r(n,m-1)$ and let $N_r$ be
the set of $r$-sets distinct from $e_i$ of which no two vertices belong to
the same edge of $H$. Then
\[
|N_r|=\biggl[N- \binom{r}{2}m\binom{n-2}{r-2}\biggr]
\biggl(1+O\Bigl( \frac{r^4}{n^2}+\frac{r^6m^2}{n^3}\Bigr)\biggr)
\]
\end{lemma}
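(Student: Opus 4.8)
The plan is to mimic the inclusion--exclusion argument in the proof of Lemma~\ref{l3.5}, but to exploit the fact that $H$ is \emph{linear} (so $\codeg(\{x,y\})\le1$ for every $2$-set and $\sum_v\binom{\deg v}{2}\le\binom m2$) to make the second-order estimates both simpler and sharper. First I would set $N_r^{\circ}$ to be the set of \emph{all} $r$-sets of $[n]$, not necessarily distinct from $e_i$, with no two vertices in a common edge of $H$; then $|N_r|$ equals $|N_r^{\circ}|$ or $|N_r^{\circ}|-1$ according as $e_i$ itself lies in $N_r^{\circ}$, so once we know $|N_r^{\circ}|=\Theta(N)$ this exclusion contributes only a relative error $O(1/N)=O(r^4/n^2)$. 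For $|N_r^{\circ}|$, letting $A_{e(j,j')}$ be the event that an $r$-set contains the two vertices $j,j'$ of an edge $e\in H$, Bonferroni's inequalities give
\[
N-\!\!\sum_{\{e,\{j,j'\}\}}\!\!|A_{e(j,j')}|\ \le\ |N_r^{\circ}|\ \le\ N-\!\!\sum_{\{e,\{j,j'\}\}}\!\!|A_{e(j,j')}|+\!\!\sum_{\{e,\{j,j'\}\}\ne\{e',\{j'',j'''\}\}}\!\!|A_{e(j,j')}\cap A_{e'(j'',j''')}|.
\]
Since $|A_{e(j,j')}|=\binom{n-2}{r-2}$ and $H$ has $m-1$ edges, the first sum equals $\binom{r}{2}(m-1)\binom{n-2}{r-2}$; replacing $m-1$ by $m$ produces the stated main term $N-\binom{r}{2}m\binom{n-2}{r-2}$ at the cost of an additive $\binom{r}{2}\binom{n-2}{r-2}$, i.e. a relative error $O(r^4/n^2)$.

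It remains to bound the double sum, which I would split into the same four cases as in~\eqref{e3.2}--\eqref{e3.5}. The case $e=e'$ contributes $O\bigl(mr^3\binom{n-3}{r-3}\bigr)$; the case $e\ne e'$ with the two pairs sharing a vertex contributes $O\bigl(\sum_v\binom{\deg v}{2}\,r^2\binom{n-3}{r-3}\bigr)=O\bigl(m^2r^2\binom{n-3}{r-3}\bigr)$, where, unlike in Lemma~\ref{l3.5}, no degree bound (Remark~\ref{r3.1}) is needed because linearity already gives $\sum_v\binom{\deg v}{2}\le\binom m2$; the case $e\ne e'$ with disjoint pairs contributes $O\bigl(m^2r^4\binom{n-4}{r-4}\bigr)$; and the case $\{j,j'\}=\{j'',j'''\}$ with $e\ne e'$ is \emph{empty}, since it would force $\codeg(\{j,j'\})\ge2$, contradicting linearity of $H$. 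Using $\binom{n-3}{r-3}/N=O(r^3/n^3)$ and $\binom{n-4}{r-4}/N=O(r^4/n^4)$, the three surviving terms become $O(mr^6/n^3)$, $O(m^2r^5/n^3)$ and $O(m^2r^8/n^4)$; since $m\ge1$ and $r=o(n^{1/2})$, each of these is $O(r^6m^2/n^3)$.

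Finally I would combine the estimates. Because $m=o(r^{-3}n^{3/2})$ together with $r=o(n^{1/2})$ gives $\binom{r}{2}m\binom{n-2}{r-2}=o(N)$, the denominator $N-\binom{r}{2}m\binom{n-2}{r-2}$ is $\Theta(N)$, so every absolute error above becomes a relative error of size $O(r^4/n^2+r^6m^2/n^3)$, and the two-sided bound on $|N_r|$ yields the claimed formula. I do not anticipate a genuine obstacle; the only delicate point is the bookkeeping that all four second-order contributions, as well as the $\pm1$ coming from the exclusion of $e_i$, are absorbed into $O(r^4/n^2+r^6m^2/n^3)$, which is why I would first peel off the $O(r^4/n^2)$ part (arising from the $m-1\to m$ adjustment and the exclusion of $e_i$) before estimating the remaining, genuinely second-order, terms.
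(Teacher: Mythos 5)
Your proof is correct and follows essentially the same route as the paper, which simply re-runs the inclusion--exclusion of Lemma~\ref{l3.5} with the codegree-sum bound in~\eqref{e3.4} replaced by $O(rm^2)$ and the observation that $\binom{r}{2}\binom{n-2}{r-2}/N=O(r^4/n^2)$. Your two refinements --- bounding $\sum_v\binom{\deg v}{2}$ by $\binom{m}{2}$ via linearity and noting that the case $\{j,j'\}=\{j'',j'''\}$ with $e\neq e'$ is empty --- are slightly cleaner than the paper's but yield the same error term.
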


\begin{proof} It is clear that $|N_r|\geq\bigl[N- 1- \binom{r}{2}(m-1) \binom{n-2}{r-2}\bigr]$.
The proof of Lemma~\ref{l3.5} applies if we  replace the bound on
$\sum_{v\in[n]} \binom{\deg (v)}{2}$ in~\eqref{e3.4} by
$O(rm^2)$ and note that $ \frac{ \binom{r}{2} \binom{n-2}{r-2}}{ \binom{n}{r}}=O ( \frac{r^4}{n^2})$.
\end{proof}

An \textit{$e_i$-replacement} is the inverse of an $e_i$-displacement.
An $e_i$-replacement from $H'\in\mathcal{L}_r(n,m:\overline{e}_i)$
consists of removing any edge in $E(H')-\{e_1,\ldots,e_{i-1}\}$,
then inserting $e_i$. We say that the $e_i$-replacement is \textit{legal} if
$H\in\mathcal{L}_r(n,m:e_i)$, otherwise it is \textit{illegal}.

\begin{lemma}\label{l10.2}
Assume $m=o(r^{-3}n^{ \frac32})$ and $1\leq i\leq k$.
Consider $H'\in\mathcal{L}_r(n,m:\overline{e}_i)$
chosen uniformly at random. Let $E^*$
be the set of  $r$-sets $e^*$ of $[n]$ such that $|e^*\cap e_i|\geq 2$.
Suppose that $n\to\infty$.
Then
\begin{equation*}
\mathbb{P}\bigl[E^*\cap H'\neq\emptyset\bigr]=
 \frac{(m-i+1) \binom{r}{2} \binom{n-r}{r-2}}{N}
  +O\Bigl( \frac{r^6m^2}{n^3}\Bigr).
\end{equation*}
\end{lemma}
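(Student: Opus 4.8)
The plan is to write $X=X(H')=\bigl|\{\,e^\ast\in E^\ast:e^\ast\in E(H')\,\}\bigr|$, so that $\mathbb{P}[E^\ast\cap H'\neq\emptyset]=\mathbb{P}[X\ge 1]$, and to control this by the first and second moments. Since $e_1,\dots,e_{i-1}$ are edges of the linear hypergraph $K$ and $e_i\in E(K)$, we have $|e_j\cap e_i|\le 1$ for $j<i$, so none of $e_1,\dots,e_{i-1}$ lies in $E^\ast$; thus $X$ counts only non-fixed edges of $H'$, which is why the factor $m-i+1$ appears. By Bonferroni, $\mathbb{E}[X]-\mathbb{E}\bigl[\binom{X}{2}\bigr]\le\mathbb{P}[X\ge1]\le\mathbb{E}[X]$, so it suffices to show
\[
\mathbb{E}[X]=\frac{(m-i+1)\binom{r}{2}\binom{n-r}{r-2}}{N}+O\Bigl(\frac{r^6m^2}{n^3}\Bigr),\qquad \mathbb{E}\Bigl[\binom{X}{2}\Bigr]=O\Bigl(\frac{r^6m^2}{n^3}\Bigr).
\]

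First I would record a one-edge bound: for an $r$-set $g\notin\{e_1,\dots,e_{i-1},e_i\}$, the switching ``delete $g$, insert a generic $r$-set distinct from $g$ and $e_i$'' applies to each $H'\in\mathcal{L}_r(n,m:\overline{e}_i)$ with $g\in E(H')$ in at least $N-\binom{r}{2}m\binom{n-2}{r-2}\ge N/2$ ways (the estimate of Lemma~\ref{l10.1}, whose proof excludes one arbitrary fixed $r$-set), while its inverse deletes one of the $m-i+1$ non-fixed edges; hence $\mathbb{P}[g\in H']\le 2m/N$, with the probability being $1$ if $g$ is a fixed edge and $0$ if $g=e_i$. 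Applying this with a second edge held in place gives $\mathbb{P}[e^\ast,e^{\ast\ast}\in H']=O((m/N)^2)$ for distinct $r$-sets (the value being $0$ unless both avoid the fixed edges and $e_i$ and meet each other in at most one vertex). Since $|E^\ast|=\sum_{j=2}^{r-1}\binom{r}{j}\binom{n-r}{r-j}=\binom{r}{2}\binom{n-r}{r-2}\bigl(1+O(r^2/n)\bigr)=O(r^4N/n^2)$, summing over unordered pairs from $E^\ast$ yields $\mathbb{E}\bigl[\binom{X}{2}\bigr]=O(|E^\ast|^2m^2/N^2)=O(r^8m^2/n^4)=O(r^6m^2/n^3)$, using $r^2/n=o(1)$.

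For the first moment I would fix $e^\ast\in E^\ast$ and compute $\mathbb{P}[e^\ast\in H']$. If $|e^\ast\cap e_j|\ge2$ for some $j<i$ then $\{e_1,\dots,e_{i-1},e^\ast\}$ is non-linear and the probability is $0$; the number of such $e^\ast$ is at most $(i-1)\binom{2r}{3}\binom{n-3}{r-3}=O(kr^6N/n^3)$, since such an $e^\ast$ must contain a $3$-subset of the at most $2r$ vertices of $e_i\cup e_j$. Otherwise, consider the switching that deletes $e^\ast$ and inserts a generic $r$-set distinct from $e^\ast$ and $e_i$; by Lemma~\ref{l10.1} applied to $H'\setminus\{e^\ast\}\in\mathcal{L}_r(n,m-1)$ (the extra exclusion of $e^\ast$ costing $-1$, negligibly), every member of $A_1:=\{H'\in\mathcal{L}_r(n,m:\overline{e}_i):e^\ast\in E(H')\}$ admits $[N-\binom{r}{2}m\binom{n-2}{r-2}]\bigl(1+O(r^4/n^2+r^6m^2/n^3)\bigr)$ such moves. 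The inverse move deletes a non-fixed edge of $H''$ and inserts $e^\ast$; I would verify that it is legal with exactly $m-i+1$ choices precisely when $H''\in B_1:=\{H''\in\mathcal{L}_r(n,m:\overline{e}_i):e^\ast\notin E(H''),\ \text{no edge of }H''\text{ links }e^\ast\}$, and that the remaining hypergraphs $B_2$ with $e^\ast\notin E(H'')$ satisfy $|B_2|/|B_1|=O(r^4m/n^2)$ (since $B_2\subseteq\bigcup_{g:\,|g\cap e^\ast|\ge2}\{H'':g\in E(H'')\}$, there are $O(r^4N/n^2)$ such $g$, each of probability $O(m/N)$ by the one-edge bound, and $|B_1|=(1-o(1))|\mathcal{L}_r(n,m:\overline{e}_i)|$). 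Feeding these counts into Lemma~\ref{l3.7} with $A_2=\emptyset$ gives $|A_1|/|B_1|=\frac{m-i+1}{N}\bigl(1+O(r^4m/n^2+r^6m^2/n^3)\bigr)$, hence $\mathbb{P}[e^\ast\in H']=\frac{m-i+1}{N}\bigl(1+O(r^4m/n^2+r^6m^2/n^3)\bigr)$.

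Summing over $e^\ast\in E^\ast$: the degenerate $e^\ast$ contribute $0$ but number only $O(kr^6N/n^3)$, costing $O(kr^6m/n^3)=O(r^6m^2/n^3)$ by $k\le m$; the relative error, after multiplying by $O(r^4N/n^2)\cdot(m/N)$, contributes $O(r^8m^2/n^4+r^{10}m^3/n^5)=O(r^6m^2/n^3)$ using $r^4m/n^2=o(1)$; and $|E^\ast|=\binom{r}{2}\binom{n-r}{r-2}(1+O(r^2/n))$ costs a further $O(r^6m/n^3)$. Thus $\mathbb{E}[X]=\frac{(m-i+1)\binom{r}{2}\binom{n-r}{r-2}}{N}+O(r^6m^2/n^3)$, and with the second-moment bound Bonferroni finishes the proof. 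I expect the main obstacle to be the combinatorial bookkeeping of the $e^\ast$-switching --- pinning down exactly which hypergraphs admit the inverse move (the re-inserted edge must not create a link and the deleted edge must be non-fixed), so that the reverse count is the clean constant $m-i+1$ on $B_1$ --- and checking that $B_2$ together with all accumulated relative errors is genuinely absorbed into $O(r^6m^2/n^3)$ under the hypotheses $m=o(r^{-3}n^{3/2})$ and $k\le m$.
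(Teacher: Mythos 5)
Your proposal is correct and follows the same overall architecture as the paper's proof: inclusion--exclusion over the events $\{e^*\in H'\}$ for $e^*\in E^*$, a single-edge switching (delete $e^*$, insert a generic $r$-set) to establish $\mathbb{P}[e^*\in H']=\frac{m-i+1}{N}\bigl(1+O(r^4m/n^2+r^6m^2/n^3)\bigr)$, and a crude $O(m^2/N^2)$ bound on pair probabilities. The differences are in execution, and they are worth recording. Where the paper computes the \emph{expected} number of legal $e_i$-replacements by a nested switching over the set $E^{**}$ of $r$-sets linking $e^*$, you instead partition the non-$e^*$ side into $B_1$ (no edge of $H''$ links $e^*$, so exactly $m-i+1$ legal reverse moves) and $B_2$, bound $|B_2|/|B_1|=O(r^4m/n^2)$ by a union bound against the one-edge probability $O(m/N)$, and feed the degrees into Lemma~\ref{l3.7} with $A_2=\emptyset$; this is equivalent in strength and arguably cleaner, and the lower bound of Lemma~\ref{l3.7} even guarantees $|A_1|\neq 0$ for free. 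You also take the Bonferroni correction over \emph{all} unordered pairs from $E^*$ (the correct form of the inequality) rather than only the intersecting pairs as in~\eqref{e10.6}, and your bound $O(|E^*|^2m^2/N^2)=O(r^8m^2/n^4)$ is still absorbed. Finally, you explicitly dispose of the degenerate $e^*$ that link some fixed edge $e_j$ with $j<i$, for which $\mathbb{P}[e^*\in H']=0$ rather than $(m-i+1)/N\cdot(1+o(1))$; the paper's displayed estimate~\eqref{e10.5} silently ignores these, and your count $O(kr^6N/n^3)$ of them, converted to an error $O(kr^6m/n^3)=O(r^6m^2/n^3)$ via $k\le m$, is exactly the patch needed. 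No gaps.
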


\begin{proof}
Fix an $r$-set $e^*\in E^*$.
 Let $\mathcal{L}_r(n,m:\overline{e}_i, e^*)$ 
be the set of all the hypergraphs in $\mathcal{L}_r(n,m:\overline{e}_i)$
which contain the edge $e^*$. Let
\begin{align*}
\mathcal{L}_r(n,m:\overline{e}_i, \overline{e}^*)=
\mathcal{L}_r(n,m:\overline{e}_i)-\mathcal{L}_r(n,m:\overline{e}_i, e^*).
\end{align*}
Thus, we have
\begin{align}
\mathbb{P}[e^*\in H']&
= \frac{|\mathcal{L}_r(n,m:\overline{e}_i, e^*)|}{|\mathcal{L}_r(n,m:\overline{e}_i)|}\notag\\
&= \frac{|\mathcal{L}_r(n,m:\overline{e}_i, e^*)|}
{|\mathcal{L}_r(n,m:\overline{e}_i, e^*)|+|\mathcal{L}_r(n,m:\overline{e}_i, \overline{e}^*)|}\notag\\
&= \biggl(1+ \frac{|\mathcal{L}_r(n,m:\overline{e}_i, \overline{e}^*)|}{|\mathcal{L}_r(n,m:\overline{e}_i, e^*)|}\biggr)^{\!-1}.\label{e10.3}
\end{align}

Let $G\in\mathcal{L}_r(n,m:\overline{e}_i, e^*)$ and $S(G)$ be the set
of all ways to move the edge $e^*$ to an $r$-set of $[n]$ distinct from $e^*$ and $e_i$,
of which no two vertices are in any remaining  edges of $G$.
Call the new graph~$G'$.
By the same proof as Lemma~\ref{l10.1}, we have
\[
  S(G)=\biggl[N- \binom{r}{2}m \binom{n-2}{r-2}\biggr]
  \biggl(1+O\Bigl( \frac{r^4}{n^2}+ \frac{r^6m^2}{n^3}\Bigr)\biggr).
\]

Conversely, let $G'\in\mathcal{L}_r(n,m:\overline{e}_i, \overline{e}^*)$
and let $S'(G')$ be the set of all ways to move one edge in $E(G')-\{e_1,\ldots,e_{i-1}\}$
to $e^*$ to make the resulting graph in
$\mathcal{L}_r(n,m:\overline{e}_i, e^*)$. In order to find the expected number of $S'(G')$,
we need to apply the same switching way to $\mathcal{L}_r(n,m:\overline{e}_i, \overline{e}^*)$
with a simple analysis.

Likewise, let $E^{**}$ be the set of $r$-sets $e^{**}$ of $[n]$ such that $|e^{**}\cap e^*|\geq 2$
and fix an $r$-set $e^{**}\in E^{**}$. 
 Let $\mathcal{L}_r(n,m:\overline{e}_i, \overline{e}^*, {e}^{**})$
be the set of all the hypergraphs in $\mathcal{L}_r(n,m:\overline{e}_i,\overline{e}^*)$
which contain the edge $e^{**}$ and $\mathcal{L}_r(n,m:\overline{e}_i, \overline{e}^*,\overline{e}^{**})=
\mathcal{L}_r(n,m:\overline{e}_i,\overline{e}^*)-\mathcal{L}_r(n,m:\overline{e}_i, \overline{e}^*,{e}^{**})$.
By the exactly same analysis above, we also have
\begin{align}\label{e10.4}
\mathbb{P}[e^{**}\in G']
&= \biggl(1+ \frac{|\mathcal{L}_r(n,m:\overline{e}_i, \overline{e}^*,\overline{e}^{**})|}{|\mathcal{L}_r(n,m:\overline{e}_i, \overline{e}^*,{e}^{**})|}\biggr)^{\!-1}.
\end{align}
For any hypergraph in $\mathcal{L}_r(n,m:\overline{e}_i, \overline{e}^*,{e}^{**})$, 
by the same proof as Lemma~\ref{l10.1}, we also have $\bigl[N- \binom{r}{2}m \binom{n-2}{r-2}\bigr]\bigl(1+O\bigl( \frac{r^4}{n^2}+ \frac{r^6m^2}{n^3}\bigr)\bigr)$
ways to move the edge $e^{**}$ to an $r$-set of $[n]$ distinct from ${e}_i$, ${e}^*$ and ${e}^{**}$,
of which no two vertices are in any remaining edges. Similarly, there are at most $m-i+1$ ways 
to switch a hypergraph from $\mathcal{L}_r(n,m:\overline{e}_i, \overline{e}^*,\overline{e}^{**})$ to
$\mathcal{L}_r(n,m:\overline{e}_i, \overline{e}^*,{e}^{**})$.
As the equation shown in~\eqref{e10.4}, we have
$\mathbb{P}[e^{**}\in G']
= O\bigl( \frac{m}{N}\bigr)$.  Note that $|E^{**}|=\sum_{i=2}^{r-1} \binom{r}{i} \binom{n-r}{r-i}=O\bigl( \binom{r}{2} \binom{n-r}{r-2}\bigr)$,
then $\mathbb{P}[E^{**}\cap G'\neq\emptyset]=O\bigl( \frac{r^4m}{n^2}\bigr)$ and
the expected number of $S'(G')$ is $(m-i+1)\bigl(1-O\bigl( \frac{r^4m}{n^2}\bigr)\bigr)$.

Thus, we have
\begin{align*}
& \frac{|\mathcal{L}_r(n,m:\overline{e}_i, \overline{e}^*)|}{|\mathcal{L}_r(n,m:\overline{e}_i, e^*)|}
= \frac{|S(G)|}{|S'(G')|}\\
&{\qquad}= \frac{N- \binom{r}{2}m \binom{n-2}{r-2}}{m-i+1}
\biggl(1+O\Bigl( \frac{r^4m}{n^2}+ \frac{r^6m^2}{n^3}\Bigr)\biggr)\\
&{\qquad}= \frac{N}{m-i+1}\biggl(1+O\Bigl( \frac{r^4m}{n^2}+\frac{r^6m^2}{n^3}\Bigr)\biggr).
\end{align*}
As the equation shown in~\eqref{e10.3}, we also have
\begin{align}\label{e10.5}
\mathbb{P}[e^*\in H']= \frac{m-i+1}{N}
\biggl(1+O\Bigl( \frac{r^4m}{n^2}+\frac{r^6m^2}{n^3}\Bigr)\biggr).
\end{align}

By inclusion-exclusion,
\begin{align}\label{e10.6}
\sum_{e^*\in E^*}\mathbb{P}[e^*\in H']-\sum_{e_1^*\in E^*, e_2^*\in E^*,e_1^*\cap e_2^*\neq\emptyset}\mathbb{P}[e_1^*,e_2^*\in H']\leq \mathbb{P}\bigl[E^*\cap H'\neq\emptyset\bigr]\leq \sum_{e^*\in E^*}\mathbb{P}[e^*\in H'].
\end{align}
Since $|E^*|=\sum_{i=2}^{r-1} \binom{r}{i} \binom{n-r}{r-i}= \binom{r}{2} \binom{n-r}{r-2}+O\bigl( \frac{r^3n^{r-3}}{(r-3)!}\bigr)$,
as the equation shown in~\eqref{e10.5}, we have
\begin{align}\label{e10.7}
\sum_{e^*\in E^*}\mathbb{P}[e^*\in H']=
 \frac{(m-i+1) \binom{r}{2} \binom{n-2}{r-2}}{N}
 +O\Bigl(\frac{r^6m^2}{n^3}\Bigr)
\end{align}
because $O\bigl( \frac{m}{N} \binom{r}{2} \binom{n-2}{r-2}\bigl( \frac{r^4m}{n^2}
+\frac{r^6m^2}{n^3}\bigr)\bigr)=
O\bigl(\frac{r^6m^2}{n^3}\bigr)$ and
 $O\bigl( \frac{m}{N} \frac{r^3n^{r-3}}{(r-3)!}\bigr)=O\bigl( \frac{r^6m}{n^3}\bigr)$.

Consider
$\sum_{e_1^*\in E^*, e_2^*\in E^*, e_1^*\cap e_2^*\neq\emptyset}\mathbb{P}[e_1^*,e_2^*\in H']$ in the
equation~\eqref{e10.6}.
Note that $H'\in \mathcal{L}_r(n,m:\overline{e}_i)$, then $|e_1^*\cap e_2^*|=1$.
Let $\mathcal{L}_r(n,m:\overline{e}_i, e_1^*, e_2^*)$,
$\mathcal{L}_r(n,m:\overline{e}_i, {e}_1^*, \overline{e}_2^*)$,
$\mathcal{L}_r(n,m:\overline{e}_i, \overline{e}_1^*, e_2^*)$ and $\mathcal{L}_r(n,m:\overline{e}_i, \overline{e}_1^*, \overline{e}_2^*)$
be the set of all linear
hypergraphs in $\mathcal{L}_r(n,m:\overline{e}_i)$ which contain both
$e_1^*$ and  $e_2^*$, only contain $e_1^*$, only contain $e_2^*$ and neither of them,
respectively. Thus, we have
\begin{align}
&\mathbb{P}[e_1^*,e_2^*\in H']\notag\\&
= \frac{|\mathcal{L}_r(n,m:\overline{e}_i, e_1^*, e_2^*)|}{|\mathcal{L}_r(n,m:\overline{e}_i)|}\notag\\
&= \frac{|\mathcal{L}_r(n,m:\overline{e}_i, e_1^*, e_2^*)|}
{|\mathcal{L}_r(n,m:\overline{e}_i, e_1^*, e_2^*)|+|\mathcal{L}_r(n,m:\overline{e}_i, {e}_1^*, \overline{e}_2^*)|+|\mathcal{L}_r(n,m:\overline{e}_i, \overline{e}_1^*, e_2^*)|+|\mathcal{L}_r(n,m:\overline{e}_i, \overline{e}_1^*, \overline{e}_2^*)|}\notag\\
&= \biggl(1+ \frac{|\mathcal{L}_r(n,m:\overline{e}_i, {e}_1^*, \overline{e}_2^*)|}{|\mathcal{L}_r(n,m:\overline{e}_i, e_1^*, e_2^*)|}+ \frac{|\mathcal{L}_r(n,m:\overline{e}_i, \overline{e}_1^*, e_2^*)|}{|\mathcal{L}_r(n,m:\overline{e}_i, e_1^*, e_2^*)|}+ \frac{|\mathcal{L}_r(n,m:\overline{e}_i, \overline{e}_1^*, \overline{e}_2^*)|}{|\mathcal{L}_r(n,m:\overline{e}_i, e_1^*, e_2^*)|}\biggr)^{\!-1}\label{e10.8}.
\end{align}
By the similar analysis above, we have
\begin{align}
 \frac{|\mathcal{L}_r(n,m:\overline{e}_i, {e}_1^*, 
 \overline{e}_2^*)|}{|\mathcal{L}_r(n,m:\overline{e}_i, e_1^*, e_2^*)|}&\geq \frac{\bigl[N- \binom{r}{2}m \binom{n-2}{r-2}\bigr]}{m-i+1}\biggl(1+O\Bigl( \frac{r^4}{n^2}+\frac{r^6m^2}{n^3}\Bigr)\biggr), \notag\\
 \frac{|\mathcal{L}_r(n,m:\overline{e}_i, \overline{e}_1^*, e_2^*)|}{|\mathcal{L}_r(n,m:\overline{e}_i, e_1^*, e_2^*)|}
 &\geq \frac{\bigl[N- \binom{r}{2}m \binom{n-2}{r-2}\bigr]}{m-i+1}\biggl(1+O\Bigl( \frac{r^4}{n^2}+ \frac{r^6m^2}{n^3}\Bigr)\biggr). \label{e10.9}
\end{align}

For any hypergraph in $\mathcal{L}_r(n,m:\overline{e}_i, e_1^*, e_2^*)$, we move
$e_1^*$ and $e_2^*$ away in two steps by
the similar switching operations in Section~\ref{s:5}.
For $e_1^*$ (resp. $e_2^*$), by the same proof as Lemma~\ref{l10.1},
there are  $\bigl[N- \binom{r}{2}m \binom{n-2}{r-2}\bigr]
\bigl(1+O\bigl( \frac{r^4}{n^2}+ \frac{r^6m^2}{n^3}\bigr)\bigr)$ ways to move  $e_1^{*}$
 (resp. $e_2^*$)  to an $r$-set of $[n]$ distinct from ${e}_i$, $e_1^*$ and $e_2^*$
 such that the resulting graph is in $\mathcal{L}_r(n,m:\overline{e}_i, \overline{e}_1^*, \overline{e}_2^*)$.
Similarly, there are at most $2\binom{m-i+1}{2}$ ways 
to switch a hypergraph from $\mathcal{L}_r(n,m:\overline{e}_i, \overline{e}_1^*, \overline{e}_2^*)$ to
$\mathcal{L}_r(n,m:\overline{e}_i, e_1^*, e_2^*)$.
Thus, we have
\begin{align}\label{e10.10}
 \frac{|\mathcal{L}_r(n,m:\overline{e}_i, \overline{e}_1^*, \overline{e}_2^*)|}{|\mathcal{L}_r(n,m:\overline{e}_i, e_1^*, e_2^*)|}&\geq \frac{\bigl[N- \binom{r}{2}m \binom{n-2}{r-2}\bigr]^2}{2\binom{m-i+1}{2}}\biggl(1+O\Bigl( \frac{r^4}{n^2}+ \frac{r^6m^2}{n^3}\Bigr)\biggr).
\end{align}

By equations~\eqref{e10.8}--\eqref{e10.10} and note that there are
 $O\bigl( \frac{r^3 n^{2r-4}}{(r-2)!^2}\bigr)$
ways to choose the pair $\{e_1^*,e_2^*\}$ such that $|e_1^*\cap e_i|\geq 2$, $|e_2^*\cap e_i|\geq 2$ and $|e_1^*\cap e_2^*|=1$,
then we have
\begin{align}\label{e10.11}
\sum_{e_1^*\in E^*, e_2^*\in E^*,e_1^*\cap e_2^*\neq\emptyset}\mathbb{P}[e_1^*,e_2^*\in H']
=O\biggl( \frac{r^3 n^{2r-4}}{(r-2)!^2} \frac{m^2}{N^2}\biggr)=O\Bigl( \frac{r^7 m^2}{n^4}\Bigr)=O\Bigl( \frac{r^6m^2}{n^3}\Bigr).
\end{align}

To complete the proof of Lemma~\ref{l10.2}, add together the equations~\eqref{e10.6},~\eqref{e10.7} and~\eqref{e10.11}.
\end{proof}

By Lemmas~\ref{l10.1} and~\ref{l10.2}, we have
\begin{lemma}\label{l10.3}
Assume $m=o(r^{-3}n^{ \frac32})$ and $1\leq i\leq k$. Then\\
$(a)$\ Let $H\in\mathcal{L}_r(n,m:{e}_i)$. The number of $e_i$-displacements is
\[
\biggl[N- \binom{r}{2}m \binom{n-2}{r-2}\biggr]\biggl(1+O\Bigl( \frac{r^4}{n^2}
  + \frac{r^6m^2}{n^3}\Bigr)\biggr).
\]
$(b)$\ Consider $H'\in\mathcal{L}_r(n,m:\overline{e}_i)$ chosen uniformly at random.
The expected number of legal $e_i$-replacements is
\[
(m-i+1)\biggl[1- \frac{(m-i+1) \binom{r}{2} \binom{n-r}{r-2}}{N}
 +O\Bigl(\frac{r^6m^2}{n^3}\Bigr)\biggr].
\]
$(c)$
\begin{align*}
 \frac{|\mathcal{L}_r(n,m:\overline{e}_i)|}{|\mathcal{L}_r(n,m:{e}_i)|}
&= \frac{\bigl[N- \binom{r}{2}m \binom{n-2}{r-2}\bigr]}{m-i+1}\\
&{\qquad}\times\biggl(1+ \frac{(m-i+1) \binom{r}{2} \binom{n-r}{r-2}}{N}
+O\Bigl( \frac{r^4}{n^2}+\frac{r^6m^2}{n^3}\Bigr)\biggr).
\end{align*}
\end{lemma}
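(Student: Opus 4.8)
The plan is to obtain all three parts from Lemmas~\ref{l10.1} and~\ref{l10.2} together with one double count, the only genuinely new step being a description of the legal $e_i$-replacements available at a fixed $H'$. For $(a)$, observe that an $e_i$-displacement from $H\in\mathcal{L}_r(n,m:e_i)$ is the forced removal of $e_i$, yielding $H_0=H\setminus\{e_i\}\in\mathcal{L}_r(n,m-1)$, followed by the choice of a new edge from the set $N_r$ of $r$-sets distinct from $e_i$ no two of whose vertices lie in a common edge of $H_0$; so the number of $e_i$-displacements of $H$ is $|N_r|$, and Lemma~\ref{l10.1} applied to $H_0$ is precisely the assertion of $(a)$. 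Since $N-\binom{r}{2}m\binom{n-2}{r-2}\to\infty$, each $H\in\mathcal{L}_r(n,m:e_i)$ admits a displacement, whose output lies in $\mathcal{L}_r(n,m:\overline{e}_i)$, and because the hypothesis $\mathbb{P}[K\subseteq H]\neq0$ forces $\mathcal{L}_r(n,m:e_i)\supseteq\mathcal{L}_r(n,m:e_k)$ to be nonempty for every $i\le k$, all sets entering $(b)$ and $(c)$ are nonempty.

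For $(b)$, fix $H'\in\mathcal{L}_r(n,m:\overline{e}_i)$ and let $E^*\cap H'$ be the set of its edges meeting $e_i$ in at least two vertices. Because $H'$ is linear and $e_i\notin H'$, deleting $f\in E(H')\setminus\{e_1,\dots,e_{i-1}\}$ and inserting $e_i$ produces a linear graph exactly when $E^*\cap H'\subseteq\{f\}$; and when $|E^*\cap H'|=1$ its single edge is automatically none of $e_1,\dots,e_{i-1}$, since $K$ is linear so each $e_j$ meets $e_i$ in at most one vertex. Hence $H'$ has $m-i+1$ legal $e_i$-replacements when $E^*\cap H'=\emptyset$, one when $|E^*\cap H'|=1$, and none when $|E^*\cap H'|\ge2$. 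Averaging over a uniformly random $H'$, the expected number of legal $e_i$-replacements equals $(m-i+1)\bigl(1-\mathbb{P}[E^*\cap H'\neq\emptyset]\bigr)+\mathbb{P}[|E^*\cap H'|=1]$. Lemma~\ref{l10.2} supplies $\mathbb{P}[E^*\cap H'\neq\emptyset]$, and a union bound over the pairs of $r$-sets in $E^*$, each present in $H'$ with probability $O\bigl((m/N)^2\bigr)$ by the switching estimates in the proof of Lemma~\ref{l10.2} and with $|E^*|=O\bigl(r^2n^{r-2}/(r-2)!\bigr)$, gives $\mathbb{P}[|E^*\cap H'|\ge2]=O\bigl(r^8m^2/n^4\bigr)=O\bigl(r^6m^2/n^3\bigr)$. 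Collecting the terms yields the formula of $(b)$.

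For $(c)$, form the bipartite graph on $\mathcal{L}_r(n,m:e_i)\cup\mathcal{L}_r(n,m:\overline{e}_i)$ with one edge for every $e_i$-displacement $H\to H'$. One checks that $e_i$-displacements and legal $e_i$-replacements are mutually inverse between the two classes: the new edge $e'$ created by a displacement from $H$ cannot be an edge of $H_0$ and hence differs from every $e_j$, so it is available for removal by a replacement; conversely the edge removed by a legal replacement, lying in a linear hypergraph, has no two vertices in a common remaining edge, so it is exactly the edge that a displacement would create. Counting the edges of this graph from both sides, $|\mathcal{L}_r(n,m:e_i)|$ times the average number of $e_i$-displacements equals $|\mathcal{L}_r(n,m:\overline{e}_i)|$ times the average number of legal $e_i$-replacements. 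By $(a)$ the first average is $\bigl[N-\binom{r}{2}m\binom{n-2}{r-2}\bigr]\bigl(1+O(r^4/n^2+r^6m^2/n^3)\bigr)$, uniformly, and by $(b)$ the second is $(m-i+1)\bigl[1-x+O(r^6m^2/n^3)\bigr]$ with $x=(m-i+1)\binom{r}{2}\binom{n-r}{r-2}/N=O(r^4m/n^2)$, which is positive for large $n$ because $1\le k\le m$. Dividing, and using $1/(1-x+O(\epsilon))=1+x+O(x^2+\epsilon)$ with $x^2=O(r^8m^2/n^4)=O(r^6m^2/n^3)$, produces the expression of $(c)$ and shows in passing that no denominator in~\eqref{e10.2} vanishes.

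The main obstacle is the bookkeeping of part $(b)$ and the verification that the displacement/replacement pairing is a genuine bijection onto the correct classes, so that the double count in $(c)$ is valid; in particular one must confirm that the event $|E^*\cap H'|\ge2$ contributes only to the error term. Everything else is a direct appeal to Lemmas~\ref{l10.1} and~\ref{l10.2}.
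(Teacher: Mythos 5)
Your proof is correct and follows the route the paper intends: the paper offers no explicit proof of Lemma~\ref{l10.3} beyond the words ``By Lemmas~\ref{l10.1} and~\ref{l10.2}'', and your argument --- part $(a)$ directly from Lemma~\ref{l10.1}, part $(b)$ from the case analysis on $|E^*\cap H'|$ combined with Lemma~\ref{l10.2}, and part $(c)$ by double-counting the displacement/replacement bipartite graph after checking the two operations are mutually inverse --- is exactly the intended filling-in. The one caveat is that your extra term $\mathbb{P}\bigl[|E^*\cap H'|=1\bigr]$ contributes $(m-i+1)\cdot O\bigl(r^4/n^2\bigr)$ to the expectation in $(b)$, which is not always $O\bigl(r^6m^2/n^3\bigr)$ as the paper's statement of $(b)$ would require for small $m$, but it is absorbed by the $O\bigl(r^4/n^2\bigr)$ already present in the error term of $(c)$, so the conclusion of the lemma and of Theorem~\ref{t1.5} is unaffected.
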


By Lemma~\ref{l10.3}(c), we have
\begin{align*}
\mathbb{P}[K&\subseteq H]\\
&=\prod_{i=1}^{k} \biggl(1+
\frac{\mathbb{P}[e_1,\ldots,e_{i-1}\in H,e_i\notin H]}{\mathbb{P}[e_1,\ldots,e_i\in H]}\biggr)^{\!\!-1}\\
&=\prod_{i=1}^{k} \frac{m-i+1}{\bigl[N- \binom{r}{2}m \binom{n-2}{r-2}\bigr]}
\biggl(1- \frac{(m-i+1) \binom{r}{2} \binom{n-r}{r-2}}{N}
+O\Bigl( \frac{r^4}{n^2}+ \frac{r^6m^2}{n^3}\Bigr)\biggr)\\
&=\prod_{i=1}^{k} \frac{m-i+1}{\bigl[N- \binom{r}{2}m \binom{n-2}{r-2}\bigr]}
\exp\biggl[- \frac{(m-i+1) \binom{r}{2} \binom{n-r}{r-2}}{N}
+O\Bigl( \frac{r^4}{n^2}+ \frac{r^6m^2}{n^3}\Bigr)\biggr]\\
&= \frac{[m]_k}{N^k}\exp\biggl[ \frac{[r]_2^2k^2}{4n^2}
+O\Bigl( \frac{r^4k}{n^2}+ \frac{r^6m^2k}{n^3}\Bigr)\biggr],
\end{align*}
since $k=o\bigl(\frac{n^3}{r^6m^2}\bigr)$.


\section*{Acknowledgement}\label{s:11}

Fang Tian
was partially supported by the National Natural Science Foundation of China
(Grant No.~11871377) and China Scholarship Council [2017]3192, and is now a
visiting research fellow at the Australian National University. Fang Tian is
immensely grateful to Brendan D. McKay for giving her the opportunity to learn from him,
and thanks him for his problem and useful discussions.


\begin{thebibliography}{s2}
\bibitem{asas00}
A.\,S.~Asratian and N.\,N.~Kuzjurin, On the number of partial Steiner systems.
\textit{ J. Comb. Des.}, {\bf 81}(5) (2000), 347-352.

\bibitem{balgoh17}
J.~Balogh and L.~Li, On the number of linear hypergraphs of large girth.\\
\textit{arXiv:1709.04079}.


\bibitem{vlaejoc}
V.~Blinovsky and C.~Greenhill, Asymptotic enumeration of sparse uniform
hypergraphs with given degrees.
\textit{Eur. J. Combin.}, {\bf 51} (2016), 287-296.

\bibitem{valelec}
V.~Blinovsky and C.~Greenhill, Asymptotic enumeration of sparse uniform linear
hypergraphs with given degrees.
\textit{Electron. J. Comb.}, {\bf 23}(3) (2016), P3.17.


\bibitem{cher52}
H. Chernoff,
A measure of asymptotic efficiency for tests of a hypothesis bases on the sum of observations. {\it
Annals of Mathematical Statistics.}, {\bf 23} (1952), 493-507.

\bibitem{dudek13}
A. Dudek, A. Frieze, A. Ruci\'{n}ski and M. \v{S}ileikis,
Approximate counting of regular hypergraphs.
\textit{Inform. Process. Lett.}, {\bf 113} (2013), 785-788.

\bibitem{grable96}
D.\,A. Grable and K.\,T. Phelps, Random methods in design theory: a survey.
\textit{J. Comb. Des.}, {\bf 4}(4) (1996), 255-273.

\bibitem{green06}
C. Greenhill, B.\,D. McKay and X. Wang, Asymptotic enumeration of sparse
$0-1$ matrices with irregular row and column
sums. \textit{J. Comb. Theory A}, {\bf 113} (2006), 291-324.

\bibitem{green08}
C. Greenhill and B.\,D. McKay, Asymptotic enumeration of sparse nonnegative
integer matrices with specified row and column
sums. \textit{Adv. Appl. Math.}, {\bf 41} (2008), 459-481.

\bibitem{green13}
C. Greenhill and B.\,D. McKay, Asymptotic enumeration of sparse multigraphs with given
degrees. \textit{SIAM J. Discrete Math.},
{\bf 27} (2013), 2064-2089.



\bibitem{rodl85}
V. R\"{o}dl,  On a packing and covering problem, \textit{Eur. J. Combin.},
{\bf 5} (1985), 69-78.



\end{thebibliography}
\end{document}